\newtheorem{thm}{Theorem}[section]
\newtheorem{lem}[thm]{Lemma}
\newtheorem{prop}[thm]{Proposition}
\newtheorem{coro}[thm]{Corollary}
\newtheorem{conj}[thm]{Conjecture}
\newtheorem{rem}[thm]{Remark}
\newtheorem{exa}[thm]{Example}
\newtheorem*{term*}{Notation/Terminology}
\newtheorem{defi}[thm]{Definition}
\newcommand{\oH}{\overline{H}}
\newcommand{\bT}{\boldsymbol{t}}
\newcommand{\bbT}{\boldsymbol{T}}
\newcommand{\STab}{\mathrm{STab}}
\newcommand{\SSTab}{\mathrm{SSTab}}
\newcommand{\bk}{\bold{k}}
\newcommand{\cD}{\mathcal{D}}
\newcommand{\bC}{\mathbb{C}}
\newcommand{\CC}{\mathbb{C}}
\newcommand{\si}{\sigma}
\newcommand{\mS}{\mathfrak{S}}
\newcommand{\cc}{\mathrm{cc}}
\newcommand{\qc}{\mathrm{c}}
\newcommand{\AS}{\mathrm{AS}}
\newcommand{\Dom}{\mathrm{Dom}}
\newcommand{\diag}[3]{ \foreach \t in {1,...,#3} {\draw[thick] (#1+\t,#2-1) rectangle (#1+\t-1,#2);} }
\newcommand{\diagg}[4]{ \foreach \t in {1,...,#3} {\draw[thick] (#1+\t,#2-1) rectangle (#1+\t-1,#2);} \foreach \t in {1,...,#4} {\draw[thick] (#1+\t,#2-1) rectangle (#1+\t-1,#2-2);} }
\newcommand{\diaggg}[5]{ \foreach \t in {1,...,#3} {\draw[thick] (#1+\t,#2-1) rectangle (#1+\t-1,#2);} \foreach \t in {1,...,#4} {\draw[thick] (#1+\t,#2-1) rectangle (#1+\t-1,#2-2);}
                         \foreach \t in {1,...,#5} {\draw[thick] (#1+\t,#2-2) rectangle (#1+\t-1,#2-3);} }
\newcommand{\diagggg}[6]{ \foreach \t in {1,...,#3} {\draw[thick] (#1+\t,#2-1) rectangle (#1+\t-1,#2);} \foreach \t in {1,...,#4} {\draw[thick] (#1+\t,#2-1) rectangle (#1+\t-1,#2-2);}
                         \foreach \t in {1,...,#5} {\draw[thick] (#1+\t,#2-2) rectangle (#1+\t-1,#2-3);} \foreach \t in {1,...,#6} {\draw[thick] (#1+\t,#2-3) rectangle (#1+\t-1,#2-4);} }    
\newcommand{\diaggggg}[6]{ \foreach \t in {1,...,#3} {\draw[thick] (#1+\t,#2-1) rectangle (#1+\t-1,#2);} \foreach \t in {1,...,#4} {\draw[thick] (#1+\t,#2-1) rectangle (#1+\t-1,#2-2);}
                         \foreach \t in {1,...,#5} {\draw[thick] (#1+\t,#2-2) rectangle (#1+\t-1,#2-3);} \foreach \t in {1,...,#6} {\draw[thick] (#1+\t,#2-3) rectangle (#1+\t-1,#2-4);}
                         \foreach \t in {1,...,#6} {\draw[thick] (#1+\t,#2-4) rectangle (#1+\t-1,#2-5);} }                             
\begin{document}

\title{Fused braids and centralisers of tensor representations of $U_q(gl_N)$}

\author{N. Cramp\'e\footnote{Institut Denis-Poisson CNRS/UMR 7013 - Universit\'e de Tours - Universit\'e d'Orl\'eans, 
Parc de Grandmont, 37200 Tours, France. crampe1977@gmail.com}$\ $ and 
L. Poulain d'Andecy\footnote{Laboratoire de math\'ematiques de Reims UMR 9008, Universit\'e de Reims Champagne-Ardenne,
Moulin de la Housse BP 1039, 51100 Reims, France.
loic.poulain-dandecy@univ-reims.fr}}

\date{}
\maketitle

\begin{abstract}
We present in this paper the algebra of fused permutations and its deformation the fused Hecke algebra. The first one is defined on a set of combinatorial objects that we call fused permutations, and its deformation is defined on a set of topological objects that we call fused braids. We use these algebras to prove a Schur--Weyl duality theorem for any tensor products of any symmetrised powers of the natural representation of $U_q(gl_N)$. Then we proceed to the study of the fused Hecke algebras and in particular, we describe explicitely the irreducible representations and the branching rules. Finally, we aim to an algebraic description of the centralisers of the tensor products of $U_q(gl_N)$-representations under consideration. We exhibit a simple explicit element that we conjecture to generate the kernel from the fused Hecke algebra to the centraliser. We prove this conjecture in some cases and in particular, we obtain a description of the centraliser of any tensor products of any finite-dimensional representations of $U_q(sl_2)$.
\end{abstract}

\section{Introduction}

The Schur--Weyl duality relates the representation theory of the group $GL_N(\CC)$ to the representation theory of the symmetric groups $\mS_n$. In fact, for any $N>1$ and $n\geq 1$, if $V$ denotes the natural (vector) representation of dimension $N$ of $GL_N(\CC)$, the Schur--Weyl duality asserts that the centraliser of the action of $GL_N(\CC)$ on the tensor product $V^{\otimes n}$ is the image of the action by permutation of the symmetric group $\mS_n$.

Moreover, the Schur--Weyl duality can be extended to the standard deformations of the structures under consideration. Namely, on one hand, one replaces $GL_N(\CC)$ by the quantum group $U_q(gl_N)$ and on the other hand, one replace the symmetric group $\mS_n$ by the Hecke algebra $H_n(q)$. 

The fact that the centraliser of the action of $U_q(gl_N)$ ($GL_N(\CC)$ if $q^2=1$) is obtained as the image of the action of the Hecke algebra $H_n(q)$ (the symmetric group $\mS_n$ if $q^2=1$) is the first part of the statement, sometimes called in invariant theory the first fundamental theorem. In order to describe more precisely the centraliser, one needs the second part, sometimes called the second fundamental theorem, which identifies the kernel of the action of the Hecke algebra (starting from now, we include the case $q^2=1$ in the general case, and we indicate that in this paper $q$ is either an indeterminate or a non-zero complex number which is not a root of unity). This kernel is well-understood and can be described alternatively in terms of the representation theory of $H_n(q)$, or with a direct algebraic description (with an explicit generator, the $q$-antisymmetriser) in $H_n(q)$. One famous example is for $N=2$ where one obtains the Temperley--Lieb algebra which can be seen as a quotient of the Hecke algebra.

We note that the first part of the Schur--Weyl duality involves an algebra, here $H_n(q)$, which does not depend on the dimension $N$, while of course in the second part, the description of the kernel depends on $N$. We see the Hecke algebra $H_n(q)$ as a sort of ``universal'' centraliser allowing to obtain, for all $N$, the actual centralisers and we emphasise its role. We note that for any $N$, the centraliser coincides with $H_n(q)$ for small $n$ ($n\leq N$) but always starts to differ at $n=N+1$.

\vskip .2cm
In general, centralisers of tensor powers of representations of quantum groups are interesting objects for several reasons. First and most naturally, in representation theory, they allow to decompose these tensor powers into direct sums of irreducible representations. For example, the decomposition of $V^{\otimes n}$ can be obtained directly from the study of the representation theory of the Hecke algebra.

Second, the centralisers contain naturally elements satisfying the braid relations. In other words, they contain elements generating a finite-dimensional quotients of the algebra of the braid group (we emphasise that in general the centralisers are not generated by these elements, see \cite{LZ}). These images of the braid group in centralisers come from the quasi-triangular structure of quantum groups since the images of the $R$-matrix in the representation satisfy the braid relations. These has applications in particular for finding knots and links invariants. For example, the Temperley--Lieb algebra is used to construct the Jones polynomial of a link.

In fact, an image of the braid group is already found at the level of the Hecke algebra $H_n(q)$. Indeed, the Hecke algebra is well-known to be a quotient of the algebra of the braid group. This step (maybe at first sight a small step) of considering the Hecke algebra instead of the centralisers leads to quite interesting development for the knots and links invariants. In fact, by working directly at the level of the Hecke algebra, one obtains the HOMFLY-PT polynomial of a link, which is a two-variable generalisation of the Jones polynomial. This polynomial can be seen as interpolating the family of invariants coming from the centralisers of $V^{\otimes n}$ as the dimension $N$ varies. We also point out that, even though the Hecke algebra $H_n(q)$ is larger than the centraliser, its algebraic structure is somewhat simpler and the calculation of the HOMFLY-PT polynomial from $H_n(q)$ is a relatively simple algebraic procedure, see \emph{e.g.} \cite[section 4.5]{GP}. All in all this points out again to the importance and the useful role played by the Hecke algebra.

Finally, one can be interested in the centralisers in the context of the Yang--Baxter equation in mathematical physics. Here again our main point is the following: a very simple formula (called a Baxterisation formula, see for example \cite{IO,Jo}) builds abstract solutions of the Yang--Baxter equation inside the Hecke algebra. Then the genuine (matrix) solutions associated to the representation $V$ of $U_q(gl_N)$ (for any $N$) can be obtained by representing this simple abstract solution vie the action of $H_n(q)$ on $V^{\otimes n}$. This is our final pointer to the usefulness of the Hecke algebra as a ``universal'' centraliser.

\vskip .2cm
In this paper, we consider the following tensor product of representations of $U_q(gl_N)$:
\begin{equation}\label{intro-rep}
L_{(k_1)}^N\otimes L_{(k_2)}^N\otimes \dots\otimes L_{(k_n)}^N\ ,
\end{equation}
where $L_{(k)}^N$ is the $k$-th symmetric power of the vector representation $V=L_{(1)}^N$ of $U_q(gl_N)$. Here $k_1,k_2,\dots,k_n$ are arbitrary non-negative integers. From the quasitriangularity property of $U_q(gl_N)$, up to isomorphism, the centralisers do not depend on the order of $k_1,\dots,k_n$, so we could assume for example that $k_1\geq\dots\geq k_n$. However, we will only make such an assumption in this paper when necessary.

Our main goal is to introduce and study an algebra playing for these tensor representations the same role as the Hecke algebra for the representations $V^{\otimes n}$. So we will denote by $\bk=(k_1,k_2,\dots)$ an arbitrary sequence of non-negative integers and we will denote this algebra by $H_{\bk,n}(q)$ (of course, for a given $n\geq0$, the algebra $H_{\bk,n}(q)$ depends on $\bk$ only through the $n$ first entries $k_1,\dots,k_n$).

Our first main result which serves also as a motivation is that there is an action of $H_{\bk,n}(q)$ on the space (\ref{intro-rep}) such that its image coincides with the centraliser of the action of the quantum group $U_q(gl_N)$. This is the generalisation of the first part of the Schur--Weyl duality theorem. Note that, as before, the algebra $H_{\bk,n}(q)$ is ``universal'' in the sense that it does not depend on $N$, only its image in the space of endomorphisms does. For a given $N$, we denote it $\oH^N_{\bk,n}(q)$ and this is the centraliser of the $U_q(gl_N)$-action.

In fact, for fixed $\bk$, the algebras $H_{\bk,n}(q)$ form a chain of algebras as $n$ varies:
\begin{equation*}
\CC=H_{\bk,0}(q)\subset H_{\bk,1}(q)\subset H_{\bk,2}(q)\subset\dots\dots \subset H_{\bk,n}(q)\subset H_{\bk,n+1}(q)\subset \dots\dots\ ,
\end{equation*}
and the chain of centralisers, that we denote,
\begin{equation*}
\CC=\oH^N_{\bk,0}(q)\subset \oH^N_{\bk,1}(q)\subset \oH^N_{\bk,2}(q)\subset\dots\dots \subset \oH^N_{\bk,n}(q)\subset \oH^N_{\bk,n+1}(q)\subset \dots\dots\ ,
\end{equation*}
is obtained by taking a quotient at each level of the first chain of algebras. Here as in the usual Schur--Weyl duality, the two chains always coincide for $n$ small enough and always start to differ at $n=N+1$. In other words, we have that $\oH^N_{\bk,n}(q)=H_{\bk,n}(q)$ if and only if $n\leq N$ (and this for any $\bk$). 

\vskip .2cm
Regarding the definition of the algebras $H_{\bk,n}(q)$, we proceed in two steps, starting with the situation $q^2=1$ (that we denote $H_{\bk,n}(1)$) and then deforming this construction to obtain $H_{\bk,n}(q)$. The algebra $H_{\bk,n}(1)$ is constructed on a set of combinatorial objects, which we call ``fused permutation'', which can be conveniently described by diagrams. They generalise the usual permutations of the symmetric group $\mS_n$. As combinatorial objects, these fused permutations can also be described as certain sequences of multisets, or as $n$ by $n$ matrices with non-negative integer entries subject to the condition that the rows and columns sum to some of the integers from the sequence $\bk$. The multiplication of these objects is best described with the diagrams, and is a purely combinatorial procedure.

Then for the deformation $H_{\bk,n}(q)$, we add to these combinatorial objects a topological information (roughly, we now have strands and we allow crossings). The resulting objects we propose to call them ``fused braids''. The ingredients to multiply these fused braids are the usual Hecke relation together with an idempotent from the Hecke algebra, which is here the $q$-symmetriser. As a tentative name for the resulting algebra $H_{\bk,n}(q)$, we propose ``fused Hecke algebra'' (a more precise name would be something along the lines of $\bk$-symmetrised fused Hecke algebra). As in many cases of diagram algebras, the deformation $H_{\bk,n}(q)$ has the same dimension as $H_{\bk,n}(1)$ and admits a basis indexed by some standard diagrams from $H_{\bk,n}(1)$.

Algebras similar to the fused Hecke algebras have been studied in some particular cases, however mostly corresponding to the centralisers of $U_q(gl_2)$ (so in our notation related to $\oH^2_{\bk,n}(q)$ for some particular $\bk$). They were constructed from the Temperley--Lieb algebras. We refer to \cite{ALZ, LSA} for studies of cellular structure and non-semisimple representation theory and to \cite{Mat, Zi} for some studies related to physical models.

\vskip .2cm
Then we proceed to the study of the algebras $H_{\bk,n}(q)$, which we believe is now motivated. In particular, one objective we set is to generalise also the second part of the Schur--Weyl duality theorem, namely the description of the kernel of the quotient map from $H_{\bk,n}(q)$ to $\oH^N_{\bk,n}(q)$ for any $N$.

The path we follow towards this objective starts with the description of the representation theory of $H_{\bk,n}(q)$, which is also of independent interest. The representation theory of the algebra $H_{\bk,n}(q)$ could in principle be obtained by seeing it as a centraliser (for $N$ large enough), and then by using the well-known Littlewood--Richardson rule restricted to the tensor products under consideration (\ref{intro-rep}) (in this case, this is also called the Pieri rule). However, we find it more natural in this paper and also more convenient for our later purposes to provide an independent treatment, which relies only on the representation theory of the Hecke algebras. So we recover the Pieri rule by analysing the seminormal basis of skew-shape representations of usual Hecke algebras. In fact, we obtain along the way a more precise information than the combinatorial rule, since we identify explicitly the representation spaces of $H_{\bk,n}(q)$ inside the representation spaces of usual Hecke algebras, providing thus an explicit construction of the irreducible representations of $H_{\bk,n}(q)$. To summarise, we completely describe the representation theory of the semisimple algebras $H_{\bk,n}(q)$, together with the branching rules between $H_{\bk,n-1}(q)$ and $H_{\bk,n}(q)$.

Interestingly, once the representation theory of $H_{\bk,n}(q)$ is described, it is possible to identify a subalgebra (which is more precisely an ideal) of $H_{\bk,n}(q)$ isomorphic to $H_{\bk-1,n}(q)$ (where $\bk-1$ is the sequence obtained from $\bk$ by decreasing every entries by 1). This makes connections inside the whole family of algebras $H_{\bk,n}(q)$ between members with fixed $n$ and different $\bk$. Moreover, these ideals turn out to be precisely the ideals allowing to obtain for different $N$ the centralisers $\oH^N_{\bk,n}(q)$ from $H_{\bk,n}(q)$. 
So at this point, we have a complete description of the ideals from the point of view of representation theory.

Finally, we proceed to an algebraic description of these ideals, aiming to a concrete algebraic presentation of the centralisers. We exhibit an explicit and rather simple element of the fused Hecke algebra $H_{\bk,N+1}(q)$ and we conjecture that it is a generator of the ideal we are looking to. We are able to prove this conjecture in the following cases:

$\bullet$ Any sequence $\bk$ when $N=2$, so in particular we obtain a description of the centraliser of any tensor products of any finite-dimensional representations of $U_q(sl_2)$. 

$\bullet$ Any $N$ for representations (\ref{intro-rep}) having an arbitrary symmetrised power in the first factor and then only the natural representation.

$\bullet$ Any $N$ for representations (\ref{intro-rep}) involving only the natural representation and the symmetrised square.

We indicate that recently the centralisers for $N=2$ and a constant sequence $\bk=(k,k,k,\dots)$ have been studied from a different point of view in \cite{ALZ} (see also \cite{LZ2} for $k=2$), and again from a different perspective in \cite{CPV} for $N=2$ and a tensor product of three spaces.

\vskip .2cm
The representation theory (including the branching rules) of the fused Hecke algebras, and of the centralisers, are conveniently described by their so-called Bratteli diagrams. Moreover, the way centralisers appear as quotients of the fused Hecke algebras is also best described, from the point of view of representation theory, by the notion of quotients of Bratteli diagrams. We collect and organise in Appendix these notions and the terminology we will use throughout the paper, and we also provide examples.

\paragraph{Some perspectives.} In the situation of a constant sequence $\bk=(k,k,k,\dots)$, the fused Hecke algebras $H_{\bk,n}(q)$ contains naturally elements satisfying the braid group relations. These elements does not generate the whole algebra $H_{\bk,n}(q)$ in general. So the subalgebras generated by these elements, and their images in the centralisers $\oH^N_{\bk,n}(q)$ for various $N$, deserve a better study. We emphasise that these algebras are finite-dimensional quotients of the braid group algebra in which the braid generators satisfy a characteristic equation of order $k+1$.

Regarding the study of the Yang--Baxter equation, the fused Hecke algebras admit a Baxterisation formula, generalising the one in the usual Hecke algebra. The explicit formula will appear in a future work.

Finally one can also consider other tensor products than (\ref{intro-rep}) and/or other quantum groups than $U_q(gl_N)$. The starting point of the approach developed in this paper clearly generalises as follows. One can still consider fused braids, but use a different procedure for multiplying them. Indeed one can replace the Hecke algebras by other quotients of the braid group algebras and/or one can replace the $q$-symmetrisers by other idempotents. For example, one can keep the Hecke algebra and replace the $q$-symmetrisers by the $q$-antisymmetrisers (for alternating powers of representations). Also, one could replace the Hecke algebra by the BMW algebra (for other classical quantum groups) and consider analogues of the $q$-symmetriser and $q$-antisymmetriser.

\paragraph{Acknowledgements.} Both authors are partially supported by Agence National de la Recherche Projet AHA
ANR-18-CE40-0001. N.Cramp\'e warmly thanks the university of Reims for hospitality
during his visit in the course of this investigation.

\setcounter{tocdepth}{1}
\tableofcontents

\section{The algebra of fused permutations}\label{sec-def-fus-perm}

Let $\bk=(k_1,k_2,...)\in \mathbb{Z}_{\geq0}^{\infty}$ be an infinite sequence of non-negative integers, and let $n\in\mathbb{Z}_{>0}$\,.

\subsection{Definition of the algebra $H_{\bk,n}(1)$ of fused permutations}

\paragraph{Objects.} We consider diagrams as follows. We place two (horizontal) rows of $n$ ellipses (drawn as small black-filled ellipses), one on top of another and we connect top ellipses with bottom ellipses with edges. We require the following: for each $a\in\{1,\dots,n\}$, there are $k_a$ edges which start from the $a$-th top ellipse and there are $k_a$ edges which arrive at the $a$-th bottom ellipse. The total number of edges is then $k_1+\dots+k_n$.

Now take a diagram as above and let $a\in\{1,\dots,n\}$. There are $k_a$ edges starting from the $a$-th top ellipse, so denote $I_a$ the multiset $\{i_1,\dots,i_{k_a}\}$ indicating the bottom ellipses reached by these edges. This is indeed a multiset, meaning that repetitions are allowed since several of these edges can reach the same bottom ellipse.

Finally, we consider two diagrams as above equivalent if their sequences of multisets $(I_1,\dots,I_n)$ coincide. Diagrammatically, this simply means that the only information that matters is of the form: which ellipse is connected to which ones and by how many edges.

\begin{defi}
A fused permutation is an equivalence class of diagrams as explained above. We denote $\cD_{\bk,n}$ the set of fused permutations associated to $\bk$ and $n$.
\end{defi}
We will simply say fused permutation instead of a more precise terminology for elements of $\cD_{\bk,n}$ (such that for example $(\bk,n)$-fused permutations).  From now on, we will almost always identify a fused permutation with a diagram representing it.

\begin{rem}
\begin{itemize}
\item By definition, the set of fused permutations $\cD_{\bk,n}$ is in bijection with the set of sequences $(I_1,\dots,I_n)$ of multisets consisting of elements of $\{1,\dots,n\}$ such that: for each $a\in\{1,\dots,n\}$, the number of entries in $I_a$ is equal to $k_a$ and the number of occurrences of $a$ in the multisets $I_1,\dots,I_n$ is also equal to $k_a$.
\item The set $\cD_{\bk,n}$ is also in bijection with the set of $n$ by $n$ matrices with entries in $\mathbb{Z}_{\geq0}$ such that: the sum of the entries in the $a$-th row is equal to $k_a$ and the sum of the entries in the $a$-th column is also equal to $k_a$. The bijection is such that the entry in position $(a,b)$ of a matrix indicates how many times the $a$-th top ellipse is connected to the $b$-th bottom ellipse.
\end{itemize}
\end{rem}

\paragraph{Examples.} $\bullet$ If $\bk=(1,1,1,\dots)$ consists only of 1's then the set of fused permutations $\cD_{\bk,n}$ coincides with the set of permutations of $\{1,\dots,n\}$.

$\bullet$ Let $n=3$ and  take $k_1=2$ and $k_2=k_3=1$. There are 7 distinct fused permutations in $\cD_{\bk,3}$ and here is an example (we give a diagram and the corresponding sequence of multisets):

\begin{center}
\begin{tikzpicture}[scale=0.3]

\fill (21,2) ellipse (0.6cm and 0.2cm);\fill (21,-2) ellipse (0.6cm and 0.2cm);
\draw[thick] (20.8,2) -- (20.8,-2);\draw[thick] (21.2,2)..controls +(0,-2) and +(0,+2) .. (27,-2);  
\fill (24,2) ellipse (0.6cm and 0.2cm);\fill (24,-2) ellipse (0.6cm and 0.2cm);
\draw[thick] (24,2)..controls +(0,-2) and +(0,+2) .. (21,-2); 
%\draw[thick] (26.8,2)..controls +(0,-2) and +(0,+2) .. (24.2,-2);
\fill (27,2) ellipse (0.6cm and 0.2cm);\fill (27,-2) ellipse (0.6cm and 0.2cm);
%\draw[thick] (24.2,2)..controls +(0,-2) and +(0,+2) .. (26.8,-2);
\draw[thick] (27,2)..controls +(0,-2) and +(0,+2)..(24,-2);
\node at (38,0) {$(\{1,3\},\{1\},\{2\})$};

\end{tikzpicture}
\end{center}

$\bullet$ Let $n=2$ and  take $k_1=k_2=2$. We give below the three distinct fused permutations of $\cD_{\bk,3}$ (for each, we give a diagram and the corresponding sequence of multisets):

\begin{center}
\begin{tikzpicture}[scale=0.3]
\fill (1,2) ellipse (0.6cm and 0.2cm);\fill (1,-2) ellipse (0.6cm and 0.2cm);
\draw[thick] (0.8,2)..controls +(0,-2) and +(0,+2) .. (0.8,-2);\draw[thick] (1.2,2)..controls +(0,-2) and +(0,+2) .. (1.2,-2);  
\fill (4,2) ellipse (0.6cm and 0.2cm);\fill (4,-2) ellipse (0.6cm and 0.2cm);
\draw[thick] (3.8,2)..controls +(0,-2) and +(0,+2) .. (3.8,-2);\draw[thick] (4.2,2)..controls +(0,-2) and +(0,+2) .. (4.2,-2);
\node at (2.5,-5) {$(\{1,1\},\{2,2\})$};

\fill (21,2) ellipse (0.6cm and 0.2cm);\fill (21,-2) ellipse (0.6cm and 0.2cm);
\draw[thick] (20.8,2) -- (20.8,-2);\draw[thick] (21.2,2)..controls +(0,-2) and +(0,+2) .. (23.8,-2);  
\fill (24,2) ellipse (0.6cm and 0.2cm);\fill (24,-2) ellipse (0.6cm and 0.2cm);
\draw[thick] (23.8,2)..controls +(0,-2) and +(0,+2) .. (21.2,-2); \draw[thick] (24.2,2)..controls +(0,-2) and +(0,+2) .. (24.2,-2);
\node at (22.5,-5) {$(\{1,2\},\{1,2\})$};

\fill (41,2) ellipse (0.6cm and 0.2cm);\fill (41,-2) ellipse (0.6cm and 0.2cm);
\draw[thick] (40.8,2)..controls +(0,-2.1) and +(0,+1.9) .. (43.8,-2);\draw[thick] (41.2,2)..controls +(0,-1.9) and +(0,+2.1) .. (44.2,-2);
\fill (44,2) ellipse (0.6cm and 0.2cm);\fill (44,-2) ellipse (0.6cm and 0.2cm);
\draw[thick] (43.8,2)..controls +(0,-1.9) and +(0,+2.1) .. (40.8,-2);\draw[thick] (44.2,2)..controls +(0,-2) and +(0,+2) .. (41.2,-2); 
\node at (42.5,-5) {$(\{2,2\},\{1,1\})$};
\end{tikzpicture}
\end{center}

$\bullet$ Let $n=3$ and  take $k_1=k_2=k_3=2$. There are 21 distinct fused permutations in ${\cD}_{\bk,3}$  and here are three examples (for each, we give a diagram and the corresponding sequence of multisets):
\begin{center}
\begin{tikzpicture}[scale=0.3]
\fill (1,2) ellipse (0.6cm and 0.2cm);\fill (1,-2) ellipse (0.6cm and 0.2cm);
\draw[thick] (0.8,2)..controls +(0,-2) and +(0,+2) .. (3.8,-2);\draw[thick] (1.2,2)..controls +(0,-2) and +(0,+2) .. (4.2,-2);  
\fill (4,2) ellipse (0.6cm and 0.2cm);\fill (4,-2) ellipse (0.6cm and 0.2cm);
\draw[thick] (3.8,2)..controls +(0,-2) and +(0,+2) .. (0.8,-2);\draw[thick] (4.2,2)..controls +(0,-2) and +(0,+2) .. (6.8,-2);
\fill (7,2) ellipse (0.6cm and 0.2cm);\fill (7,-2) ellipse (0.6cm and 0.2cm);
\draw[thick] (6.8,2)..controls +(0,-2) and +(0,+2) .. (1.2,-2);\draw[thick] (7.2,2) -- (7.2,-2);
\node at (4,-5) {$(\{2,2\},\{1,3\},\{1,3\})$};

\fill (21,2) ellipse (0.6cm and 0.2cm);\fill (21,-2) ellipse (0.6cm and 0.2cm);
\draw[thick] (20.8,2) -- (20.8,-2);\draw[thick] (21.2,2)..controls +(0,-2) and +(0,+2) .. (23.8,-2);  
\fill (24,2) ellipse (0.6cm and 0.2cm);\fill (24,-2) ellipse (0.6cm and 0.2cm);
\draw[thick] (23.8,2)..controls +(0,-2) and +(0,+2) .. (21.2,-2); \draw[thick] (26.8,2)..controls +(0,-2) and +(0,+2) .. (24.2,-2);
\fill (27,2) ellipse (0.6cm and 0.2cm);\fill (27,-2) ellipse (0.6cm and 0.2cm);
\draw[thick] (24.2,2)..controls +(0,-2) and +(0,+2) .. (26.8,-2);\draw[thick] (27.2,2) -- (27.2,-2);
\node at (24,-5) {$(\{1,2\},\{1,3\},\{2,3\})$};

\fill (41,2) ellipse (0.6cm and 0.2cm);\fill (41,-2) ellipse (0.6cm and 0.2cm);
\draw[thick] (40.8,2)..controls +(0,-2.1) and +(0,+1.9) .. (46.8,-2);\draw[thick] (41.2,2)..controls +(0,-1.9) and +(0,+2.1) .. (47.2,-2);
\fill (44,2) ellipse (0.6cm and 0.2cm);\fill (44,-2) ellipse (0.6cm and 0.2cm);
\draw[thick] (43.8,2)..controls +(0,-2) and +(0,+2) .. (40.8,-2);\draw[thick] (44.2,2)..controls +(0,-2) and +(0,+2) .. (41.2,-2); 
\fill (47,2) ellipse (0.6cm and 0.2cm);\fill (47,-2) ellipse (0.6cm and 0.2cm);
\draw[thick] (46.8,2)..controls +(0,-2) and +(0,+2) .. (43.8,-2);\draw[thick] (47.2,2)..controls +(0,-2) and +(0,+2) .. (44.2,-2);
\node at (44,-5) {$(\{3,3\},\{1,1\},\{2,2\})$};
\end{tikzpicture}
\end{center}

\paragraph{Multiplication.} We define the associative $\mathbb{C}$-algebra $H_{\bk,n}(1)$ as the $\bC$-vector space with basis indexed by the fused permutations in $\cD_{\bk,n}$, and with the multiplication given as follows. Let $d,d'\in \cD_{\bk,n}$ and we identify $d$, respectively, $d'$, with a diagram representing it.
\begin{itemize}
\item \emph{(Concatenation)} We place the diagram of $d$ on top of the diagram of $d'$ by identifying the bottom ellipses of $d$ with the top ellipses of $d'$.
\item \emph{(Removal of middle ellipses)} For each $a\in\{1,\dots,n\}$, there are $k_a$ edges arriving and $k_a$ edges leaving the $a$-th ellipse in the middle row. So for each $a\in\{1,\dots,n\}$, we delete the $a$-th ellipse in the middle row and sum over all possibilities of connecting the $k_a$ edges arriving at it to the $k_a$ edges leaving from it (at the $a$-th edge, there are thus $k_a!$ possibilities).
 \item  \emph{(Normalisation)} We divide the resulting sum by $k_1!\dots k_n!$.
\end{itemize}
At the end of the procedure described above, we obtain a sum of diagrams representing a sum of fused permutations (with rational coefficients). This is what we define to be $dd'$ in $H_{\bk,n}(1)$. This diagrammatic multiplication is well-defined since the result clearly depends only on the equivalences classes of the diagrams.

The algebra $H_{\bk,n}(1)$ is an associative algebra with unit, the unit element is the fused permutation corresponding to the diagram with only vertical edges (all the edges starting from the $a$-th top ellipse go the $a$-th bottom ellipse). 

\begin{rem}
The algebra $H_{\bk,n}(1)$ can be defined over any ring in which $k_1!\dots k_n!$ is invertible.
\end{rem}

\paragraph{Examples.} $\bullet$ If $\bk=(1,1,1,\dots)$ consists only of 1's then the algebra $H_{\bk,n}(1)$ obviously coincides with the complex group algebra $\CC\mS_n$ of the symmetric group $\mS_n$ on $n$ letters.

$\bullet$ Here is an example of a product of two elements of $H_{\bk,2}(1)$ with $k_1=k_2=2$:
\begin{center}
\begin{tikzpicture}[scale=0.3]
\fill (1,2) ellipse (0.6cm and 0.2cm);\fill (1,-2) ellipse (0.6cm and 0.2cm);
\draw[thick] (0.8,2) -- (0.8,-2);\draw[thick] (1.2,2)..controls +(0,-2) and +(0,+2) .. (3.8,-2);  
\fill (4,2) ellipse (0.6cm and 0.2cm);\fill (4,-2) ellipse (0.6cm and 0.2cm);
\draw[thick] (3.8,2)..controls +(0,-2) and +(0,+2) .. (1.2,-2); \draw[thick] (4.2,2) -- (4.2,-2);
\node at (6,0) {$.$};
\fill (8,2) ellipse (0.6cm and 0.2cm);\fill (8,-2) ellipse (0.6cm and 0.2cm);
\draw[thick] (7.8,2) -- (7.8,-2);\draw[thick] (8.2,2)..controls +(0,-2) and +(0,+2) .. (10.8,-2);  
\fill (11,2) ellipse (0.6cm and 0.2cm);\fill (11,-2) ellipse (0.6cm and 0.2cm);
\draw[thick] (10.8,2)..controls +(0,-2) and +(0,+2) .. (8.2,-2); \draw[thick] (11.2,2) -- (11.2,-2);
\node at (13,0) {$=$};
\fill (15,4) ellipse (0.6cm and 0.2cm);\fill (15,0) ellipse (0.6cm and 0.2cm);
\draw[thick] (14.8,4) -- (14.8,0);\draw[thick] (15.2,4)..controls +(0,-2) and +(0,+2) .. (17.8,0);  
\fill (18,4) ellipse (0.6cm and 0.2cm);\fill (18,0) ellipse (0.6cm and 0.2cm);
\draw[thick] (17.8,4)..controls +(0,-2) and +(0,+2) .. (15.2,0); \draw[thick] (18.2,4) -- (18.2,0);

\fill (15,0) ellipse (0.6cm and 0.2cm);\fill (15,-4) ellipse (0.6cm and 0.2cm);
\draw[thick] (14.8,0) -- (14.8,-4);\draw[thick] (15.2,0)..controls +(0,-2) and +(0,+2) .. (17.8,-4);  
\fill (18,0) ellipse (0.6cm and 0.2cm);\fill (18,-4) ellipse (0.6cm and 0.2cm);
\draw[thick] (17.8,0)..controls +(0,-2) and +(0,+2) .. (15.2,-4); \draw[thick] (18.2,0) -- (18.2,-4);

\node at (20.5,0) {$=\frac{1}{4}\Bigl($};
\fill (22.5,4) ellipse (0.6cm and 0.2cm);
\draw[thick] (22.3,4) -- (22.3,0);\draw[thick] (22.7,4)..controls +(0,-2) and +(0,+2) .. (25.3,0);  
\fill (25.5,4) ellipse (0.6cm and 0.2cm);
\draw[thick] (25.3,4)..controls +(0,-2) and +(0,+2) .. (22.7,0); \draw[thick] (25.7,4) -- (25.7,0);

\fill (22.5,-4) ellipse (0.6cm and 0.2cm);
\draw[thick] (22.3,0) -- (22.3,-4);\draw[thick] (22.7,0)..controls +(0,-2) and +(0,+2) .. (25.3,-4);  
\fill (25.5,-4) ellipse (0.6cm and 0.2cm);
\draw[thick] (25.3,0)..controls +(0,-2) and +(0,+2) .. (22.7,-4); \draw[thick] (25.7,0) -- (25.7,-4);

\node at (27.5,0) {$+$};
\fill (29.5,4) ellipse (0.6cm and 0.2cm);
\draw[thick] (29.3,4) -- (29.3,0.5);\draw[thick] (29.7,4)..controls +(0,-2) and +(0,+2) .. (32.3,0);  
\fill (32.5,4) ellipse (0.6cm and 0.2cm);
\draw[thick] (32.3,4)..controls +(0,-2) and +(0,+2) .. (29.7,0.5); \draw[thick] (32.7,4) -- (32.7,0);

\draw[thick] (29.3,0.5)..controls +(0,-0.2) and +(0,+0.2) .. (29.7,-0.5);
\draw[thick] (29.7,0.5)..controls +(0,-0.2) and +(0,+0.2) .. (29.3,-0.5);

\fill (29.5,-4) ellipse (0.6cm and 0.2cm);
\draw[thick] (29.3,-0.5) -- (29.3,-4);\draw[thick] (29.7,-0.5)..controls +(0,-2) and +(0,+2) .. (32.3,-4);  
\fill (32.5,-4) ellipse (0.6cm and 0.2cm);
\draw[thick] (32.3,0)..controls +(0,-2) and +(0,+2) .. (29.7,-4); \draw[thick] (32.7,0) -- (32.7,-4);

\node at (34.5,0) {$+$};
\fill (36.5,4) ellipse (0.6cm and 0.2cm);
\draw[thick] (36.3,4) -- (36.3,0);\draw[thick] (36.7,4)..controls +(0,-2) and +(0,+2) .. (39.3,0.5);  
\fill (39.5,4) ellipse (0.6cm and 0.2cm);
\draw[thick] (39.3,4)..controls +(0,-2) and +(0,+2) .. (36.7,0); \draw[thick] (39.7,4) -- (39.7,0.5);

\draw[thick] (39.3,0.5)..controls +(0,-0.2) and +(0,+0.2) .. (39.7,-0.5);
\draw[thick] (39.7,0.5)..controls +(0,-0.2) and +(0,+0.2) .. (39.3,-0.5);

\fill (36.5,-4) ellipse (0.6cm and 0.2cm);
\draw[thick] (36.3,0) -- (36.3,-4);\draw[thick] (36.7,0)..controls +(0,-2) and +(0,+2) .. (39.3,-4);  
\fill (39.5,-4) ellipse (0.6cm and 0.2cm);
\draw[thick] (39.3,-0.5)..controls +(0,-2) and +(0,+2) .. (36.7,-4); \draw[thick] (39.7,-0.5) -- (39.7,-4);

\node at (41.5,0) {$+$};
\fill (43.5,4) ellipse (0.6cm and 0.2cm);
\draw[thick] (43.3,4) -- (43.3,0.5);\draw[thick] (43.7,4)..controls +(0,-2) and +(0,+2) .. (46.3,0.5);  
\fill (46.5,4) ellipse (0.6cm and 0.2cm);
\draw[thick] (46.3,4)..controls +(0,-2) and +(0,+2) .. (43.7,0.5); \draw[thick] (46.7,4) -- (46.7,0.5);

\draw[thick] (43.3,0.5)..controls +(0,-0.2) and +(0,+0.2) .. (43.7,-0.5);
\draw[thick] (43.7,0.5)..controls +(0,-0.2) and +(0,+0.2) .. (43.3,-0.5);
\draw[thick] (46.3,0.5)..controls +(0,-0.2) and +(0,+0.2) .. (46.7,-0.5);
\draw[thick] (46.7,0.5)..controls +(0,-0.2) and +(0,+0.2) .. (46.3,-0.5);

\fill (43.5,-4) ellipse (0.6cm and 0.2cm);
\draw[thick] (43.3,-0.5) -- (43.3,-4);\draw[thick] (43.7,-0.5)..controls +(0,-2) and +(0,+2) .. (46.3,-4);  
\fill (46.5,-4) ellipse (0.6cm and 0.2cm);
\draw[thick] (46.3,-0.5)..controls +(0,-2) and +(0,+2) .. (43.7,-4); \draw[thick] (46.7,-0.5) -- (46.7,-4);

\node at (48.5,0) {$\Bigr)$};

\node at (20,-8) {$=\frac{1}{4}$};
\fill (22,-6) ellipse (0.6cm and 0.2cm);\fill (22,-10) ellipse (0.6cm and 0.2cm);
\draw[thick] (21.8,-6) -- (21.8,-10);\draw[thick] (22.2,-6) -- (22.2,-10);
\fill (25,-6) ellipse (0.6cm and 0.2cm);\fill (25,-10) ellipse (0.6cm and 0.2cm);
\draw[thick] (24.8,-6) -- (24.8,-10);\draw[thick] (25.2,-6) -- (25.2,-10);

\node at (27,-8) {$+\frac{1}{2}$};

\fill (29,-6) ellipse (0.6cm and 0.2cm);\fill (29,-10) ellipse (0.6cm and 0.2cm);
\draw[thick] (28.8,-6) -- (28.8,-10);\draw[thick] (29.2,-6)..controls +(0,-2) and +(0,+2) .. (31.8,-10);  
\fill (32,-6) ellipse (0.6cm and 0.2cm);\fill (32,-10) ellipse (0.6cm and 0.2cm);
\draw[thick] (31.8,-6)..controls +(0,-2) and +(0,+2) .. (29.2,-10);  \draw[thick] (32.2,-6) -- (32.2,-10);

\node at (34,-8) {$+\frac{1}{4}$};

\fill (36,-6) ellipse (0.6cm and 0.2cm);\fill (36,-10) ellipse (0.6cm and 0.2cm);
\draw[thick] (35.8,-6)..controls +(0,-2) and +(0,+2) .. (38.8,-10);  
\draw[thick] (36.2,-6)..controls +(0,-2) and +(0,+2) .. (39.2,-10);  
\fill (39,-6) ellipse (0.6cm and 0.2cm);\fill (39,-10) ellipse (0.6cm and 0.2cm);
\draw[thick] (38.8,-6)..controls +(0,-2) and +(0,+2) .. (35.8,-10);  
\draw[thick] (39.2,-6)..controls +(0,-2) and +(0,+2) .. (36.2,-10);  

\end{tikzpicture}
\end{center}

$\bullet$ Here is an example of a product in $H_{\bk,3}(1)$ with $k_1=2$ and $k_2=k_3=1$:
\begin{center}
\begin{tikzpicture}[scale=0.3]

\fill (1,0) ellipse (0.6cm and 0.2cm);\fill (1,-4) ellipse (0.6cm and 0.2cm);
\draw[thick] (0.8,0) -- (0.8,-4);\draw[thick] (1.2,0)..controls +(0,-2) and +(0,+2) .. (7,-4);  
\fill (4,0) ellipse (0.6cm and 0.2cm);\fill (4,-4) ellipse (0.6cm and 0.2cm);
\draw[thick] (4,0)..controls +(0,-2) and +(0,+2) .. (1,-4); 
\fill (7,0) ellipse (0.6cm and 0.2cm);\fill (7,-4) ellipse (0.6cm and 0.2cm);
\draw[thick] (7,0)..controls +(0,-2) and +(0,+2)..(4,-4);

\node at (9,-2) {$\cdot$};

\fill (11,-4) ellipse (0.6cm and 0.2cm);\fill (11,0) ellipse (0.6cm and 0.2cm);
\draw[thick] (10.8,0) -- (10.8,-4);\draw[thick] (11.2,0)..controls +(0,-2) and +(0,+2) .. (14,-4);  
\fill (14,-4) ellipse (0.6cm and 0.2cm);\fill (14,0) ellipse (0.6cm and 0.2cm);
\draw[thick] (14,0)..controls +(0,-2) and +(0,+2) .. (11,-4); 
\fill (17,-4) ellipse (0.6cm and 0.2cm);\fill (17,0) ellipse (0.6cm and 0.2cm);
\draw[thick] (17,0)..controls +(0,-2) and +(0,+2)..(17,-4);

\node at (19,-2) {$=$};

\fill (21,2) ellipse (0.6cm and 0.2cm);\fill (21,-2) ellipse (0.6cm and 0.2cm);
\draw[thick] (20.8,2) -- (20.8,-2);\draw[thick] (21.2,2)..controls +(0,-2) and +(0,+2) .. (27,-2);  
\fill (24,2) ellipse (0.6cm and 0.2cm);\fill (24,-2) ellipse (0.6cm and 0.2cm);
\draw[thick] (24,2)..controls +(0,-2) and +(0,+2) .. (21,-2); 
%\draw[thick] (26.8,2)..controls +(0,-2) and +(0,+2) .. (24.2,-2);
\fill (27,2) ellipse (0.6cm and 0.2cm);\fill (27,-2) ellipse (0.6cm and 0.2cm);
%\draw[thick] (24.2,2)..controls +(0,-2) and +(0,+2) .. (26.8,-2);
\draw[thick] (27,2)..controls +(0,-2) and +(0,+2)..(24,-2);

\fill (21,-6) ellipse (0.6cm and 0.2cm);
\draw[thick] (20.8,-2) -- (20.8,-6);\draw[thick] (21.2,-2)..controls +(0,-2) and +(0,+2) .. (24,-6);  
\fill (24,-6) ellipse (0.6cm and 0.2cm);
\draw[thick] (24,-2)..controls +(0,-2) and +(0,+2) .. (21,-6); 
%\draw[thick] (26.8,2)..controls +(0,-2) and +(0,+2) .. (24.2,-2);
\fill (27,-6) ellipse (0.6cm and 0.2cm);
%\draw[thick] (24.2,2)..controls +(0,-2) and +(0,+2) .. (26.8,-2);
\draw[thick] (27,-2)..controls +(0,-2) and +(0,+2)..(27,-6);

\node at (29.5,-2) {$=\,\frac{1}{2}\Bigl($};

\fill (32,0) ellipse (0.6cm and 0.2cm);\fill (32,-4) ellipse (0.6cm and 0.2cm);
\draw[thick] (31.8,0) -- (31.8,-4);\draw[thick] (32.2,0)..controls +(0,-2) and +(0,+2) .. (38,-4);  
\fill (35,0) ellipse (0.6cm and 0.2cm);\fill (35,-4) ellipse (0.6cm and 0.2cm);
\draw[thick] (35,0)..controls +(0,-2) and +(0,+2) .. (35,-4); 
\fill (38,0) ellipse (0.6cm and 0.2cm);\fill (38,-4) ellipse (0.6cm and 0.2cm);
\draw[thick] (38,0)..controls +(0,-2) and +(0,+2)..(32,-4);

\node at (40,-2) {$+$};

\fill (42,0) ellipse (0.6cm and 0.2cm);\fill (42,-4) ellipse (0.6cm and 0.2cm);
\draw[thick] (41.8,0) ..controls +(0,-2) and +(0,+2) ..  (45,-4);\draw[thick] (42.2,0)..controls +(0,-2) and +(0,+2) .. (48,-4);  
\fill (45,0) ellipse (0.6cm and 0.2cm);\fill (45,-4) ellipse (0.6cm and 0.2cm);
\draw[thick] (45,0)..controls +(0,-2) and +(0,+2) .. (41.8,-4); 
\fill (48,0) ellipse (0.6cm and 0.2cm);\fill (48,-4) ellipse (0.6cm and 0.2cm);
\draw[thick] (48,0)..controls +(0,-2) and +(0,+2)..(42.2,-4);

\node at (49.5,-2) {$\Bigr)$};

\end{tikzpicture}
\end{center}

\subsection{Double cosets and standard basis of $H_{\bk,n}(1)$}\label{subsec-doublecoset}

\paragraph{Algebra of double cosets.} We consider the following subgroup of the symmetric group $\mS_{k_1+\dots+k_n}$:
\[\mS^{\bk,n}=\mS_{k_1}\times\dots\times\mS_{k_n}\ ,\]
where $\mS_{k_1}$ is embedded as the subgroup permuting the letters $1,\dots,k_1$, $\mS_{k_2}$ is embedded as the subgroup permuting letters $k_1+1,\dots,k_1+k_2$, and so on. We denote $\mS^{\bk,n}\backslash\mS_{k_1+\dots+k_n}/\mS^{\bk,n}$ the set of double cosets of $\mS_{k_1+\dots+k_n}$ with respect of $\mS^{\bk,n}$.

We define the following element in the group algebra $\CC\mS_{k_1+\dots+k_n}$:
\[P_{\bk,n}=\frac{1}{k_1!\dots k_n!}\sum_{w\in\mS^{\bk,n}}w\ .\]
This element $P_{\bk,n}$ is an idempotent which satisfies $wP_{\bk,n}=P_{\bk,n}w=P_{\bk,n}$ for any $w\in \mS^{\bk,n}$. Then the subset $P_{\bk,n}\CC\mS_{k_1+\dots+k_n}P_{\bk,n}=\{P_{\bk,n}xP_{\bk,n}\ |\ x\in \CC\mS_{k_1+\dots+k_n}\}$ is an algebra with unit $P_{\bk,n}$.

\begin{prop}
The algebra $H_{\bk,n}(1)$ is isomorphic to $P_{\bk,n}\CC\mS_{k_1+\dots+k_n}P_{\bk,n}$.
\end{prop}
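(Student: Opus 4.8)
The strategy is to construct an explicit linear isomorphism $\Phi\colon H_{\bk,n}(1)\to P_{\bk,n}\CC\mS_{k_1+\dots+k_n}P_{\bk,n}$ by "unfusing" the ellipses into groups of $k_a$ individual points, and then check that it is multiplicative. First I would set up the dictionary between the two sides. Label the $n$ top ellipses of a fused diagram by the blocks $B_1=\{1,\dots,k_1\}$, $B_2=\{k_1+1,\dots,k_1+k_2\}$, etc., and likewise for the bottom row; a permutation $w\in\mS_{k_1+\dots+k_n}$ is then exactly a choice, for each point, of an edge to a bottom point, i.e.\ an honest (unfused) diagram on $k_1+\dots+k_n$ strands. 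Forgetting the labelling of edges \emph{within} each block, on the top and on the bottom simultaneously, sends such a $w$ to a fused permutation in $\cD_{\bk,n}$: the induced multiset $I_a$ records which bottom block each of the $k_a$ edges leaving block $B_a$ reaches. Two permutations $w,w'$ give the same fused permutation precisely when $w'=uwv$ for some $u,v\in\mS^{\bk,n}$, i.e.\ when they lie in the same double coset. This gives a bijection $\cD_{\bk,n}\leftrightarrow \mS^{\bk,n}\backslash\mS_{k_1+\dots+k_n}/\mS^{\bk,n}$, and a natural basis of $P_{\bk,n}\CC\mS_{k_1+\dots+k_n}P_{\bk,n}$ indexed by double cosets is $\{P_{\bk,n}\,w\,P_{\bk,n}\}$ (one should recall/verify the standard fact that these span and are linearly independent: $P_{\bk,n}$ is an idempotent, and $P_{\bk,n}(\CC\mS)P_{\bk,n}$ has a basis indexed by double cosets since $P_{\bk,n}x P_{\bk,n}$ depends only on the two-sided $\mS^{\bk,n}$-orbit sums). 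Define $\Phi$ on the basis by $\Phi(d)=P_{\bk,n}\,w_d\,P_{\bk,n}$ where $w_d$ is any representative of the double coset corresponding to $d$, extended linearly. This is a well-defined linear isomorphism of vector spaces.

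Next I would verify that $\Phi$ is an algebra homomorphism, which is the heart of the matter. Take $d,d'\in\cD_{\bk,n}$ with representative permutations $w,w'$. On the double-coset side, $P_{\bk,n}wP_{\bk,n}\cdot P_{\bk,n}w'P_{\bk,n}=P_{\bk,n}\,w\,P_{\bk,n}\,w'\,P_{\bk,n}$, and using $P_{\bk,n}=\frac{1}{k_1!\cdots k_n!}\sum_{u\in\mS^{\bk,n}}u$ the middle factor expands as $\frac{1}{k_1!\cdots k_n!}\sum_{u\in\mS^{\bk,n}} w u w'$. On the fused side, the diagrammatic product $dd'$ is computed by stacking the diagram of $w$ on that of $w'$ (an honest diagram on $k_1+\dots+k_n$ strands glued at the middle), and then at each middle block $B_a$ summing over the $k_a!$ ways of matching the $k_a$ incoming edges to the $k_a$ outgoing edges and dividing by $k_a!$ — which is exactly the insertion of $\frac{1}{k_1!\cdots k_n!}\sum_{u\in\mS^{\bk,n}}u$ between $w$ and $w'$, followed by the forgetful map $\mS_{k_1+\dots+k_n}\to\cD_{\bk,n}$. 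So I must show $\Phi\bigl(\sum_{u\in\mS^{\bk,n}} \mathrm{fuse}(wuw')\bigr)\,\big/\,(k_1!\cdots k_n!) = P_{\bk,n}wP_{\bk,n}w'P_{\bk,n}$. The left side, after regrouping the $wuw'$ according to which double coset they fall into, becomes a sum $\sum_{d''} c_{d''}\, P_{\bk,n}w_{d''}P_{\bk,n}$; the point is that $c_{d''}$ counts, with the $\frac{1}{k_1!\cdots k_n!}$ normalisation, exactly how many $u\in\mS^{\bk,n}$ send $wuw'$ into the coset of $d''$ — and grouping the expansion $\frac{1}{k_1!\cdots k_n!}\sum_u P_{\bk,n}(wuw')P_{\bk,n}$ on the right side the same way produces the identical coefficients, because $P_{\bk,n}(wuw')P_{\bk,n}$ depends only on the double coset of $wuw'$. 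Thus both sides agree coefficient by coefficient. It is also immediate that $\Phi$ sends the identity fused permutation (all vertical edges, representative $= \mathrm{id}$) to $P_{\bk,n}$, the unit of the target.

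The main obstacle is purely bookkeeping: making precise that the "removal of middle ellipses and sum over $k_a!$ matchings" operation corresponds, under the unfusing dictionary, to conjugating/sandwiching by $\frac{1}{k_a!}\sum_{\mS_{k_a}}$, and that the forgetful map $\mathrm{fuse}$ intertwines the two products — i.e.\ that reorganising a sum of honest diagrams into fused ones commutes with the linear map $\Phi$. There is a mild subtlety worth spelling out: the middle-block matching in the fused product also re-sums the edges attached to the \emph{surviving} top and bottom ellipses in all equivalent ways, but since a fused permutation is by definition an equivalence class under exactly such re-summing (within the $a$-th top block and the $a$-th bottom block), this is automatically absorbed — equivalently, it corresponds to the outer two factors of $P_{\bk,n}$ in $P_{\bk,n}(\cdots)P_{\bk,n}$. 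Once this correspondence is stated cleanly, associativity and the algebra structure transport for free, and one could even take this proposition as an \emph{a posteriori} justification that the diagrammatic multiplication of $H_{\bk,n}(1)$ is associative. I would therefore organise the write-up as: (1) the bijection of bases via double cosets; (2) the basis $\{P_{\bk,n}wP_{\bk,n}\}$ of the target; (3) the explicit identification of the diagrammatic product with sandwiching by $P_{\bk,n}$; (4) conclude $\Phi$ is an isomorphism of unital algebras.
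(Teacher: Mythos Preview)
Your proposal is correct and follows essentially the same approach as the paper's proof: the bijection between fused permutations and double cosets via gluing/unfusing dots into ellipses, the basis $\{P_{\bk,n}wP_{\bk,n}\}$ indexed by representatives, and the identification of the diagrammatic product with the product in $P_{\bk,n}\CC\mS_{k_1+\dots+k_n}P_{\bk,n}$. The paper is terser on the multiplicativity step (it simply says the two products ``correspond by construction''), whereas you spell out the expansion of the middle $P_{\bk,n}$ and the matching with the sum-over-connections rule; this extra care is fine but not a different method.
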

\begin{proof}
For $w\in \mS_{k_1+\dots+k_n}$, we see $w$ as a diagram made of two rows of $k_1+\dots+k_n$ dots connected by edges according to the permutation $w$ (the $i$-th top dot is connected to the $w(i)$-th bottom dot). The multiplication in $\mS_{k_1+\dots+k_n}$ is then simply the concatenation of diagrams.

Then, in the diagram of $w\in \mS_{k_1+\dots+k_n}$, in each of the two rows of dots, we glue the $k_1$ first dots into an ellipse, and the $k_2$ next dots into an ellipse and so on. We obtain thus a diagram as in the preceding subsection. We denote $[w]$ the corresponding fused permutation in $\cD_{\bk,n}$ (that is, the equivalence class of the diagram).

First, it is immediate that any fused permutation in $\cD_{\bk,n}$ can be written as $[w]$ for some $w\in\mS_{k_1+\dots+k_n}$. Moreover, we claim that, for $w_1,w_2\in\mS_{k_1+\dots+k_n}$, we have
\[[w_1]=[w_2]\ \ \ \ \Leftrightarrow\ \ \ \ \text{$w_1$ and $w_2$ are in the same double coset in $\mS^{\bk,n}\backslash\mS_{k_1+\dots+k_n}/\mS^{\bk,n}$.}\]
To prove the claim, then note first that if $w_1=xw_2y$ with $x,y\in\mS^{\bk,n}$ then it is clear that $[w_1]=[w_2]$ since the gluing of dots into ellipses will make trivial the effect of $x$ and $y$.

Reciprocally, assume that $[w_1]=[w_2]$. Up to top concatenation by elements of $\mS^{\bk,n}$, we can assume that, for $a=1,\dots,n$, we have
\[w(k_1+\dots+k_{a-1}+1)<w(k_1+\dots+k_{a-1}+2)<\dots<w(k_1+\dots+k_{a})\,\]
and similarly for $w_2$. Then $[w_1]=[w_2]$ means that $w_1$ and $w_2$ differs only by a bottom concatenation with a element of $\mS^{\bk,n}$. This proves that $w_1$ and $w_2$ differs only by left and right multiplication by elements of $\mS^{\bk,n}$.

Now choose a set $\mathcal{C}\subset \mS_{k_1+\dots+k_n}$ of representatives for the double cosets $\mS^{\bk,n}\backslash\mS_{k_1+\dots+k_n}/\mS^{\bk,n}$. From the previous claim, we have that the set $\{[w]\ |\ w\in\mathcal{C}\}$ is a basis of $H_{\bk,n}(1)$, while on the other hand, the set $\{P_{\bk,n}wP_{\bk,n}\ |\ w\in\mathcal{C}\}$ is clearly a basis of $P_{\bk,n}\CC\mS_{k_1+\dots+k_n}P_{\bk,n}$.

We conclude that the linear map defined by $[w]\to P_{\bk,n}wP_{\bk,n}$ is the desired isomorphism of algebras since the multiplication of diagrams $[w].[w']$ corresponds by construction to the multiplication $P_{\bk,n}wP_{\bk,n}w'P_{\bk,n}$ in $P_{\bk,n}\CC\mS_{k_1+\dots+k_n}P_{\bk,n}$.
\end{proof}

\begin{rem} The algebra $P_{\bk,n}\CC\mS_{k_1+\dots+k_n}P_{\bk,n}\cong H_{\bk,n}(1)$ is an example of a Hecke algebras, see \emph{e.g.} \cite[\S 11D]{CR}. More precisely here, it is the algebra of functions on the space $\mS^{\bk,n}\backslash\mS_{k_1+\dots+k_n}/\mS^{\bk,n}$ of double cosets (the multiplication corresponds to the natural convolution product). Thus $H_{\bk,n}(1)$ can also be seen as the endomorphism algebra $\text{End}_{\mS_{k_1+\dots+k_n}}(M_{\bk})$, where $M_{\bk}$ is the permutation module associated to the composition $(k_1,\dots,k_n)$, that is, the module induced form the trivial representation of $\mS^{\bk,n}$. In this paper, we will reserve the name Hecke algebra for the deformation of the symmetric group.
\end{rem}

\paragraph{Standard basis.} The subgroup $\mS^{\bk,n}$ is a parabolic subgroup of $\mS_{k_1+\dots+k_n}$, and as such it enjoys special properties. We refer to \cite[\S 2.1]{GP}.

For any $\pi\in \mS_{k_1+\dots+k_n}$ there exist unique elements $w\in \mS_{k_1+\dots+k_n}$ and $\pi_1\in\mS^{\bk,n}$ such that $\pi=w\pi_1$ and $\ell(\pi)=\ell(w)+\ell(\pi_1)$, where $\ell$ is the usual length function on $\mS_{k_1+\dots+k_n}$. The element $w$ is the unique element of minimal length in the left coset $\pi\mS^{\bk,n}$.

The set formed by the elements of minimal length in their left coset is a set of representatives for  $\mS_{k_1+\dots+k_n}/\mS^{\bk,n}$, called the set of distinguished left coset representatives. Denote it by $X_{\bk,n}$.

Further, for any $\pi\in \mS_{k_1+\dots+k_n}$ there exists a unique element $w\in \mS_{k_1+\dots+k_n}$ such that $\pi=\pi_1w\pi_2$ with $\pi_1,\pi_2\in\mS^{\bk,n}$ and $\ell(\pi)=\ell(\pi_1)+\ell(w)+\ell(\pi_2)$. The element $w$ is the unique element of minimal length in the double coset of $\pi$.

The set formed by the elements of minimal length in their double coset is a set of representatives for  $\mS^{\bk,n}\backslash\mS_{k_1+\dots+k_n}/\mS^{\bk,n}$, called the set of distinguished double coset representatives. Besides, this set is equal to $X_{\bk,n}\cap X^{-1}_{\bk,n}$.

It is then easy to see that $w\in \mS_{k_1+\dots+k_n}$ is a distinguished double coset representatives if and only if we have:
\[\begin{array}{l}
w(k_1+\dots+k_{a-1}+1)<w(k_1+\dots+k_{a-1}+2)<\dots<w(k_1+\dots+k_{a})\,,\\[0.5em]
w^{-1}(k_1+\dots+k_{a-1}+1)<w^{-1}(k_1+\dots+k_{a-1}+2)<\dots<w^{-1}(k_1+\dots+k_{a})\,,
\end{array}\ \ \ \forall a=1,\dots,n\ .\]

From now on, for brevity, we will simply say $w\in \mS^{\bk,n}\backslash\mS_{k_1+\dots+k_n}/\mS^{\bk,n}$ to indicate that $w$ is one of these distinguished double coset representatives. Morever, we will denote by $f_w$ the fused permutation in $\cD_{\bk,n}$ corresponding to $w$ and will refer to the set
\begin{equation}\label{basis-clas}\{f_w\ |\ w\in \mS^{\bk,n}\backslash\mS_{k_1+\dots+k_n}/\mS^{\bk,n}\}
\end{equation}
as the standard basis of $H_{\bk,n}(1)$.

In the diagrammatic point of view, the choice of distinguished double coset representatives reflects the following fact: we can assume that the $k_a$ edges leaving the $a$-th top ellipse do not cross and similarly for the $k_a$ edges arriving at the $a$-th bottom ellipse (this for $a=1,\dots,n$). All the diagrams of fused permutations drawn above were drawn like this.

\section{Fused braids and the fused Hecke algebra}\label{sec-fus-br}

We refer for example to $\cite{GP}$ and $\cite{KT}$ for the standard facts we will recall on the Hecke algebra.

\subsection{The Hecke algebra}

Let $m>0$. Let $q\in\CC^{\times}$ such that $q^2$ is not a non trivial root of unity ($q^2=1$ is allowed). The Hecke algebra $H_m(q)$ is the $\CC$-algebra generated by elements $\si_1,\dots,\si_{m-1}$ with defining relations:
\begin{equation}\label{rel-H}
\begin{array}{ll}
\si_i^2=(q-q^{-1})\si_i+1 & \text{for}\ i\in\{1,\dots,m-1\}\,,\\[0.2em]
\si_i\si_{i+1}\si_i=\si_{i+1}\si_i\si_{i+1}\ \ \  & \text{for $i\in\{1,\dots,m-2\}$}\,,\\[0.2em]
\si_i\si_j=\si_j\si_i\ \ \  & \text{for $i,j\in\{1,\dots,m-1\}$ such that $|i-j|>1$}\,.
\end{array}
\end{equation} 
By convention $H_0(q):=\CC$ (note also that $H_1(q)=\CC$). If $q^2=1$ the Hecke algebra is the group algebra $\CC\mS_m$ of the symmetric group $\mS_m$. In this case, we denote $s_1,\dots,s_{m-1}$ the generators $\sigma_1,\dots,\sigma_{m-1}$; then $s_i$ corresponds to the transposition $(i,i+1)$ of  $\mS_m$. The restriction on $q$ ensures that $H_m(q)$ is semisimple and is a flat deformation isomorphic to $\CC\mS_m$.

For any element $w\in \mS_m$, let $w=s_{a_1}\dots s_{a_k}$ be a reduced expression for $w$ in terms of the generators $s_1,\dots,s_{m-1}$, and define $\si_w:=\si_{a_1}\dots \si_{a_k}\in H_m(q)$. This definition does not depend on the reduced expression for $w$ and it is a standard fact that the set $\{\si_w\}_{w\in\mS_m}$ forms a basis of $H_m(q)$.

For example, the following set of elements (where the product of sets $A.B$ is $\{a.b\ |\ a\in A,\ b\in B\}$) forms a basis of $H_m(q)$:
\begin{equation}\label{baseH}
\left\{\begin{array}{c} 1,\\ \si_1 \end{array}\right\}\cdot \left\{\begin{array}{c} 1,\\ \si_2, \\ \si_2\si_1 \end{array}\right\}\cdot \left\{\begin{array}{c} 1,\\ \si_3, \\ \si_3\si_2, \\ \si_3\si_2\si_1 \end{array}\right\}\cdot\ \ldots\ \cdot \left\{\begin{array}{c} 1,\\ \si_{m-1}, \\ \vdots \\ \si_{m-1}\dots\si_1 \end{array}\right\}\ .
\end{equation}

The algebras $\{H_m(q)\}_{m\geq0}$ form a chain of algebras:
\begin{equation}\label{chain-H}
\CC=H_0(q)\subset H_1(q)\subset H_2(q)\subset\dots\dots \subset H_m(q)\subset H_{m+1}(q)\subset \dots\dots\ ,
\end{equation}
where the natural inclusions of algebras are given by $H_m(q)\ni\si_i\mapsto\si_i\in H_{m+1}(q)$.

\paragraph{Diagrammatic presentation of $H_m(q)$.} We use the standard diagrammatic presentation of the Hecke algebra $H_m(q)$ coming from the standard diagrammatic presentation of the braid group (we refer for example to \cite{KT} for a precise formulation of braids and braid diagrams). 

Algebraically, the braid group is the group generated by $\sigma_1,\dots,\sigma_{m-1}$ and the second and third lines of relations in (\ref{rel-H}).

The diagrammatic presentation of a braid is by considering a rectangular strip with a line of $m$ dots at its top and a line of $m$ dots at its bottom. We connect each top dot to a bottom dot by a strand inside the strip. At each point of the strip at most two strands are intersecting, and at each intersection, we indicate which strand ``pass over'' the other one. An intersection is called a crossing and
we call a crossing  positive (resp. negative) when the strand coming from the left passes over (resp. under) the strand coming from the right.
Such diagram is called a braid with $m$ strands and braids are considered up to homotopy, which consists in being able of moving continuously the strands while leaving their end points fixed.

In terms of diagrams, the multiplication in the braid group is simply by concatenation of the diagram. If $\alpha$ and $\beta$ are two braids, to perform the product $\alpha\beta$, we place the diagram of $\alpha$ above the diagram of $\beta$ by identifying the bottom line of dots of $\alpha$ with the top line of dots in $\beta$, and then deleting the middle dots.

From now on we will always identify a braid with a braid diagram representing it. The identity element of the braid group is the braid where all the strands are vertical and parallel. Each generator $\si_i$ of the braid group is associated to the following braid:
\begin{center}
 \begin{tikzpicture}[scale=0.3]
\node at (0,0) {$\si_i=$};
\node at (2,3) {$1$};\fill (2,2) circle (0.2);\fill (2,-2) circle (0.2);
\draw[thick] (2,2) -- (2,-2);
\node at (4,0) {$\dots$};
\draw[thick] (6,2) -- (6,-2);
\node at (6,3) {$i-1$};\fill (6,2) circle (0.2);\fill (6,-2) circle (0.2);
\node at (10,3) {$i$};\fill (10,2) circle (0.2);\fill (10,-2) circle (0.2);
\node at (14,3) {$i+1$};\fill (14,2) circle (0.2);\fill (14,-2) circle (0.2);
\draw[thick] (14,2)..controls +(0,-2) and +(0,+2) .. (10,-2);
\fill[white] (12,0) circle (0.4);
\draw[thick] (10,2)..controls +(0,-2) and +(0,+2) .. (14,-2);
\draw[thick] (18,2) -- (18,-2);
\node at (18,3) {$i+2$};\fill (18,2) circle (0.2);\fill (18,-2) circle (0.2);
\node at (20,0) {$\dots$};
\draw[thick] (22,2) -- (22,-2);\fill (22,2) circle (0.2);\fill (22,-2) circle (0.2);
\node at (22,3) {$m$};
\end{tikzpicture}
\end{center}
The previous braid provides an example of positive crossing. The inverse $\si_i^{-1}$ of $\si_i$ is the following braid
\begin{center}
 \begin{tikzpicture}[scale=0.3]
\node at (-0.5,0) {$\si_i^{-1}=$};
\node at (2,3) {$1$};\fill (2,2) circle (0.2);\fill (2,-2) circle (0.2);
\draw[thick] (2,2) -- (2,-2);
\node at (4,0) {$\dots$};
\draw[thick] (6,2) -- (6,-2);
\node at (6,3) {$i-1$};\fill (6,2) circle (0.2);\fill (6,-2) circle (0.2);
\node at (10,3) {$i$};\fill (10,2) circle (0.2);\fill (10,-2) circle (0.2);
\node at (14,3) {$i+1$};\fill (14,2) circle (0.2);\fill (14,-2) circle (0.2);
\draw[thick] (10,2)..controls +(0,-2) and +(0,+2) .. (14,-2);
\fill[white] (12,0) circle (0.4);
\draw[thick] (14,2)..controls +(0,-2) and +(0,+2) .. (10,-2);
\draw[thick] (18,2) -- (18,-2);
\node at (18,3) {$i+2$};\fill (18,2) circle (0.2);\fill (18,-2) circle (0.2);
\node at (20,0) {$\dots$};
\draw[thick] (22,2) -- (22,-2);\fill (22,2) circle (0.2);\fill (22,-2) circle (0.2);
\node at (22,3) {$m$};
\end{tikzpicture}
\end{center}

The first relation in (\ref{rel-H}) can be read as $\sigma_i^{-1}=\sigma_i-(q-q^{-1})$, and so the Hecke algebra $H_m(q)$ has the following diagrammatic description: it is the algebra spanned by all braids with $m$ strands imposing moreover the following relation for any crossing:
\begin{center}
 \begin{tikzpicture}[scale=0.25]
\draw[thick] (0,2)..controls +(0,-2) and +(0,+2) .. (4,-2);
\fill[white] (2,0) circle (0.4);
\draw[thick] (4,2)..controls +(0,-2) and +(0,+2) .. (0,-2);
\node at (6,0) {$=$};
\draw[thick] (12,2)..controls +(0,-2) and +(0,+2) .. (8,-2);
\fill[white] (10,0) circle (0.4);
\draw[thick] (8,2)..controls +(0,-2) and +(0,+2) .. (12,-2);
\node at (17,0) {$-\,(q-q^{-1})$};
\draw[thick] (21,2) -- (21,-2);\draw[thick] (25,2) -- (25,-2);
\end{tikzpicture}
\end{center}
This relation has to be understood as a local relation, meaning that for any braid and any of its crossing, the braid is equal to a sum of two terms : the braid obtained by replacing the crossing by its opposite and $\pm(q-q^{-1})$ (depending on the sign of the original crossing) times the braid obtained by replacing the crossing by two pieces of vertical strands. In particular this allows one to transform all the negative crossings into positive ones. 

A diagrammatic interpretation of the basis $\{\sigma_w\ |\ w\in\mS_m\}$ of $H_m(q)$ is then the following: For any permutation $w$, take a permutation diagram representing $w$ (namely, the particular case $\bk=(1,1,1,\dots)$ of the diagrams defined in Section \ref{sec-def-fus-perm}) with a minimal number of intersections. And promote each intersection into a positive crossing and see the resulting diagram as an element of $H_m(q)$. This is $\sigma_w$.
 
\paragraph{The $q$-symmetriser of $H_m(q)$.} For $L\in\mathbb{Z}$, we define the $q$-numbers as follows
\begin{equation}\label{quantum-numbers}
[L]_q:=\frac{q^L-q^{-L}}{q-q^{-1}}=\pm(q^{L-1}+q^{L-3}+\dots+q^{-(L-1)})\,,\ \ \ \ \{L\}_q:=\frac{q^{2L}-1}{q^2-1}=\pm(1+q^2+\dots+q^{2(L-1)})\ ,
\end{equation}
the sign $\pm$ being the sign of $L$. We also set $[L]_q!:=[1]_q[2]_q\dots[L]_q$ and $\{L\}_q!:=\{1\}_q\{2\}_q\dots\{L\}_q$. 

The $q$-symmetriser of $H_m(q)$ is the following element:
\begin{equation}\label{def-P}
P_m:=\frac{\sum_{w\in\mS_m}q^{\ell(w)}\si_w}{\sum_{w\in\mS_m}q^{2\ell(w)}}=\frac{\sum_{w\in\mS_m}q^{\ell(w)}\si_w}{\{m\}_q!}=q^{-n(n-1)/2}\frac{\sum_{w\in\mS_m}q^{\ell(w)}\si_w}{[m]_q!}\ .
\end{equation}
Note that the first equality of the denominators is easy to see, by induction on $m$, from the basis (\ref{baseH}). It is well-known that the element $P_m$ is a minimal central idempotent in $H_m(q)$ corresponding to the one-dimensional representation given by $\sigma_i\mapsto q$ for $i=1,\dots,m-1$. In particular, we have
\begin{equation}\label{rel-sym}
P_m^2=P_m\ \ \ \ \ \text{and}\ \ \ \ \ \ \si_iP_m=P_m\si_i=q P_m\,,\ i=1,\dots,m-1.
\end{equation}
If $q^2=1$, the projector $P_m$ is the symmetriser of $\CC\mS_m$ projecting on the trivial representation of $\mS_m$.

\subsection{Fused braids}

As in the preceding section, let $\bk=(k_1,k_2,...)\in \mathbb{Z}_{\geq0}^{\infty}$ be an infinite sequence of non-negative integers, and let $n\in\mathbb{Z}_{>0}$\,.

\paragraph{Objects.} We consider the following objects, which are similar to the braid diagrams of the previous subsection, but in which we replace the two lines of dots by two lines of $n$ ellipses (drawn again as small black-filled ellipses). Moreover, we connect top ellipses with bottom ellipses by strands (as before with the usual braids) but now we require the following: for each $a\in\{1,\dots,n\}$, there are $k_a$ strands attached to the $a$-th top ellipse and $k_a$ strands attached to the $a$-th bottom ellipse. To be more precise, the strands which are attached to the same ellipse are not attached to the same point of the ellipse. Instead they are attached next to each other at the same ellipse (hence the need of ellipses instead of points or dots as for the usual braids). Examples are drawn below. The total number of strands is then $k_1+\dots+k_n$.

As before we require that at each point of the strip at most two strands are intersecting and we keep the same terminology of positive and negative crossings. Again as before we consider such diagrams up to homotopy, namely up to continuously moving the strands while leaving their end points fixed.

Such an equivalence class of diagrams we call a fused braid (we will not use a more precise name such as $(\bk,n)$-fused braid) and we will from now on identify a fused braid with a diagram representing it. 

\paragraph{Examples.} $\bullet$ If $\bk=(1,1,1,\dots)$ consists only of 1's then a fused braid is simply a usual braid.

$\bullet$ Here are examples of 6 fused braids when $k_1=k_2=k_3=2$:
\begin{center}
\begin{tikzpicture}[scale=0.3]
\fill (1,2) ellipse (0.6cm and 0.2cm);\fill (1,-2) ellipse (0.6cm and 0.2cm);
\draw[thick] (0.8,2) -- (0.8,-2);\draw[thick] (1.2,2) -- (1.2,-2);
\fill (4,2) ellipse (0.6cm and 0.2cm);\fill (4,-2) ellipse (0.6cm and 0.2cm);
\draw[thick] (3.8,2) -- (3.8,-2);\draw[thick] (4.2,2) -- (4.2,-2);
\fill (7,2) ellipse (0.6cm and 0.2cm);\fill (7,-2) ellipse (0.6cm and 0.2cm);
\draw[thick] (6.8,2) -- (6.8,-2);\draw[thick] (7.2,2) -- (7.2,-2);
\node at (9,0) {$,$};
\fill (11,2) ellipse (0.6cm and 0.2cm);\fill (11,-2) ellipse (0.6cm and 0.2cm);!
\draw[thick] (10.8,2) -- (10.8,-2);\draw[thick] (13.8,2)..controls +(0,-2) and +(0,+2) .. (11.2,-2);\fill[white] (12.5,0) circle (0.4);
\fill (14,2) ellipse (0.6cm and 0.2cm);\fill (14,-2) ellipse (0.6cm and 0.2cm);
\draw[thick] (14.2,2) -- (14.2,-2);\draw[thick] (11.2,2)..controls +(0,-2) and +(0,+2) .. (13.8,-2);
\fill (17,2) ellipse (0.6cm and 0.2cm);\fill (17,-2) ellipse (0.6cm and 0.2cm);
\draw[thick] (16.8,2) -- (16.8,-2);\draw[thick] (17.2,2) -- (17.2,-2);
\node at (19,0) {$,$};
\fill (21,2) ellipse (0.6cm and 0.2cm);\fill (21,-2) ellipse (0.6cm and 0.2cm);
\draw[thick] (20.8,2) -- (20.8,-2);\draw[thick] (21.2,2) -- (21.2,-2);
\fill (24,2) ellipse (0.6cm and 0.2cm);\fill (24,-2) ellipse (0.6cm and 0.2cm);
\draw[thick] (23.8,2) -- (23.8,-2);\draw[thick] (26.8,2)..controls +(0,-2) and +(0,+2) .. (24.2,-2);\fill[white] (25.5,0) circle (0.4);
\fill (27,2) ellipse (0.6cm and 0.2cm);\fill (27,-2) ellipse (0.6cm and 0.2cm);
\draw[thick] (24.2,2)..controls +(0,-2) and +(0,+2) .. (26.8,-2);\draw[thick] (27.2,2) -- (27.2,-2);
\node at (29,0) {$,$};
\draw[thick] (36.8,2)..controls +(0,-2) and +(0,+2) .. (31.2,-2);\draw[thick] (37.2,2) -- (37.2,-2);
\fill[white] (33,-0.4) circle (0.3);\fill[white] (35,0.4) circle (0.3);;\fill[white] (34,0) circle (0.3);
\fill (31,2) ellipse (0.6cm and 0.2cm);\fill (31,-2) ellipse (0.6cm and 0.2cm);
\draw[thick] (30.8,2) -- (30.8,-2);\draw[thick] (31.2,2)..controls +(0,-2) and +(0,+2) .. (33.8,-2);
\fill (34,2) ellipse (0.6cm and 0.2cm);\fill (34,-2) ellipse (0.6cm and 0.2cm);
\draw[thick] (33.8,2) -- (34.2,-2);\draw[thick] (34.2,2)..controls +(0,-2) and +(0,+2) .. (36.8,-2);
\fill (37,2) ellipse (0.6cm and 0.2cm);\fill (37,-2) ellipse (0.6cm and 0.2cm);
\node at (39,0) {$,$};
\draw[thick] (46.8,2)..controls +(0,-2) and +(0,+2) .. (44.2,-2);
\draw[thick] (43.8,2)..controls +(0,-2) and +(0,+2) .. (41.2,-2);
\fill[white] (43,0.4) circle (0.3);\fill[white] (45,-0.4) circle (0.3);
\draw[thick] (41.2,2)..controls +(0,-2) and +(0,+2) .. (46.8,-2);
\fill[white] (44,0) circle (0.3);
\fill (41,2) ellipse (0.6cm and 0.2cm);\fill (41,-2) ellipse (0.6cm and 0.2cm);
\draw[thick] (40.8,2) -- (40.8,-2);
\fill (44,2) ellipse (0.6cm and 0.2cm);\fill (44,-2) ellipse (0.6cm and 0.2cm);
\draw[thick] (44.2,2) -- (43.8,-2);
\fill (47,2) ellipse (0.6cm and 0.2cm);\fill (47,-2) ellipse (0.6cm and 0.2cm);
\draw[thick] (47.2,2) -- (47.2,-2);
\node at (49,0) {$,$};
\draw[thick] (56.8,2)..controls +(0,-3) and +(0,+1) .. (51.2,-2);
\fill[white] (53.5,-0.8) circle (0.3);\fill[white] (54.5,-0.4) circle (0.3);
\draw[thick] (53.8,2)..controls +(-0.5,-1) and +(-0.5,1) .. (53.8,-2);
\fill[white] (53.5,0.85) circle (0.3);
\fill[white] (54,0.8) circle (0.3);\fill[white] (55.6,0) circle (0.3);
\draw[thick] (51.2,2)..controls +(0,-1) and +(0,+3) .. (56.8,-2);
\fill[white] (54.5,0.4) circle (0.3);
\draw[thick] (54.2,2)..controls +(0.5,-1) and +(0.5,1) .. (54.2,-2);
\draw[thick] (50.8,2) -- (50.8,-2);
\fill (51,2) ellipse (0.6cm and 0.2cm);\fill (51,-2) ellipse (0.6cm and 0.2cm);
\fill (54,2) ellipse (0.6cm and 0.2cm);\fill (54,-2) ellipse (0.6cm and 0.2cm);
\fill (57,2) ellipse (0.6cm and 0.2cm);\fill (57,-2) ellipse (0.6cm and 0.2cm);
\draw[thick] (57.2,2) -- (57.2,-2);
\end{tikzpicture}
\end{center}

\subsection{Definition of the fused Hecke algebra $H_{\bk,n}(q)$}

In the definition below, we consider a vector space with basis indexed by fused braids and we identify the vector basis with their indices (in other words, we consider formal linear combinations of fused braids).
\begin{defi}\label{vector-fused-braids}
 The $\CC$-vector space $H_{\bk,n}(q)$ is the quotient of the vector space with basis indexed by fused braids by the following relations:
 \begin{itemize}
   \item[(i)] The Hecke relation:  
   \begin{center}
 \begin{tikzpicture}[scale=0.25]
\draw[thick] (0,2)..controls +(0,-2) and +(0,+2) .. (4,-2);
\fill[white] (2,0) circle (0.4);
\draw[thick] (4,2)..controls +(0,-2) and +(0,+2) .. (0,-2);
\node at (6,0) {$=$};
\draw[thick] (12,2)..controls +(0,-2) and +(0,+2) .. (8,-2);
\fill[white] (10,0) circle (0.4);
\draw[thick] (8,2)..controls +(0,-2) and +(0,+2) .. (12,-2);
\node at (17,0) {$-\,(q-q^{-1})$};
\draw[thick] (21,2) -- (21,-2);\draw[thick] (25,2) -- (25,-2);
\end{tikzpicture}
\end{center}
  \item[(ii)]  The idempotent relations: for top ellipses,
 \begin{center}
 \begin{tikzpicture}[scale=0.4]
\fill (2,2) ellipse (0.8cm and 0.2cm);
%\draw[thick] (1.6,2)..controls +(0,-1) and +(1,1) .. (-1,0);
\draw[thick] (2.2,2)..controls +(0,-1.5) and +(1,1) .. (1.2,0);
\fill[white] (2,0.7) circle (0.2);
\draw[thick] (1.8,2)..controls +(0,-1.5) and +(-1,1) .. (2.8,0);
%\draw[thick] (2.4,2)..controls +(0,-1) and +(-1,1.4) .. (4.2,0);
\node at (4.5,1) {$=$};
\node at (7,1) {$q$};
\fill (9,2) ellipse (0.8cm and 0.2cm);
%\draw[thick] (8.6,2)..controls +(0,-1) and +(1,1) .. (6,0);
\draw[thick] (8.8,2)..controls +(0,-1.5) and +(0.5,0.5) .. (8.2,0);
\draw[thick] (9.2,2)..controls +(0,-1.5) and +(-0.5,0.5) .. (9.8,0);
%\draw[thick] (9.4,2)..controls +(0,-1) and +(-1,1.4) .. (11.2,0);

\node at (13,1) {and};

\fill (18,2) ellipse (0.8cm and 0.2cm);
%\draw[thick] (17.6,2)..controls +(0,-1) and +(1,1) .. (15,0);
\draw[thick] (17.8,2)..controls +(0,-1.5) and +(-1,1) .. (18.8,0);
\fill[white] (18,0.7) circle (0.2);
\draw[thick] (18.2,2)..controls +(0,-1.5) and +(1,1) .. (17.2,0);
%\draw[thick] (18.4,2)..controls +(0,-1) and +(-1,1.4) .. (20.2,0);
\node at (20.5,1) {$= $};
\node at (23,1) {$q^{-1}$};
\fill (25,2) ellipse (0.8cm and 0.2cm);
%\draw[thick] (24.6,2)..controls +(0,-1) and +(1,1) .. (22,0);
\draw[thick] (24.8,2)..controls +(0,-1.5) and +(0.5,0.5) .. (24.2,0);
\draw[thick] (25.2,2)..controls +(0,-1.5) and +(-0.5,0.5) .. (25.8,0);
%\draw[thick] (25.4,2)..controls +(0,-1) and +(-1,1.4) .. (27.2,0);
\end{tikzpicture}
\end{center}
and for bottom ellipses,
\begin{center}
 \begin{tikzpicture}[scale=0.4]
\fill (2,0) ellipse (0.8cm and 0.2cm);
%\draw[thick] (1.6,0)..controls +(0,1) and +(1,-1.5) .. (-1,2);
\draw[thick] (1.8,0)..controls +(0,1.5) and +(-1,-1) .. (2.8,2);
\fill[white] (2,1.25) circle (0.2);
\draw[thick] (2.2,0)..controls +(0,1.5) and +(1,-1) .. (1.2,2);
%\draw[thick] (2.4,0)..controls +(0,1) and +(-1,-1.4) .. (4.2,2);
\node at (4.5,1) {$=$};
\node at (7,1) {$q$};
\fill (9,0) ellipse (0.8cm and 0.2cm);
%\draw[thick] (8.6,0)..controls +(0,1) and +(1,-1.5) .. (6,2);
\draw[thick] (9.2,0)..controls +(0,1.5) and +(-0.5,-0.5) .. (9.8,2);
\draw[thick] (8.8,0)..controls +(0,1.5) and +(0.5,-0.5) .. (8.2,2);
%\draw[thick] (9.4,0)..controls +(0,1) and +(-1,-1.4) .. (11.2,2);

\node at (13,1) {and};

\fill (18,0) ellipse (0.8cm and 0.2cm);
%\draw[thick] (17.6,0)..controls +(0,1) and +(1,-1.5) .. (15,2);
\draw[thick] (18.2,0)..controls +(0,1.5) and +(1,-1) .. (17.2,2);
\fill[white] (18,1.25) circle (0.2);
\draw[thick] (17.8,0)..controls +(0,1.5) and +(-1,-1) .. (18.8,2);
%\draw[thick] (18.4,0)..controls +(0,1) and +(-1,-1.4) .. (20.2,2);
\node at (20.5,1) {$=$};
\node at (23,1) {$q^{-1}$};
\fill (25,0) ellipse (0.8cm and 0.2cm);
%\draw[thick] (24.6,0)..controls +(0,1) and +(1,-1.5) .. (22,2);
\draw[thick] (25.2,0)..controls +(0,1.5) and +(-0.5,-0.5) .. (25.8,2);
\draw[thick] (24.8,0)..controls +(0,1.5) and +(0.5,-0.5) .. (24.2,2);
%\draw[thick] (25.4,0)..controls +(0,1) and +(-1,-1.4) .. (27.2,2);
\end{tikzpicture}
\end{center}
 \end{itemize}
\end{defi}
The first relation is the Hecke relation, valid locally for all crossings as in the situation of classical braids and Hecke algebra. The other relations are also local relations, valid for crossings near the ellipses. In words, they impose the following: in a fused braid, if two strands start from the same ellipse and their first crossing is crossing each other, then the original fused braid is equal to the fused braid obtained by removing this crossing and multiplying by $q^{\pm1}$ depending on the sign of the crossing; and similarly for two strands arriving at the same ellipse.

\paragraph{Multiplication.} Now we define a product on the vector space $H_{\bk,n}(q)$, which makes it an associative unital algebra. In order to multiply fused braids, we use the Hecke algebra and its $q$-symmetriser. Namely let $b,b'$ be two fused braids. We define $bb'$ as the result of the following procedure:
\begin{itemize}
\item \emph{(Concatenation)} We place the diagram of $b$ on top of the diagram of $b'$ by identifying the bottom ellipses of $b$ with the top ellipses of $b'$
\item \emph{(Removal of middle ellipses)} For each $a\in\{1,\dots,n\}$, there are $k_a$ strands arriving and $k_a$ strands leaving the $a$-th ellipse in the middle row. We remove this middle ellipse and replace it by the $q$-symmetriser $P_{k_a}$ of the Hecke algebra (\ref{def-P}).
\end{itemize}
More explicitly, in order to remove the $a$-th middle ellipse, we take $w\in\mS_{k_a}$ and we first construct the diagram where this middle ellipse is replaced by the element $\si_w$ of $H_{k_a}$ connecting the $k_a$ incoming strands to the $k_a$ outgoing ones. Then we make the sum over $w\in\mS_{k_a}$ of the resulting diagrams, each with the coefficient $q^{\ell(w)}$ (that is, multiplied by $q$ to the power the number of crossings we added). Finally, we normalise by dividing this sum by $\sum_{w\in\mS_m}q^{2\ell(w)}=\{m\}_q!$. 

It is immediate that the fused braid with only non-crossing vertical strands is the unit element for this multiplication.

\begin{defi}
The fused Hecke algebra is the algebra whose underlying vector space is $H_{\bk,n}(q)$ from Definition \ref{vector-fused-braids} and with multiplication defined with the procedure above. We continue to use $H_{\bk,n}(q)$ to denote this algebra .
\end{defi}

\begin{rem}\label{rem-def}
In this paper, we work for simplicity with a complex non-zero number $q$, with the condition that $q^2$ is not a non-trivial root of unity. This last condition is for staying in the semisimple regime. Nevertheless we point out that the algebras $H_{\bk,n}(q)$ can be defined for a complex non-zero number $q$ with the only condition that $q^{2l}\neq 1$ for $l=2,\dots,K$ where $K=\text{max}\{k_1,\dots,k_n\}$.

Alternatively, we can consider the generic algebra $H_{\bk,n}(q)$ defined over the ring $\CC[q,q^{-1},\bigl(\{K\}!\bigr)^{-1}]$, where $q$ is an indeterminate. We refer to this situation (only in the last section) as ``generic $q$''. The statements on representations are then to be understood over the field $\CC(q)$.
\end{rem}

\begin{exa}\label{ex:rem} We illustrate below the procedure to remove a middle ellipse when the number of incoming (and thus also of outgoing) strands is 2:
 \begin{center}
 \begin{tikzpicture}[scale=0.4]
\fill (0,0) ellipse (0.8cm and 0.2cm);
%\node at (-1,1) {$\dots$};
\draw[thick] (-0.3,0)..controls +(0,1) and +(1,-1) .. (-1,2);
\draw[thick] (0.3,0)..controls +(0,1) and +(-1,-1) .. (2.5,2);
\draw[thick] (-0.3,0)..controls +(0,-1) and +(1,1) .. (-3,-2);
\draw[thick] (0.3,0)..controls +(0,-1.5) and +(-0.5,0.5) .. (2,-2);
\node at (3,0) {$\rightarrow$};
\node at (5,0) {$\frac{1}{1+q^2}$};
\draw[thick] (8.7,0)..controls +(0,1) and +(1,-1) .. (8,2);
\draw[thick] (9.3,0)..controls +(0,1) and +(-1,-1) .. (11.5,2);
\draw[thick] (8.7,0)..controls +(0,-1) and +(1,1) .. (6,-2);
\draw[thick] (9.3,0)..controls +(0,-1.5) and +(-0.5,0.5) .. (11,-2);
\node at (13,0) {$+\frac{q}{1+q^2}$};
\draw[thick] (16.7,0.4)..controls +(0,1) and +(1,-1) .. (16,2);
\draw[thick] (17.3,0.4)..controls +(0,1) and +(-1,-1) .. (19.5,2);
\draw[thick] (17.3,0.4)..controls +(0,-0.3) and +(0,0.3) .. (16.7,-0.4);
\fill[white] (17,0) circle (0.2);
\draw[thick] (16.7,0.4)..controls +(0,-0.3) and +(0,0.3) .. (17.3,-0.4);
\draw[thick] (16.7,-0.4)..controls +(0,-1) and +(1,1) .. (14,-2);
\draw[thick] (17.3,-0.4)..controls +(0,-1.5) and +(-0.5,0.5) .. (19,-2);
\end{tikzpicture}
\end{center}
\end{exa}

\paragraph{Examples.} $\bullet$ If $\bk=(1,1,1,\dots)$ consists only of 1's then the algebra $H_{\bk,n}(q)$ obviously coincides with the Hecke algebra $H_n(q)$.

$\bullet$ Here is an example of a product of two elements of $H_{\bk,2}(q)$ with $k_1=k_2=2$:
\begin{center}
\begin{tikzpicture}[scale=0.3]
\fill (1,2) ellipse (0.6cm and 0.2cm);\fill (1,-2) ellipse (0.6cm and 0.2cm);
\fill (4,2) ellipse (0.6cm and 0.2cm);\fill (4,-2) ellipse (0.6cm and 0.2cm);
\draw[thick] (3.8,2)..controls +(0,-2) and +(0,+2) .. (1.2,-2); \draw[thick] (4.2,2) -- (4.2,-2);
\fill[white] (2.5,0) circle (0.4);
\draw[thick] (0.8,2) -- (0.8,-2);\draw[thick] (1.2,2)..controls +(0,-2) and +(0,+2) .. (3.8,-2);  

\node at (6,0) {$.$};

\fill (8,2) ellipse (0.6cm and 0.2cm);\fill (8,-2) ellipse (0.6cm and 0.2cm);
\fill (11,2) ellipse (0.6cm and 0.2cm);\fill (11,-2) ellipse (0.6cm and 0.2cm);
\draw[thick] (10.8,2)..controls +(0,-2) and +(0,+2) .. (8.2,-2); \draw[thick] (11.2,2) -- (11.2,-2);
\fill[white] (9.5,0) circle (0.4);
\draw[thick] (7.8,2) -- (7.8,-2);\draw[thick] (8.2,2)..controls +(0,-2) and +(0,+2) .. (10.8,-2);  

\node at (17.5,0) {$=\displaystyle\frac{1}{(1+q^2)^2}\Bigl($};
\fill (22.5,4) ellipse (0.6cm and 0.2cm);
\fill (25.5,4) ellipse (0.6cm and 0.2cm);
\draw[thick] (25.3,4)..controls +(0,-2) and +(0,+2) .. (22.7,0); \draw[thick] (25.7,4) -- (25.7,0);
\fill[white] (24,2) circle (0.4);
\draw[thick] (22.3,4) -- (22.3,0);\draw[thick] (22.7,4)..controls +(0,-2) and +(0,+2) .. (25.3,0);  

\fill (22.5,-4) ellipse (0.6cm and 0.2cm);
\fill (25.5,-4) ellipse (0.6cm and 0.2cm);
\draw[thick] (25.3,0)..controls +(0,-2) and +(0,+2) .. (22.7,-4); \draw[thick] (25.7,0) -- (25.7,-4);
\fill[white] (24,-2) circle (0.4);
\draw[thick] (22.3,0) -- (22.3,-4);\draw[thick] (22.7,0)..controls +(0,-2) and +(0,+2) .. (25.3,-4);  

\node at (27.5,0) {$+ q$};
\fill (29.5,4) ellipse (0.6cm and 0.2cm);
\fill (32.5,4) ellipse (0.6cm and 0.2cm);
\draw[thick] (32.3,4)..controls +(0,-2) and +(0,+2) .. (29.7,0.5); \draw[thick] (32.7,4) -- (32.7,0);
\fill[white] (30.9,2) circle (0.4);
\draw[thick] (29.3,4) -- (29.3,0.5);\draw[thick] (29.7,4)..controls +(0,-2) and +(0,+2) .. (32.3,0);  

\draw[thick] (29.7,0.5)..controls +(0,-0.2) and +(0,+0.2) .. (29.3,-0.5);
\fill[white] (29.5,0) circle (0.2);
\draw[thick] (29.3,0.5)..controls +(0,-0.2) and +(0,+0.2) .. (29.7,-0.5);

\fill (29.5,-4) ellipse (0.6cm and 0.2cm);
\fill (32.5,-4) ellipse (0.6cm and 0.2cm);
\draw[thick] (32.3,0)..controls +(0,-2) and +(0,+2) .. (29.7,-4); \draw[thick] (32.7,0) -- (32.7,-4);
\fill[white] (30.8,-2.1) circle (0.4);
\draw[thick] (29.3,-0.5) -- (29.3,-4);\draw[thick] (29.7,-0.5)..controls +(0,-2) and +(0,+2) .. (32.3,-4);

\node at (34.5,0) {$+q$};
\fill (36.5,4) ellipse (0.6cm and 0.2cm);
\fill (39.5,4) ellipse (0.6cm and 0.2cm);
\draw[thick] (39.3,4)..controls +(0,-2) and +(0,+2) .. (36.7,0); \draw[thick] (39.7,4) -- (39.7,0.5);
\fill[white] (38.2,2.1) circle (0.4);
\draw[thick] (36.3,4) -- (36.3,0);\draw[thick] (36.7,4)..controls +(0,-2) and +(0,+2) .. (39.3,0.5);  

\draw[thick] (39.7,0.5)..controls +(0,-0.2) and +(0,+0.2) .. (39.3,-0.5);
\fill[white] (39.5,0) circle (0.2);
\draw[thick] (39.3,0.5)..controls +(0,-0.2) and +(0,+0.2) .. (39.7,-0.5);

\fill (36.5,-4) ellipse (0.6cm and 0.2cm);
\fill (39.5,-4) ellipse (0.6cm and 0.2cm);
\draw[thick] (39.3,-0.5)..controls +(0,-2) and +(0,+2) .. (36.7,-4); \draw[thick] (39.7,-0.5) -- (39.7,-4);
\fill[white] (38,-2) circle (0.4);
\draw[thick] (36.3,0) -- (36.3,-4);\draw[thick] (36.7,0)..controls +(0,-2) and +(0,+2) .. (39.3,-4);  

\node at (41.5,0) {$+q^2$};
\fill (43.5,4) ellipse (0.6cm and 0.2cm);
\fill (46.5,4) ellipse (0.6cm and 0.2cm);
\draw[thick] (46.3,4)..controls +(0,-2) and +(0,+2) .. (43.7,0.5); \draw[thick] (46.7,4) -- (46.7,0.5);
\fill[white] (45.1,2) circle (0.4);
\draw[thick] (43.3,4) -- (43.3,0.5);\draw[thick] (43.7,4)..controls +(0,-2) and +(0,+2) .. (46.3,0.5);  

\draw[thick] (43.7,0.5)..controls +(0,-0.2) and +(0,+0.2) .. (43.3,-0.5);
\fill[white] (43.5,0) circle (0.2);
\draw[thick] (43.3,0.5)..controls +(0,-0.2) and +(0,+0.2) .. (43.7,-0.5);

\draw[thick] (46.7,0.5)..controls +(0,-0.2) and +(0,+0.2) .. (46.3,-0.5);
\fill[white] (46.5,0) circle (0.2);
\draw[thick] (46.3,0.5)..controls +(0,-0.2) and +(0,+0.2) .. (46.7,-0.5);

\fill (43.5,-4) ellipse (0.6cm and 0.2cm);
\fill (46.5,-4) ellipse (0.6cm and 0.2cm);
\draw[thick] (46.3,-0.5)..controls +(0,-2) and +(0,+2) .. (43.7,-4); \draw[thick] (46.7,-0.5) -- (46.7,-4);
\fill[white] (44.9,-2) circle (0.4);
\draw[thick] (43.3,-0.5) -- (43.3,-4);\draw[thick] (43.7,-0.5)..controls +(0,-2) and +(0,+2) .. (46.3,-4);  

\node at (48.5,0) {$\Bigr)$};

\node at (17.5,-8) {$=\displaystyle\frac{1}{(1+q^2)^2}\Bigl($};
\fill (22,-6) ellipse (0.6cm and 0.2cm);\fill (22,-10) ellipse (0.6cm and 0.2cm);
\draw[thick] (21.8,-6) -- (21.8,-10);\draw[thick] (22.2,-6) -- (22.2,-10);
\fill (25,-6) ellipse (0.6cm and 0.2cm);\fill (25,-10) ellipse (0.6cm and 0.2cm);
\draw[thick] (24.8,-6) -- (24.8,-10);\draw[thick] (25.2,-6) -- (25.2,-10);

\node at (31,-8) {$+(q-q^{-1}+2q^3)$};

\fill (37,-6) ellipse (0.6cm and 0.2cm);\fill (37,-10) ellipse (0.6cm and 0.2cm);
\fill (40,-6) ellipse (0.6cm and 0.2cm);\fill (40,-10) ellipse (0.6cm and 0.2cm);
\draw[thick] (39.8,-6)..controls +(0,-2) and +(0,+2) .. (37.2,-10);  \draw[thick] (40.2,-6) -- (40.2,-10);
\fill[white] (38.5,-8) circle (0.4);
\draw[thick] (36.8,-6) -- (36.8,-10);\draw[thick] (37.2,-6)..controls +(0,-2) and +(0,+2) .. (39.8,-10);  

\node at (42,-8) {$+q^2$};

\fill (44,-6) ellipse (0.6cm and 0.2cm);\fill (44,-10) ellipse (0.6cm and 0.2cm);
\fill (47,-6) ellipse (0.6cm and 0.2cm);\fill (47,-10) ellipse (0.6cm and 0.2cm);
\draw[thick] (46.8,-6)..controls +(0,-2) and +(0,+2) .. (43.8,-10);  
\draw[thick] (47.2,-6)..controls +(0,-2) and +(0,+2) .. (44.2,-10);  
\fill[white] (45.5,-8) circle (0.4);
\draw[thick] (43.8,-6)..controls +(0,-2) and +(0,+2) .. (46.8,-10);  
\draw[thick] (44.2,-6)..controls +(0,-2) and +(0,+2) .. (47.2,-10);  

\node at (49,-8) {$\Bigr)$};

\end{tikzpicture}
\end{center}

In the right hand side of the first line we proceed as follows. For the first diagram, we apply the Hecke relation and this results to the identity term and the term with coefficient $(q-q^{-1})$. For the second diagram, we take the strand connecting the first top ellipse to the first bottom ellipse, and we move it on the left of the diagram, then we apply the idempotent relations; this results with one term with coefficient $q^3$. We proceed similarly for the third diagram. We do almost nothing except moving the strands in the fourth diagram.

$\bullet$ Let $\bk=(2,2,\dots)$ the infinite sequence of $2$'s. We define below some elements of $H_{\bk,n}(q)$:
\begin{center}
 \begin{tikzpicture}[scale=0.3]
\node at (-2,0) {$T_i:=$};
\node at (2,3) {$1$};\fill (2,2) ellipse (0.6cm and 0.2cm);\fill (2,-2) ellipse (0.6cm and 0.2cm);
\draw[thick] (1.8,2) -- (1.8,-2);\draw[thick] (2.2,2) -- (2.2,-2);
\node at (4,0) {$\dots$};
\draw[thick] (5.8,2) -- (5.8,-2);\draw[thick] (6.2,2) -- (6.2,-2);
\node at (6,3) {$i-1$};\fill (6,2) ellipse (0.6cm and 0.2cm);\fill (6,-2) ellipse (0.6cm and 0.2cm);
\node at (10,3) {$i$};\fill (10,2) ellipse (0.6cm and 0.2cm);\fill (10,-2) ellipse (0.6cm and 0.2cm);
\node at (14,3) {$i+1$};\fill (14,2) ellipse (0.6cm and 0.2cm);\fill (14,-2) ellipse (0.6cm and 0.2cm);

\draw[thick] (9.8,2) -- (9.8,-2);
\draw[thick] (13.8,2)..controls +(0,-2) and +(0,+2) .. (10.2,-2);
\fill[white] (12,0) circle (0.4);
\draw[thick] (10.2,2)..controls +(0,-2) and +(0,+2) .. (13.8,-2);
\draw[thick] (14.2,2) -- (14.2,-2);

\draw[thick] (17.8,2) -- (17.8,-2);\draw[thick] (18.2,2) -- (18.2,-2);
\node at (18,3) {$i+2$};\fill (18,2) ellipse (0.6cm and 0.2cm);\fill (18,-2) ellipse (0.6cm and 0.2cm);
\node at (20,0) {$\dots$};
\draw[thick] (21.8,2) -- (21.8,-2);\draw[thick] (22.2,2) -- (22.2,-2);\fill (22,2) ellipse (0.6cm and 0.2cm);\fill (22,-2) ellipse (0.6cm and 0.2cm);
\node at (22,3) {$n$};
\end{tikzpicture}
\end{center}

\begin{center}
 \begin{tikzpicture}[scale=0.3]
\node at (-2,0) {$\Sigma_i:=$};
\node at (2,3) {$1$};\fill (2,2) ellipse (0.6cm and 0.2cm);\fill (2,-2) ellipse (0.6cm and 0.2cm);
\draw[thick] (1.8,2) -- (1.8,-2);\draw[thick] (2.2,2) -- (2.2,-2);
\node at (4,0) {$\dots$};
\draw[thick] (5.8,2) -- (5.8,-2);\draw[thick] (6.2,2) -- (6.2,-2);
\node at (6,3) {$i-1$};\fill (6,2) ellipse (0.6cm and 0.2cm);\fill (6,-2) ellipse (0.6cm and 0.2cm);
\node at (10,3) {$i$};\fill (10,2) ellipse (0.6cm and 0.2cm);\fill (10,-2) ellipse (0.6cm and 0.2cm);
\node at (14,3) {$i+1$};\fill (14,2) ellipse (0.6cm and 0.2cm);\fill (14,-2) ellipse (0.6cm and 0.2cm);

\draw[thick] (13.8,2)..controls +(0,-2) and +(0,+2) .. (9.8,-2);
\draw[thick] (14.2,2)..controls +(0,-2) and +(0,+2) .. (10.2,-2);
\fill[white] (12,0) circle (0.5);
\draw[thick] (10.2,2)..controls +(0,-2) and +(0,+2) .. (14.2,-2);
\draw[thick] (9.8,2)..controls +(0,-2) and +(0,+2) .. (13.8,-2);

\draw[thick] (17.8,2) -- (17.8,-2);\draw[thick] (18.2,2) -- (18.2,-2);
\node at (18,3) {$i+2$};\fill (18,2) ellipse (0.6cm and 0.2cm);\fill (18,-2) ellipse (0.6cm and 0.2cm);
\node at (20,0) {$\dots$};
\draw[thick] (21.8,2) -- (21.8,-2);\draw[thick] (22.2,2) -- (22.2,-2);\fill (22,2) ellipse (0.6cm and 0.2cm);\fill (22,-2) ellipse (0.6cm and 0.2cm);
\node at (22,3) {$n$};
\end{tikzpicture}
\end{center}
The elements $\Sigma_i$ satisfy the braid relations : $\Sigma_i\Sigma_{i+1}\Sigma_i=\Sigma_{i+1}\Sigma_i\Sigma_{i+1}$. We leave to the reader the verification of the following relations
\[T_i\Sigma_i=\Sigma_iT_i=(q-q^{-1})\Sigma_i+q^2 T_i\ \ \ \ \text{and}\ \ \ \Sigma_i^2=(q-q^{-1})^2\{2\}_q \Sigma_i+(q-q^{-1})\{3\}_q T_i+1\ ,\]
which implies easily the following characteristic equation of order 3 for $\Sigma_i$:
\[\Sigma_i^3-(q^4-1+q^{-2})\Sigma_i^2-(q^4-q^2+q^{-2})\Sigma_i+q^2=(\Sigma_i-q^4)(\Sigma_i+1)(\Sigma_i-q^{-2})=0\ .\]

$\bullet$ More generally, let $\bk=(k,k,\dots)$ be the infinite sequence of $k$'s for an integer $k\geq 1$. We define $\Sigma_i\in H_{\bk,n}(q)$ similarly to the previous example, namely $\Sigma_i$ is the fused braid for which all strands starting from ellipse $i$ pass over the strands starting from ellipse $i+1$. All other strands are vertical. 

Then the braid relation $\Sigma_i\Sigma_{i+1}\Sigma_i=\Sigma_{i+1}\Sigma_i\Sigma_{i+1}$ is satisfied. So inside the algebra $H_{\bk,n}(q)$ the elements $\Sigma_1,\dots,\Sigma_{n-1}$ generate a subalgebra which is a quotient of the algebra of the braid group. Note that these elements do not generate the whole algebra $H_{\bk,n}(q)$ when $n>2$ and $k>1$.

We will see in the next subsection that the algebra $H_{\bk,n}(q)$ is finite-dimensional. In particular, for $n=2$, the algebra $H_{\bk,2}(q)$ is of dimension $k+1$. Moreover, we will see later that the algebra $H_{\bk,n}(q)$ acts on a tensor product of representations of $U_q(gl_N)$. In this representation, the elements $\Sigma_i$ correspond to the $R$-matrix, and from this, one can obtain that $\Sigma_i$ satisfies a certain characteristic equation (see for example \cite{LZ}). We skip the details and we just indicate that one obtains that the minimal characteristic equation for $\Sigma_i$ in $H_{\bk,n}(q)$ is:
\[\prod_{l=0}^k\Bigl(\Sigma_i-(-1)^{k+l} q^{-k+l(l+1)}\Bigr)=0\ .\]

\subsection{Standard basis}

Inside the Hecke algebra $H_{k_1+\dots+k_n}(q)$, we consider the parabolic subalgebra isomorphic to
\[H_{k_1}\otimes\dots\otimes H_{k_n}\ .\]
The first factor $H_{k_1}$ above is obtained as the subalgebra generated by $\si_1,\dots,\si_{k_1-1}$, the second factor $H_{k_2}$ is obtained as the subalgebra generated by $\si_{k_1+1},\dots,\si_{k_1+k_2-1}$, and so on. We define the following element of $H_{k_1+\dots+k_n}(q)$:
\begin{equation}\label{def-Pkn}
P_{\bk,n}:=P_{k_1}\otimes\dots\otimes P_{k_n}=P_{(1,\,...\,,k_1)}P_{(k_1+1,\,...\,,k_1+k_2)}\dots P_{(k_1+\dots+k_{n-1}+1,\,...\,,k_1+\dots+k_n)}\ .
\end{equation}
In the second expression, $P_{(1,\,...\,,k_1)}$ is the $q$-symmetriser on the generators $\si_1,\dots,\si_{k_1-1}$, $P_{(k_1+1,\,...\,,k_1+k_2)}$ is the $q$-symmetriser on the generators $\si_{k_1+1},\dots,\si_{k_1+k_2-1}$, and so on up to $P_{(k_1+\dots+k_{n-1}+1,\,...\,,k_1+\dots+k_n)}$ which is the $q$-symmetriser on the generators $\si_{k_1+\dots+k_{n-1}+1},\dots,\si_{k_1+\dots+k_{n}-1}$. 

The element $P_{\bk,n}$ is clearly an idempotent of $H_{k_1+\dots+k_n}(q)$. Then the subset $P_{\bk,n}H_{k_1+\dots+k_n}(q)P_{\bk,n}=\{P_{\bk,n}xP_{\bk,n}\ |\ x\in H_{k_1+\dots+k_n}(q)\}$ of $H_{k_1+\dots+k_n}(q)$ is an algebra with unit $P_{\bk,n}$. This algebra has the following canonical basis
\[\{P_{\bk,n}\si_wP_{\bk,n}\ |\ w\in \mS^{\bk,n}\backslash\mS_{k_1+\dots+k_n}/\mS^{\bk,n}\}\ ,\]
where by $w\in \mS^{\bk,n}\backslash\mS_{k_1+\dots+k_n}/\mS^{\bk,n}$, we mean $w$ is a distinguished double coset representative, as explained in Subsection \ref{subsec-doublecoset}. This follows from the flatness of the deformation from the symmetric group to the Hecke algebra which ensures that the dimension is the number of double cosets. And moreover, we have that for any $\pi\in \mS_{k_1+\dots+k_n}$, there exists an $w$ as above and $\pi_1,\pi_2\in \mS^{\bk,n}$ such that $\pi=\pi_1w\pi_2$ and $\ell(\pi)=\ell(\pi_1)+\ell(w)+\ell(w_2)$. Therefore we have $\si_{\pi}=\si_{w_1}\si_{w}\si_{\pi_2}$ and then $P_{\bk,n}\si_{\pi}P_{\bk,n}$ is proportional to $\si_w$ using Relations (\ref{rel-sym}). This shows that the above set is indeed a spanning set of the correct cardinality.

\begin{prop}\label{prop-PHP}
 The algebra $H_{\bk,n}(q)$ is isomorphic to $P_{\bk,n}H_{k_1+\dots+k_n}(q)P_{\bk,n}$.
\end{prop}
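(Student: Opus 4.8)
The plan is to construct an explicit isomorphism $\Phi\colon H_{\bk,n}(q)\to P_{\bk,n}H_{k_1+\dots+k_n}(q)P_{\bk,n}$ by associating to each fused braid an element of the conjugated Hecke algebra, mimicking the proof of the $q^2=1$ case (the proposition on double cosets). Given a fused braid $b$, I would first ``separate'' the strands attached to each ellipse: since each of the top (and bottom) ellipses carries $k_a$ strands attached next to each other, a diagram for $b$ can be drawn as an honest braid on $k_1+\dots+k_n$ strands, simply by replacing the $a$-th ellipse by the $k_a$ points it glues together. Promoting all crossings to positive ones and reading the result in $H_{k_1+\dots+k_n}(q)$ gives an element $\beta_b$; then set $\Phi(b):=P_{\bk,n}\,\beta_b\,P_{\bk,n}$. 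One checks that this is independent of the chosen braid diagram for $b$ precisely because of the idempotent relations (ii): moving a crossing between two strands attached to the same ellipse past that ellipse is, on the Hecke side, the relation $\sigma_iP_{\bk,n}=qP_{\bk,n}$ (resp. $P_{\bk,n}\sigma_i=qP_{\bk,n}$), so the $P_{\bk,n}$'s on the two sides absorb exactly the ambiguity in how the $k_a$ strands enter and leave each ellipse. Since any two braid diagrams for the ``same'' fused braid differ (up to homotopy, handled already by the Hecke relation (i)) by such moves near the ellipses, $\Phi$ is well-defined and linear.

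Next I would check that $\Phi$ respects the multiplications. The product $bb'$ in $H_{\bk,n}(q)$ is defined by concatenating, then replacing each middle ellipse by the $q$-symmetriser $P_{k_a}$. Reading this through the strand-separation map, concatenation of fused braids becomes concatenation of the underlying honest braids, i.e. multiplication $\beta_b\beta_{b'}$ in $H_{k_1+\dots+k_n}(q)$, and inserting $P_{k_a}$ at the $a$-th middle ellipse is exactly inserting the corresponding tensor factor of $P_{\bk,n}$ in the middle. Hence $\Phi(bb')=P_{\bk,n}\beta_b\,P_{\bk,n}\,\beta_{b'}P_{\bk,n}=\Phi(b)\Phi(b')$, using $P_{\bk,n}^2=P_{\bk,n}$; and $\Phi$ sends the trivial fused braid (vertical strands) to $P_{\bk,n}$, the unit of the target. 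Surjectivity is then immediate: every $P_{\bk,n}\sigma_w P_{\bk,n}$ is hit by the fused braid obtained from a minimal braid diagram of $w$ with its ellipses glued.

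For injectivity I would compare dimensions, or equivalently exhibit a basis of $H_{\bk,n}(q)$ mapping to the canonical basis $\{P_{\bk,n}\sigma_wP_{\bk,n}\mid w\in\mS^{\bk,n}\backslash\mS_{k_1+\dots+k_n}/\mS^{\bk,n}\}$ of the target. The natural candidate is the set of ``standard'' fused braids $f_w$ obtained from the distinguished double coset representatives $w$ by promoting the minimal-crossing permutation diagram of $w$ to a positive braid, glued into ellipses — the evident deformation of the standard basis (\ref{basis-clas}) of $H_{\bk,n}(1)$. One must show these $f_w$ span $H_{\bk,n}(q)$: given an arbitrary fused braid, use the Hecke relation (i) to turn all negative crossings positive, then use the idempotent relations (ii) to pull crossings away from the ellipses until the $k_a$ strands at each ellipse no longer cross near it; what remains is, up to a power of $q$, a standard fused braid (this is the deformation of the diagrammatic remark about distinguished double coset representatives at the end of Subsection \ref{subsec-doublecoset}). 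Since $|\mS^{\bk,n}\backslash\mS_{k_1+\dots+k_n}/\mS^{\bk,n}|$ is the dimension of the target and $\Phi$ is a surjection from a space spanned by this many elements, $\Phi$ is a linear isomorphism, hence an algebra isomorphism.

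\textbf{Main obstacle.} The delicate point is the well-definedness of $\Phi$ and, dually, the spanning claim for the $f_w$: both require a careful argument that the combined effect of the Hecke relation (i) (which handles homotopy of braids and turns negative crossings positive) and the idempotent relations (ii) (which handle the ambiguity of attachment points on the ellipses) is \emph{exactly} matched by conjugation by $P_{\bk,n}$ in the Hecke algebra — no more, no less. In other words, one needs that two honest braid diagrams give the same class in $H_{\bk,n}(q)$ if and only if their images in $H_{k_1+\dots+k_n}(q)$ become equal after left and right multiplication by $P_{\bk,n}$. The ``only if'' direction is the content of relations (i)--(ii) translated via $\sigma_iP_{\bk,n}=qP_{\bk,n}$; the ``if'' direction requires knowing that the ambiguity really is generated by crossings adjacent to the ellipses, which is where one invokes the parabolic-subgroup/reduced-word combinatorics recalled for $\mS^{\bk,n}$. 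I expect this bookkeeping — essentially a careful isotopy-plus-relations normal form argument for fused braids — to be the heart of the proof, with everything else (multiplicativity, surjectivity, dimension count) being routine once the normal form is in hand.
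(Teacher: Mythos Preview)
Your proposal is correct and follows essentially the same route as the paper: define the map by replacing ellipses with the idempotents $P_{k_a}$ (equivalently, sandwich the underlying braid by $P_{\bk,n}$), check well-definedness via relations (i)--(ii) and the property $\sigma_iP_{\bk,n}=qP_{\bk,n}$, observe multiplicativity and surjectivity, and conclude injectivity by exhibiting a spanning set indexed by distinguished double coset representatives.

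One comment: your ``Main obstacle'' overstates the difficulty. You never need the ``if'' direction of the biconditional you wrote down. Well-definedness of $\Phi$ only needs that equivalent fused braids map to the same element (the ``only if'' direction), and injectivity comes purely from the dimension count: a surjection from a space spanned by $|\mS^{\bk,n}\backslash\mS_{k_1+\dots+k_n}/\mS^{\bk,n}|$ elements onto a space of that dimension is an isomorphism. The spanning argument itself is exactly what the paper does: the Hecke relation reduces any fused braid to a combination of $[\sigma_\pi]$'s, and the parabolic factorization $\sigma_\pi=\sigma_{\pi_1}\sigma_w\sigma_{\pi_2}$ with lengths additive, combined with the idempotent relations, makes each $[\sigma_\pi]$ proportional to some $[\sigma_w]$ with $w$ a distinguished representative. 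So the ``heart of the proof'' you anticipate is in fact routine once the parabolic double-coset combinatorics is in hand.
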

\begin{proof}
Starting from a fused braid, replace the $a$-th top ellipse by the idempotent $P_{k_a}$ of $H_{k_a}(q)$. More precisely, we mean that for any basis element $\si_w$ of $H_{k_a}(q)$, we can replace the $a$-th top ellipse by $\si_w$ (in its diagrammatic form) by plugging the $k_a$-th strand starting from the ellipse to the $k_a$ bottom dots of $\si_w$. So we do this for any $\si_w$ in $H_{k_a}(q)$, and we make the sum with the same coefficients as in the definition (\ref{def-P}) of $P_{k_a}$. Similarly, we can replace the $a$-th bottom ellipse by the idempotent $P_{k_a}$ of $H_{k_a}(q)$. Doing this for any top and bottom ellipse, we obtain an element of $H_{k_1+\dots+k_n}(q)$, which is more precisely in $P_{\bk,n}H_{k_1+\dots+k_n}(q)P_{\bk,n}$ by construction. We claim that this procedure produces a well-defined map
\[\phi\ :\ H_{\bk,n}(q)\to P_{\bk,n}H_{k_1+\dots+k_n}(q)P_{\bk,n}\]
which is moreover a morphism of algebras.

First we have to check that the relations defining the vector space $H_{\bk,n}(q)$ in Definition \ref{vector-fused-braids} are preserved. The local Hecke relation is obviously preserved since it is also true by definition in $H_{k_1+\dots+k_n}(q)$. The fact that the idempotent relations are also preserved follows immediately from the properties (\ref{rel-sym}) of the $q$-symmetriser.

Then the multiplication in $H_{\bk,n}(q)$ is also preserved by the map $\phi$ since by construction  it corresponds to the multiplication in $P_{\bk,n}H_{k_1+\dots+k_n}(q)P_{\bk,n}$.

\medskip
Now let $\pi\in \mS_{k_1+\dots+k_n}$ and consider $\sigma_{\pi}\in H_{k_1+\dots+k_n}(q)$ in its diagrammatic representation. Then, gluing the $k_1$ first dots into an ellipse, and the $k_2$ next dots into an ellipse and so on, we obtain the diagram of a fused braid. We denote $[\si_{\pi}]$ the corresponding element of $H_{\bk,n}(q)$. By the map $\phi$, the element $[\si_{\pi}]$ is sent to $\si_{\pi}$, which shows the surjectivity of $\phi$.

It remains to show that $H_{\bk,n}(q)$ is of dimension less than the dimension of $P_{\bk,n}H_{k_1+\dots+k_n}(q)P_{\bk,n}$. To see this, we show that the set 
\[\{[\si_w]\ |\ w\in \mS^{\bk,n}\backslash\mS_{k_1+\dots+k_n}/\mS^{\bk,n}\}\]
is a spanning set of  $H_{\bk,n}(q)$. First we have that $\{[\si_{\pi}]\ |\ \pi\in \mS_{k_1+\dots+k_n}\}$ is a spanning set since we can apply the local Hecke relation inside $H_{\bk,n}(q)$. Moreover, for any $\pi\in \mS_{k_1+\dots+k_n}$, there exists $w\in \mS^{\bk,n}\backslash\mS_{k_1+\dots+k_n}/\mS^{\bk,n}$ and $\pi_1,\pi_2\in \mS^{\bk,n}$ such that $\pi=\pi_1w\pi_2$ and $\ell(\pi)=\ell(\pi_1)+\ell(w)+\ell(w_2)$. Therefore we have $\si_{\pi}=\si_{w_1}\si_{w}\si_{\pi_2}$. Thus from the idempotent relations in Definition \ref{vector-fused-braids}, it is clear that $[\si_{\pi}]$ is proportional to $[\si_{w}]$, concluding the proof of the proposition.
\end{proof}

\paragraph{Chain property.} By convention, we consider that $H_{\bk,0}(q)=\CC$. Note also that $H_{\bk,1}(q)=\CC$. The algebras $\{H_{\bk,n}(q)\}_{n\geq0}$ form a chain of algebras:
\begin{equation}\label{chain-PHP}
\CC=H_{\bk,0}(q)\subset H_{\bk,1}(q)\subset H_{\bk,2}(q)\subset\dots\dots \subset H_{\bk,n}(q)\subset H_{\bk,n+1}(q)\subset \dots\dots\ ,
\end{equation}
generalising the chain (\ref{chain-H}). Here, using Proposition \ref{prop-PHP}, the natural inclusions of algebras are given by 
$$H_{\bk,n}(q)\ni P_{\bk,n}xP_{\bk,n}\mapsto P_{\bk,n+1}xP_{\bk,n+1}\in H_{\bk,n+1}(q)\ .$$
Indeed an element $x\in H_{k_1+\dots+k_n}(q)$ can be seen as the element $x\otimes 1$ in $H_{k_1+\dots+k_n}(q)\otimes H_{k_{n+1}}(q)\subset H_{k_1+\dots+k_n+k_{n+1}}(q)$. Then, in $ H_{k_1+\dots+k_n+k_{n+1}}(q)$ the element $P_{\bk,n+1}xP_{\bk,n+1}$  can be seen as $P_{\bk,n}xP_{\bk,n}\otimes P_{k_{n+1}}$. The subalgebra consisting of these elements is clearly isomorphic to $P_{\bk,n}H_{k_1+\dots+k_n}(q)P_{\bk,n}=H_{\bk,n}(q)$.

Diagrammatically, the chain property reads naturally as follows: the algebra $H_{\bk,n}(q)$ is embedded in $H_{\bk,n+1}(q)$ by considering only the fused braids in $H_{\bk,n+1}(q)$ such that all strands starting from top ellipse $n+1$ go vertically and without crossings to the bottom ellipse $n+1$.

We note that there is more generally a notion of parabolic subalgebras for $H_{\bk,n}(q)$ with natural diagrammatic and algebraic formulations that we leave to the reader.

\paragraph{Standard basis and deformation.} Recall the definition of elements $[\si_w]\in H_{\bk,n}(q)$ for any $w\in\mS_{k_1+\dots+k_n}$ that we used during the proof of Proposition \ref{prop-PHP}. Diagrammatically, $[\si_w]$ was obtained by putting the diagram of $\si_w$ between the two lines of ellipses, or equivalently, by gluing some dots into ellipses in the diagram of $\si_w$. For simplicity, we denote $F_w$ the corresponding element of $H_{\bk,n}(q)$. We proved that
\begin{equation}\label{basis-PHP}\{F_w\ |\ w\in \mS^{\bk,n}\backslash\mS_{k_1+\dots+k_n}/\mS^{\bk,n}\}
\end{equation}
is a basis of $H_{\bk,n}(q)$ ($w$ runs over a set of distinguished double coset representatives as in Subsection \ref{subsec-doublecoset}).

Finally we have that the two definitions (the one in the preceding section and the one in this section if $q=1$) of the algebra $H_{\bk,n}(1)$ result in the same algebras so that there is no ambiguity. The fused Hecke algebra $H_{\bk,n}(q)$ is a flat deformation of the algebra $H_{\bk,n}(1)$ of fused permutations.

We note that in the diagrammatic point of view, the basis (\ref{basis-PHP}) of $H_{\bk,n}(q)$ is naturally obtained from the basis (\ref{basis-clas}) of $H_{\bk,n}(1)$ in the following way. Namely, take a basis element $f_w$ of $H_{\bk,n}(1)$, where $w\in \mS^{\bk,n}\backslash\mS_{k_1+\dots+k_n}/\mS^{\bk,n}$, and consider a diagram representing it with a minimal number of intersections: namely, the $k_a$ edges leaving the $a$-th top ellipse do not intersect and similarly for the $k_a$ edges arriving at the $a$-th bottom ellipse (this for $a=1,\dots,n$). Then, promote each intersection as a positive crossing. The resulting element can be seen as a fused braid of $H_{\bk,n}(q)$. This element is $F_w$.

\section{Classical Schur--Weyl duality}\label{sec-SW}

Our goal in this section is to recall the well-known Schur--Weyl duality between $U_q(gl_N)$ and the Hecke algebra. The classical results presented in this section will serve as a model for the formulation of the subsequent results. Moreover they will also be used in most of our proofs. We refer to \cite[Chap. 9]{GW} for the classical Schur--Weyl duality for $U(gl_N)$ and to \cite[\S 8.6]{KS} for its standard analogue for $U_q(gl_N)$ and the relevant definitions.

Recall that for a representation $\rho\ :\ U_q(gl_N)\to\text{End}(V)$ on a vector space $V$, what we call the centraliser of the action of $U_q(gl_N)$ on $V$ is the following subalgebra of $\text{End}(V)$:
\[\text{End}_{U_q(gl_N)}(V)=\{\phi\in\text{End}(V)\ |\ \phi\circ\rho(x)=\rho(x)\circ\phi\,,\forall x\in U_q(gl_N)\}.\]

\paragraph{Representations of $U_q(gl_N)$.} Let $N>1$ and denote $gl_N$ the Lie algebra of $N\times N$ matrices and $sl_N$ the Lie subalgebra of traceless matrices. Let $U_q(gl_N)$ (respectively, $U_q(sl_N)$) denote the standard deformation of the universal enveloping algebra $U(gl_N)$ of $gl_N$ (respectively, of $sl_N$). If $q=1$, $U_1(gl_N)=U(gl_N)$ and $U_1(sl_N)=U(sl_N)$.

Let $\lambda$ be a partition with a number of non-zero parts less or equal to $N$, that is, let $\lambda=(\lambda_1,\dots,\lambda_N)$ with $\lambda_1\geq\dots\geq\lambda_N\geq0$. We denote $L^N_{\lambda}$ the irreducible highest-weight representation of $U_q(gl_N)$ with highest-weight corresponding to $\lambda$.

\begin{rem}
We formulate all the results in this paper for $U_q(gl_N)$. Nevertheless, one can replace everywhere $gl_N$ by $sl_N$ without further modification. We recall for convenience of the reader the following facts. The restriction of $L^N_{\lambda}$ to $U_q(sl_N)$ remains an irreducible highest-weight representation. We still denote its restriction $L^N_{\lambda}$. Then, for a partition $\lambda=(\lambda_1,\dots,\lambda_N)$, the associated highest-weight of $sl_N$ corresponds to the consecutive differences $\lambda_i-\lambda_{i+1}$, for $i=1,\dots,N-1$, and the associated representation $L^N_{\lambda}$ of $U_q(sl_N)$ only depends on these differences. In particular, if $\lambda_N>0$ the representation $L^N_{\lambda}$ is equivalent to the representation $L^N_{\lambda'}$, where $\lambda'=(\lambda_1-1,\dots,\lambda_N-1)$. In terms of Young diagrams (their definition will be recalled later), this corresponds to the possibility of deleting columns of length $N$. This is valid only for $U_q(sl_N)$ and therefore we will never apply it in this paper.
\end{rem}

\begin{exa}
\begin{itemize}
\item Let $\lambda$ be the partition $(1)$. The representation $L^N_{(1)}$ is the vector representation of $U_q(gl_N)$ of dimension $N$ (the deformation of the natural representation of $U(gl_N)$ by $N\times N$ matrices).
\item If $\lambda=(k)$ is the partition consisting of a single integer $k>0$ then $L^N_{(k)}$ is the deformation of the $k$-th symmetric tensor power of the vector representation $L^N_{(1)}$.
\item If $N=2$ and $\lambda=(k)$, then $L^2_{(k)}$ is an irreducible representation of $U_q(gl_2)$ of dimension $k+1$. Its restriction to $U_q(sl_2)$ is the unique irreducible representation of $U_q(sl_2)$ of dimension $k+1$. It is the so-called ``spin $\frac{k}{2}$'' representation of $U_q(sl_2)$.
\end{itemize}
\end{exa}

\subsection{Classical Schur--Weyl duality for $U_q(gl_N)$}

Let $n>0$. We consider the representation of $U_q(gl_N)$ on the tensor power $(L^N_{(1)})^{\otimes n}$. The assertions concerning this representation of $U_q(gl_N)$ and its centraliser, commonly referred as the (quantum) Schur--Weyl duality, can be summarised as follows.

First, for any finite-dimensional vector space $V$, there is a representation of the Hecke algebra $H_n(q)$ on $V^{\otimes n}$. To give the action, let $(e_1,\dots,e_d)$ be a basis of $V$ and define the linear operator $\check{R}\in\text{End}(V\otimes V)$ by
\begin{equation}\label{hR-Hn-s}
\check{R}(e_i\otimes e_j):=\left\{\begin{array}{ll}
q\,e_i\otimes e_j\ \  & \text{if $i=j$,}\\[0.8em]
e_j\otimes e_i+(q-q^{-1})\,e_i\otimes e_j & \text{if $i<j$,}\\[0.4em]
e_j\otimes e_i & \text{if $i>j$.}
\end{array}\right.\ \ \ \text{where $i,j=1,\dots,d$.}
\end{equation}
Then the following map from the set of generators of $H_n(q)$ to $\text{End}(V^{\otimes n})$:
\begin{equation}\label{SW-Hn}
\sigma_i \mapsto \check{R}_{i,i+1}\ \quad\text{for $i=1,\dots,n-1$\,,}
\end{equation}
extends to an algebra homomorphism (where $\check{R}_{i,i+1}$ is the operator $\text{Id}_{V^{\otimes i-1}}\otimes\check{R}\otimes\text{Id}_{V^{\otimes n-i-1}}$).

We apply this construction for $V=L^N_{(1)}$ carrying a realisation of the vector representation of $U_q(gl_N)$ (see for example \cite{PdA} for the conventions and normalisations we are using here about $U_q(gl_N)$, its action on $L^N_{(1)}$ and its coproduct).
\begin{defi}\label{def-oH}
We denote by $\oH^N_n(q)$ the image of the map $H_n(q)\to\text{End}\bigl((L^N_{(1)})^{\otimes n}\bigr)$ and by $I_n^N$ its kernel.
\end{defi}
In the theorem below, $\lambda\vdash n$ means that $\lambda$ is a partition of $n$, that is, $\lambda=(\lambda_1,\dots,\lambda_l)$ is a family of  integers such that $\lambda_1\geq\lambda_2\geq\dots\geq\lambda_l\geq 0$ and $\lambda_1+\dots+\lambda_l=n$. The number $l(\lambda)$ is the number of non-zero parts of $\lambda$.
\begin{thm}[Schur--Weyl duality]\label{theo-SW}
The algebra $\oH^N_n(q)$ coincides with the centraliser of the action of $U_q(gl_N)$ on $(L^N_{(1)})^{\otimes n}$. Moreover, as a $U_q(gl_N)\otimes H_n(q)$-module, the space $(L^N_{(1)})^{\otimes n}$ decomposes as follows:
\begin{equation}\label{SW}
(L^N_{(1)})^{\otimes n}=\bigoplus_{\begin{array}{c}
\\[-1.6em]
\scriptstyle{\lambda\vdash n} \\[-0.4em]
\scriptstyle{l(\lambda)\leq N}
\end{array}} L^N_{\lambda}\otimes V_{\lambda}\ ,
\end{equation}
where the $V_{\lambda}$'s are pairwise non-equivalent irreducible representations of $H_n(q)$.
\end{thm}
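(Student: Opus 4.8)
The plan is to follow the classical route, deducing everything from the double-centraliser theorem together with the representation theory of $H_n(q)$ and of $U_q(gl_N)$, both of which we may assume known. First I would recall that $U_q(gl_N)$ is a quasi-triangular Hopf algebra and that the braiding $\check R$ on $L^N_{(1)}\otimes L^N_{(1)}$ (which in the chosen basis is exactly the operator \eqref{hR-Hn-s} with $d=N$) is a $U_q(gl_N)$-module isomorphism satisfying the Hecke relation \eqref{rel-H}; this is the standard computation with the universal $R$-matrix, and it gives that the map \eqref{SW-Hn} lands inside $\mathrm{End}_{U_q(gl_N)}\bigl((L^N_{(1)})^{\otimes n}\bigr)$. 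Hence $\oH^N_n(q)\subseteq \mathrm{End}_{U_q(gl_N)}\bigl((L^N_{(1)})^{\otimes n}\bigr)$; the content of the theorem is the reverse inclusion and the explicit decomposition.

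Next I would invoke semisimplicity. Since $q$ is not a root of unity, $H_n(q)$ is semisimple with irreducibles indexed by partitions $\lambda\vdash n$, and the category of finite-dimensional $U_q(gl_N)$-modules appearing in $(L^N_{(1)})^{\otimes n}$ is semisimple with the $L^N_\mu$ ($\mu$ a partition, $l(\mu)\le N$, $|\mu|=n$) as irreducibles. Write the isotypic decomposition as a $U_q(gl_N)$-module,
\[
(L^N_{(1)})^{\otimes n}=\bigoplus_{\mu}\, L^N_\mu\otimes M_\mu\,,
\]
where $M_\mu=\mathrm{Hom}_{U_q(gl_N)}\bigl(L^N_\mu,(L^N_{(1)})^{\otimes n}\bigr)$ carries an action of the centraliser $\mathrm{End}_{U_q(gl_N)}\bigl((L^N_{(1)})^{\otimes n}\bigr)$, and by general nonsense (Schur's lemma plus semisimplicity) the centraliser is exactly $\bigoplus_\mu \mathrm{End}(M_\mu)$. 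It therefore suffices to prove two things: (a) the image of $H_n(q)$ already fills up each $\mathrm{End}(M_\mu)$ for which $M_\mu\neq 0$, and (b) $M_\mu\neq 0$ precisely when $l(\mu)\le N$, together with $\dim M_\mu=\dim V_\mu$, the $H_n(q)$-irreducible labelled by $\mu$, the two actions being intertwined so that $M_\mu\cong V_\mu$ as $H_n(q)$-modules.

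For (b), the multiplicity $\dim M_\mu$ of $L^N_\mu$ in $(L^N_{(1)})^{\otimes n}$ is computed by iterated application of the Pieri/branching rule for adding a box, which yields exactly the number of standard Young tableaux of shape $\mu$ — i.e. $\dim V_\mu$ — and this count is nonzero exactly when $\mu$ has at most $N$ rows. For (a) I would compare dimensions: on one hand $\dim \oH^N_n(q)=\dim H_n(q)-\dim I_n^N$; on the other, the candidate centraliser $\bigoplus_{l(\mu)\le N}\mathrm{End}(M_\mu)$ has dimension $\sum_{l(\mu)\le N}(\dim V_\mu)^2$. Using $\dim H_n(q)=\sum_{\mu\vdash n}(\dim V_\mu)^2$, it is enough to show that $H_n(q)$ acts faithfully on $M_\mu$ for $l(\mu)\le N$ and annihilates the block of $\mu$ for $l(\mu)>N$; faithfulness on the relevant blocks follows because the $U_q(gl_N)$-module map $H_n(q)\to\mathrm{End}\bigl((L^N_{(1)})^{\otimes n}\bigr)$ restricted to a block $\mathrm{End}(V_\mu)$ of the semisimple algebra $H_n(q)$ is either zero or injective (simplicity of the block), and it is nonzero exactly when $L^N_\mu$ occurs. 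Pairwise non-equivalence of the $V_\mu$ is then automatic from the classification of $H_n(q)$-irreducibles.

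I expect the genuinely non-formal step to be the first one: checking that $\check R$ is a $U_q(gl_N)$-endomorphism of $L^N_{(1)}\otimes L^N_{(1)}$ satisfying the quadratic Hecke relation, since this is where the specific coproduct and $R$-matrix of $U_q(gl_N)$ enter; everything afterwards is the standard double-centraliser formalism plus the branching rule. Alternatively, to sidestep the explicit $R$-matrix computation, one can argue structurally: $\mathrm{End}_{U_q(gl_N)}(L^N_{(1)}\otimes L^N_{(1)})$ is two-dimensional (as $L^N_{(1)}\otimes L^N_{(1)}=L^N_{(2)}\oplus L^N_{(1,1)}$ for $N\ge 2$), so any $U_q(gl_N)$-endomorphism with two distinct eigenvalues generates it; normalising the braiding so its eigenvalues are $q$ and $-q^{-1}$ forces the Hecke relation, and the braid relation for $\check R_{i,i+1}$ is a consequence of the hexagon axiom for the quasi-triangular structure. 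Either way, once the homomorphism $H_n(q)\to\mathrm{End}\bigl((L^N_{(1)})^{\otimes n}\bigr)$ is in place, the rest of the proof is the dimension count sketched above.
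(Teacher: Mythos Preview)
The paper does not actually prove Theorem~\ref{theo-SW}; it states the result as classical and refers the reader to \cite[Chap.~9]{GW} for $U(gl_N)$ and \cite[\S 8.6]{KS} for the quantum case. Section~\ref{sec-SW} is explicitly framed as a recall of known material to be used later as a black box (most notably in the proof of Theorem~\ref{theo-SWk} and Proposition~\ref{prop-SWk-ideal}).

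Your sketch is a correct outline of the standard double-centraliser proof, and it matches what one finds in the cited references. The one place I would tighten is your step~(a): the sentence ``faithfulness on the relevant blocks follows because \dots\ restricted to a block $\mathrm{End}(V_\mu)$ \dots\ is either zero or injective (simplicity of the block), and it is nonzero exactly when $L^N_\mu$ occurs'' is circular as written. Knowing that $L^N_\mu$ occurs only tells you $M_\mu\neq 0$; you still need that the $H_n(q)$-action on $M_\mu$ realises the specific irreducible $V_\mu$ (not some other $V_\nu$), which is exactly what you are trying to conclude. The clean way to close this is to use the Jucys--Murphy elements (or the spectrum of the $\check R_{i,i+1}$ on highest-weight vectors) to identify which $H_n(q)$-block acts nontrivially on which $M_\mu$, or to run the inductive branching argument you mention in~(b) simultaneously for both algebras so that the bijection $\mu\leftrightarrow V_\mu$ is forced by compatibility with restriction. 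Once that matching is pinned down, the dimension count you describe goes through.
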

A construction of the representations $V_{\lambda}$ of $H_n(q)$ for $\lambda\vdash n$ will be recalled in the next subsection.

In particular, the Schur--Weyl duality provides a description of the decomposition of the tensor product $(L^N_{(1)})^{\otimes n}$ into a direct sum of irreducible $U_q(gl_N)$-modules:
\[(L^N_{(1)})^{\otimes n}=\bigoplus_{\begin{array}{c}
\\[-1.6em]
\scriptstyle{\lambda\vdash n} \\[-0.4em]
\scriptstyle{l(\lambda)\leq N}
\end{array}} (L^N_{\lambda})^{\oplus \dim(V_{\lambda})}\ ,
\]
or, in words, the multiplicity of the $U_q(gl_N)$-module $L^N_{\lambda}$ in the decomposition is equal to the dimension of the $H_n(q)$-module $V_{\lambda}$ if $\lambda\vdash n$ and is $0$ otherwise.

Concerning a description of the centraliser of the action of $U_q(gl_N)$ on $(L^N_{(1)})^{\otimes n}$, the preceding theorem expresses it as an homomorphic image of the Hecke algebra $H_n(q)$. So a more precise understanding of this centraliser will be obtained through a description of the kernel $I_n^N$ of the action of $H_n(q)$. In order to give such a description (and also for further use), we need to recall the semisimple representation theory of Hecke algebras.

\subsection{Representation theory of the chain of Hecke algebras $H_n(q)$}

Let $n\geq 0$. We recall the well-known representation theory of the Hecke algebras $H_n(q)$ in a form that we will use to study the representation theory of the algebras $H_{\bk,n}(q)$. We refer to, \emph{e.g.}, \cite[\S 10]{GP} or \cite[\S 5]{KT}.

\paragraph{Combinatorics of partitions.} Let $\lambda\vdash n$ be a partition of $n$, that is, $\lambda=(\lambda_1,\dots,\lambda_l)$ is a family of  integers such that $\lambda_1\geq\lambda_2\geq\dots\geq\lambda_l\geq 0$ and $\lambda_1+\dots+\lambda_l=n$. We say that $\lambda$ is a partition {\em of size} $n$ and set $|\lambda|:=n$. The number $l(\lambda)$ of non-zero parts is called the {\em length} of $\lambda$. By definition, the empty partition $\lambda=\emptyset$ is a partition of size 0 and of length 0.

 A pair $(x,y)\in\mathbb{Z}^2$ is called a {\em node}. For a node $\theta=(x,y)$, the classical content of $\theta$ is denoted by $\cc(\theta)$ and is defined by $\cc(\theta):=y-x$\,. The $q$-content, or simply the content, of the node $\theta=(x,y)$ is $\qc(\theta):=q^{2\cc(\theta)}=q^{2(y-x)}$. 
 
The Young diagram of $\lambda=(\lambda_1,\dots,\lambda_l)$ is the set of nodes $(x,y)$ such that $x\in\{1,\dots,l\}$ and $y\in\{1,\dots,\lambda_x\}$. The Young diagram of $\lambda$ will be seen as a left-justified array of $l$ rows such that the $j$-th row contains $\lambda_j$ nodes for all $j=1,\dots,l$ (a node will be pictured by an empty box). We number the rows from top to bottom. 

A skew partition consists of two partitions $\mu,\lambda$ such that, as sets of nodes, $\mu\subset\lambda$. It is commonly denoted by $\lambda/\mu$. The Young diagram of $\lambda/\mu$ consists of the sets of nodes which are in $\lambda$ and not in $\mu$. The size $|\lambda/\mu|$ of a skew partition $\lambda/\mu$ is $|\lambda|-|\mu|$ and it is the number of nodes in the Young diagram. As an example,
\[\begin{array}{cccc}
 \cdot & \hspace{-0.35cm}\cdot & \hspace{-0.35cm}\cdot & \hspace{-0.35cm}\fbox{\phantom{\scriptsize{$2$}}} \\[-0.2em]
\cdot & \hspace{-0.35cm}\cdot & \hspace{-0.35cm}\fbox{\phantom{\scriptsize{$2$}}} & \hspace{-0.35cm}\fbox{\phantom{\scriptsize{$2$}}}\\[-0.2em]
\fbox{\phantom{\scriptsize{$2$}}} &\hspace{-0.35cm}\fbox{\phantom{\scriptsize{$2$}}} &\hspace{-0.35cm}\fbox{\phantom{\scriptsize{$2$}}} &\\[-0.2em]
\fbox{\phantom{\scriptsize{$2$}}} &\hspace{-0.35cm}\fbox{\phantom{\scriptsize{$2$}}} & &
\end{array}\]
is the Young diagram corresponding to $\lambda=(4,4,3,2)$ and $\mu=(3,2)$. We will make no distinction between a skew partition and its Young diagram; this will not cause any confusion here. We will say that $(x,y)$ is a node of $\lambda/\mu$, or $(x,y)\in\lambda/\mu$, if $(x,y)$ is a node in the Young diagram of $\lambda/\mu$.

A Young tableau of shape $\lambda/\mu$ is a map from the set of nodes of $\lambda/\mu$ to $\mathbb{Z}_{\geq1}$. It is represented by filling the nodes of the $\lambda/\mu$ by numbers in $\mathbb{Z}_{\geq1}$. The size of a Young tableau is the size of its shape.

A Young tableau of size $n$ is called standard if the map from the set of nodes is a bijection with $\{1,\dots,n\}$ and if moreover the numbers are strictly ascending along rows and down columns of the Young diagram. We set
\[\STab(\lambda/\mu):=\{\text{standard Young tableaux of shape $\lambda/\mu$}\}\ .\]

Let $\bT$ be a standard Young tableau of size $n$ and let $\theta_i$ be the node of $\bT$ with number $i$. We set $\cc_i(\bT):=\cc(\theta_i)$ and $\qc_i(\bT):=\qc(\theta_i)$ for $i=1,\dots,n$. Here are two examples of standard Young tableaux of size $4$ with their sequence of contents (the shape of the second one is a skew partition):
\[\bT=\begin{array}{ccc}
\fbox{\scriptsize{$1$}} & \hspace{-0.35cm}\fbox{\scriptsize{$2$}} & \hspace{-0.35cm}\fbox{\scriptsize{$4$}} \\[-0.2em]
\fbox{\scriptsize{$3$}} & &
\end{array}\ : \qquad \qc_1(\bT)=1\,,\ \ \ \qc_2(\bT)=q^2\,,\ \ \ \qc_3(\bT)=q^{-2}\,,\ \ \ \qc_4(\bT)=q^4\,.\]
\[\bT=\begin{array}{ccc}
\cdot & \hspace{-0.35cm}\fbox{\scriptsize{$2$}} & \hspace{-0.35cm}\fbox{\scriptsize{$4$}} \\[-0.2em]
\fbox{\scriptsize{$1$}} & \hspace{-0.35cm}\fbox{\scriptsize{$3$}} &
\end{array}\ : \qquad \qc_1(\bT)=q^{-2}\,,\ \ \ \qc_2(\bT)=q^2\,,\ \ \ \qc_3(\bT)=1\,,\ \ \ \qc_4(\bT)=q^4\,.\]

Let $\bT$ be a standard Young tableau of size $n$ and $i\in\{1,\dots,n-1\}$. We denote $s_i(\bT)$ the Young tableau obtained from $\bT$ by exchanging $i$ and $i+1$ (note that $s_i(\bT)$ is not necessarily standard). We record here a fundamental fact concerning standard Young tableaux of a given shape. This is surely well-known to experts, however we provide a sketch of a proof for convenience of the reader (see also \cite[Proposition 6.5]{PdA2} for a slightly different formulation).
\begin{lem}\label{lem-tableaux}
Let $\lambda/\mu$ be a skew partition. Any two standard Young tableaux of shape $\lambda/\mu$ can be obtained one from another by a sequence of elementary transpositions $s_{i_1},\dots,s_{i_k}$ such that the tableaux remain standard after every step.
\end{lem}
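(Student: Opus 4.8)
The plan is to prove the statement by induction on the size $n=|\lambda/\mu|$, using the natural partial order on standard Young tableaux given by the ``last box'' position. First I would fix the skew partition $\lambda/\mu$ and, given a standard Young tableau $\bT$ of this shape, look at the node $\theta$ containing the largest entry $n$; this node $\theta$ is necessarily a \emph{removable} corner of $\lambda/\mu$ (removing it leaves a skew Young diagram $\lambda'/\mu$ with $|\lambda'/\mu|=n-1$, where $\lambda'$ is $\lambda$ with that corner deleted), and deleting $n$ from $\bT$ yields a standard Young tableau $\bT^-$ of shape $\lambda'/\mu$. Conversely any standard tableau of shape $\lambda'/\mu$ (for a removable corner giving $\lambda'$) extends by placing $n$ in the deleted node.

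The key step is then to show that given two standard Young tableaux $\bT$ and $\bS$ of shape $\lambda/\mu$, with $n$ sitting in corners $\theta_{\bT}$ and $\theta_{\bS}$ respectively, one can move $n$ from $\theta_{\bT}$ to $\theta_{\bS}$ through a sequence of elementary transpositions $s_i$ keeping standardness at every step. Concretely, if $\theta_{\bT}\neq\theta_{\bS}$, pick any removable corner $\theta$ of $\lambda/\mu$ distinct from $\theta_{\bT}$ (it exists because there are at least two corners when $\theta_{\bT}\neq\theta_{\bS}$; more carefully, one picks the corner toward which one wants to push $n$). The entry $n-1$, or more generally the entry currently in the cell that together with $n$'s cell forms a ``descent'' configuration, can be swapped with $n$: one checks that if $n$ is in corner $\theta$ and $n-1$ is \emph{not} in a cell adjacent (sharing an edge) to $\theta$ in a way forbidding the swap, then $s_{n-1}\bT$ is still standard, and iterating the transpositions $s_{n-1},s_{n-2},\dots$ lets one slide the largest entry along the ``staircase'' of corners until it reaches any desired removable corner. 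Once $\bT$ and $\bS$ have the entry $n$ in the same corner $\theta$, we delete it from both, obtaining standard tableaux of shape $\lambda'/\mu$ of size $n-1$, apply the induction hypothesis there, and then re-insert $n$ at the end (the transpositions used in the smaller tableau involve only indices $\le n-2$, so re-inserting $n$ in the fixed corner $\theta$ preserves standardness at each step).

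I would carry out the base case trivially (size $0$ or $1$: nothing to do), then organise the induction as: (1) reduce to the case where $n$ occupies the same corner in both tableaux, via the sliding argument above; (2) invoke the induction hypothesis on the skew shape with that corner removed; (3) conclude. The sliding lemma in (1) is the technical heart and should be stated and proved as a sub-claim: \emph{if $\bT$ is standard of shape $\lambda/\mu$ and $\theta,\theta'$ are two removable corners of $\lambda/\mu$ with $n\in\theta$, then there is a sequence of standardness-preserving elementary transpositions taking $\bT$ to a standard tableau with $n\in\theta'$}. Its proof is a direct check: between two consecutive removable corners one moves $n$ down the relevant rows/columns one step at a time, and at each step the transposed pair is $(i,i+1)$ with $i+1=n$ sitting in a cell whose neighbour above and to the left contains an entry $<n-1$ after the previous steps, so the swap keeps rows increasing and columns increasing.

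The main obstacle I anticipate is making the sliding argument (step 1) rigorous without excessive case analysis: one must verify that the intermediate tableaux obtained while moving the maximal entry remain standard, which requires tracking precisely where the entries $n-1, n-2,\dots$ sit relative to the path of corners. The cleanest way to handle this is to note that among all standard tableaux of shape $\lambda/\mu$ whose largest entry lies in a given corner $\theta'$, there is a distinguished one (say, fill by rows left-to-right, top-to-bottom, among the cells not forced to be $n$); one shows every $\bT$ can be brought to this distinguished form by standardness-preserving swaps, reducing the problem to connecting each $\bT$ to a canonical representative. This is essentially the observation that the ``standard Young tableau graph'' with edges given by valid $s_i$-moves is connected, and the inductive/corner argument above is precisely how one proves that connectivity.
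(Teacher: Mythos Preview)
Your inductive framework --- compare the corners $\theta_{\bT},\theta_{\bS}$ holding $n$, reduce to size $n-1$ when they agree --- is exactly the one the paper uses. The gap is in your treatment of the case $\theta_{\bT}\neq\theta_{\bS}$.

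The proposed sliding via $s_{n-1},s_{n-2},\dots$ does not do what you claim. Only $s_{n-1}$ moves the entry $n$; applying it sends $n$ to the former cell of $n-1$, and for the result to be standard that cell must itself be a removable corner of $\lambda/\mu$ (since $n$ is maximal it can have nothing to its right or below). There is no mechanism to continue: $s_{n-2}$ swaps $n-2$ and $n-1$ and leaves $n$ untouched. Your fallback (connect every tableau to a canonical one) is the connectivity statement itself, so it is circular.

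The paper's fix is short and removes the case analysis entirely. Since $\theta_{\bT}$ and $\theta_{\bS}$ are distinct removable corners of $\lambda/\mu$, they lie in non-adjacent diagonals, and $\theta_{\bT}$ is still a removable corner of the smaller skew shape $(\lambda\setminus\theta_{\bS})/\mu$. Hence some standard tableau of that smaller shape has $n-1$ in $\theta_{\bT}$. Apply the induction hypothesis to $\bS|_{\{1,\dots,n-1\}}$ (transpositions among $1,\dots,n-1$ only, so $n$ stays in $\theta_{\bS}$) to reach such a tableau; now $n-1$ sits in $\theta_{\bT}$ and $n$ in $\theta_{\bS}$, and a single $s_{n-1}$ is a valid standardness-preserving swap putting $n$ into $\theta_{\bT}$. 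One more use of the induction hypothesis finishes. In other words, rather than sliding $n$, use the induction hypothesis a second time to bring $n-1$ to the target corner first --- this is precisely the missing idea that makes your step (1) rigorous.
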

\begin{proof}
Let $\bT$ and $\bT'$ be two standard Young tableaux of the same shape. We reason by induction on the size $n$ and we assume that $n\geq 2$  (if $n=1$, there is nothing to prove). Denote $\theta$ (respectively, $\theta'$) the node of $\bT$ (respectively, $\bT'$) containing $n$.

If $\theta=\theta'$ then we can use the induction hypothesis to obtain a desired sequence of elementary transpositions acting only on $\{1,\dots,n-1\}$ and transforming $\bT$ into $\bT'$.

If $\theta\neq\theta'$ we note that since $\bT$ and $\bT'$ are standard, then $\theta$ must be the rightmost node of its line and the lowest node of its column, and similarly for $\theta'$. In particular, $\theta$ and $\theta'$ can not be in adjacent diagonals. It follows that the following procedure in three steps is possible:

$\bullet$ First, we use the induction hypothesis to obtain a sequence of elementary transpositions acting only on $\{1,\dots,n-1\}$ and transforming $\bT'$ into a standard Young tableau with $n-1$ in the node $\theta$.

$\bullet$ Then we apply the elementary transposition $(n-1,n)$ to obtain a standard Young tableau with $n$ in $\theta$.

$\bullet$ Finally, we use again the induction hypothesis to obtain a sequence of elementary transpositions acting only on $\{1,\dots,n-1\}$, which transforms this standard Young tableau into $\bT$.
\end{proof}

\paragraph{Representations of $H_n(q)$.} We refer to \cite{Ho,Ra}. Let $n\geq 1$ and $\lambda/\mu$ be a skew partition of size $n$. Let $V_{\lambda/\mu}$ be a $\CC$-vector space with a basis $\{v_{\bT}\}_{\bT\in\STab(\lambda/\mu)}$ indexed by the standard Young tableaux of shape $\lambda/\mu$. The following formula for the generators $\sigma_1,\dots,\sigma_{n-1}$ defines a representation of the Hecke algebra $H_n(q)$ on the space $V_{\lambda/\mu}$ (this can be checked with a straightforward verification of the defining formulas of $H_n(q)$):
\begin{equation}\label{rep-si}
\si_{i}(v_{\bT})=\frac{(q-q^{-1})\qc_{i+1}(\bT)}{\qc_{i+1}(\bT)-\qc_{i}(\bT)}\,v_{\bT}+\frac{q\,\qc_{i+1}(\bT)-q^{-1}\qc_{i}(\bT)}{\qc_{i+1}(\bT)-\qc_{i}(\bT)}\,v_{s_i(\bT)}\ ,\ \ \ \quad\text{for $i\in\{1,\dots,n-1\}$.}
\end{equation}
where $s_i(\bT)$ is the Young tableau obtained from $\bT$ by exchanging $i$ and $i+1$ and where we define $v_{\bT'}:=0$ for any non-standard Young tableau $\bT'$. Note that $\qc_{i}(\bT)\neq\qc_{i+1}(\bT)$ for any $\bT\in\STab(\lambda/\mu)$. The basis $\{v_{\bT}\}_{\bT\in\STab(\lambda/\mu)}$ is sometimes called the seminormal basis of the representation $V_{\lambda/\mu}$.

If we denote by $d_{i,j}(\bT):=\cc_{j}(\bT)-\cc_i(\bT)$ the axial distance between the nodes with number $j$ and $i$ in the Young tableau $\bT$, then Formula (\ref{rep-si}) can be written in terms of $q$-numbers defined in (\ref{quantum-numbers}) as follows:
\begin{equation}\label{rep-si2}
\si_{i}(v_{\bT})=\frac{q^{d_{i,i+1}(\bT)}}{[d_{i,i+1}(\bT)]_q}\,v_{\bT}+\frac{[d_{i,i+1}(\bT)+1]_q}{[d_{i,i+1}(\bT)]_q}\,v_{s_i(\bT)}\ ,\ \ \ \quad\text{for $i\in\{1,\dots,n-1\}$.}
\end{equation}
These formulas can be specialised for $q^2=1$ and provide representations of the symmetric group $\mS_n$.

\paragraph{Irreducible representations $V_{\lambda}$.} If $\lambda$ is a partition of $n$ then the representation $V_{\lambda}$ is irreducible. Moreover, as $\lambda$ runs over the set of partitions of $n$, the representations $V_{\lambda}$ are pairwise non-isomorphic and exhaust the set of irreducible representations of $H_n(q)$ up to isomorphism. The irreducible representations $V_{\lambda}$ are the ones appearing in the Schur--Weyl duality in Theorem \ref{theo-SW}.

We note that if $\lambda=(n)$ (a line of $n$ boxes) then the representation $V_{(n)}$ is the one-dimensional representation given by $\si_i\mapsto q$ for $i=1,\dots,n-1$. Its associated minimal central idempotent (in the sense described in Appendix \ref{app-idem}) is what we called the $q$-symmetriser and denoted $P_n$.

We note also that if $\lambda=(1,\dots,1)$ (a column of $n$ boxes) then the representation $V_{(1,\dots,1)}$ is the one-dimensional representation given by $\si_i\mapsto -q^{-1}$ for $i=1,\dots,n-1$. Its associated minimal central idempotent will be given later and is called the $q$-antisymmetriser.

\paragraph{Branching rules.} From Formulas (\ref{rep-si}), the branching rules for the chain of algebras $\{H_n(q)\}_{n\geq0}$ are almost immediate to obtain. Indeed let $\lambda\vdash n$ and let $\mu\vdash n-1$ be a subpartition of $\lambda$, that is, $\mu$ is obtained from $\lambda$ by deleting one node, say $\theta$. Then consider the subset $I_{\mu}$ of $\STab(\lambda)$ consisting of standard Young tableaux of shape $\lambda$ containing $n$ in the node $\theta$. The set $I_{\mu}$ is clearly in bijection with $\STab(\mu)$ and it is immediate from (\ref{rep-si}) to see that the subspace of $V_{\lambda}$ generated by $I_{\mu}$ is a representation of $H_{n-1}(q)$ isomorphic to $V_{\mu}$. We conclude then that if $\lambda$ is a partition of $n$ then the restriction of $V_{\lambda}$ to $H_{n-1}(q)$ decomposes into irreducible as follows:
\begin{equation}\label{BR}
\text{Res}_{H_{n-1}(q)}(V_{\lambda})\cong \bigoplus_{\begin{array}{c}
\\[-1.6em]
\scriptstyle{\mu\,\vdash n-1} \\[-0.4em]
\scriptstyle{\mu\subset \lambda}
\end{array}} V_{\mu}\ .
\end{equation}
The first levels of the Bratteli diagram (see Appendix \ref{app-bra} for definitions) for the chain $\{H_n(q)\}_{n\geq0}$ are shown in Appendix \ref{app1}.

\subsection{Identification of the kernel in the classical Schur--Weyl duality}

Let $N\geq2$. We recall that the centraliser of the action of $U_q(gl_N)$ on $(L^N_{(1)})^{\otimes n}$ was denoted $\oH^N_n(q)$ and that it is obtained as the quotient of $H_n(q)$ by a certain ideal $I_n^N$, see Definition \ref{def-oH}.

\subsubsection{Representation-theoretic description of the kernel}

In Appendix, we fix the terminology and notations for quotients of semisimple algebras and quotients of Bratteli diagrams. 

The result in the following proposition is a well-known consequence of the Schur--Weyl duality. The first item follows quite immediately from the statements in Theorem \ref{theo-SW}. For the second item, one has to notice that a partition $\lambda$ is such that $l(\lambda)>N$ if and only if its Young diagram contains a column of size $N+1$, if and only if there is a path in the Bratteli diagram from $(1,1,\dots,1)\vdash N+1$ to $\lambda$. We indicate that the chain structure on the quotients is explicited in a more general situation later in Section \ref{sec-quot-N}.
\begin{prop}\label{prop-quot-rep}
\begin{enumerate}
\item For $n\geq 0$, the ideal $I_n^N$ of $H_n(q)$ corresponds to the following subset of partitions:
\[\{\ \lambda\vdash n\ |\ l(\lambda)>N\ \}\ ;\]
\item The Bratteli diagram of the chain $\{\oH^N_n(q)\}_{n\geq0}$ is the quotient of the Bratteli diagram of the chain $\{H_n(q)\}_{n\geq0}$ generated by:
\[S_{min}:=\{(1,1,\dots,1)\vdash N+1\}\ \ \text{(the one-column partition of size $N+1$)}\ .\]
\end{enumerate}
\end{prop}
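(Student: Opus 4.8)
The plan is to prove both items directly from Theorem~\ref{theo-SW} together with the representation theory of $H_n(q)$ recalled above. Recall that by Theorem~\ref{theo-SW} the algebra $\oH^N_n(q)$ is the image of $H_n(q)$ acting on $(L^N_{(1)})^{\otimes n}=\bigoplus_{\lambda\vdash n,\,l(\lambda)\leq N}L^N_\lambda\otimes V_\lambda$, and since $H_n(q)$ is semisimple the kernel $I_n^N$ is a sum of matrix blocks. More precisely, writing $H_n(q)\cong\bigoplus_{\lambda\vdash n}\mathrm{End}(V_\lambda)$, the action factors through the projection onto the blocks $\lambda$ for which $V_\lambda$ actually occurs in $(L^N_{(1)})^{\otimes n}$; those are exactly the $\lambda\vdash n$ with $l(\lambda)\leq N$. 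Hence $I_n^N$ is the two-sided ideal given by the sum of the blocks $\mathrm{End}(V_\lambda)$ over $\lambda\vdash n$ with $l(\lambda)>N$, which is precisely the assertion of item~(1) in the language of ``ideals corresponding to subsets of partitions'' fixed in the Appendix. (One should note in passing that the multiplicities $\dim V_\lambda$ are nonzero, so no further block disappears; and that the restriction on $q$ guarantees semisimplicity, which is what lets us speak of blocks.)

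For item~(2), the point is to translate the subset $\{\lambda\vdash n\mid l(\lambda)>N\}$ into the language of quotients of Bratteli diagrams. First I would observe that, for a partition $\lambda$, the condition $l(\lambda)>N$ is equivalent to saying that the Young diagram of $\lambda$ contains a column of length $N+1$, i.e. that $\lambda$ contains the one-column partition $(1,1,\dots,1)\vdash N+1$ as a subdiagram. Next, using the branching rule~\eqref{BR}, a partition $\mu\vdash m$ lies on a path in the Bratteli diagram starting from a partition $\nu\vdash N+1$ if and only if $\nu\subseteq\mu$ as Young diagrams (adding one node at each step realises any containment, and conversely every step of a path is an addition of a node). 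Applying this with $\nu=(1,1,\dots,1)$: the set of partitions reachable from $(1,1,\dots,1)\vdash N+1$ is exactly $\{\lambda\mid (1^{N+1})\subseteq\lambda\}=\{\lambda\mid l(\lambda)>N\}$, which is $I_n^N$ at level $n$ by item~(1). This says exactly that the Bratteli diagram of $\{\oH^N_n(q)\}_{n\geq 0}$ is obtained from that of $\{H_n(q)\}_{n\geq 0}$ by removing all vertices reachable from $S_{\min}=\{(1^{N+1})\}$, i.e.\ it is the quotient generated by $S_{\min}$ in the sense of the Appendix.

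It remains to check that the quotient of the chain of algebras matches the quotient of the Bratteli diagram compatibly through all levels, i.e.\ that the ideals $I_n^N$ are compatible with the inclusions $H_n(q)\subset H_{n+1}(q)$, so that $\{\oH^N_n(q)\}_{n\geq 0}$ is genuinely a chain whose Bratteli diagram is the quotient one. This follows because each $\oH^N_n(q)$ is a quotient of $H_n(q)$ by an ideal defined by a ``downward-closed under the reachability relation'' set of partitions (if $\lambda$ is reachable from $S_{\min}$ then so is any $\lambda'$ obtained by adding a node), exactly the combinatorial condition ensuring the quotients form a chain; I would simply cite the general discussion of quotients of Bratteli diagrams in the Appendix (and the remark that the chain structure is treated in full generality in Section~\ref{sec-quot-N}) rather than repeat it. The main (and really only) obstacle here is purely bookkeeping: being careful that ``the ideal corresponds to a subset of partitions'' and ``the Bratteli diagram is the quotient generated by $S_{\min}$'' are stated in exactly the normalised form set up in the Appendix; there is no hard analysis, the content being entirely the two equivalences $l(\lambda)>N\iff(1^{N+1})\subseteq\lambda\iff\lambda$ reachable from $(1^{N+1})$ in the Bratteli diagram.
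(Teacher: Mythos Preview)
Your proposal is correct and follows essentially the same approach as the paper: the paper's own argument (given in the paragraph preceding the proposition rather than as a formal proof) is precisely that item~1 follows from Theorem~\ref{theo-SW} via the Artin--Wedderburn decomposition, and item~2 from the equivalences $l(\lambda)>N \iff (1^{N+1})\subseteq\lambda \iff \lambda$ is reachable from $(1^{N+1})$ in the Bratteli diagram, with the chain compatibility deferred to Section~\ref{sec-quot-N}. Your write-up is simply a more detailed version of this same sketch.
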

We note the following immediate consequence:
\begin{equation}\label{comp-SW}
\oH^N_n(q)=H_n(q)\ \ \ \ \Leftrightarrow\ \ \ \ N\geq n\ .
\end{equation}

\subsubsection{Algebraic description of the kernel}

Proposition \ref{prop-quot-rep} describes the ideal $I_n^N$ of $H_n(q)$ on the side of representations. From this, one can easily obtain an algebraic description of this ideal $I_n^N$ and thus of the quotient $\oH^N_n(q)$.

\paragraph{The $q$-antisymmetriser.} Let $m>0$ and define the following element of $H_m(q)$:
\begin{equation}\label{def-P'}
P'_m:=\frac{\sum_{w\in\mS_m}(-q^{-1})^{\ell(w)}\si_w}{\sum_{w\in\mS_m}q^{-2\ell(w)}}=\frac{\sum_{w\in\mS_m}(-q^{-1})^{\ell(w)}\si_w}{q^{-n(n-1)}\{m\}_q!}=q^{n(n-1)/2}\frac{\sum_{w\in\mS_m}(-q^{-1})^{\ell(w)}\si_w}{[m]_q!}\ .
\end{equation}
It is well-known that the element $P'_m$ is a minimal central idempotent in $H_m(q)$ projecting on the one-dimensional representation given by $\sigma_i\mapsto -q^{-1}$ for $i=1,\dots,m-1$. In particular, we have
\[(P'_m)^2=P'_m\ \ \ \ \ \text{and}\ \ \ \ \ \ \si_iP'_m=P'_m\si_i=-q^{-1} P'_m\,,\ i=1,\dots,m-1.\]
If $q=1$, the projector $P'_m$ is the antisymmetriser of $\CC\mS_m$ projecting on the sign representation of $\mS_m$.

From Proposition \ref{prop-quot-rep} and using the general facts presented in Appendix \ref{app-quot-Brat}, we obtain:
\begin{prop}
If $n\leq N$ then the centraliser $\oH^N_n(q)$ coincides with $H_n(q)$. If $n>N$ then the centraliser $\oH^N_n(q)$ is the quotient of $H_n(q)$ over the relation:
\[\sum_{w\in\mS_{N+1}}(-q^{-1})^{\ell(w)}\si_w=0\ ,\]
\end{prop}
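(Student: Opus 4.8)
The plan is to combine the representation-theoretic description of the kernel in Proposition~\ref{prop-quot-rep} with the standard machinery relating quotients of Bratteli diagrams to central idempotents, as recalled in Appendix~\ref{app-quot-Brat}. The case $n\leq N$ is already recorded in \eqref{comp-SW}, so assume $n>N$. By Proposition~\ref{prop-quot-rep}, the ideal $I_n^N$ corresponds to the set of partitions $\lambda\vdash n$ with $l(\lambda)>N$, and the Bratteli diagram of $\{\oH^N_n(q)\}$ is the quotient of that of $\{H_n(q)\}$ generated by the single vertex $S_{min}=\{(1,\dots,1)\vdash N+1\}$. The general principle (Appendix) is that such a ``generated quotient'' is realised algebraically by quotienting by the two-sided ideal generated by the minimal central idempotent attached to the vertex $(1,\dots,1)\vdash N+1$ sitting at level $N+1$ of the chain. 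That idempotent is exactly the $q$-antisymmetriser $P'_{N+1}\in H_{N+1}(q)\subset H_n(q)$, as recalled just above in the discussion of \eqref{def-P'}, since $V_{(1,\dots,1)}$ is precisely the one-dimensional sign-type representation on which $P'_{N+1}$ projects.

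Next I would translate ``quotient by the ideal generated by $P'_{N+1}$'' into the concrete relation in the statement. Since $P'_{N+1}=q^{N(N+1)/2}\bigl(\sum_{w\in\mS_{N+1}}(-q^{-1})^{\ell(w)}\si_w\bigr)/[N+1]_q!$ and the scalar prefactor and $[N+1]_q!$ are invertible under our hypothesis on $q$, the two-sided ideal generated by $P'_{N+1}$ coincides with the two-sided ideal generated by $\sum_{w\in\mS_{N+1}}(-q^{-1})^{\ell(w)}\si_w$. Hence setting this element to zero is equivalent to killing $P'_{N+1}$, which is equivalent to removing from the Bratteli diagram every partition through which a path from $(1,\dots,1)\vdash N+1$ passes, i.e.\ every $\lambda$ with $l(\lambda)>N$. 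By Proposition~\ref{prop-quot-rep} this is exactly the quotient defining $\oH^N_n(q)$, so the quotient of $H_n(q)$ by the relation $\sum_{w\in\mS_{N+1}}(-q^{-1})^{\ell(w)}\si_w=0$ is isomorphic to $\oH^N_n(q)$.

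The only point that needs a little care — and the step I expect to be the main obstacle — is checking that the ideal generated by the idempotent $P'_{N+1}$ really does produce exactly the prescribed quotient of the Bratteli diagram, and in particular that it does not kill more than it should. In the semisimple setting this is clean: for a semisimple algebra $A$ in a chain with a central idempotent $e$ attached to a vertex $v$ at some level $m\le n$, the ideal $\langle e\rangle$ in $A_n$ is the sum of the matrix blocks indexed by those irreducibles of $A_n$ whose restriction chain to level $m$ contains $v$, which is precisely the set of $\lambda$ admitting a path from $v$; one uses here that restriction is multiplicity-free-free enough and that $e$ is a full idempotent in its block. This is exactly the content of the generated-quotient formalism in the Appendix, so invoking it suffices; alternatively one argues directly that a partition $\lambda\vdash n$ has $l(\lambda)>N$ iff its Young diagram contains a column of length $N+1$ iff there is a path in the Bratteli diagram from $(1,\dots,1)\vdash N+1$ to $\lambda$, which is the observation already made before Proposition~\ref{prop-quot-rep}. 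Assembling these ingredients gives the statement.
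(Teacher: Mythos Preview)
Your proposal is correct and follows essentially the same route as the paper: the paper states the proposition as an immediate consequence of Proposition~\ref{prop-quot-rep} together with the general facts in Appendix~\ref{app-quot-Brat}, and you have simply written out those steps in detail (identifying $S_{min}=\{(1^{N+1})\}$, noting its central idempotent is $P'_{N+1}$, and observing that the invertible scalar makes the relation equivalent). Your final paragraph about the ideal not killing too much is exactly the content of the ``Algebraic description of the quotients $A_n/I_n$'' paragraph in the Appendix, so nothing further is needed.
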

In words, to obtain $\oH^N_n(q)$, we cancel the $q$-antisymmetriser on $N+1$ letters. Note that $\si_w$ when $w\in\mS_{N+1}$ is a word in the generators $\si_1,\dots,\si_N$ and as such can be seen as an element of $H_n(q)$ if $n>N$ (this is a convenient slight abuse of notation).

\begin{exa}[Temperley--Lieb algebra]\label{exa-TL}
Let $N=2$ and $n\geq3$. What the preceding proposition states is that the centraliser $\oH^2_n(q)$ is the quotient of the Hecke algebra over the relation:
\[\si_1\si_2\si_1-q(\si_1\si_2+\si_2\si_1)+q^2(\si_1+\si_2)-q^{3}=0\ .\]
This centraliser $\oH^2_n(q)$ is called the Temperley--Lieb algebra. It is easy to see using the braid relations that conjugating this relation by $\si_1\si_2\si_3$, one obtains the similar relation with indices 2,3, and hence that the similar relation with indices $i,i+1$ for all $i=1,\dots,n-2$ is implied. Then setting $\tau_i:=\si_i-q$, one recovers the other standard presentation  of the Temperley--Lieb algebra:
\[\tau_i^2=-(q+q^{-1})\tau_i\,,\ \ \ \ \tau_i\tau_{i+1}\tau_i=\tau_i\,,\ \ \ \ \tau_{i+1}\tau_{i}\tau_{i+1}=\tau_{i+1}\ \ \ \text{and}\ \ \ \tau_i\tau_j=\tau_j\tau_i\ \text{if $|i-j|>1$.}\]
\end{exa}

\section{Generalisation of Schur--Weyl duality for symmetric powers}\label{sec-cent}

Let $\bk=(k_1,k_2,...)\in \mathbb{Z}_{\geq0}^{\infty}$ be an infinite sequence of non-negative integers, and let $n\in\mathbb{Z}_{\geq0}$\,. Let also $N>1$.

We recall that for $k\geq 1$ we have denoted $L_{(k)}$ the irreducible representation of $U_q(gl_N)$ corresponding to the partition $(k)$ (the one-line partition with $k$ boxes). We consider here the representation of $U_q(gl_N)$ on the following tensor product:
\[L^N_{(k_1)}\otimes\dots\otimes L^N_{(k_n)}\ .\]
By convention, for $n=0$, this is the trivial representation.

We will use the definition and properties of minimal central idempotents (here, for the Hecke algebras) recalled in general in Appendix \ref{app-idem}.

\subsection{Schur--Weyl duality for $L^N_{(k_1)}\otimes\dots\otimes L^N_{(k_n)}$}

Recall that, for any $k\geq1$, the element $P_k$ of $H_k(q)$ defined by (\ref{def-P}) is the primitive central idempotent associated to the one-dimensional representation $V_{(k)}$. As such, $P_k$ is equal to the identity in the representation $V_{(k)}$ of $H_k(q)$ and $0$ in every other irreducible representations of $H_k(q)$.

Note that the dimension of $V_{(k)}$ is equal to 1. Therefore, from Formula (\ref{SW}) expressing the Schur--Weyl duality, we deduce immediately that we have the following decomposition of $U_q(gl_N)$-modules
\begin{equation}\label{dec-small}
(L_{(1)}^N)^{\otimes k}\cong L^N_{(k)}\oplus U\ ,
\end{equation}
with the property that
\begin{equation}\label{action-Pkn-small}{P_k}_{|_{L^N_{(k)}}}=\text{Id}_{L^N_{(k)}}\ \ \ \ \text{and}\ \ \ \ P_k(U)=0\ .
\end{equation}

Then using (\ref{dec-small}) and expanding, we have the following decomposition of $U_q(gl_N)$-modules
\begin{equation}\label{dec}
\begin{array}{rcl} 
(L_{(1)}^N)^{\otimes k_1+\dots+k_n} & = & (L^N_{(1)})^{\otimes k_1}\otimes \dots\otimes (L^N_{(1)})^{\otimes k_n}\ \ \\[0.8em]
 & = & L^N_{(k_1)}\otimes \dots\otimes L^N_{(k_n)}\ \ \bigoplus\ \ U'\ .
 \end{array}
 \end{equation}
Now, from its definition (\ref{def-Pkn}), the element $P_{\bk,n}$ of $H_{k_1+\dots+k_n}(q)$ acts as $P_{k_1}\otimes\dots\otimes P_{k_n}$ on $(L_{(1)}^N)^{\otimes k_1+\dots+k_n}$ (this can be seen directly from the definition (\ref{SW-Hn}) of the action of $H_{k_1+\dots+k_n}(q)$ on this space). Therefore, using (\ref{action-Pkn-small}) for $k=k_1$,\dots,$k=k_n$, we obtain that the above decomposition (\ref{dec}) is such that:
\begin{equation}\label{action-Pkn}
{P_{\bk,n}}_{|_{L^N_{(k_1)}\otimes \dots\otimes L^N_{(k_n)}}}=\text{Id}_{L^N_{(k_1)}\otimes \dots\otimes L^N_{(k_n)}}\ \ \ \ \text{and}\ \ \ \ P_{\bk,n}(U')=0\ .
\end{equation}
In other words, the action of $P_{\bk,n}$ on $(L_{(1)}^N)^{\otimes k_1+\dots+k_n}$ is the projection on $L^N_{(k_1)}\otimes \dots\otimes L^N_{(k_n)}$ associated to the decomposition (\ref{dec}). As recalled in Appendix \ref{app-ss}, we have therefore naturally an action of $P_{\bk,n}H_{k_1+\dots+k_n}(q)P_{\bk,n}$ on $L^N_{(k_1)}\otimes \dots\otimes L^N_{(k_n)}$ given by restriction
\begin{equation}\label{act-PHP}
\begin{array}{rcl}
P_{\bk,n}H_{k_1+\dots+k_n}(q)P_{\bk,n} & \to & \text{End}\bigl(L^N_{(k_1)}\otimes \dots\otimes L^N_{(k_n)}\bigr) \\[0.5em]
P_{\bk,n}xP_{\bk,n} & \mapsto & {P_{\bk,n}xP_{\bk,n}}_{|_{L^N_{(k_1)}\otimes \dots\otimes L^N_{(k_n)}}}
\end{array}\ .
\end{equation}

Now we are ready to state the analogue of (the first part) of the Schur--Weyl duality.
\begin{thm}\label{theo-SWk}
There is a representation of the algebra $H_{\bk,n}(q)$ on $L^N_{(k_1)}\otimes \dots\otimes L^N_{(k_n)}$ and the image of the map $H_{\bk,n}(q)\to\text{End}\bigl(L^N_{(k_1)}\otimes \dots\otimes L^N_{(k_n)}\bigr)$ coincides with the centraliser of the action of $U_q(gl_N)$.
\end{thm}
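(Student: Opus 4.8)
The plan is to deduce the statement from the classical Schur--Weyl duality (Theorem~\ref{theo-SW}) and Proposition~\ref{prop-PHP}, via the standard mechanism of idempotent truncation (Schur functor). Set $m:=k_1+\dots+k_n$, write $W:=(L^N_{(1)})^{\otimes m}$ and $V:=L^N_{(k_1)}\otimes\dots\otimes L^N_{(k_n)}$, and let $\Phi\colon H_m(q)\to\text{End}(W)$ denote the representation~(\ref{SW-Hn}), so that $\oH^N_m(q)=\Phi(H_m(q))$; by Theorem~\ref{theo-SW} this equals $\text{End}_{U_q(gl_N)}(W)$. Let $e:=\Phi(P_{\bk,n})$ be the image of the idempotent~(\ref{def-Pkn}). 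By~(\ref{action-Pkn}), $e$ is the projector of $W$ onto the $U_q(gl_N)$-submodule $V$ along the complementary submodule $U'$ of~(\ref{dec}); in particular $e$ is an idempotent of $\oH^N_m(q)=\text{End}_{U_q(gl_N)}(W)$, and so it commutes with the $U_q(gl_N)$-action on $W$.

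The key input I would isolate is a general fact about centralisers: if $B$ is an algebra, $M$ a $B$-module, $A=\text{End}_B(M)$, and $f\in A$ an idempotent, then $fM$ is a $B$-submodule, $M=fM\oplus(1-f)M$ as $B$-modules, and the map $fAf\to\text{End}_B(fM)$ given by restricting endomorphisms to $fM$ is an isomorphism of algebras. Injectivity holds because $f\psi f$ (for $\psi\in A$) vanishes on $(1-f)M$ and is therefore determined by its restriction to $fM$; surjectivity holds because a $B$-endomorphism of $fM$, extended by $0$ on $(1-f)M$, again lies in $A$. Applying this with $B=U_q(gl_N)$, $M=W$, $A=\oH^N_m(q)$ and $f=e$ (so that $fM=eW=V$) yields an algebra isomorphism
\[
e\,\oH^N_m(q)\,e\ \xrightarrow{\ \sim\ }\ \text{End}_{U_q(gl_N)}(V)
\]
implemented by restricting endomorphisms of $W$ to the subspace $V$.

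It remains to recognise the left-hand side as the image of $H_{\bk,n}(q)$. Since $\Phi$ is an algebra homomorphism with $\Phi(P_{\bk,n})=e$ and $\Phi(H_m(q))=\oH^N_m(q)$, we have $e\,\oH^N_m(q)\,e=\Phi\bigl(P_{\bk,n}H_m(q)P_{\bk,n}\bigr)$. The restriction of $\Phi$ to $P_{\bk,n}H_m(q)P_{\bk,n}$, followed by restriction of endomorphisms to $V$, is exactly the map~(\ref{act-PHP}); it is a homomorphism of algebras because it is the restriction of an action of $P_{\bk,n}H_m(q)P_{\bk,n}$ to an invariant subspace (on which $e$ acts as the identity, $V=eW$). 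Precomposing with the isomorphism $H_{\bk,n}(q)\cong P_{\bk,n}H_m(q)P_{\bk,n}$ of Proposition~\ref{prop-PHP}, we obtain the asserted representation of $H_{\bk,n}(q)$ on $V$, whose image is $e\,\oH^N_m(q)\,e$ and hence, by the displayed isomorphism, equals $\text{End}_{U_q(gl_N)}(V)$.

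The argument is essentially formal once Theorem~\ref{theo-SW} and Proposition~\ref{prop-PHP} are in hand; the only point requiring genuine care --- and the natural place for the proof to be subtle --- is the general lemma on idempotent truncation, in particular the surjectivity onto the \emph{full} centraliser $\text{End}_B(fM)$, which really uses that $f$ lies in $\text{End}_B(M)$ (so that $(1-f)M$ is a $B$-submodule and the extension-by-zero of a $B$-endomorphism of $fM$ remains $B$-equivariant). Everything else is bookkeeping with~(\ref{act-PHP}) and the isomorphism of Proposition~\ref{prop-PHP}.
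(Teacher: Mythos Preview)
Your proof is correct and is essentially the same as the paper's: both restrict the action to $V$ via the idempotent $P_{\bk,n}$, obtain one inclusion from the classical Schur--Weyl commutation, and prove surjectivity onto $\text{End}_{U_q(gl_N)}(V)$ by extending a $U_q(gl_N)$-endomorphism of $V$ by zero on $U'$ and invoking Theorem~\ref{theo-SW}, then compose with the isomorphism of Proposition~\ref{prop-PHP}. The only difference is packaging: you isolate the ``$fAf\cong\text{End}_B(fM)$'' step as a general lemma, whereas the paper carries out the same argument inline.
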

\begin{proof}
Let $\phi\in U_q(gl_N)$ and let $x\in H_{k_1+\dots+k_n}(q)$. From the classical Schur--Weyl duality, we know that the actions of $P_{\bk,n}xP_{\bk,n}$ and of $\phi$ on $(L_{(1)}^N)^{\otimes k_1+\dots+k_n}$ commute. Moreover, both actions leave the subspace $L^N_{(k_1)}\otimes \dots\otimes L^N_{(k_n)}$ invariant, so their restrictions to this subspace commute as well. So we have that the image of the map (\ref{act-PHP}) is included in the centraliser of the action of $U_q(gl_N)$.

For the reverse inclusion, let $\overline{y}\in \text{End}\bigl(L^N_{(k_1)}\otimes \dots\otimes L^N_{(k_n)}\bigr)$ which commutes with the action of $U_q(gl_N)$. Extend it by $0$ on the subspace $U'$ appearing in the decomposition (\ref{dec}) to get an element $\overline{Y}$ acting on $(L_{(1)}^N)^{\otimes k_1+\dots+k_n}$. Obviously this element $\overline{Y}$ commutes with the action of $U_q(gl_N)$. Therefore, from the Schur--Weyl duality, we have an element $Y\in H_{k_1+\dots+k_n}(q)$ such that $\overline{Y}$ is the action of $Y$ on $(L_{(1)}^N)^{\otimes k_1+\dots+k_n}$. Then, we have that $\overline{y}$ is the action of $P_{\bk,n}YP_{\bk,n}$ on $L^N_{(k_1)}\otimes \dots\otimes L^N_{(k_n)}$ and thus we conclude that $\overline{y}$ belongs to the image of the map (\ref{act-PHP}).

We just proved that the image of the map (\ref{act-PHP}) coincides with the centraliser of the action of $U_q(gl_N)$. Finally, the isomorphism of $H_{\bk,n}(q)$ with $P_{\bk,n}H_{k_1+\dots+k_n}(q)P_{\bk,n}$ obtained in Proposition \ref{prop-PHP} provides by composition with (\ref{act-PHP}) the required action of $H_{\bk,n}(q)$ on $L^N_{(k_1)}\otimes \dots\otimes L^N_{(k_n)}$ with the desired properties.
\end{proof}

In view of the preceding result, we make the following definition.
\begin{defi}\label{def-oPHP}
We denote by $\oH^N_{\bk,n}(q)$ the image of the representation $H_{\bk,n}(q)\to\text{End}\bigl(L^N_{(k_1)}\otimes \dots\otimes L^N_{(k_n)}\bigr)$ and by $I_{\bk,n}^N$ its kernel.
\end{defi}
From the preceding Theorem, we have that $\oH^N_{\bk,n}(q)$ is the centraliser of the action of $U_q(gl_N)$ on $L^N_{(k_1)}\otimes \dots\otimes L^N_{(k_n)}$ and that it is isomorphic to the quotient of the algebra $H_{\bk,n}(q)$ by the ideal $I_{\bk,n}^N$.

\paragraph{First description of the ideals $I_{\bk,n}^N$.} We collect a preliminary result on the ideals $I_{\bk,n}^N$ for later use. Recall that $I_{k_1+\dots+k_n}^N$ is the ideal of $H_{k_1+\dots+k_n}(q)$ corresponding to the representation of $H_{k_1+\dots+k_n}(q)$ on $(L_{(1)}^N)^{\otimes k_1+\dots+k_n}$ in the classical Schur--Weyl duality (see Definition \ref{def-oH}).
\begin{prop}\label{prop-SWk-ideal}
Under the isomorphism between $H_{\bk,n}(q)$ and $P_{\bk,n}H_{k_1+\dots+k_n}(q)P_{\bk,n}$, the ideal $I_{\bk,n}^N$ corresponds to $P_{\bk,n}I_{k_1+\dots+k_n}^NP_{\bk,n}$.
\end{prop}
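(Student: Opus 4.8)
The plan is to transport everything to the Hecke algebra $H := H_{k_1+\dots+k_n}(q)$ and reduce the statement to the elementary identity $PHP \cap I = PIP$ for a suitable idempotent $P$ and ideal $I$. Write $W := (L_{(1)}^N)^{\otimes k_1+\dots+k_n}$ and $P := P_{\bk,n}\in H$, and let $\rho : H \to \mathrm{End}(W)$ denote the Schur--Weyl representation of Theorem \ref{theo-SW}, so that $\ker\rho = I := I_{k_1+\dots+k_n}^N$ by Definition \ref{def-oH}. By Proposition \ref{prop-PHP} we identify $H_{\bk,n}(q)$ with $PHP$, and under this identification the representation of $H_{\bk,n}(q)$ on $L := L^N_{(k_1)}\otimes\dots\otimes L^N_{(k_n)}$ from Theorem \ref{theo-SWk} becomes the map $\psi : PHP \to \mathrm{End}(L)$, $PxP \mapsto \rho(PxP)|_L$, of \eqref{act-PHP}. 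Hence $I_{\bk,n}^N$ corresponds to $\ker\psi$, and the whole point is to prove $\ker\psi = PIP$.

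First I would show $\ker\psi = PHP \cap I$. By \eqref{action-Pkn}, the operator $\rho(P)$ is the projection of $W$ onto $L$ along the complement $U'$ appearing in the decomposition \eqref{dec}, i.e.\ $W = L \oplus U'$ with $\rho(P)|_L = \mathrm{Id}_L$ and $\rho(P)(U') = 0$. Thus for $a = PxP \in PHP$ the operator $\rho(a) = \rho(P)\rho(x)\rho(P)$ annihilates $U'$ (because of its rightmost factor $\rho(P)$) and its restriction to $L$ is precisely $\psi(a)$. Since $W = L \oplus U'$, this yields $\rho(a) = 0 \iff \psi(a) = 0$, so $\ker\psi = \{\, a \in PHP : \rho(a) = 0 \,\} = PHP \cap \ker\rho = PHP \cap I$.

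It then remains to verify the purely algebraic identity $PHP \cap I = PIP$. The inclusion $PIP \subseteq PHP \cap I$ is immediate: $PIP \subseteq PHP$ by definition, and $PIP \subseteq I$ because $I$ is a two-sided ideal of $H$. Conversely, every $y \in PHP$ satisfies $y = PyP$ since $P^2 = P$, so if moreover $y \in I$ then $y = PyP \in PIP$; this gives $PHP \cap I \subseteq PIP$ and completes the proof (and incidentally exhibits $PIP$ as the two-sided ideal of $PHP \cong H_{\bk,n}(q)$ it is asserted to be).

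I do not expect a real obstacle here: the argument is a short diagram chase combined with the triviality that a corner idempotent acts as the identity on its own corner. The one place deserving a little care is the middle step --- checking that an element of $PHP$ killing $L$ must already kill all of $W$ --- and this is exactly what the splitting $W = L \oplus U'$ with $\rho(P)$ the corresponding projection (from \eqref{action-Pkn}) guarantees; keeping straight the identifications from Proposition \ref{prop-PHP} and Theorem \ref{theo-SWk} is the only other point of bookkeeping.
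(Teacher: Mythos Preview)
Your proof is correct and follows essentially the same approach as the paper: both first argue that $PxP$ acts as zero on $L$ iff it acts as zero on all of $W$ (using that $\rho(P)$ kills $U'$), and then pass from $PHP\cap I$ to $PIP$ via $P^2=P$ and the fact that $I$ is an ideal. Your write-up is in fact a bit cleaner in separating these two steps and naming the intermediate identity $\ker\psi = PHP\cap I$ explicitly.
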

\begin{proof}
Let $x\in H_{k_1+\dots+k_n}(q)$. We have that $P_{\bk,n}xP_{\bk,n}$ belongs to the kernel of the map (\ref{act-PHP}) if and only if the restriction on $L^N_{(k_1)}\otimes \dots\otimes L^N_{(k_n)}$ of the action of $P_{\bk,n}xP_{\bk,n}$ is $0$. Since $P_{\bk,n}(U')=0$, where $U'$ is the complementary subspace appearing in (\ref{dec}), this is equivalent to saying that the action of $P_{\bk,n}xP_{\bk,n}$ on the whole space $(L_{(1)}^N)^{\otimes k_1+\dots+k_n}$ is 0. In other words, it is equivalent to the fact that $P_{\bk,n}xP_{\bk,n}$ belongs to the ideal $I_{k_1+\dots+k_n}^N$ of $H_{k_1+\dots+k_n}(q)$. 

Finally $P_{\bk,n}xP_{\bk,n}\in I_{k_1+\dots+k_n}^N$ implies $P_{\bk,n}xP_{\bk,n}\in P_{\bk,n}I_{k_1+\dots+k_n}^NP_{\bk,n}$ since $P_{\bk,n}^2=P_{\bk,n}$, and reciprocally, $P_{\bk,n}xP_{\bk,n}\in P_{\bk,n}I_{k_1+\dots+k_n}^NP_{\bk,n}$ implies $P_{\bk,n}xP_{\bk,n}\in I_{k_1+\dots+k_n}^N$ since $I_{k_1+\dots+k_n}^N$ is an ideal.

This concludes the proof since $I_{\bk,n}^N$ is defined as the kernel of the composition of the isomorphism  between $H_{\bk,n}(q)\cong P_{\bk,n}H_{k_1+\dots+k_n}(q)P_{\bk,n}$ and the map (\ref{act-PHP}).
\end{proof}

\begin{rem}\label{rem-quot-sub}
The centraliser $\oH_{\bk,n}^N(q)$ is described here as a quotient of the algebra $H_{\bk,n}(q)$. We recall that this algebra is isomorphic to $P_{\bk,n}H_{k_1+\dots+k_n}(q)P_{\bk,n}$. Applying this isomorphism, it is easy to check that by construction the centraliser $\oH_{\bk,n}^N(q)$ is sent to $P_{\bk,n}\oH_{k_1+\dots+k_n}^N(q)P_{k,n}$ (where we still denote by $P_{\bk,n}$ the image of $P_{\bk,n}$ in $\oH_{k_1+\dots+k_n}^N(q)$).

This remark illustrates the fact that, for any $N$, we can follow two different paths to reach the centralisers $\oH^N_{\bk,n}(q)$ from $H_{k_1+\dots+k_n}(q)$, graphically depicted as follows
\[\begin{array}{ccc}
H_{k_1+\dots+k_n}(q) & \longrightarrow & \oH_{k_1+\dots+k_n}^N(q) \\[0.5em]
\downarrow &  &\downarrow \\[0.5em]
H_{\bk,n}(q) & \longrightarrow & \oH^N_{\bk,n}(q)
\end{array}
\]
On one hand, we can first take a quotient (depending on $N$) to obtain $\oH_{k_1+\dots+k_n}^N(q)$  and then consider inside each $\oH_{k_1+\dots+k_n}^N(q)$ the subalgebras obtained by multiplying by the idempotent on both sides. On the other hand, we can first consider the subalgebra $H_{\bk,n}(q)$ of $H_{k_1+\dots+k_n}(q)$ obtained by multiplying by the idempotent on both sides, and then take a quotient depending on $N$. Our approach in this paper is to follow the second road and we emphasize the role of the algebra $H_{\bk,n}(q)$ which does not depend on $N$.
\end{rem}

\section{Representation theory and Bratteli diagram of $\{H_{\bk,n}(q)\}_{n\geq 0}$}\label{sec-rep}

We recall that due to the restrictions on $q$, the algebra $H_{\bk,n}(q)$ is semisimple (see Proposition \ref{prop-PHP} and Appendix \ref{app-ss}). In this section, we provide a description of the representation theory of the algebra $H_{\bk,n}(q)$, which relies only on the well-known representation theory of the Hecke algebras. The knowledge of the representation theory of $H_{\bk,n}(q)$ will allow us to give a first description of its quotients $\oH_{\bk,n}(q)$. This description will be entirely representation-theoretic. We will use it to study in the last section a description of the quotients in the diagrammatic presentation of $\oH_{\bk,n}(q)$.

\subsection{Induction step}

Let $k\geq 1$. Let $\lambda/\mu$ be a skew partition of size $k$ and $V_{\lambda/\mu}$ the corresponding representation of $H_k(q)$ constructed in Section \ref{sec-SW}. The next proposition identifies the subspace $P_k(V_{\lambda/\mu})$ in terms of the seminormal basis $\{v_{\bT}\}_{\bT\in\STab(\lambda/\mu)}$, where we recall that $P_k$ is the $q$-symmetriser of $H_k$. This result will serve later as the induction step to understand the irreducible representations of $H_{\bk,n}(q)$ from the ones of $H_{\bk,n-1}(q)$. 

We note the remarkable fact that the image of $P_k$ can be expressed in terms of the basis $\{v_{\bT}\}$ with no $q$ appearing.
\begin{prop}\label{prop-rep}
We have:
\[P_k(V_{\lambda/\mu})=\left\{\begin{array}{ll}
\CC\bigl(\sum\limits_{\bT\in \STab(\lambda/\mu)}v_{\bT}\bigr) & \text{if $\lambda/\mu$ contains at most one box in each column,}\\[0.5em]
0 & \text{otherwise.}
\end{array}\right.\]
\end{prop}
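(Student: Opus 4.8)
The plan is to reduce the statement to a computation inside the seminormal representation, exploiting that $P_k$ is the primitive idempotent projecting onto the one-dimensional representation $\si_i\mapsto q$. Concretely, I would first show that $P_k(V_{\lambda/\mu})$ equals the common $q$-eigenspace $W:=\{v\in V_{\lambda/\mu}\ :\ \si_iv=qv\ \text{for all }i=1,\dots,k-1\}$: the inclusion $P_k(V_{\lambda/\mu})\subseteq W$ is immediate from $\si_iP_k=qP_k$ in (\ref{rel-sym}), and the reverse inclusion follows because if $\si_iv=qv$ for all $i$ then $\si_wv=q^{\ell(w)}v$ for every $w\in\mS_k$, so $P_kv=\bigl(\sum_{w\in\mS_k}q^{2\ell(w)}\bigr)^{-1}\sum_{w\in\mS_k}q^{\ell(w)}\si_wv=v$ by the definition (\ref{def-P}) of $P_k$ together with $\sum_{w\in\mS_k}q^{2\ell(w)}=\{k\}_q!$. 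After this reduction everything is a matter of reading off the explicit formula (\ref{rep-si}).

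The next step is to record how $\si_i$ acts on a seminormal basis vector $v_{\bT}$, according to the relative position in $\bT$ of the nodes $\alpha$ and $\beta$ carrying $i$ and $i+1$. There are three mutually exclusive cases. If $\alpha,\beta$ lie in the same row then $\qc_{i+1}(\bT)=q^2\qc_i(\bT)$, $s_i(\bT)$ is not standard, and (\ref{rep-si}) collapses to $\si_iv_{\bT}=qv_{\bT}$. If they lie in the same column (hence in adjacent rows, since the nodes of $\lambda/\mu$ in a fixed column form a contiguous block) then $\qc_{i+1}(\bT)=q^{-2}\qc_i(\bT)$ and similarly $\si_iv_{\bT}=-q^{-1}v_{\bT}$. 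Otherwise $s_i(\bT)$ is standard, $\si_i$ preserves the plane $\CC v_{\bT}\oplus\CC v_{s_i(\bT)}$, and a short manipulation of $q$-numbers from (\ref{rep-si}) (or (\ref{rep-si2})) shows it acts on this plane with the two distinct eigenvalues $q$ and $-q^{-1}$ (distinct since $q^2\ne-1$), the $q$-eigenline being spanned by $v_{\bT}+v_{s_i(\bT)}$. Comparing, for each $i$, the coefficient of every $v_{\bT}$ on the two sides of $\si_iv=qv$, these three cases translate at once into the following description of $W$: a vector $v=\sum_{\bT}a_{\bT}v_{\bT}$ lies in $W$ if and only if (i) $a_{\bT}=0$ whenever $\bT$ has two consecutive entries lying in one column, and (ii) $a_{\bT}=a_{s_i(\bT)}$ whenever $s_i(\bT)$ is standard.

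Then I would finish by distinguishing the two cases of the statement. If $\lambda/\mu$ has at most one node in each column, condition (i) is vacuous, so by (ii) and Lemma \ref{lem-tableaux} (connectedness of $\STab(\lambda/\mu)$ under standardness-preserving elementary transpositions) all coefficients $a_{\bT}$ must coincide; hence $W=\CC\bigl(\sum_{\bT\in\STab(\lambda/\mu)}v_{\bT}\bigr)$, a line (nonzero as $\STab(\lambda/\mu)\ne\emptyset$). If instead $\lambda/\mu$ has two nodes in some column, pick two of them in adjacent rows, $\theta_1=(r,c)$ and $\theta_2=(r+1,c)$ (possible again because the column is contiguous). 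Then for any $v=\sum a_{\bT}v_{\bT}\in W$, condition (ii) and Lemma \ref{lem-tableaux} force all $a_{\bT}$ equal, while condition (i) applied to any standard tableau carrying consecutive values in $\theta_1,\theta_2$ forces that common value to be $0$; so $W=0$ as soon as one such tableau exists.

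Producing one such tableau is the only genuinely combinatorial point, and it is where I expect the main work to lie; the plan is to build it by prescribing the order in which nodes are filled. Let $S$ be the set of nodes of $\lambda/\mu$ lying in columns $<c$, together with the nodes of column $c$ in rows $\le r$. One checks that $S$ is an order ideal of $\lambda/\mu$ (stable under moving up or left), that $\mu\cup S$ and $\mu\cup S\cup\{\theta_2\}$ are both Young diagrams, and that $\theta_1$ is a maximal node of $S$. Filling $S$ by $1,\dots,|S|$ in a standard way with $\theta_1$ receiving $|S|$, then assigning $\theta_2$ the value $|S|+1$, and finally completing the skew shape $\lambda/(\mu\cup S\cup\{\theta_2\})$ standardly by $|S|+2,\dots,k$, yields a standard tableau of shape $\lambda/\mu$ in which $\theta_1$ and $\theta_2$ carry the consecutive values $|S|$ and $|S|+1$, as needed. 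Apart from this construction, the only real computation is the verification of the eigenvalues of $\si_i$ on the two-dimensional blocks, which is a routine $q$-number identity.
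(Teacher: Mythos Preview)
Your argument is correct and follows essentially the same strategy as the paper: both proofs analyse the action of each $\si_i$ on the seminormal basis in the three cases (same row, same column, neither), and both invoke Lemma~\ref{lem-tableaux} to propagate the equality of coefficients across all standard tableaux. Your reformulation of $P_k(V_{\lambda/\mu})$ as the joint $q$-eigenspace $W=\bigcap_i\ker(\si_i-q)$ is equivalent to the paper's repeated use of the factorisation $P_k=P_k\frac{1+q\si_i}{1+q^2}$, and leads to the same conditions on the coefficients.

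There is one place where your treatment is actually more careful than the paper's. In the case where $\lambda/\mu$ has two boxes in a column, the paper asserts that for \emph{every} standard tableau the two chosen adjacent boxes carry consecutive labels; this is not literally true (e.g.\ in $\begin{smallmatrix}1&2\\3&4\end{smallmatrix}$ no column contains consecutive entries). Your argument avoids this by first using condition~(ii) and Lemma~\ref{lem-tableaux} to force all coefficients equal, and then needing only \emph{one} tableau with consecutive entries in a column to conclude. Your explicit construction of such a tableau via the order ideal $S$ is exactly the missing ingredient, and the verifications you sketch (that $\mu\cup S$ and $\mu\cup S\cup\{\theta_2\}$ are Young diagrams, that $\theta_1$ is maximal in $S$) are routine and correct.
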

\begin{proof}
The defining formula and the fundamental properties of $P_k$ are in (\ref{def-P}) and (\ref{rel-sym}). We note first that we have:
\begin{equation}\label{Pk-si}
P_k=P_k\frac{1+q\si_i}{1+q^2}\ \ \ \ \ \text{for any $i=1,\dots,k-1$.}
\end{equation}

Let $\bT\in\STab(\lambda/\mu)$ and let $i\in\{1,\dots,k-1\}$. We denote $d:=d_{i,i+1}(\bT)=\cc_{i+1}(\bT)-\cc_i(\bT)$. From Formula (\ref{rep-si}), we calculate the action of $\si_i$ on the subspace of $V_{\lambda/\mu}$ generated by $v_{\bT}$ and $v_{s_i(\bT)}$, and deduce the action of $1+q\sigma_i$. There are three cases:
\begin{itemize}
\item[\textbf{(a)}] If $i+1$ is in the same column as $i$ just below it in $\bT$ then $v_{s_i(\bT)}=0$. The action is given by:
\[\si_i(v_{\bT})=-q^{-1}v_{\bT}\ \ \ \ \Rightarrow\ \ \ \ (1+q\si_i)(v_{\bT})=0\,.\]

\item[\textbf{(b)}] If $i+1$ is in the same line as $i$ just to its right in $\bT$ then $v_{s_i(\bT)}=0$. The action is given by:
\[\si_i(v_{\bT})=q v_{\bT}\ \ \ \ \Rightarrow\ \ \ \ (1+q\si_i)(v_{\bT})=(1+q^2)v_{\bT}\,.\]

\item[\textbf{(c)}] If $i$ and $i+1$ are neither in the same line nor in the same column in $\bT$ then $s_i(\bT)\in\STab(\lambda/\mu)$. The action in the basis $\{v_{\bT},v_{s_i(\bT)}\}$ is given by:
\[\si_i=\left(\begin{array}{cc}
\displaystyle \frac{q-q^{-1}}{1-q^{2d}} & \displaystyle\frac{q-q^{-2d-1}}{1-q^{-2d}} \\[1em]
\displaystyle \frac{q-q^{2d-1}}{1-q^{2d}} & \displaystyle\frac{q-q^{-1}}{1-q^{-2d}}
\end{array}\right)\ \ \ \ \Rightarrow\ \ \ \ 1+q\si_i=\left(\begin{array}{cc}
\displaystyle \frac{q^2-q^{2d}}{1-q^{2d}} & \displaystyle\frac{q^2-q^{-2d}}{1-q^{-2d}} \\[1em]
\displaystyle \frac{q^2-q^{2d}}{1-q^{2d}} & \displaystyle\frac{q^2-q^{-2d}}{1-q^{-2d}}
\end{array}\right)\,.\]
We find that the image of $(1+q\si_i)$ is included in the line $\CC(v_{\bT}+v_{s_i(\bT)})$ and moreover an easy calculation shows that $\si_i(v_{\bT}+v_{s_i(\bT)})=q(v_{\bT}+v_{s_i(\bT)})$.
\end{itemize}

We will combine (\ref{Pk-si}) with these elementary calculations to prove the proposition. First assume that the skew partition $\lambda/\mu$ contains two boxes in the same column. As $\lambda/\mu$ is a skew partition, we have two adjacent boxes in this column. Then for a standard Young tableau $\bT$ of shape $\lambda/\mu$, these two boxes must contain the numbers $i$ and $i+1$ for some $i\in\{1,\dots,k-1\}$ (with $i+1$ below $i$). Therefore, from Case \textbf{(a)} above, we have:
\[P_k(v_{\bT})=P_k\frac{1+q\si_i}{1+q^2}(v_{\bT})=0\ .\]
This shows that $P_k(V_{\lambda/\mu})=0$ in this case.

\medskip
Now, let $\{\alpha_{\bT}\}_{\bT\in\STab(\lambda/\mu)}$ be arbitrary complex numbers. The sums below are always indexed by the set $\STab(\lambda/\mu)$. We have:
\[P_k\bigl(\sum \alpha_{\bT}v_{\bT}\bigr)=P_k\frac{1+q\si_i}{1+q^2}\bigl(\sum \alpha_{\bT}v_{\bT}\bigr)\ \ \ \ \ \text{for any $i=1,\dots,k-1$.}\]
From Case \textbf{(c)} above, we have that $P_k\bigl(\sum \alpha_{\bT}v_{\bT}\bigr)$ is proportional to $P_k\bigl(\sum \alpha'_{\bT}v_{\bT}\bigr)$, where the coefficients $\alpha'_{\bT}$ satisfy
\[\alpha'_{\bT}=\alpha'_{s_i(\bT)}\ \ \ \ \ \ \text{for every $\bT$ and $i$ such that $s_i(\bT)$ is standard.}\]
Using Lemma \ref{lem-tableaux}, we conclude that $P_k\bigl(\sum \alpha_{\bT}v_{\bT}\bigr)$ is proportional to $P_k\bigl(\sum v_{\bT}\bigr)$, namely
\begin{equation}\label{alpha}
P_k\bigl(\sum \alpha_{\bT}v_{\bT}\bigr)\in\CC P_k\bigl(\sum v_{\bT}\bigr)\ .
\end{equation}
Then if the skew partition $\lambda/\mu$ contains at most one box in each column, we have, from Cases \textbf{(b)} and \textbf{(c)} above, that:
\[\forall\,i=1,\dots,k-1\,,\ \ \ \si_i\bigl(\sum v_{\bT}\bigr)=q\bigl(\sum v_{\bT}\bigr)\ .\]
From the explicit formula for the idempotent $P_k$, this gives that $P_k\bigl(\sum v_{\bT}\bigr)=\sum v_{\bT}$. With (\ref{alpha}), this shows that $P_k(V_{\lambda/\mu})=\CC\bigl(\sum v_{\bT}\bigr)$ and concludes the proof of the proposition.
\end{proof}

\subsection{Complete description}

Let $\bk=(k_1,k_2,...)\in \mathbb{Z}_{\geq0}^{\infty}$ and $n\in\mathbb{Z}_{>0}$ as before. From the generalities recalled at the beginning of Section \ref{app-ss}, we need to understand the subspaces $P_{\bk,n}(V)$ for any irreducible representation $V$ of the algebra $H_{k_1+\dots+k_n}(q)$. The irreducible representations of $H_{k_1+\dots+k_n}(q)$ are the representations $V_{\lambda}$, where $\lambda$ runs over the partitions of size $k_1+\dots+k_n$.

\paragraph{Semistandard Young tableaux and Kostka numbers.} A sequence of non-negative integers $\nu=(\nu_1,\dots,\nu_l)$ such that $\nu_1+\dots+\nu_l=n$ is called a composition of $n$. We say that the size $|\nu|$ is equal to $n$. We make no difference between $\nu$ and the same sequence where we added some parts equal to $0$ at the end.

Let $\bT$ be an arbitrary Young tableau of size $n$. For $a\in\mathbb{Z}_{\geq1}$, let $\nu_a$ be the number of times the integer $a$ appears in the tableau $\bT$. The sequence $\nu=(\nu_1,\nu_2,\dots)$ forms a composition of $n$. We say that $\bT$ is a tableau of weight $\nu$.

A Young tableau is called semistandard if the numbers are weakly ascending along rows and strictly ascending down columns of the Young diagram. For a skew partition $\lambda/\mu$ of size $n$ and a composition $\nu$ of $n$, we set
\[\SSTab(\lambda/\mu,\nu):=\{\text{semistandard Young tableaux of shape $\lambda/\mu$ and of weight $\nu$}\}\ .\]
For example, a standard Young tableau is a semistandard Young tableau of weight $(1,\dots,1)$.

For saving space we use the following notation for the composition of $k_1+\dots+k_n$ obtained by restricting the sequence of integers $\bk$ to the first $n$ entries:
\[\bk_{\vert n}:=(k_1,\dots,k_n)\ .\]
Apart from the standard Young tableaux, we will mainly use the notion of semistandard Young tableaux for partitions of size $k_1+\dots+k_n$ and of weight $\bk_{\vert n}=(k_1,\dots,k_n)$. For example, if $n=4$ and $\bk_{\vert 4}=(2,2,2,2)$ then 
\[\SSTab\bigl((4,4),\bk_{\vert 4}\bigr):=\Bigl\{ \begin{array}{cccc}
\fbox{\scriptsize{$1$}} & \hspace{-0.35cm}\fbox{\scriptsize{$1$}} & \hspace{-0.35cm}\fbox{\scriptsize{$2$}} & \hspace{-0.35cm}\fbox{\scriptsize{$2$}} \\[-0.2em]
\fbox{\scriptsize{$3$}} & \hspace{-0.35cm}\fbox{\scriptsize{$3$}} & \hspace{-0.35cm}\fbox{\scriptsize{$4$}} & \hspace{-0.35cm}\fbox{\scriptsize{$4$}}
\end{array}\ ,\qquad \begin{array}{cccc}
\fbox{\scriptsize{$1$}} & \hspace{-0.35cm}\fbox{\scriptsize{$1$}} & \hspace{-0.35cm}\fbox{\scriptsize{$3$}} & \hspace{-0.35cm}\fbox{\scriptsize{$3$}} \\[-0.2em]
\fbox{\scriptsize{$2$}} & \hspace{-0.35cm}\fbox{\scriptsize{$2$}} & \hspace{-0.35cm}\fbox{\scriptsize{$4$}} & \hspace{-0.35cm}\fbox{\scriptsize{$4$}}
\end{array}\ ,\qquad\begin{array}{cccc}
\fbox{\scriptsize{$1$}} & \hspace{-0.35cm}\fbox{\scriptsize{$1$}} & \hspace{-0.35cm}\fbox{\scriptsize{$2$}} & \hspace{-0.35cm}\fbox{\scriptsize{$3$}} \\[-0.2em]
\fbox{\scriptsize{$2$}} & \hspace{-0.35cm}\fbox{\scriptsize{$3$}} & \hspace{-0.35cm}\fbox{\scriptsize{$4$}} & \hspace{-0.35cm}\fbox{\scriptsize{$4$}}
\end{array}\Bigr\}\]
are all the semistandard tableaux of shape $(4,4)$ and of weight $\bk_{\vert 4}$.

For a skew partition $\lambda/\mu$ of size $n$ and a composition $\nu$ of $n$, the number of semistandard Young tableaux of shape $\lambda/\mu$ and of weight $\nu$ is called a \emph{Kostka number} and is denoted:
\[K_{\lambda/\mu,\nu}:=|\SSTab(\lambda/\mu,\nu)|\ .\]
One of their main properties is that $K_{\lambda/\mu,\nu}$ does not depend on the ordering of the parts of $\nu$. A direct combinatorial proof of this fact can be found in \cite[Theorem 7.10.2]{St}.

\paragraph{Dominance order and Kostka numbers.} For a partition $\lambda=(\lambda_1,\dots,\lambda_l)$, we use the convention that $\lambda_{l+1}=\lambda_{l+2}=\dots=0$. For two partitions $\lambda,\mu$ of the same size, we denote 
$$\lambda\geq\mu\ \ \ \Longleftrightarrow\ \ \ \lambda_1+\dots+\lambda_i\geq \mu_1+\dots+\mu_i\,,\ \ \ \forall i.$$
This is the dominance ordering of partitions. 

We are going to use the following combinatorial construction several times in the rest of the paper. If $\mu$ is a composition, we denote $\mu^{\text{ord}}$ the partition obtained by reordering the parts of $\mu$ in decreasing order.
\begin{lem}\label{lem-comb}
Let $\lambda$ be a partition and $\mu=(\mu_1,\dots,\mu_n)$ a composition such that $|\lambda|=|\mu|$. Assume that $\lambda\geq \mu^{\text{ord}}$.
\begin{itemize}
\item[(i)] $\lambda$ has at least $\mu_n$ non-empty columns.
\item[(ii)] Fill the last box of the first $\mu_n$ columns of $\lambda$ with the letter $n$. Then, as long as there is a box containing $n$ with an empty box in the same line on its right, move the letter $n$ in the empty box (in other words, slide the boxes with $n$ to the right as far as possible):\\
There is a semistandard Young tableau $\bbT\in \SSTab(\lambda,\mu)$ with the letters $n$ in these positions.
\end{itemize}
\end{lem}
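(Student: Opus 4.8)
```latex
\textbf{Proof plan.} The plan is to prove (i) and (ii) essentially together by an explicit greedy construction, exploiting the dominance hypothesis $\lambda \geq \mu^{\text{ord}}$ and the fact (recalled in the excerpt) that Kostka numbers are independent of the ordering of the weight $\mu$.

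First I would establish (i). Write $\lambda = (\lambda_1, \dots, \lambda_l)$ and let $c$ be the number of non-empty columns of $\lambda$, i.e. $c = \lambda_1$. Reorder $\mu$ to get $\mu^{\text{ord}} = (\mu^{\text{ord}}_1, \dots)$; since $\lambda \geq \mu^{\text{ord}}$ and both have the same size $|\lambda| = |\mu|$, I claim $\lambda_1 \geq \mu_n$. Indeed, $\mu_n$ is one of the parts of $\mu$, hence $\mu_n \leq \mu^{\text{ord}}_1 \leq \lambda_1$ by dominance (the inequality $\mu^{\text{ord}}_1 \leq \lambda_1$ is the $i=1$ case of the dominance order). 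So $\lambda$ has at least $\mu_n$ non-empty columns, proving (i); in particular the operation described in (ii) of putting $n$ in the last box of each of the first $\mu_n$ columns makes sense.

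Next I would address (ii). After placing the $\mu_n$ letters $n$ at the bottom of the first $\mu_n$ columns and sliding them as far right as possible within their rows, let $\lambda^-$ be the (skew-complement) shape obtained by deleting the $\mu_n$ boxes now occupied by $n$. The key point is to show $\lambda^-$ is an honest partition shape (a Young diagram of a partition), of size $|\mu| - \mu_n = \mu_1 + \dots + \mu_{n-1}$, and that $\lambda^- \geq (\mu_1, \dots, \mu_{n-1})^{\text{ord}}$ in dominance order. Granting this, by induction on $n$ one fills $\lambda^-$ with a semistandard tableau of weight $(\mu_1, \dots, \mu_{n-1})$, and combining with the $n$'s already placed gives the desired $\bbT \in \SSTab(\lambda, \mu)$; one checks directly that the $n$'s, being in the bottom of their columns and weakly right-justified among the other entries, do not violate semistandardness. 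The base case $n = 1$ is trivial. So the real content is the two claims about $\lambda^-$: that removing the bottom box of each of the first $\mu_n$ columns, after the rightward sliding, leaves a Young diagram, and that dominance is preserved.

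The main obstacle is precisely verifying these two structural claims about $\lambda^-$, and this is where I would spend the effort. For the "$\lambda^-$ is a partition" claim: the rightward sliding means each $n$ ends up as the rightmost box of some row, and one must check (a) that after deleting these boxes the remaining row lengths are still weakly decreasing, and (b) that exactly one box is removed from each of the top $\mu_n$ columns so that columns are still "bottom-justified". The sliding process is exactly what arranges the removed boxes into a horizontal-strip-like configuration; I expect the cleanest argument is to describe, for each column index $j \leq \mu_n$, which row the corresponding $n$ lands in, using the column-height function $\lambda'_j$, and then to argue by a direct counting/monotonicity argument that removing these specific boxes preserves the partition shape. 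For the dominance claim $\lambda^- \geq (\mu_1,\dots,\mu_{n-1})^{\text{ord}}$: here I would use that removing a horizontal strip of size $\mu_n$ from $\lambda$ decreases each partial sum $\lambda_1 + \dots + \lambda_i$ by at most $\min(i, \mu_n) \le \mu_n$, combined with the hypothesis $\lambda_1 + \dots + \lambda_i \geq \mu^{\text{ord}}_1 + \dots + \mu^{\text{ord}}_i$ and a careful comparison between $\mu^{\text{ord}}$ (with $\mu_n$ removed) and $(\mu_1,\dots,\mu_{n-1})^{\text{ord}}$; the independence of the construction on the ordering of $\mu$ lets me reduce to the convenient case where $\mu_n$ is the smallest part, which simplifies this bookkeeping considerably.
```
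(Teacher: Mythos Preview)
Your approach matches the paper's exactly: prove (i) via $\mu_n \le \mu^{\text{ord}}_1 \le \lambda_1$, then for (ii) induct on $n$, removing the $n$'s to get a smaller shape $\lambda^-$ and checking that $\lambda^-$ is a partition with $\lambda^- \ge (\mu_1,\dots,\mu_{n-1})^{\text{ord}}$. The two subclaims you isolate are precisely what the paper verifies.

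Where your sketch falls short is in the tools you propose for those two subclaims. First, the reduction to ``$\mu_n$ smallest'' is not legitimate: the positions of the $n$'s prescribed in (ii) depend on the value $\mu_n$ itself, so reordering $\mu$ in a way that changes which part sits in the last slot changes the statement you are proving. You may freely reorder $\mu_1,\dots,\mu_{n-1}$ (since neither the placement of the $n$'s nor $(\mu_1,\dots,\mu_{n-1})^{\text{ord}}$ sees their order), but you cannot move $\mu_n$. Second, the bound ``partial sums $\lambda_1+\dots+\lambda_i$ decrease by at most $\min(i,\mu_n)$'' is neither a general fact about horizontal strips nor sufficient here: even granting it, you would only get $\lambda^-_1+\dots+\lambda^-_k \ge (\mu'_1+\dots+\mu'_k)-\mu_n$, which does not recover $\nu'_1+\dots+\nu'_k$.

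The paper closes both gaps with one stroke: it writes $\lambda^-$ down explicitly. Let $j$ be the largest row index with $\lambda_j \ge \mu_n$ (so $\lambda_{j+1} < \mu_n$). Then the sliding procedure gives
\[
\lambda^-_a = \lambda_a \ \ (a<j),\qquad \lambda^-_j = \lambda_j - (\mu_n - \lambda_{j+1}),\qquad \lambda^-_a = \lambda_{a+1}\ \ (a>j).
\]
This shows at once that $\lambda^-$ is a partition, and yields the key identity $\lambda^-_1+\dots+\lambda^-_k = \lambda_1+\dots+\lambda_{k+1} - \mu_n$ for $k\ge j$ (you gain one extra $\lambda$-term while losing $\mu_n$). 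The dominance check $\lambda^- \ge \nu^{\text{ord}}$, with $\nu=(\mu_1,\dots,\mu_{n-1})$, then reduces to a short case analysis on whether $\mu_n$ lies among the $k+1$ largest parts of $\mu$. That explicit formula for $\lambda^-$ is really the missing ingredient in your plan.
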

\begin{proof} Denote $\mu^{\text{ord}}=(\mu'_1,\dots,\mu'_n)$.

$(i)$ The condition $\lambda\geq\mu^{\text{ord}}$ implies in particular that $\lambda_1\geq\mu'_1$ which is the largest part of $\mu$. So we have in particular $\lambda_1\geq\mu_n$. Thus there are more than $\mu_n$ non-empty columns in $\lambda$.

$(ii)$ First note that since $\lambda_1+\dots+\lambda_n\geq \mu'_1+\dots+\mu'_n=|\mu|=|\lambda|$ then $\lambda$ has at most $n$ non-empty parts. We use induction on $n$ (the case $n=1$ is trivial since in this case, $\lambda$ is a single line of boxes). After placing the letters $n$ as indicated, the remaining empty boxes in $\lambda$ form a partition $\tilde{\lambda}$ which is given by:
\[\tilde{\lambda}_1=\lambda_1\,,\ \ \dots\ \ \,\ \tilde{\lambda}_{i-1}=\lambda_{i-1}\,,\ \tilde{\lambda}_i=\lambda_i-(\mu_n-\lambda_{i+1})\,,\ \tilde{\lambda}_{i+1}=\lambda_{i+2}\,,\ \dots\ ,\ \tilde{\lambda}_{n-1}=\lambda_n\,,\ \tilde{\lambda}_n=0\ ,\]
for some $i\in\{1,\dots,n\}$ (this number $i$ is such that $\lambda_1,\dots,\lambda_i\geq\mu_n$ and $\lambda_{i+1}<\mu_n$; if $i=n$, it is to be understood that the only modified part is $\tilde{\lambda}_n=\lambda_n-\mu_n$). Let $\nu=(\mu_1,\dots,\mu_{n-1})$. It remains to show that $\tilde{\lambda}\geq\nu^{\text{ord}}$. Indeed by induction we will have the existence of a semistandard Young tableau in $\SSTab(\tilde{\lambda},\nu)$, which together with the boxes containing $n$ will create an element of $\SSTab(\lambda,\mu)$.

The fact that $\tilde{\lambda}\geq\nu^{\text{ord}}$ is checked as follows. Denote $\nu^{\text{ord}}=(\nu'_1,\dots,\nu'_{n-1})$. First, we have (using $\lambda\geq\mu^{\text{ord}}$)
\[\tilde{\lambda}_1+\dots+\tilde{\lambda}_k=\lambda_1+\dots+\lambda_k\geq\mu'_1+\dots+\mu'_k\ \ \ \ \text{if $k=1,\dots,i-1$.}\]
This is greater or equal to $\nu'_1+\dots+\nu'_k$ since in fact $\nu'_1\leq \mu'_1$, $\dots$, $\nu'_k\leq\mu'_k$.

If $k=i,\dots,n-1$, we have (using $\lambda\geq\mu^{\text{ord}}$)
\[\tilde{\lambda}_1+\dots+\tilde{\lambda}_k=\lambda_1+\dots+\lambda_k+\lambda_{k+1}-\mu_n\geq\mu'_1+\dots+\mu'_k+\mu'_{k+1}-\mu_n\,.\]
If $\mu_n<\mu'_{k+1}$ then this is greater than $\mu'_1+\dots+\mu'_k$ which is in turn greater or equal to $\nu'_1+\dots+\nu'_k$ as above. Otherwise if $\mu_n$ is one of the integer $\mu'_1,\dots,\mu'_{k+1}$, say $\mu'_j$, then this is equal to $\mu'_1+\dots+\mu'_{j-1}+\mu'_{j+1}+\dots+\mu'_{k+1}$, which is in turn equal to $\nu'_1+\dots+\nu'_k$. Indeed we have here $\nu'_1=\mu'_1$, $\dots$, $\nu'_{j-1}=\mu'_{j-1}$, $\nu'_j=\mu'_{j+1}$, $\dots$, $\nu'_k=\mu'_{k+1}$.
\end{proof}

The preceding construction easily implies in particular the following known properties of Kostka numbers.
\begin{lem}\label{lem-Kos}
Let $\lambda$ a partition and $\mu$ a composition such that $|\lambda|=|\mu|$. We have
$$K_{\lambda,\mu}\neq 0\ \ \ \ \ \Longleftrightarrow\ \ \ \ \ \lambda\geq \mu^{\text{ord}}\ .$$
\end{lem}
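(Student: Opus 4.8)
The plan is to prove the equivalence $K_{\lambda,\mu}\neq 0\iff \lambda\geq\mu^{\text{ord}}$ by establishing the two implications separately, using Lemma~\ref{lem-comb} for the hard direction.

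\textbf{The direction $\lambda\geq\mu^{\text{ord}}\Rightarrow K_{\lambda,\mu}\neq0$.} This is immediate: Lemma~\ref{lem-comb}(ii) explicitly constructs a semistandard Young tableau $\bbT\in\SSTab(\lambda,\mu)$ under the hypothesis $\lambda\geq\mu^{\text{ord}}$, so $\SSTab(\lambda,\mu)$ is nonempty, i.e.\ $K_{\lambda,\mu}\geq1$. (One should also observe that since $K_{\lambda,\mu}$ depends only on $\mu^{\text{ord}}$ and not on the ordering of the parts of $\mu$ --- a fact already cited in the paper via \cite[Theorem 7.10.2]{St} --- there is no loss in assuming $\mu$ itself is a partition, but strictly this is not even needed here.)

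\textbf{The direction $K_{\lambda,\mu}\neq0\Rightarrow\lambda\geq\mu^{\text{ord}}$.} Suppose $\bbT\in\SSTab(\lambda,\mu)$ exists; I want to show $\lambda_1+\dots+\lambda_i\geq\mu^{\text{ord}}_1+\dots+\mu^{\text{ord}}_i$ for every $i$. The standard argument: in a semistandard tableau, each column is strictly increasing, so within any single column the letters $1,2,\dots,i$ can occupy at most $i$ boxes, and moreover all boxes containing a letter $\leq i$ in a given column must form an initial segment of that column (i.e.\ lie in the top rows), because if a box in row $r$ contains a letter $\leq i$ then so does every box strictly above it in the same column by strict column-increase. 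Hence the set of boxes of $\lambda$ containing letters in $\{1,\dots,i\}$ is contained in the first $i$ rows of $\lambda$, which have $\lambda_1+\dots+\lambda_i$ boxes total. On the other hand the number of such boxes is exactly (number of $1$'s)$+\dots+$(number of $i$'s)$=\mu_{j_1}+\dots+\mu_{j_i}$ for some distinct indices, wait --- more carefully, it is the sum of the multiplicities in $\mu$ of the values $1,\dots,i$. To get the cleanest bound I would first reduce to the case where $\mu=\mu^{\text{ord}}$ is itself a partition, using the cited invariance of $K_{\lambda,\mu}$ under reordering of $\mu$; then the number of boxes with entries $\leq i$ is precisely $\mu_1+\dots+\mu_i=\mu^{\text{ord}}_1+\dots+\mu^{\text{ord}}_i$, and we conclude $\lambda_1+\dots+\lambda_i\geq\mu^{\text{ord}}_1+\dots+\mu^{\text{ord}}_i$, as desired. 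This holds for all $i$, so $\lambda\geq\mu^{\text{ord}}$.

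\textbf{Main obstacle.} There is no serious obstacle: the forward construction is handed to us by Lemma~\ref{lem-comb}, and the reverse implication is the classical ``box-counting in the first $i$ rows'' argument for semistandard tableaux. The only point requiring a little care is bookkeeping the multiplicities of $\mu$ versus the ordered version $\mu^{\text{ord}}$; invoking the reordering-invariance of Kostka numbers at the outset makes this clean. If one wanted to avoid citing that invariance, one could instead phrase the reverse inequality directly as $\lambda_1+\dots+\lambda_i\geq$ (sum of any $i$ parts of $\mu$) and take the maximum over choices of $i$ parts, which is exactly $\mu^{\text{ord}}_1+\dots+\mu^{\text{ord}}_i$; but the cleanest exposition is simply to reduce to $\mu$ a partition from the start.
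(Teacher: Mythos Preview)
Your proof is correct and follows essentially the same route as the paper: both directions reduce to $\mu=\mu^{\text{ord}}$ via the reordering invariance of Kostka numbers, then use the ``letters $\leq i$ lie in the first $i$ rows'' count for necessity and invoke Lemma~\ref{lem-comb} for sufficiency. Your exposition is slightly more detailed on the column argument, but the structure is identical.
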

\begin{proof}
From the recalled symmetry property of Kostka numbers, we have $K_{\lambda,\mu}=K_{\lambda,\mu^{\text{ord}}}$. So we can assume that the parts of $\mu$ are already ordered in decreasing order, namely, that we have $\mu^{\text{ord}}=\mu$.

First assume that $K_{\lambda,\mu}\neq 0$ so that there is $\bbT\in \SSTab(\lambda,\mu)$. Let $i\geq 1$. By semistandardness, in $\bbT$ the numbers $1,\dots,i$ all appear in the first $i$ lines of $\bbT$. So we must have $\lambda_1+\dots+\lambda_i\geq \mu_1+\dots+\mu_i$. This proves that $\lambda\geq \mu$.

Reciprocally, take $\lambda$ such that $\lambda\geq \mu$. From Lemma \ref{lem-comb}, there is an element in $\bbT\in \SSTab(\lambda,\mu)$, therefore $K_{\lambda,\mu}\neq 0$. 
\end{proof}

\subsubsection{Main result}

We are now ready to describe the representation theory of the chain of algebras
\[\CC=H_{\bk,0}(q)\subset H_{\bk,1}(q)\subset H_{\bk,2}(q)\subset\dots\dots \subset H_{\bk,n}(q)\subset H_{\bk,n+1}(q)\subset \dots\dots\ .\]
Let $\lambda\vdash k_1+\dots+k_n$ and recall that $V_{\lambda}$ is a vector space with basis indexed by $\STab(\lambda)$ carrying the irreducible representation of $H_{k_1+\dots+k_n}(q)$.

Let $\bT\in\STab(\lambda)$. We denote by $\overline{\bT}$ the Young tableau obtained from $\bT$ by the following map from $\{1,\dots,k_1+\dots+k_n\}$ to $\{1,\dots,n\}$:
\[1,\dots,k_1 \mapsto 1\,,\ \ \ \ \ k_1+1,\dots,k_1+k_2\mapsto 2\,,\ \ \ \ \dots\ \ \ \ k_1+\dots+k_{n-1}+1,\dots,k_1+\dots+k_n\mapsto n\,,\]
that is, we replace in $\bT$ the $k_1$ first integers by 1, the next $k_2$ ones by 2, and so on. We obtain this way a Young tableau $\overline{\bT}$ of weight $\bk_{\vert n}$ (note that $\overline{\bT}$ does not have to be semistandard, as shown in the example below).

Now, let $\bbT\in\SSTab(\lambda,\bk_{\vert n})$ a semistandard Young tableau of shape $\lambda$ and of weight $\bk_{\vert n}$. We define in $V_{\lambda}$ the following vector:
\begin{equation}\label{def-wT}
w_{\bbT}:=\sum_{\begin{array}{c}
\\[-1.6em]
\scriptstyle{\bT\in\STab(\lambda)} \\[-0.4em]
\scriptstyle{\overline{\bT}=\bbT}
\end{array}} v_{\bT}\ \in V_{\lambda}\ .
\end{equation}
\begin{exa}
Let $n=2$, $\bk_{\vert 2}=(2,2)$ and $\lambda=(3,1)$. There is only one semistandard Young tableau of shape $\lambda$ with weight $\bk_{\vert 2}$, and that is $\begin{array}{ccc}
\fbox{\scriptsize{$1$}} & \hspace{-0.35cm}\fbox{\scriptsize{$1$}} & \hspace{-0.35cm}\fbox{\scriptsize{$2$}} \\[-0.2em]
\fbox{\scriptsize{$2$}} & &
\end{array}$. We have then : $w_{\begin{array}{ccc}
\fbox{\scriptsize{$1$}} & \hspace{-0.35cm}\fbox{\scriptsize{$1$}} & \hspace{-0.35cm}\fbox{\scriptsize{$2$}} \\[-0.2em]
\fbox{\scriptsize{$2$}} & &
\end{array}}=v_{\begin{array}{ccc}
\fbox{\scriptsize{$1$}} & \hspace{-0.35cm}\fbox{\scriptsize{$2$}} & \hspace{-0.35cm}\fbox{\scriptsize{$3$}} \\[-0.2em]
\fbox{\scriptsize{$4$}} & &
\end{array}}+v_{\begin{array}{ccc}
\fbox{\scriptsize{$1$}} & \hspace{-0.35cm}\fbox{\scriptsize{$2$}} & \hspace{-0.35cm}\fbox{\scriptsize{$4$}} \\[-0.2em]
\fbox{\scriptsize{$3$}} & &
\end{array}}$. The remaining standard Young tableau $\bT=\begin{array}{ccc}
\fbox{\scriptsize{$1$}} & \hspace{-0.35cm}\fbox{\scriptsize{$3$}} & \hspace{-0.35cm}\fbox{\scriptsize{$4$}} \\[-0.2em]
\fbox{\scriptsize{$2$}} & &
\end{array}$ of shape $\lambda$ gives a Young tableau $\overline{\bT}=\begin{array}{ccc}
\fbox{\scriptsize{$1$}} & \hspace{-0.35cm}\fbox{\scriptsize{$2$}} & \hspace{-0.35cm}\fbox{\scriptsize{$2$}} \\[-0.2em]
\fbox{\scriptsize{$1$}} & &
\end{array}$ which is not semistandard.
\end{exa}

Below, we denote by $\bk^{\text{ord}}_{\vert n}$ the partition of $k_1+\dots+k_n$ obtained by ordering in decreasing order the numbers $k_1,\dots,k_n$.
\begin{thm}\label{thm-rep}
For any $\lambda\vdash k_1+\dots+k_n$, set $W_{\bk,\lambda}:=P_{\bk,n}(V_{\lambda})$.
\begin{enumerate}
\item The space $W_{\bk,\lambda}$ is spanned by the vectors $w_{\bbT}$, where $\bbT\in\SSTab(\lambda,\bk_{\vert n})$.
\item A complete set of pairwise non-isomorphic irreducible (non-zero) representations of $H_{\bk,n}(q)$ is 
\[\{W_{\bk,\lambda}\}_{\lambda\in S_{\bk,n}}\ \ \ \ \ \text{with $S_{\bk,n}:=\{\lambda\vdash k_1+\dots+k_n\ |\ \lambda\geq\bk^{\text{ord}}_{\vert n}\}$}.\]
The dimension of $W_{\bk,\lambda}$ is the Kostka number $K_{\lambda,\bk_{\vert_n}}=|\SSTab(\lambda,\bk_{\vert n})|$.
\item For $\lambda\in S_{\bk,n}$, the restriction of $W_{\bk,\lambda}$ to $H_{\bk,n-1}(q)$ decomposes as:
\begin{equation}\label{BR-PHP}
\text{Res}_{H_{\bk,n-1}(q)}(W_{\bk,\lambda})\,\cong\,\bigoplus_{\mu\in\text{Res}_\bk(\lambda)} W_{\bk,\mu}\,,
\end{equation}
where we have set:
$$\text{Res}_{\bk}(\lambda):=\bigl\{\mu\in S_{\bk,n-1}\ |\  \text{$\mu\subset\lambda$ and $\lambda/\mu$ contains at most one box in each column}\bigr\}.$$
\end{enumerate}
\end{thm}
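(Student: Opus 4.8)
The whole theorem is about understanding the subspace $P_{\bk,n}(V_\lambda)$ inside the irreducible $H_{k_1+\dots+k_n}(q)$-module $V_\lambda$; by the generalities on idempotent subalgebras recalled in the Appendix, the non-zero spaces among $\{P_{\bk,n}(V_\lambda)\}_{\lambda\vdash k_1+\dots+k_n}$ form a complete set of pairwise non-isomorphic irreducible $P_{\bk,n}H_{k_1+\dots+k_n}(q)P_{\bk,n}$-modules, and via the isomorphism of Proposition~\ref{prop-PHP} these are exactly the irreducibles of $H_{\bk,n}(q)$. So the three items will all follow once we pin down $P_{\bk,n}(V_\lambda)$ explicitly in the seminormal basis. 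The key computational input is Proposition~\ref{prop-rep}, which already handles a single $q$-symmetriser $P_k$ acting on a skew representation $V_{\lambda/\mu}$ of $H_k(q)$: it says the image is one-dimensional, spanned by $\sum_{\bT}v_{\bT}$, precisely when $\lambda/\mu$ has no two boxes in a column, and zero otherwise.

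\textbf{Item 1 (spanning by the $w_{\bbT}$).} The plan is to apply Proposition~\ref{prop-rep} iteratively, one tensor factor at a time, using the factorisation $P_{\bk,n}=P_{k_1}\otimes\dots\otimes P_{k_n}$ and the branching rule (\ref{BR}) for Hecke algebras. Restricting $V_\lambda$ along the parabolic chain $H_{k_1}\otimes\dots\otimes H_{k_n}\subset H_{k_1+\dots+k_n}(q)$, the module $V_\lambda$ decomposes, iterating (\ref{BR}), as a direct sum indexed by chains of partitions $\emptyset=\lambda^{(0)}\subset\lambda^{(1)}\subset\dots\subset\lambda^{(n)}=\lambda$ with $|\lambda^{(a)}/\lambda^{(a-1)}|=k_a$; the corresponding isotypic piece is $\bigotimes_a V_{\lambda^{(a)}/\lambda^{(a-1)}}$, and its seminormal basis vectors are exactly the $v_{\bT}$ for $\bT\in\STab(\lambda)$ whose restriction recording which box contains $1,\dots,k_1$, then $k_1+1,\dots,k_1+k_2$, etc., follows that chain. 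Applying $P_{k_a}$ on the $a$-th factor, Proposition~\ref{prop-rep} kills the summand unless every skew shape $\lambda^{(a)}/\lambda^{(a-1)}$ is a horizontal strip (at most one box per column), in which case it collapses that tensor factor to the line spanned by the sum of its standard fillings. Matching up: a chain $(\lambda^{(a)})$ with all $\lambda^{(a)}/\lambda^{(a-1)}$ horizontal strips and $|\lambda^{(a)}/\lambda^{(a-1)}|=k_a$ is exactly the data of a semistandard tableau $\bbT\in\SSTab(\lambda,\bk_{\vert n})$ (put the entry $a$ in $\lambda^{(a)}/\lambda^{(a-1)}$), and the surviving image vector is precisely $w_{\bbT}=\sum_{\overline{\bT}=\bbT}v_{\bT}$. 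Since the surviving summands are linearly independent (they sit in distinct isotypic components for the parabolic subalgebra), the $w_{\bbT}$ are a basis of $W_{\bk,\lambda}$ and $\dim W_{\bk,\lambda}=|\SSTab(\lambda,\bk_{\vert n})|=K_{\lambda,\bk_{\vert n}}$.

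\textbf{Item 2 (classification).} By the idempotent-subalgebra generalities, $\{W_{\bk,\lambda}:W_{\bk,\lambda}\neq 0\}$ is a complete, irredundant set of irreducibles of $H_{\bk,n}(q)$, so the only thing to check is that $W_{\bk,\lambda}\neq 0$ iff $\lambda\geq\bk^{\text{ord}}_{\vert n}$. But $W_{\bk,\lambda}\neq 0$ iff $\SSTab(\lambda,\bk_{\vert n})\neq\emptyset$ iff $K_{\lambda,\bk_{\vert n}}\neq 0$, which by Lemma~\ref{lem-Kos} is equivalent to $\lambda\geq(\bk_{\vert n})^{\text{ord}}=\bk^{\text{ord}}_{\vert n}$. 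Done.

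\textbf{Item 3 (branching).} Here I would use that the inclusion $H_{\bk,n-1}(q)\hookrightarrow H_{\bk,n}(q)$ corresponds, under Proposition~\ref{prop-PHP} and the chain property (\ref{chain-PHP}), to $P_{\bk,n-1}H_{k_1+\dots+k_{n-1}}(q)P_{\bk,n-1}\otimes P_{k_n}$ sitting inside $P_{\bk,n}H_{k_1+\dots+k_n}(q)P_{\bk,n}$. So restricting $W_{\bk,\lambda}=P_{\bk,n}(V_\lambda)$ to $H_{\bk,n-1}(q)$ amounts to: first restrict $V_\lambda$ (over $H_{k_1+\dots+k_n}(q)$) to $H_{k_1+\dots+k_{n-1}}(q)\otimes H_{k_n}(q)$, getting $\bigoplus_{\mu}V_\mu\otimes V_{\lambda/\mu}$ over $\mu\subset\lambda$ with $|\lambda/\mu|=k_n$ (iterating (\ref{BR})); then apply $P_{k_n}$ on the second factor, which by Proposition~\ref{prop-rep} kills the term unless $\lambda/\mu$ is a horizontal strip and otherwise leaves $V_\mu$ (tensored with a line); then apply $P_{\bk,n-1}$ to each surviving $V_\mu$, giving $W_{\bk,\mu}=P_{\bk,n-1}(V_\mu)$, which is non-zero exactly when $\mu\in S_{\bk,n-1}$. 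Collecting the surviving $\mu$ gives precisely $\text{Res}_{\bk}(\lambda)=\{\mu\in S_{\bk,n-1}:\mu\subset\lambda,\ \lambda/\mu \text{ a horizontal strip}\}$, as claimed. One should also double-check compatibility: applying $P_{k_n}$ first and $P_{\bk,n-1}$ second gives the same space as applying $P_{\bk,n}$, which is immediate because these idempotents commute (they act on disjoint sets of Hecke generators).

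\textbf{Main obstacle.} The genuinely substantive point is Item~1, i.e. correctly organising the iterated restriction along the parabolic chain and tracking the seminormal basis through it. The subtlety is that the labelling ``$\overline{\bT}=\bbT$'' with $\bbT$ semistandard must be shown to match exactly the chains with all consecutive skew shapes being horizontal strips — and here one must be a little careful that Proposition~\ref{prop-rep} is applied to the correct skew module with the correct labelling of contents (the seminormal basis of $V_\lambda$ restricted to $H_{k_a}$ acting on letters $k_1+\dots+k_{a-1}+1,\dots,k_1+\dots+k_a$ is genuinely a seminormal basis of a skew representation $V_{\lambda^{(a)}/\lambda^{(a-1)}}$ with contents shifted appropriately, which is exactly the statement of the Hecke branching rule). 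Once this bookkeeping is in place, items~2 and~3 are short consequences.
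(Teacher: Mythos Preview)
Your proposal is correct and follows essentially the same route as the paper: the key ingredients are Proposition~\ref{prop-rep} applied to each skew factor, the parabolic restriction of the seminormal basis, and Lemma~\ref{lem-Kos} for the non-vanishing criterion. The only organisational difference is that the paper proves Item~1 by induction on $n$, peeling off the last factor $H_{k_n}(q)$ and invoking Lemma~\ref{lem-BR} at each step, whereas you restrict to the full parabolic $H_{k_1}\otimes\dots\otimes H_{k_n}$ in one go and apply all the $P_{k_a}$ simultaneously; these are equivalent, yours being the unrolled form of the paper's induction.
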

The condition $\lambda\geq \bk^{\text{ord}}_{\vert n}$ implies easily that $l(\lambda)\leq n$ (see below), so that we have:
\[S_{\bk,n}\subset\{\lambda\vdash k_1+\dots+k_n\ |\ l(\lambda)\leq n\}\ .\] 
In general, the inclusion is strict (see however Subsection \ref{subsec-rep-const} for a situation where it is equivalent to $l(\lambda)\leq n$).

Denote by $k'_1\geq k'_2\geq\dots\geq k'_n$ the parts of $\bk_{\vert n}$ after reordering. Then for partitions $\lambda\vdash k_1+\dots+k_n$ appearing in $S_{\bk,n}$, the condition $\lambda\geq \bk^{\text{ord}}_{\vert n}$ can be expressed by the two following equivalent set of inequalities (using $|\lambda|=\lambda_1+\dots+\lambda_n=k'_1+\dots+k'_n$):
\begin{equation}\label{k-condition}
\lambda\in S_{\bk,n}\ \ \Leftrightarrow\ \ \left\{\begin{array}{rcl} \lambda_1 & \geq & k'_1 \\
\lambda_1+\lambda_2 & \geq & k'_1+k'_2\\
 & \vdots & \\
 \lambda_1+\dots+\lambda_{n-1} & \geq & k'_1+\dots+k'_{n-1}\\
 \lambda_1+\dots+\lambda_n & \geq & k'_1+\dots+k'_n\end{array}\right.
 \Leftrightarrow\ \ \left\{\begin{array}{rcl} \lambda_n & \leq & k'_n \\
\lambda_{n-1}+\lambda_n & \leq & k'_{n-1}+k'_n\\
 & \vdots & \\
 \lambda_2+\dots+\lambda_{n} & \leq & k'_2+\dots+k'_{n}\\
 \lambda_1+\dots+\lambda_n & \leq & k'_1+\dots+k'_n\end{array}\right.
\end{equation} 

Before proving the theorem, we establish the following combinatorial bijection underlying the branching rules (\ref{BR-PHP}). It is the generalisation for general $\bk$ of the natural bijection, for $\lambda\vdash n$,
\[\STab(\lambda)\ \ \stackrel{1\text{-}1}{\longleftrightarrow}\ \ \bigcup_{\begin{array}{c}
\\[-1.6em]
\scriptstyle{\mu\,\vdash n-1} \\[-0.4em]
\scriptstyle{\mu\subset \lambda}
\end{array}}\STab(\mu)\ ,\]
underlying the branching rules for the chain of Hecke algebras $H_n(q)$.
\begin{lem}\label{lem-BR}
For any $\lambda\vdash k_1+\dots+k_n$, we have:
\begin{equation}\label{comb-bij}
\SSTab(\lambda,\bk_{\vert n})\ \ \stackrel{1\text{-}1}{\longleftrightarrow}\ \ \bigcup_{\mu\in\text{Res}_{\bk}(\lambda)}\SSTab(\mu,\bk_{\vert n-1})\ .
\end{equation}
\end{lem}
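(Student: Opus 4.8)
The plan is to realise the bijection explicitly through the operation of deleting, respectively adjoining, the boxes filled with the largest letter $n$; this is essentially the combinatorial content of the Pieri rule, adapted to the present setting. Throughout, recall that the union on the right-hand side of (\ref{comb-bij}) is disjoint, since an element of $\SSTab(\mu,\bk_{\vert n-1})$ remembers its shape $\mu$.

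First I would define the forward map. Let $\bbT\in\SSTab(\lambda,\bk_{\vert n})$; since $\bbT$ has weight $\bk_{\vert n}$, exactly $k_n$ of its boxes contain $n$. I claim that the set of nodes of $\bbT$ \emph{not} containing $n$ is the Young diagram of a partition $\mu$: if a node $\theta$ does not contain $n$, then by weak increase along rows, strict increase down columns, and maximality of $n$, neither does the node immediately above $\theta$ nor the one immediately to its left, so this set is closed upwards and to the left, hence is a Young diagram. Moreover, as no letter exceeds $n$, a box containing $n$ must be the bottom box of its column in $\lambda$, and by strict increase down columns each column contains at most one such box; therefore $\lambda/\mu$ has at most one box in each column. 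Restricting $\bbT$ to $\mu$ yields $\bbT'$, which remains semistandard (semistandardness is inherited by sub-shapes) of weight $\bk_{\vert n-1}$; in particular $\SSTab(\mu,\bk_{\vert n-1})\neq\emptyset$, so $K_{\mu,\bk_{\vert n-1}}\neq 0$ and Lemma~\ref{lem-Kos} gives $\mu\geq\bk^{\text{ord}}_{\vert n-1}$, i.e. $\mu\in S_{\bk,n-1}$. Hence $\mu\in\text{Res}_{\bk}(\lambda)$ and $\bbT'$ lies in the right-hand side of (\ref{comb-bij}).

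Next I would define the inverse map. Given $\mu\in\text{Res}_{\bk}(\lambda)$ and $\bbT'\in\SSTab(\mu,\bk_{\vert n-1})$, fill every box of $\lambda/\mu$ with $n$ to obtain a tableau $\bbT$ of shape $\lambda$ and weight $\bk_{\vert n}$ (the count of $n$'s being $|\lambda/\mu|=|\lambda|-|\mu|=k_n$). It is semistandard: rows stay weakly increasing because $n$ is maximal, and columns stay strictly increasing because within $\mu$ nothing changed, while for a new box $\theta$ containing $n$ the box directly above $\theta$ (if present in $\lambda$) cannot lie in $\lambda/\mu$ — that would force two $n$'s in one column of $\lambda/\mu$ — hence lies in $\mu$ and carries a letter $\leq n-1<n$. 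Thus $\bbT\in\SSTab(\lambda,\bk_{\vert n})$.

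Finally, the two constructions are visibly mutually inverse: deleting the $n$-boxes and then re-adding them reproduces $\lambda/\mu$ exactly, and adjoining a layer of $n$'s and then deleting it reproduces the original skew shape. This proves (\ref{comb-bij}). There is no serious obstacle; the only points requiring care are that deleting the $n$-boxes leaves a genuine partition $\mu$ with $\lambda/\mu$ a horizontal strip, and that this $\mu$ belongs to $S_{\bk,n-1}$ — for the latter the non-vanishing criterion of Lemma~\ref{lem-Kos} is precisely what is invoked.
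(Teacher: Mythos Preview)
Your proof is correct and follows essentially the same approach as the paper: both define the forward map by deleting the boxes containing $n$ and the inverse by filling $\lambda/\mu$ with $n$'s, and both invoke Lemma~\ref{lem-Kos} (non-vanishing of the Kostka number) to verify $\mu\in S_{\bk,n-1}$. Your version is somewhat more detailed in justifying that the complement of the $n$-boxes is a partition shape and that the refilled tableau is semistandard, but the structure and key ingredients are identical.
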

\begin{proof}
The map from left to right is given by starting from $\bbT\in\SSTab(\lambda,\bk_{\vert n})$ and removing the $k_n$ boxes containing $n$. Denote $\mu$ the shape of the resulting tableau $\bbT'$. Then we have obviously that $\bbT'\in\SSTab(\mu,\bk_{\vert n-1})$ with $\mu\vdash k_1+\dots+k_{n-1}$ and $\mu\subset\lambda$. As $\bbT$ is semistandard, we also have immediately that $\lambda/\mu$ contains at most one box in each column. It remains to show that $\mu\geq \bk_{\vert n-1}^{\text{ord}}$. As we have at hand an element $\bbT'$ of $\SSTab(\mu,\bk_{\vert n-1})$, this set is thus non-empty. So we can apply Lemma \ref{lem-Kos}.

The map from right to left is given by starting from $\bbT'\in\SSTab(\mu,\bk_{\vert n-1})$ with $\mu\in\text{Res}_\bk(\lambda)$ and by adding to $\bbT'$ the boxes of $\lambda/\mu$ filled with numbers $n$. The resulting tableau is clearly an element of $\SSTab(\lambda,\bk_{\vert n})$.

By construction the two maps are inverse to each other.
\end{proof}

\begin{proof}[Proof of the Theorem]
$\bullet$ First we show how item 2 follows from item 1. We know that a complete set of pairwise non-isomorphic irreducible representations of $H_{\bk,n}$ is given by the non-zero $W_{\bk,\lambda}=P_{\bk,n}(V_{\lambda})$ from the results recalled in Appendix \ref{app-ss}. The assertion about the dimension of $W_{\bk,\lambda}$ is immediate from item 1 since the set $\{w_{\bbT}\}_{\bbT\in\SSTab(\lambda,\bk_{\vert n})}$ is clearly linearly independent. So we only need to show that, for any $\lambda\vdash k_1+\dots+k_n$, we have:
$$\SSTab(\lambda,\bk_{\vert n})\neq\emptyset\ \ \ \ \ \Longleftrightarrow\ \ \ \ \ \lambda\geq \bk^{\text{ord}}_{\vert n}\ .$$
This is Lemma \ref{lem-Kos} with $\mu=(k_1,\dots,k_n)$.

\vskip .2cm
$\bullet$ Assume that $n=1$ and let $\lambda\vdash k_1$. Here we have $P_{\bk,n}=P_{k_1}\in H_{k_1}(q)$ and the subspace $W_{\bk,\lambda}=P_{k_1}(V_{\lambda})$ is thus obtained as a particular case of Proposition \ref{prop-rep}. We have:
\[W_{\bk,\lambda}=\CC\bigl(\sum\limits_{\bT\in \STab(\lambda)}v_{\bT}\bigr)\ \ \text{if $\lambda=(k_1)$,}\ \ \ \ \ \ \text{and}\ \ \ \ \ W_{\bk,\lambda}=0\ \ \text{if $\lambda\neq (k_1)$.}\]
Besides, for any $\lambda\vdash k_1$, there is a single Young tableau of shape $\lambda$ and weight $(k_1)$ (since all the boxes are filled with $1$'s). Clearly, this Young tableau is semistandard if and only if $\lambda=(k_1)$. So we have $\SSTab(\lambda,(k_1))=\emptyset$ if $\lambda\neq(k_1)$, while $\SSTab\lambda,(k_1))$ consists of one element $\bbT$ if $\lambda\neq(k_1)$. In this latter case, we have in addition that $\overline{\bT}=\bbT$ for any $\bT\in\STab(\lambda)$. This proves item 1 for $n=1$.

\vskip .2cm
$\bullet$ Now let $n>1$. Recall that for any decomposition $N=N_1+N_2$ with $N_1,N_2\geq 0$, there is a parabolic subalgebra of the Hecke algebra $H_{N}(q)$ isomorphic to $H_{N_1}(q)\otimes H_{N_2}(q)$, where the copy of $H_{N_1}(q)$ is generated by the $N_1-1$ first generators of $H_{N}(q)$ and the copy of $H_{N_2}(q)$ is generated by the $N_2-1$ last ones.

Let $\lambda\vdash k_1+\dots+k_n$. We start by explaining that the restriction of $V_{\lambda}$ to the subalgebra $H_{k_1+\dots+k_{n-1}}(q)\otimes H_{k_n}(q)$ of $H_{k_1+\dots+k_n}(q)$ decomposes as follows:
\begin{equation}\label{proof-dec}
\text{Res}_{H_{k_1+\dots+k_{n-1}}(q)\otimes H_{k_n}(q)}(V_{\lambda})\,\cong\,\bigoplus_{\begin{array}{c}
\\[-1.6em]
\scriptstyle{\mu\,\vdash k_1+\dots+k_{n-1}} \\[-0.4em]
\scriptstyle{\mu\subset \lambda}
\end{array}} V_{\mu}\otimes V_{\lambda/\mu}\ .
\end{equation}
Let $\bT\in\STab(\lambda)$. We denote $\bT_{\downarrow}$ the standard Young tableau obtained from $\bT$ by keeping the boxes with numbers $1,\dots, k_1+\dots+k_{n-1}$, and let $\mu$ be its shape. The boxes of $\bT$ containing the remaining numbers $k_1+\dots+k_{n-1}+1,\dots,k_1+\dots+k_n$ form a tableau of shape $\lambda/\mu$ and let $\bT_{\uparrow}\in\STab(\lambda/\mu)$ denote the corresponding standard Young tableau obtained by shifting the numbers by $k_1+\dots+k_{n-1}$. The linear map defined by
\begin{equation}\label{proof-dec-iso}
v_{\bT}\mapsto v_{\bT_{\downarrow}}\otimes v_{\bT_{\uparrow}}\in V_{\mu}\otimes V_{\lambda/\mu}\ ,
\end{equation}
provides the isomorphism (\ref{proof-dec}). This follows immediately from a direct inspection of Formulas (\ref{rep-si}) giving the action of the generators of the Hecke algebra $H_{k_1+\dots+k_n}(q)$ on $V_{\lambda}$.

Then recall that the idempotent $P_{\bk,n}$ of $H_{k_1+\dots+k_n}(q)$ is by definition an element of the subalgebra $H_{k_1+\dots+k_{n-1}}(q)\otimes H_{k_n}(q)$ and can be written as $P_{\bk,n-1}\otimes P_{k_n}$. We deduce from (\ref{proof-dec}) that
\begin{equation}\label{proof-BR}
W_{\bk,\lambda}=P_{\bk,n}(V_\lambda)\,\cong\,\bigoplus_{\begin{array}{c}
\\[-1.6em]
\scriptstyle{\mu\,\vdash k_1+\dots+k_{n-1}} \\[-0.4em]
\scriptstyle{\mu\subset \lambda}
\end{array}} P_{\bk,n-1}(V_{\mu})\otimes P_{k_n}(V_{\lambda/\mu})\ .
\end{equation}
Using the induction hypothesis together with Proposition \ref{prop-rep}, we obtain that $W_{\bk,\lambda}$ is spanned by the set
\begin{equation}\label{proof-set}
 \sum\limits_{\bT\in \STab(\lambda/\mu)}w_{\bbT'}\otimes v_{\bT}\ ,\ \ \ \ \text{where $\mu\in\text{Res}_{\bk}(\lambda)$ and $\bbT'\in\SSTab(\mu,\bk_{\vert n-1})$\,.}
 \end{equation}
From Lemma \ref{lem-BR}, we know that this set is in bijection with the set $\SSTab(\lambda,\bk_{\vert n})$. Moreover, we have at once that the vectors in (\ref{proof-set}) correspond under the isomorphism (\ref{proof-dec-iso}) to vectors $w_{\bbT}$ in $V_{\lambda}$ where $\bbT\in\SSTab(\lambda,\bk_{\vert n})$ (The tableau $\bbT$ is obtained by adjoining to $\bbT'$ the boxes of $\lambda/\mu$ filled with letters $n$).
This concludes the proof of item 1.

\vskip .2cm
$\bullet$ Finally, we recall that the algebra $H_{\bk,n}(q)$ is isomorphic to
$P_{\bk,n}H_{k_1+\dots+k_n}(q)P_{\bk,n}$ 
and that, after identification, the inclusion of $H_{\bk,n-1}(q)$ into $H_{\bk,n}(q)$ is given by $P_{\bk,n-1}xP_{\bk,n-1}\mapsto P_{\bk,n-1}xP_{\bk,n-1}\otimes P_{k_n}$. Then the branching rule stated in item 3 follows immediately from (\ref{proof-BR}) together with Proposition \ref{prop-rep}.
\end{proof}

\begin{rem}
Combining item 2 of the preceding theorem with the classical fact that the Kostka numbers $K_{\lambda,\mu}$ do not depend on the ordering of the composition $\mu$ (\cite[Theorem 7.10.2]{St}), we see clearly that, up to isomorphism, the algebra $H_{\bk,n}(q)$ does not depend on the ordering of $(k_1,\dots,k_n)$. However, the chain of algebras $\{H_{\bk,n}(q)\}_{n\geq 0}$ depends obviously on the ordering of $\bk$, and therefore so does its Bratteli diagram, and this will reflect in some statements later about minimal generating sets of quotients of Bratteli diagram, see Proposition \ref{prop-quot2} and Theorem \ref{theo-quot-rep}, item 3.
\end{rem}

\begin{rem}
A numerical consequence of item 2 of the preceding theorem is that:
\[\dim H_{\bk,n}(q)=\sum_{\lambda\vdash k_1+\dots+k_n} |\SSTab(\lambda,\bk_{\vert n})|^2\ .\]
Indeed recall that $|\SSTab(\lambda,\bk_{\vert n})|=0$ if we do not have $\lambda\geq \bk_{\vert n}^{ord}$. The dimension of $H_{\bk,n}(q)$ is  the number of integer matrices with non-negative entries such that the sum of the $a$-th row is $k_a$ and the sum of the $a$-th column is $k_a$. So we have that the number of such matrices is equal to the number of pairs of semistandard Young tableaux of the same shape and of content $(k_1,\dots,k_n)$. This is also a consequence of a bijection between these two sets called Robinson--Schensted--Knuth correspondence \cite{Kn}. 
\end{rem}

\subsection{The situation of a constant sequence $\bk=(k,k,...)$}\label{subsec-rep-const}

We single out the situation of a constant sequence $\bk=(k,k,\dots)$ for an integer $k\geq 1$. In this situation, the parametrisation of irreducible representations in item 2 of Theorem \ref{thm-rep} is much simpler since in fact we will check that $\lambda\geq \bk^{\text{ord}}_{\vert n}$ is simply equivalent to $l(\lambda)\leq n$ (for general $\bk$, this is only a necessary condition).
\begin{coro}\label{coro-rep-const}
Let $\bk=(k,k,\dots)$ for an integer $k\geq 1$. A complete set of pairwise non-isomorphic irreducible (non-zero) representations of $H_{\bk,n}$ is 
\[\{W_{\bk,\lambda}\}_{\lambda\in S_{\bk,n}}\ \ \ \ \ \text{with $S_{\bk,n}:=\{\lambda\vdash kn\ |\ l(\lambda)\leq n\}$}.\]
\end{coro}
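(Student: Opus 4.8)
The plan is to deduce this corollary directly from item~2 of Theorem~\ref{thm-rep}, so the only thing to verify is the combinatorial equivalence, for a constant sequence $\bk=(k,k,\dots)$ and any $\lambda\vdash kn$,
\[
\lambda\geq \bk^{\text{ord}}_{\vert n}\ \ \Longleftrightarrow\ \ l(\lambda)\leq n\ ,
\]
where here $\bk^{\text{ord}}_{\vert n}=(k,\dots,k)$ is the rectangular partition with $n$ rows and $k$ columns. The implication $\Rightarrow$ is already noted in general in the text right after the theorem (the condition $\lambda\geq\bk^{\text{ord}}_{\vert n}$ forces $l(\lambda)\leq n$, since $\lambda_1+\dots+\lambda_n\geq k_1+\dots+k_n=|\lambda|$ leaves no room for a part $\lambda_{n+1}>0$), so the real content is the reverse implication, which is special to the rectangular case.

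For the $\Leftarrow$ direction I would argue as follows. Suppose $l(\lambda)\leq n$, so $\lambda=(\lambda_1,\dots,\lambda_n)$ with $\lambda_1\geq\dots\geq\lambda_n\geq 0$ and $\lambda_1+\dots+\lambda_n=kn$. I must show $\lambda_1+\dots+\lambda_i\geq ki$ for every $i=1,\dots,n$. Equivalently (using the second, ``dual'' form of the inequalities in~(\ref{k-condition})), it suffices to show $\lambda_{i+1}+\dots+\lambda_n\leq k(n-i)$ for every $i$. But $\lambda_{i+1},\dots,\lambda_n$ are the $n-i$ \emph{smallest} parts of $\lambda$, so their average is at most the overall average $\frac{\lambda_1+\dots+\lambda_n}{n}=k$; hence their sum is at most $k(n-i)$. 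This is the crux, and it is precisely where rectangularity (all $k_a$ equal) is used: for a non-constant $\bk$ the partition $\bk^{\text{ord}}_{\vert n}$ is not a rectangle and the averaging argument fails, matching the remark in the text that the inclusion $S_{\bk,n}\subset\{\lambda\mid l(\lambda)\leq n\}$ is strict in general.

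I would then simply conclude: by item~2 of Theorem~\ref{thm-rep}, a complete set of pairwise non-isomorphic irreducible non-zero representations of $H_{\bk,n}(q)$ is $\{W_{\bk,\lambda}\}_{\lambda\in S_{\bk,n}}$ with $S_{\bk,n}=\{\lambda\vdash kn\mid \lambda\geq\bk^{\text{ord}}_{\vert n}\}$, and the equivalence just proved rewrites this index set as $\{\lambda\vdash kn\mid l(\lambda)\leq n\}$, which is the claimed statement. The whole proof is short; the only mild subtlety is getting the direction of the inequality in the averaging step right and invoking the correct ``dual'' reformulation from~(\ref{k-condition}), but there is no real obstacle here beyond bookkeeping.
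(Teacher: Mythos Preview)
Your proof is correct. Both you and the paper reduce the corollary to Theorem~\ref{thm-rep}, item~2, and then verify the combinatorial equivalence $\lambda\geq(k,\dots,k)\Leftrightarrow l(\lambda)\leq n$ for $\lambda\vdash kn$; the $\Rightarrow$ direction is handled identically.

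For the $\Leftarrow$ direction the arguments diverge. The paper argues by contradiction on the primal inequalities in~(\ref{k-condition}): taking the first index $i$ where $\lambda_1+\dots+\lambda_i<ik$ fails, it extracts a bound $\lambda_i<k-\alpha$ (with $\alpha\geq 0$ the excess at step $i-1$) and then estimates $|\lambda|\leq\lambda_1+\dots+\lambda_i+(n-i)\lambda_i<kn$, a contradiction. You instead switch to the dual inequalities and use a clean averaging step: the $n-i$ smallest parts have average at most the global average $k$, hence sum at most $k(n-i)$. Your route is shorter and avoids the case split on $i=1$ versus $i>1$; the paper's route stays with the primal inequalities throughout and makes the role of the ``first failure'' explicit. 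Both are entirely elementary and equally valid.
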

\begin{proof}
For a partition $\lambda\vdash kn$, we need to check that $\lambda\geq (k,k,\dots,k)$ is satisfied for any partition $\lambda$ such that $l(\lambda)\leq n$ (this is enough, since as already explained, $l(\lambda)> n$ contradicts $\lambda\geq (k,k,\dots,k)$).

So let $\lambda\vdash kn$ such that $l(\lambda)\leq n$ and assume that $\lambda\geq (k,k,\dots,k)$ is not true. In particular, the inequalities in the first set in (\ref{k-condition}) (where here $k'_1=k'_2=\dots=k'_n=k$) are not all satisfied, so let $i\in\{1,\dots,n\}$ be the smallest index for which the inequality is false. 

If $i=1$ then $\lambda_1<k$ and from $l(\lambda)\leq n$ we have that the size of $\lambda$ is strictly smaller than $kn$. This is a contradiction.

So we have $i>1$. Let $\alpha$ be such that
\[\left\{\begin{array}{l}\lambda_{1}+\dots+\lambda_{i-1}=(i-1)k+\alpha\ ,\\[0.3em]
\lambda_1+\dots+\lambda_i<ik\ .\end{array}\right.\]
We have $\alpha\geq0$ by minimality of $i$. So we find that $\lambda_i<k-\alpha$. From this and the fact that $l(\lambda)\leq n$, we obtain
\[|\lambda|\leq\lambda_1+\dots+\lambda_{i}+(n-i)\lambda_i<ik+(n-i)(k-\alpha)=nk-\alpha(n-i)\ ,\]
which shows that $|\lambda|$ is strictly smaller than $kn$. This is a contradiction.
\end{proof}

\begin{exa}
$\bullet$ If $\bk=(1,1,\dots)$ is the constant sequence of $1$'s, the theorem expresses simply the representation theory of the chain of Hecke algebras $H_n(q)$ in the usual way. Indeed one has in this case $P_{\bk,n}=1$, $W_{\bk,\lambda}=V_{\lambda}$, $S_{\bk,n}=\{\lambda\vdash n\}$, a semistandard tableau of weight $\bk_{\vert n}=(1,\dots,1)$ is simply a standard  tableau, and the branching rules are only given by $\mu\subset\lambda$ since $\lambda/\mu$ contains only one box.

$\bullet$ The first levels of the Bratteli diagrams of the chains $\{H_{\bk,n}\}_{n\geq 0}$ for $\bk=(2,2,2,2,...)$ and for $\bk=(3,1,1,1,...)$ are given in Appendix \ref{app2}-\ref{app3}.
\end{exa}

\section{$H_{\bk-1,n}(q)$ as a subalgebra and as a quotient of $H_{\bk,n}(q)$}\label{sec-quot-k}

In this section we assume that our sequence $\bk=(k_1,k_2,...)$ consists of strictly positive integers, and we set $\bk-1=(k_1-1,k_2-1,...)\in \mathbb{Z}_{\geq0}^{\infty}$ the sequence obtained by decreasing every entry of $\bk$ by 1. We are going to study the connections between $H_{\bk,n}(q)$ and $H_{\bk-1,n}(q)$.

We use the definitions and the terminology fixed in the appendix for quotients of semisimple algebras and quotients of Bratteli diagrams.

\subsection{$H_{\bk-1,n}(q)$ as a subalgebra of $H_{\bk,n}(q)$}\label{subsec-subal-k}

Let $n\geq 0$. Recall that given a subset $\Gamma$ of the parametrising set $S_{\bk,n}$ of irreducible representations of $H_{\bk,n}(q)$, we have an associated subalgebra. Namely, after applying the Wedderburn decomposition and with the notation of Theorem \ref{thm-rep}, the subalgebra is $\bigoplus_{\lambda\in \Gamma} \text{End}(W_{\bk,\lambda})$.

\begin{prop}\label{prop-subal-k}
Let $\mathcal{A}$ be the subalgebra of $H_{\bk,n}(q)$ corresponding to the following subset of partitions:
\[\{\ \lambda\in S_{\bk,n}\ |\ l(\lambda)=n\ \}\ .\]
Then $\mathcal{A}$ is isomorphic to $H_{\bk-1,n}(q)$.
\end{prop}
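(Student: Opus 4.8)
The plan is to argue entirely at the level of representation theory, using the complete description of the simple modules of $H_{\bk,n}(q)$ and $H_{\bk-1,n}(q)$ provided by Theorem~\ref{thm-rep}. Both algebras are semisimple over $\CC$ (Proposition~\ref{prop-PHP} together with the hypothesis on $q$), and by definition $\mathcal{A}=\bigoplus_{\lambda}\text{End}(W_{\bk,\lambda})$, the sum running over $\{\lambda\in S_{\bk,n}\mid l(\lambda)=n\}$, while the Wedderburn decomposition of $H_{\bk-1,n}(q)$ is $\bigoplus_{\mu\in S_{\bk-1,n}}\text{End}(W_{\bk-1,\mu})$. Hence, by the structure theory of semisimple algebras, it suffices to produce a bijection between $\{\lambda\in S_{\bk,n}\mid l(\lambda)=n\}$ and $S_{\bk-1,n}$ that matches the dimensions of the corresponding simple modules.

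For the bijection I would take $\lambda\mapsto\lambda-(1^n)$, i.e.\ subtract $1$ from every part (equivalently, delete the first column). Since subtracting $1$ from each entry commutes with reordering, one has $(\bk-1)^{\text{ord}}_{\vert n}=\bk^{\text{ord}}_{\vert n}-(1^n)$. A partition $\lambda\vdash k_1+\dots+k_n$ satisfies $l(\lambda)=n$ exactly when $\lambda-(1^n)$ is again a partition (necessarily of $(k_1-1)+\dots+(k_n-1)$), and subtracting $i$ from both sides of the inequalities in~(\ref{k-condition}) shows that $\lambda\geq\bk^{\text{ord}}_{\vert n}$ is equivalent to $\lambda-(1^n)\geq(\bk-1)^{\text{ord}}_{\vert n}$. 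Conversely any $\mu\in S_{\bk-1,n}$ has $l(\mu)\leq n$, so $\mu+(1^n)$ (after padding $\mu$ with zeros to $n$ parts) is a partition of length exactly $n$ lying in $S_{\bk,n}$; the two maps are mutually inverse.

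To match dimensions, recall from Theorem~\ref{thm-rep}(2) that $\dim W_{\bk,\lambda}=K_{\lambda,\bk_{\vert n}}=|\SSTab(\lambda,\bk_{\vert n})|$ and likewise $\dim W_{\bk-1,\mu}=|\SSTab(\mu,(\bk-1)_{\vert n})|$. I would exhibit a bijection $\SSTab(\lambda,\bk_{\vert n})\leftrightarrow\SSTab(\lambda-(1^n),(\bk-1)_{\vert n})$ when $l(\lambda)=n$: the first column of any $T\in\SSTab(\lambda,\bk_{\vert n})$ is a strictly increasing word of length $l(\lambda)=n$ with entries in $\{1,\dots,n\}$, hence is forced to be $(1,2,\dots,n)$, and deleting it yields a semistandard tableau of shape $\lambda-(1^n)$ and weight $(k_1-1,\dots,k_n-1)$; conversely, prepending the column $(1,2,\dots,n)$ to $T'\in\SSTab(\lambda-(1^n),(\bk-1)_{\vert n})$ keeps all columns strictly increasing, and keeps each row weakly increasing because the entry of $T'$ in row $a$, column $1$, is $\geq a$ (the $a-1$ entries strictly above it in its column are distinct positive integers). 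These maps are visibly inverse to one another, so the Kostka numbers agree, and the Wedderburn argument of the first paragraph finishes the proof.

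The only genuinely delicate point is the tableau bijection of the third paragraph — specifically the rigidity of the first column when $l(\lambda)=n$ and the bound ``entry $\geq a$ in position $(a,1)$'' guaranteeing that re-inserting the column $(1,2,\dots,n)$ preserves semistandardness. Everything else is a routine unwinding of the dominance inequalities~(\ref{k-condition}) and a direct application of Wedderburn's theorem.
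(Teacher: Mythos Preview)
Your proof is correct and follows essentially the same approach as the paper: both reduce the isomorphism to a bijection $\lambda\mapsto\lambda-(1^n)$ (removal of the first column) between the relevant parameter sets, and then match dimensions of the simple modules via the corresponding first-column deletion on semistandard tableaux. You are in fact slightly more careful than the paper in verifying that re-inserting the column $(1,2,\dots,n)$ preserves semistandardness.
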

\begin{proof}
From item 2 of Theorem \ref{thm-rep}, it follows at once that the Artin--Wedderburn decompositions (see Appendix) of $H_{\bk,n}(q)$ and $H_{\bk-1,n}(q)$ read:
\[H_{\bk,n}(q)\cong \bigoplus_{\lambda\in S_{\bk,n}} \text{End}(W_{\bk,\lambda})\ \ \ \ \quad
\text{and}\ \ \ \ \ \quad
H_{\bk-1,n}(q)\cong \bigoplus_{\lambda'\in S_{\bk-1,n}} \text{End}(W_{\bk-1,\lambda'})\ ,\]
while the subalgebra $\mathcal{A}$ of $H_{\bk,n}(q)$ is given by:
\[\mathcal{A}\cong \bigoplus_{\begin{array}{c}
\\[-1.6em]
\scriptstyle{\lambda\,\in S_{\bk,n}} \\[-0.4em]
\scriptstyle{l(\lambda)= n}
\end{array}} \text{End}(W_{\bk,\lambda})\ .\]
Therefore, it will be enough to give a bijection between $\{\lambda\in S_{\bk,n}\ |\ l(\lambda)=n\}$ and $S_{\bk-1,n}$ respecting the dimensions of the corresponding irreducible representations (which are also explicited in item 2 of Theorem \ref{thm-rep}).

Let $\lambda\in S_{\bk,n}$ such that $l(\lambda)=n$. It means that the first column of $\lambda$ contains $n$ boxes. The bijection will be given simply by the removal of the first column. Namely, we set
\begin{equation}\label{bij-proof}
\begin{array}{crcl}
\phi_n\ : & \{\lambda\in S_{\bk,n}\ |\ l(\lambda)=n\} & \to & S_{\bk-1,n}\\[0.5em]
 & \lambda & \mapsto & \phi_n(\lambda)
 \end{array},
\end{equation}
where $\phi_n(\lambda)$ is the partition obtained from $\lambda$ by removing the first column. We must check that $\phi_n$ is indeed a bijection. 

First $\phi_n$ takes values in $S_{\bk-1,n}$. Indeed $\phi_n(\lambda)$ is of the correct size since $|\phi_n(\lambda)|=|\lambda|-n=(k_1-1)+(k_2-1)+\dots+(k_n-1)$. And moreover, setting $\bk^{\text{ord}}_{\vert n}=(k'_1,\dots,k'_n)$, we have that $\phi_n(\lambda)\geq (k'_1-1,\dots,k'_n-1)$ since it follows immediately from $\lambda\geq \bk_{\vert n}^{\text{ord}}$ that $(\lambda_1-1)+\dots+(\lambda_i-1)\geq (k'_1-1)+\dots+(k'_i-1)$ for all $i$.

A similar argument shows that adding a first column of size $n$ to partitions in $S_{\bk-1,n}$ provides the inverse map.

\vskip .2cm
Now let $\lambda\in S_{\bk,n}$, and set $\lambda':=\phi_n(\lambda)$. It remains to show that the cardinality of $\SSTab(\lambda,\bk_{\vert n})$ is equal to the cardinality of $\SSTab(\lambda',(\bk-1)_{\vert n})$. Let $\bbT\in \SSTab(\lambda,\bk_{\vert n})$. As the first column of $\lambda$ contains $n$ boxes, it has to be filled, in order for $\bbT$ to be semistandard, by $1,2,\dots,n$ in ascending order. Removing this first column we thus obtain an element of $\SSTab(\lambda',(\bk-1)_{\vert n})$. This gives the desired bijection proving thereby that $W_{\bk,\lambda}$ and $W_{\bk-1,\lambda'}$ have the same dimension and concluding the proof.
\end{proof}

\subsection{$H_{\bk-1,n}(q)$ as a quotient of $H_{\bk,n}(q)$}\label{subsec-quot-k}

\begin{prop}\label{prop-quot1}
Let $I_{\bk,n}^{<n}$ be the ideal of $H_{\bk,n}(q)$ corresponding to the following subset of partitions:
\[S_{\bk,n}^{<n}:=\{\ \lambda\in S_{\bk,n}\ |\ l(\lambda)<n\ \}\ .\]
\begin{enumerate}
\item The quotient $H_{\bk,n}(q)/I_{\bk,n}^{<n}$ is isomorphic to $H_{\bk-1,n}(q)$;
\item Let $S^{<}_{\bk}$ be the union of $S^{<n}_{\bk,n}$ for all $n$. The quotient of the Bratteli diagram of $\{H_{\bk,n}(q)\}_{n\geq 0}$ generated by $S^{<}_{\bk}$ is the Bratteli diagram of $\{H_{\bk-1,n}(q)\}_{n\geq 0}$.
\end{enumerate}
\end{prop}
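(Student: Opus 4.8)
The plan is to read both statements off the complete description of the representation theory in Theorem \ref{thm-rep} together with the first-column bijection $\phi_n$ of Proposition \ref{prop-subal-k}. For item 1 I would argue as follows: since $H_{\bk,n}(q)$ is semisimple with Artin--Wedderburn decomposition $\bigoplus_{\lambda\in S_{\bk,n}}\mathrm{End}(W_{\bk,\lambda})$, the ideal $I_{\bk,n}^{<n}$ attached to the subset $S_{\bk,n}^{<n}$ is the sum of the blocks indexed by $\lambda$ with $l(\lambda)<n$, so the quotient $H_{\bk,n}(q)/I_{\bk,n}^{<n}$ is the complementary sum $\bigoplus_{\lambda\in S_{\bk,n},\,l(\lambda)=n}\mathrm{End}(W_{\bk,\lambda})$. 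As an abstract algebra this is the same as the subalgebra $\mathcal{A}$ of Proposition \ref{prop-subal-k} (a sub-sum and a quotient-sum of the same matrix blocks coincide as algebras), hence isomorphic to $H_{\bk-1,n}(q)$. Equivalently, $\phi_n$ of (\ref{bij-proof}) between $\{\lambda\in S_{\bk,n}\mid l(\lambda)=n\}$ and $S_{\bk-1,n}$ preserves the dimensions $K_{\lambda,\bk_{\vert n}}$ of the simple modules, which pins down the isomorphism of semisimple algebras.

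For item 2, recall that the quotient of a Bratteli diagram generated by a set of vertices (as fixed in the appendix) is obtained by deleting exactly the vertices lying on a directed path issued from a vertex of the set, and retaining the edges induced on the surviving vertices. So the first step is to show this deleted set is precisely $S_{\bk}^{<}$. It contains $S_{\bk}^{<}$ trivially, and for the reverse inclusion I would use the branching rule of item 3 of Theorem \ref{thm-rep}: if $\lambda\in\mathrm{Res}_{\bk}(\mu)$ then $\lambda\subset\mu$ and $\mu/\lambda$ has at most one box in its first column, so $l(\mu)\le l(\lambda)+1$; hence a successor of a vertex $\lambda$ at level $m$ with $l(\lambda)<m$ is a vertex at level $m+1$ of length $<m+1$, and induction along paths shows every vertex reachable from $S_{\bk}^{<}$ stays in $S_{\bk}^{<}$. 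Therefore the surviving vertices at level $n$ are exactly $\{\lambda\in S_{\bk,n}\mid l(\lambda)=n\}$, which $\phi_n$ identifies with $S_{\bk-1,n}$ — consistent with item 1, which says the level-$n$ quotient algebra is $H_{\bk-1,n}(q)$.

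The remaining point, and the one where I expect to have to be careful, is that $\phi_n$ carries the surviving edges onto the edges of the Bratteli diagram of $\{H_{\bk-1,n}(q)\}$. Observe first that no edge between surviving vertices is lost: a predecessor $\lambda$ of a vertex $\mu$ with $l(\mu)=n$ sits at level $n-1$, where all vertices have length $\le n-1$ (the remark after Theorem \ref{thm-rep}), while $l(\lambda)\ge l(\mu)-1=n-1$, so $l(\lambda)=n-1$ automatically. It then remains to check, for $\mu\in S_{\bk,n}$ with $l(\mu)=n$ and $\lambda\in S_{\bk,n-1}$ with $l(\lambda)=n-1$, that $\lambda\in\mathrm{Res}_{\bk}(\mu)\iff\phi_{n-1}(\lambda)\in\mathrm{Res}_{\bk-1}(\phi_n(\mu))$: since the first columns of $\mu$ and $\lambda$ have $n$ and $n-1$ boxes, $\mu/\lambda$ has exactly one box in the first column, removing the full first columns turns $\mu/\lambda$ into $\phi_n(\mu)/\phi_{n-1}(\lambda)$ with one box fewer in the first column and still at most one box per column, and $\phi_{n-1}(\lambda)\in S_{\bk-1,n-1}$ by the argument of Proposition \ref{prop-subal-k}; the converse is the same computation run backwards. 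Since both Bratteli diagrams are multiplicity-free, matching vertices and edges is enough, and item 2 follows. The only real obstacle is this bookkeeping — keeping straight the interplay of the length condition $l(\lambda)=n$, the branching condition ``at most one box per column'', and the first-column removal $\phi_n$ — all the structural input being already supplied by Theorem \ref{thm-rep} and Proposition \ref{prop-subal-k}.
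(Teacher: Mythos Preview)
Your proof is correct and follows essentially the same route as the paper: item 1 via the complementary-block identification and Proposition \ref{prop-subal-k}, and item 2 by first checking $\langle S^{<}_{\bk}\rangle=S^{<}_{\bk}$ from the ``at most one box per column'' branching rule, then verifying that the first-column-removal bijections $\phi_n$ intertwine $\mathrm{Res}_{\bk}$ and $\mathrm{Res}_{\bk-1}$. Your extra remark that every predecessor of a surviving vertex automatically survives is a small but worthwhile clarification that the paper leaves implicit.
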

\begin{proof}
\textbf{1.} By standard facts recalled in the appendix, the quotient $H_{\bk,n}(q)/I_{\bk,n}^{<n}$ is isomorphic to the subalgebra of $H_{\bk,n}(q)$ corresponding to the subset of $S_{\bk,n}$ complementary to $S_{\bk,n}^{<n}$. This subset is $\{\lambda\in S_{\bk,n}\ |\ l(\lambda)=n\}$. So the first item follows immediately from Proposition \ref{prop-subal-k}.

\vskip .2cm
\textbf{2.} Let $\langle S^{<}_{\bk}\rangle$ be the set of vertices generated by $S^{<}_{\bk}$ which have been removed to obtain the quotient of the Bratteli diagram of $\{H_{\bk,n}(q)\}_{n\geq 0}$. We start by showing that we have here $\langle S^{<}_{\bk}\rangle=S^{<}_{\bk}$.

Let $\lambda\in S^{<n}_{\bk,n}$ for some $n\geq 0$, namely $\lambda\in S_{\bk,n}$ with $l(\lambda)<n$. From the branching rules proved in Theorem \ref{thm-rep}, recall that $\lambda'\in S_{\bk,n+1}$ is connected to $\lambda$ if and only if $\lambda\in\text{Res}_\bk(\lambda')$ where
$$\text{Res}_k(\lambda'):=\bigl\{ \mu\in S_{\bk,n}\ |\ \text{$\mu\subset\lambda'$ and $\lambda'/\mu$ contains at most one box in each column}\bigr\}.$$
So if $\lambda'$ is connected to $\lambda$, we have that $\lambda'/\lambda$ contains at most one box in each column and then we obtain $l(\lambda')<n+1$ from $l(\lambda)<n$. In other words $\lambda'\in S^{<n+1}_{\bk,n+1}$. This shows that $\langle S^{<}_{\bk}\rangle=S^{<}_{\bk}$.

So, for any level $n\geq 0$, the vertices of the quotient of the Bratteli diagram of $\{H_{\bk,n}(q)\}_{n\geq 0}$ generated by $S^{<}_{\bk}$ are indexed by partitions $\lambda\in S_{\bk,n}$ with $l(\lambda)=n$. We already have a bijection, which was denoted $\phi_n$ in (\ref{bij-proof}), between this set and the set of vertices of level $n$ of the Bratteli diagram of $\{H_{\bk,n-1}(q)\}_{n\geq 0}$.

\vskip .2cm
Then it remains to show that the edges are the same in the two diagrams, namely that the bijections $\phi_n$ commute with the branching rules of the two chains. More precisely, recall that we have
\[\text{Res}_{H_{\bk,n-1}(q)}(W_{\bk,\lambda})\,\cong\,\bigoplus_{\mu\in\text{Res}_\bk(\lambda)} W_{\bk,\mu}\ \ \ \ \ \text{and}\ \ \ \ \ \text{Res}_{H_{\bk-1,n-1}(q)}(W_{\bk-1,\phi_n(\lambda)})\,\cong\,\bigoplus_{\mu\in\text{Res}_{\bk-1}(\phi_n(\lambda))} W_{\bk-1,\mu}\ .\]
So it remains to prove the following equality of sets:
\[\phi_{n-1}\bigl(\text{Res}_\bk(\lambda)\bigr)=\text{Res}_{\bk-1}\bigl(\phi_n(\lambda)\bigr)\ \ \ \ \ 
\forall \lambda\in S_{\bk,n}\ \text{with}\ l(\lambda)=n\ .\]
Note that if $\mu\in \text{Res}_\bk(\lambda)$ with $l(\lambda)=n$, then $l(\mu)=n-1$ since $\lambda/\mu$ contains at most one box in each column. So it is well-defined to apply to $\text{Res}_\bk(\lambda)$ the bijection $\phi_{n-1}$.

Finally, the above equality of sets follows immediately from the following two inclusions which are straightforward to check from the definitions: $\phi_{n-1}\bigl(\text{Res}_\bk(\lambda)\bigr)\subset \text{Res}_{\bk-1}\bigl(\phi_n(\lambda)\bigr)$ and $\phi^{-1}_{n-1}\bigl(\text{Res}_{\bk-1}\bigl(\phi_n(\lambda)\bigr)\bigr)\subset \text{Res}_\bk(\lambda)$.
\end{proof}

\paragraph{The minimal set of generators.}
We proved that the algebras $\{H_{\bk-1,n}(q)\}_{n\geq 0}$ can be seen as quotients of $\{H_{\bk,n}(q)\}_{n\geq 0}$. More precisely we identified a set of partitions 
$$S^{<}_{\bk}=\bigcup_{n\geq 0}\{\lambda\in S_{\bk,n}\ |\ l(\lambda)<n\}\,,$$ 
such that the quotient of the Bratteli diagram of $\{H_{\bk,n}(q)\}_{n\geq 0}$ generated by $S^{<}_{\bk}$ coincides with the Bratteli diagram of $\{H_{\bk-1,n}(q)\}_{n\geq 0}$.

Let $S_{min}$ be the minimal set of partitions generating this quotient (see the appendix for this terminology). For the following statement, recall that $\lambda_{n-1}$ is the number of boxes in line number $n-1$ of the partition $\lambda$. Note that $S_{min}$ depends on the ordering of $\bk$.

\begin{prop}\label{prop-quot2}
We have:
\[S_{min}=\bigcup_{n \geq 0}\{\lambda\in S_{\bk,n}\ |\ l(\lambda)<n\,,\ \lambda_{n-1}>k_n\}\,.\]
\end{prop}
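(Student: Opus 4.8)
The plan is to unwind the notion of a minimal generating set of a quotient of a Bratteli diagram, reduce it to a statement about the parents of a vertex, and then settle that statement using the explicit combinatorics of Lemma \ref{lem-comb}. First, I would recall from the proof of Proposition \ref{prop-quot1} that $\langle S^{<}_{\bk}\rangle=S^{<}_{\bk}$, so $S^{<}_{\bk}$ is closed under taking descendants; by the description of minimal generating sets recalled in the appendix, $S_{min}$ is then the set of $\lambda\in S^{<}_{\bk}$ that are not descendants of any other vertex of $S^{<}_{\bk}$, which (because $S^{<}_{\bk}$ is descendant-closed) is the same as the set of $\lambda\in S^{<}_{\bk}$ having no parent in $S^{<}_{\bk}$. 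By item 3 of Theorem \ref{thm-rep} the parents of a vertex $\lambda\in S_{\bk,n}$ are the elements of $\text{Res}_\bk(\lambda)\subseteq S_{\bk,n-1}$, and since $\mu\subset\lambda$ together with $l(\lambda)<n$ forces $l(\mu)\le n-1$, a parent $\mu$ lies in $S^{<}_{\bk}$ precisely when $l(\mu)<n-1$. Hence for $\lambda\in S_{\bk,n}$ with $l(\lambda)<n$ one has
\[
\lambda\in S_{min}\ \iff\ l(\mu)=n-1\ \text{ for every }\ \mu\in\text{Res}_\bk(\lambda)\,,
\]
and the remaining task is to show that this condition is equivalent to $\lambda_{n-1}>k_n$.

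For the easy implication, suppose $\lambda_{n-1}>k_n$ and take any $\mu\in\text{Res}_\bk(\lambda)$. Then $|\lambda/\mu|=|\lambda|-|\mu|=k_n$, so the number $\lambda_{n-1}-\mu_{n-1}$ of boxes of $\lambda/\mu$ in row $n-1$ is at most $k_n$; therefore $\mu_{n-1}\ge\lambda_{n-1}-k_n\ge1$, so $l(\mu)\ge n-1$, and with $l(\mu)\le n-1$ this gives $l(\mu)=n-1$. Hence $\lambda\in S_{min}$.

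For the reverse implication, suppose $\lambda_{n-1}\le k_n$; I would produce a parent of $\lambda$ that lies in $S^{<}_{\bk}$. Since $\lambda\in S_{\bk,n}$ one has $\lambda\ge\bk^{\text{ord}}_{\vert n}$, so Lemma \ref{lem-comb} applies to $\lambda$ and the composition $\bk_{\vert n}=(k_1,\dots,k_n)$, and yields a partition $\tilde\lambda$ with $\tilde\lambda\ge\bk_{\vert n-1}^{\text{ord}}$, hence $\tilde\lambda\in S_{\bk,n-1}$, such that $\lambda/\tilde\lambda$ is the horizontal strip of size $k_n$ formed by the boxes carrying the letter $n$ in a semistandard tableau of shape $\lambda$ and weight $\bk_{\vert n}$; in particular $\tilde\lambda\in\text{Res}_\bk(\lambda)$. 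It then remains to check that $l(\tilde\lambda)<n-1$. By the recipe in Lemma \ref{lem-comb}(ii), the letter $n$ is first placed in the last box of each of the first $k_n$ columns of $\lambda$; since $k_n\ge\lambda_{n-1}$, these columns include every column of $\lambda$ that reaches row $n-1$, and since $l(\lambda)<n$ the last box of each such column is exactly its box in row $n-1$. Thus every box of row $n-1$ receives the letter $n$, and the subsequent rightward sliding leaves those boxes untouched, row $n-1$ being the lowest row of $\lambda$. Consequently $\tilde\lambda_{n-1}=0$, i.e. $l(\tilde\lambda)\le n-2<n-1$, so $\tilde\lambda$ is a parent of $\lambda$ lying in $S^{<}_{\bk}$ and $\lambda\notin S_{min}$.

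Combining the two implications with the criterion from the first paragraph will give the stated formula. I expect the main obstacle to be this reverse implication: one must make sure that the partition produced by Lemma \ref{lem-comb}, which a priori only belongs to $S_{\bk,n-1}$, actually lands in the removed set $S^{<}_{\bk}$ one level down — and this is precisely where the hypothesis $\lambda_{n-1}\le k_n$ is used, since it forces the $n$-labelled strip of the semistandard tableau to cover all of row $n-1$.
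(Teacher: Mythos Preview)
Your proof is correct and follows essentially the same route as the paper's own argument: both reduce to the equivalence ``there exists $\mu\in\text{Res}_{\bk}(\lambda)$ with $l(\mu)<n-1$'' $\Leftrightarrow$ ``$\lambda_{n-1}\le k_n$'', dispatch the forward implication by the simple counting observation that removing $k_n$ boxes cannot empty row $n-1$ when $\lambda_{n-1}>k_n$, and construct the required $\mu$ in the backward implication via the semistandard tableau of Lemma~\ref{lem-comb}. Your framing of the minimality criterion (no parent in $S^{<}_{\bk}$, using that $\langle S^{<}_{\bk}\rangle=S^{<}_{\bk}$) and your explicit verification that the $n$'s fill all of row $n-1$ before any sliding occurs are a bit more detailed than the paper's version, but the substance is the same.
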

\begin{proof} To extract a minimal set of generators from $S^{<}_{\bk}$, we must remove $\lambda$ from $S^{<}_{\bk}$ if and only if there is $\mu\in\text{Res}_\bk(\lambda)$ such that $\mu$ is already in $S^{<}_{\bk}$.  

So let $\lambda\in S_{\bk,n}$ with $l(\lambda)<n$. We must prove that
\[\text{There is a partition $\mu\in\text{Res}_\bk(\lambda)$ such that $l(\mu)<n-1$}\ \ \ \Longleftrightarrow\ \ \ \lambda_{n-1}\leq k_n\,.
\]
Assume first that $\lambda_{n-1}>k_n$. Then we cannot obtain a partition $\mu$ with $l(\mu)<n-1$ by removing $k_n$ boxes from $\lambda$ since there are more than $k_n$ boxes in line $n-1$.

Assume now that $\lambda_{n-1}\leq k_n$. We apply the procedure of Lemma \ref{lem-comb}. This furnishes a semistandard tableau $\bbT\in\SSTab(\lambda,\bk_{\vert n})$ with the property that the last line of $\lambda$ is filled with letters $n$ (since $\lambda_{n-1}\leq k_n$).
Removing from $\lambda$ the boxes wich are filled with $n$ in $\bbT$, we obtain a partition $\mu\in \text{Res}_\bk(\lambda)$. Since we removed in particular all the boxes of the line $n-1$ of $\lambda$ we have $l(\mu)<n-1$, as desired.
\end{proof}

\subsection{The situation of a constant sequence $\bk=(k,k,...)$}

We single out the situation of a constant sequence $\bk=(k,k,\dots)$ for an integer $k\geq 1$. In this situation, we can give more information on the minimal generating set $S_{min}$ for the quotient of the Bratteli diagram of $\{H_{\bk,n}(q)\}_{n\geq 0}$ giving the Bratteli diagram of $\{H_{\bk-1,n}(q)\}_{n\geq 0}$
\begin{coro}\label{prop-quot2-const}
Let $\bk=(k,k,\dots)$ for an integer $k\geq 1$. We have:
\[S_{min}=\bigcup_{n\geq 0}\{\lambda\vdash kn\ |\ l(\lambda)<n\,,\ \lambda_{n-1}>k\}\,.\]
Furthermore, $S_{min}$ consists only of vertices of levels $\leq k+1$ (and contains at least a vertex of level  $k+1$).
\end{coro}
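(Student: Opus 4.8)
The plan is to read off the explicit form of $S_{min}$ from Proposition~\ref{prop-quot2} and then to bound the levels by a short size count on partitions. First I would specialise Proposition~\ref{prop-quot2} to the constant sequence $\bk=(k,k,\dots)$, so that $k_n=k$ for every $n$. Combined with Corollary~\ref{coro-rep-const}, which gives $S_{\bk,n}=\{\lambda\vdash kn\mid l(\lambda)\leq n\}$ in the constant case, and since the condition $l(\lambda)<n$ is already imposed, this yields at once
\[S_{min}=\bigcup_{n\geq 0}\{\lambda\vdash kn\ |\ l(\lambda)<n\,,\ \lambda_{n-1}>k\}\,,\]
which is the first assertion.

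Next, to show that $S_{min}$ contains only vertices of level at most $k+1$, I would take $\lambda$ a vertex of level $n$ in $S_{min}$. Since $k\geq 1$, the condition $\lambda_{n-1}>k$ forces $\lambda_{n-1}\geq 2>0$, hence $l(\lambda)\geq n-1$; together with $l(\lambda)<n$ this pins down $l(\lambda)=n-1$. Then, using $\lambda_1\geq\cdots\geq\lambda_{n-1}\geq k+1$ and $|\lambda|=kn$, I get
\[kn=\lambda_1+\cdots+\lambda_{n-1}\geq (n-1)(k+1)=kn+n-k-1\,,\]
so $n\leq k+1$. For the existence of a vertex of level exactly $k+1$, I would exhibit the rectangular partition $\lambda=(k+1,\dots,k+1)$ with $k$ parts: it has $|\lambda|=k(k+1)$, hence $\lambda\vdash kn$ with $n=k+1$, while $l(\lambda)=k<k+1=n$ and $\lambda_{n-1}=k+1>k$, so $\lambda$ lies in $S_{min}$ and sits at level $k+1$.

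Everything here is elementary given the earlier results, so I do not expect a real obstacle. The only subtlety worth noting is the step using $k\geq 1$ to rule out $l(\lambda)<n-1$ (which would force $\lambda_{n-1}=0\leq k$): this is precisely what makes $l(\lambda)=n-1$, and hence the size count, work.
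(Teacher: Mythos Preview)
Your proof is correct and follows essentially the same approach as the paper: the description of $S_{min}$ is specialised from Proposition~\ref{prop-quot2} via Corollary~\ref{coro-rep-const}, the level bound comes from the same size count $|\lambda|\geq (n-1)\lambda_{n-1}\geq (n-1)(k+1)$, and the witness at level $k+1$ is the same rectangular partition $(k+1,\dots,k+1)$ with $k$ rows. The only cosmetic difference is that you first pin down $l(\lambda)=n-1$ before the size count, whereas the paper uses $l(\lambda)<n$ directly; both amount to the same thing.
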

\begin{proof} The description of $S_{min}$ follows immediately from Proposition \ref{prop-quot2} together with the description of $S_{\bk,n}$ in this case given in Corollary \ref{coro-rep-const}. 

To prove that $S_{min}$ consists only of vertices of levels $\leq k+1$, we must show that for $n>k+1$, there is no $\lambda\vdash kn$ with $l(\lambda)<n$ and $\lambda_{n-1}>k$. Indeed, this would imply
\[|\lambda|\geq \lambda_{n-1}(n-1)\geq (k+1)(n-1)=kn+n-(k+1)>kn\ .\]
Moreover, consider the partition $\lambda=(k+1,\dots,k+1)$ consisting of $k$ lines of $k+1$ boxes each. With $n=k+1$, we have $\lambda\vdash kn$, $l(\lambda)=n-1<n$ and $\lambda_{n-1}=k+1>k$. Therefore, $\lambda\in S_{min}$ and is a vertex of level $k+1$.
\end{proof}

\paragraph{Example of $\bk=(1,1,\dots,)$.} If $k=1$, recall that $H_{\bk,n}(q)$ is the Hecke algebra $H_n(q)$ while $H_{\bk-1,n}(q)=\CC$ for any $n\geq 0$. In this example, the quotient explained in Proposition \ref{prop-quot2-const} kills all partitions in the Bratteli diagram of the Hecke algebras, except the ones of the form $(1,\dots,1)$ (a single column of $n$ boxes) for $n\geq 0$. So there remains only one vertex in each level, and the resulting quotient is indeed the Bratteli diagram of the constant chain of algebras $\CC$. Then Proposition \ref{prop-quot2} asserts that this quotient is generated by the single partition $\begin{array}{cc}
\fbox{\phantom{\scriptsize{$2$}}} &\hspace{-0.35cm}\fbox{\phantom{\scriptsize{$2$}}}
\end{array}$.

In algebraic terms, this amounts to saying that, for any $n\geq 2$, the quotient of the Hecke algebra $H_n(q)$ by the relation $\sigma_1+q^{-1}=0$ is isomorphic to $\CC$. This is quite straightforward to check directly.

\vskip .2cm
\paragraph{Example of $\bk=(2,2,\dots,)$.} If $k=2$, Proposition \ref{prop-quot2}, item 1, explains that the Hecke algebra $H_n(q)$ is isomorphic to a quotient of $H_{\bk,n}(q)$. In the Bratteli diagram, this is seen by keeping at each level $n$ the partitions of $2n$ with exactly $n$ lines (see for example Appendix \ref{app2}). Here Corollary \ref{prop-quot2-const} asserts that in fact the quotient is generated by two partitions: $\begin{array}{cccc}
\fbox{\phantom{\scriptsize{$2$}}} &\hspace{-0.35cm}\fbox{\phantom{\scriptsize{$2$}}} &\hspace{-0.35cm}\fbox{\phantom{\scriptsize{$2$}}} &\hspace{-0.35cm}\fbox{\phantom{\scriptsize{$2$}}}
\end{array}$ (of level 2) and $\begin{array}{ccc}
\fbox{\phantom{\scriptsize{$2$}}} &\hspace{-0.35cm}\fbox{\phantom{\scriptsize{$2$}}} & \hspace{-0.35cm}\fbox{\phantom{\scriptsize{$2$}}}\\[-0.2em]
\fbox{\phantom{\scriptsize{$2$}}} &\hspace{-0.35cm}\fbox{\phantom{\scriptsize{$2$}}} & \hspace{-0.35cm}\fbox{\phantom{\scriptsize{$2$}}}
\end{array}$ (of level 3).

Algebraically, it means that the Hecke algebra $H_n(q)$ is isomorphic to a quotient of $H_{\bk,n}(q)$ by one relation coming from level 2 (that is, in $H_{\bk,2}(q)$) and one relation coming from level 3 (that is, in $H_{\bk,3}(q)$).

\section{Representation theory and Bratteli diagrams for $\oH^N_{\bk,n}$}\label{sec-quot-N}

In this section we keep $\bk=(k_1,k_2,...)\in \mathbb{Z}_{\geq0}^{\infty}$ as before and we fix $N>1$. Recall from Section \ref{sec-cent} that, for $n\geq 0$, the centraliser of the action of $U_q(gl_N)$ on the representation
\[L^N_{(k_1)}\otimes\dots\otimes L^N_{(k_n)}\]
was denoted $\oH^N_{\bk,n}(q)$ (where $L^N_{(k)}$ is the $k$-th symmetric power representation of $U_q(gl_N)$). Morever, this centraliser was obtained as a homomorphic image (that is, a quotient) of the algebra $H_{\bk,n}(q)$. Recall also that the corresponding ideal was denoted $I_{\bk,n}^N$, so that we have $\oH^N_{\bk,n}(q)=H_{\bk,n}(q)/I_{\bk,n}^N$.

\subsection{Chain structure of $\{\oH^N_{\bk,n}(q)\}_{n\geq 0}$}

In order to speak of Bratteli diagrams, we will first make explicit the inclusion maps making the family of algebras $\{\oH^N_{\bk,n}(q)\}_{n\geq 0}$ into a chain of algebras.

By definition, for any $n\geq 0$, the algebra $\oH^N_{\bk,n}(q)$ is an algebra of endomorphisms of the vector space $L^N_{(k_1)}\otimes\dots\otimes L^N_{(k_n)}$, so that the map in the following proposition makes sense.
\begin{prop}\label{prop-incl}
For $n\geq 0$, the following map provides an inclusion of algebras:
\[\oH^N_{\bk,n}(q)\ni\ x \, \mapsto \, x\otimes\text{Id}_{L_{(k_{n+1})}^N} \ \in \oH^N_{\bk,n+1}(q)\,.\]
\end{prop}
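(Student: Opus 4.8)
The plan is to reduce everything to the classical Schur--Weyl duality and the embeddings already established. First I would recall from Theorem \ref{theo-SWk} (and Definition \ref{def-oPHP}) that $\oH^N_{\bk,n}(q)$ is by construction the image of the map $H_{\bk,n}(q)\to\text{End}\bigl(L^N_{(k_1)}\otimes\dots\otimes L^N_{(k_n)}\bigr)$, and that via the isomorphism $H_{\bk,n}(q)\cong P_{\bk,n}H_{k_1+\dots+k_n}(q)P_{\bk,n}$ of Proposition \ref{prop-PHP} and Remark \ref{rem-quot-sub}, this centraliser is identified with $P_{\bk,n}\oH^N_{k_1+\dots+k_n}(q)P_{\bk,n}$. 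So the first step is to observe that the chain property we want is the restriction to these corner algebras of the obvious fact that $\oH^N_m(q)\subset \oH^N_{m+1}(q)$ (from the chain \eqref{chain-H} passed to the quotients in the classical Schur--Weyl duality, using \eqref{comp-SW} and Proposition \ref{prop-quot-rep}), compatibly with the diagrammatic embedding $H_{\bk,n}(q)\subset H_{\bk,n+1}(q)$ described right after \eqref{chain-PHP}.

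Concretely, I would argue as follows. Let $x\in \oH^N_{\bk,n}(q)$, viewed as an endomorphism of $W:=L^N_{(k_1)}\otimes\dots\otimes L^N_{(k_n)}$ commuting with $U_q(gl_N)$. I must check that $x\otimes\text{Id}_{L^N_{(k_{n+1})}}$ is (i) a well-defined element of $\oH^N_{\bk,n+1}(q)$, i.e.\ lies in the image of $H_{\bk,n+1}(q)$, and (ii) that the assignment is an injective algebra homomorphism. For (ii), injectivity and multiplicativity are immediate: $x\mapsto x\otimes\text{Id}$ is always an injective unital algebra morphism $\text{End}(W)\hookrightarrow\text{End}(W\otimes L^N_{(k_{n+1})})$, so the only content is (i). For (i) I would use the diagrammatic embedding $H_{\bk,n}(q)\hookrightarrow H_{\bk,n+1}(q)$: a fused braid in $H_{\bk,n}(q)$ is sent to the fused braid in $H_{\bk,n+1}(q)$ obtained by adjoining $k_{n+1}$ vertical non-crossing strands through a new $(n+1)$-st ellipse. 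The key point is that the action of $H_{\bk,n+1}(q)$ on $W\otimes L^N_{(k_{n+1})}$ described in Theorem \ref{theo-SWk} sends such an adjoined-vertical-strands fused braid precisely to (action of the original fused braid on $W$)$\,\otimes\,\text{Id}_{L^N_{(k_{n+1})}}$. This follows by unwinding the definitions: the action of $H_{\bk,n+1}(q)$ factors through $P_{\bk,n+1}H_{k_1+\dots+k_{n+1}}(q)P_{\bk,n+1}$ acting on $(L^N_{(1)})^{\otimes k_1+\dots+k_{n+1}}$ via \eqref{SW-Hn}, and under the parabolic decomposition $P_{\bk,n+1}=P_{\bk,n}\otimes P_{k_{n+1}}$ an element $P_{\bk,n}yP_{\bk,n}\otimes P_{k_{n+1}}$ acts by (action of $P_{\bk,n}yP_{\bk,n}$ on $(L^N_{(1)})^{\otimes k_1+\dots+k_n}$)$\,\otimes\,(\text{action of }P_{k_{n+1}})$, which after projecting via \eqref{action-Pkn} restricts on $W\otimes L^N_{(k_{n+1})}$ to (action on $W$)$\,\otimes\,\text{Id}$. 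Therefore $x\otimes\text{Id}_{L^N_{(k_{n+1})}}$ is the image of the embedded element of $H_{\bk,n}(q)$ inside $H_{\bk,n+1}(q)$, hence belongs to $\oH^N_{\bk,n+1}(q)$.

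Putting these together shows the displayed map is a well-defined injective algebra homomorphism $\oH^N_{\bk,n}(q)\hookrightarrow\oH^N_{\bk,n+1}(q)$, compatible with (and induced by) the embedding of the chain \eqref{chain-PHP}; in other words, the surjections $H_{\bk,n}(q)\twoheadrightarrow\oH^N_{\bk,n}(q)$ assemble into a morphism of chains of algebras. I expect the main (and really only) obstacle to be the bookkeeping in step (i): carefully matching the diagrammatic embedding $H_{\bk,n}(q)\subset H_{\bk,n+1}(q)$ with the tensor-product embedding of endomorphism algebras, via the chain of identifications $H_{\bk,n}(q)\cong P_{\bk,n}H_{k_1+\dots+k_n}(q)P_{\bk,n}$, $P_{\bk,n+1}=P_{\bk,n}\otimes P_{k_{n+1}}$, and the compatibility of the classical Schur--Weyl actions of $H_m(q)$ and $H_{m+1}(q)$ with adding one tensor factor. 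Everything else is formal. It is also worth remarking, as a sanity check, that on the quotient level this is compatible with Proposition \ref{prop-quot-rep} and its generalization in the next subsection: the Bratteli diagram of $\{\oH^N_{\bk,n}(q)\}_{n\ge 0}$ is obtained from that of $\{H_{\bk,n}(q)\}_{n\ge 0}$ by deleting the partitions $\lambda$ with $l(\lambda)>N$.
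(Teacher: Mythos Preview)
Your proof is correct, but it takes a different route from the paper's. The paper's argument is purely on the centraliser side: since $\oH^N_{\bk,n}(q)=\text{End}_{U_q(gl_N)}(W)$ by Theorem~\ref{theo-SWk}, one only needs the general (almost trivial) fact that for any bialgebra-like $U$ with comultiplication $\Delta$, the map $\text{End}_U(L)\ni x\mapsto x\otimes\text{Id}_M\in\text{End}_U(L\otimes M)$ is a well-defined injective algebra map, because $x\otimes\text{Id}_M$ commutes with every $\Delta(u)$. No reference to $H_{\bk,n}(q)$ or to idempotents is needed.

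Your approach instead lifts $x$ back to $H_{\bk,n}(q)$, pushes it into $H_{\bk,n+1}(q)$ via the diagrammatic/parabolic embedding, and checks that its image in $\text{End}(W\otimes L^N_{(k_{n+1})})$ is $x\otimes\text{Id}$. This is more bookkeeping, but it actually proves something strictly stronger than the proposition: it shows that the square
\[
\begin{array}{ccc}
H_{\bk,n}(q) & \twoheadrightarrow & \oH^N_{\bk,n}(q)\\
\downarrow & & \downarrow\\
H_{\bk,n+1}(q) & \twoheadrightarrow & \oH^N_{\bk,n+1}(q)
\end{array}
\]
commutes. The paper establishes exactly this commutativity separately, later, inside the proof of Theorem~\ref{theo-quot-rep}, item~2. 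So your argument effectively merges Proposition~\ref{prop-incl} with that later step; the paper's approach keeps them apart, which makes the proposition itself a two-line abstract nonsense statement.
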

\begin{proof}
This is in fact a particular case of the following general situation. Let $U$ be an algebra and let $\Delta$ be a morphism of algebras $U\to U\otimes U$. For any representations $L,M$ of $U$, the space $L\otimes M$ is also a representation of $U$ for the action given by composing the natural action of $U\otimes U$ on $L\otimes M$ with the map $\Delta$.

Under this assumption on $U$, we have that the following map
\[\text{End}_U(L)\ni\ x \, \mapsto \, x\otimes\text{Id}_L \ \in \text{End}_U(L\otimes M)\,,\]
is a well-defined injective map, which gives the natural inclusion of algebras $\text{End}_U(L)\to \text{End}_U(L\otimes M)$. The injectivity is obvious. Moreove, for $x\in \text{End}_U(L)$ it is immediate that $x\otimes \text{Id}_M$ commutes with the action of any element of $U\otimes U$, in particular with the action of all elements of the form $\Delta(u)$, for $u\in U$. Thus the map indeed takes values in the centraliser $\text{End}_U(L\otimes M)$.

We cover the situation of the proposition by taking $U=U_q(gl_N)$ and $\Delta$ the coproduct of $U_q(gl_N)$ that we used to make tensor products of representations, $L=L^N_{(k_1)}\otimes\dots\otimes L^N_{(k_n)}$ and $M=L_{(k_{n+1})}^N$. Note that $L\otimes M=\Bigl(L^N_{(k_1)}\otimes\dots\otimes L^N_{(k_n)}\Bigr)\otimes L_{(k_{n+1})}^N=L^N_{(k_1)}\otimes\dots\otimes L^N_{(k_n)}\otimes L_{(k_{n+1})}^N$ by coassociativity of the coproduct $\Delta$ of $U_q(gl_N)$.
\end{proof}

\begin{rem}\label{rem-sub}
Keeping $U$, $\Delta$ as in the proof of the proposition and assume that the morphism $\Delta$ satisfies the coassociativity property, so that we do not have to indicate parenthesis in $n$-fold tensor products of representations. Then one can check that a more general property is satisfied (defining ``parabolic subalgebras'' in centralisers). Namely, if $n\geq 0$ and $L_1,\dots, L_n$ are representations of $U$, we have the following inclusion of algebras:
\[\text{End}_U(L_1)\otimes \dots\otimes \text{End}_U(L_n)\ni x_1\otimes\dots\otimes x_n\mapsto x_1\otimes\dots\otimes x_n\in \text{End}_U(L_1\otimes\dots\otimes L_n)\,.\]
\end{rem}

\subsection{Bratteli diagram of $\{\oH^N_{\bk,n}(q)\}_{n\geq 0}$}

The next result identifies the ideals $I_{\bk,n}^N$ in terms of the representation theory, and explains how to obtain the Bratteli diagram for the chain of algebras $\oH^N_{\bk,n}(q)$ easily from the Bratteli diagram of the chain of algebras $H_{\bk,n}(q)$. So in particular we obtain the representation theory of all the algebras $\oH^N_{\bk,n}$ and the branching rules from $\oH^N_{\bk,n}(q)$ to $\oH^N_{\bk,n-1}(q)$.

We are using the terminology of the appendix for quotients of semisimple algebras and quotients of Bratteli diagrams. We recall that Theorem \ref{thm-rep} gives the representation theory of $H_{\bk,n}(q)$ to which the following statements refer to.
\begin{thm}\label{theo-quot-rep}
$\ $
\begin{enumerate}
\item For $n\geq 0$, the ideal $I_{\bk,n}^N$ of $H_{\bk,n}(q)$ corresponds to the following subset of partitions:
\[\{\ \lambda\in S_{\bk,n}\ |\ l(\lambda)>N\ \}\ ;\]
\item The Bratteli diagram of the chain  $\{\oH_{\bk,n}^N(q)\}_{n\geq0}$ is the quotient of the Bratteli diagram of $\{H_{\bk,n}(q)\}_{n\geq0}$ generated by:
\[S=\bigcup_{n\geq 0}\{\ \lambda\in S_{\bk,n}\ |\ l(\lambda)>N\ \}\ .\]
\item Assume that the sequence $\bk$ is in decreasing order. The quotient is generated by:
\[S_{min}:=\{\lambda\in S_{\bk,N+1}\ |\ l(\lambda)=N+1\}\ .\]
In particular, $S_{min}$ only contains vertices of level $N+1$ and is the minimal generating set for the quotient.
\end{enumerate}
\end{thm}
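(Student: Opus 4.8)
The plan is to prove each of the three items by leveraging the known classical Schur--Weyl duality (Proposition \ref{prop-quot-rep}) together with the representation-theoretic dictionary between $H_{\bk,n}(q)$ and $H_{k_1+\dots+k_n}(q)$ furnished by Proposition \ref{prop-SWk-ideal} and Theorem \ref{thm-rep}. Recall that $I_{\bk,n}^N$ corresponds, under the isomorphism $H_{\bk,n}(q)\cong P_{\bk,n}H_{k_1+\dots+k_n}(q)P_{\bk,n}$, to $P_{\bk,n}I_{k_1+\dots+k_n}^NP_{\bk,n}$. The ideal $I_{k_1+\dots+k_n}^N$ of $H_{k_1+\dots+k_n}(q)$ is the one corresponding to partitions $\lambda$ with $l(\lambda)>N$. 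Cutting by an idempotent $e$ a semisimple algebra kills exactly the matrix blocks on which $e$ acts as zero, and keeps the block indexed by $\lambda$ precisely when $P_{\bk,n}(V_\lambda)\neq 0$, i.e.\ (Theorem \ref{thm-rep}) when $\lambda\in S_{\bk,n}$. So $P_{\bk,n}I_{k_1+\dots+k_n}^NP_{\bk,n}$ as a subset of $H_{\bk,n}(q)$ picks out exactly the blocks with $\lambda\in S_{\bk,n}$ and $l(\lambda)>N$. This proves item 1.

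For item 2, I would invoke the general formalism from the appendix on quotients of Bratteli diagrams: if a chain of semisimple algebras $\{A_n\}$ has ideals $I_n$ corresponding to subsets $S^{(n)}$ of the parametrising sets, and the $S^{(n)}$ are ``saturated'' downwards along the quotient direction (i.e.\ the complement is closed under taking predecessors in the Bratteli diagram), then the chain $\{A_n/I_n\}$ has Bratteli diagram equal to the quotient of that of $\{A_n\}$ generated by $S=\bigcup_n S^{(n)}$. The point to check is that $S=\bigcup_n\{\lambda\in S_{\bk,n}\mid l(\lambda)>N\}$ is indeed a generating set for a legitimate quotient, equivalently that the complementary set $\{\lambda\in S_{\bk,n}\mid l(\lambda)\leq N\}$ is closed under the branching $\mu\in\text{Res}_{\bk}(\lambda)$: this is immediate since $\mu\subset\lambda$ forces $l(\mu)\leq l(\lambda)\leq N$. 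One must also check the chain-compatibility of the quotient maps with the inclusions of Proposition \ref{prop-incl}, but this follows from the compatibility of the realisation of the $\oH^N_{\bk,n}(q)$ as genuine centralisers with the tensor-product inclusion, exactly as in Remark \ref{rem-quot-sub}. I expect item 2 to be essentially bookkeeping once the appendix results are in hand.

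Item 3 is the combinatorial heart. Assume $\bk$ is in decreasing order. I must show that the minimal generating set of the quotient consists precisely of the level-$(N+1)$ partitions $\lambda\vdash k_1+\dots+k_{N+1}$ with $l(\lambda)=N+1$ that lie in $S_{\bk,N+1}$. First, every such $\lambda$ must belong to any generating set: such a $\lambda$ has $l(\lambda)=N+1>N$ so it is removed, and it has no predecessor $\mu\in\text{Res}_{\bk}(\lambda)$ with $l(\mu)>N$ (since removing a skew shape with at most one box per column from a shape of length $N+1$ produces a shape of length $\leq N+1$, and length $N+1$ would need all of column $N+1$, i.e.\ $\lambda_{N+1}$ boxes, preserved — but wait, that can happen). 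Here is the subtlety: I need that $\lambda$ of level $N+1$ with $l(\lambda)=N+1$ has \emph{no} predecessor in $S$ of \emph{earlier} level, which is automatic since there is nothing below level $N+1$ in $S$ (everything at level $\leq N$ has $l\leq N$); hence these $\lambda$ cannot be generated from below and must be in $S_{min}$. Conversely I must show every removed vertex $\lambda$ (any level $n\geq N+1$, $l(\lambda)>N$, $\lambda\in S_{\bk,n}$) is generated by $S_{min}$, i.e.\ there is a chain in the Bratteli diagram from some element of $S_{min}$ up to $\lambda$ staying inside the removed set. The natural approach: peel off the last $k_n$ boxes repeatedly to descend from level $n$ to level $N+1$, at each step choosing a predecessor $\mu\in\text{Res}_{\bk}(\lambda)$ with $l(\mu)>N$. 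The key point, where I would use Lemma \ref{lem-comb} (with the decreasing hypothesis on $\bk$ so that $\bk_{\vert n}^{\text{ord}}=\bk_{\vert n}$), is that since $l(\lambda)>N\geq$ (bound on the length forced only by the size), one can always remove $k_n$ boxes — sliding them as far right as possible via the Lemma's construction — while keeping the first $N+1$ rows nonempty; this produces $\mu$ with $l(\mu)>N$ and $\mu\in S_{\bk,n-1}$, and $\lambda/\mu$ having at most one box per column. Iterating down to level $N+1$ lands in $S_{min}$. The main obstacle I anticipate is exactly verifying this descent stays inside $S_{\bk,n-1}$ at each step (the dominance condition $\mu\geq\bk_{\vert n-1}^{\text{ord}}$) and that length $>N$ is preserved; this is where the decreasing ordering of $\bk$ is genuinely used and where Lemma \ref{lem-comb} must be applied carefully, analogously to the proof of Proposition \ref{prop-quot2}. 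Finally, minimality follows by combining the two halves: $S_{min}$ generates and every element of $S_{min}$ is forced, and the remark that $S_{min}\subset\{$level $N+1\}$ is built into its definition.
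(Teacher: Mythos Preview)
Your treatment of items 1 and 2 is correct and matches the paper's proof almost exactly: item 1 via Proposition \ref{prop-SWk-ideal} combined with the Artin--Wedderburn picture from Theorem \ref{thm-rep}, and item 2 by checking $\langle S\rangle=S$ (closure under descendants, which you phrase dually) together with the commutativity of the inclusion/quotient square.

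For item 3, your overall strategy (descend level by level from $\lambda$ with $l(\lambda)>N$ down to level $N+1$, staying inside the removed set) is the paper's strategy, and you correctly isolate the obstacle. However, your proposed resolution has a genuine gap. When $l(\lambda)=N+1$ exactly and $\lambda_{N+1}\leq k_n$, the construction of Lemma \ref{lem-comb} (``slide the $n$'s as far right as possible'') will empty row $N+1$ entirely and produce $\mu$ with $l(\mu)=N$, which is not in $S$. Your phrase ``sliding them as far right as possible while keeping the first $N+1$ rows nonempty'' conflates two incompatible requirements; the lemma does not let you impose the second constraint for free. The analogy with Proposition \ref{prop-quot2} is also misleading: that proposition characterises when one \emph{can} drop the length, which is the opposite of what you need here.

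The paper's fix is a second step you have not anticipated. After applying Lemma \ref{lem-comb} to get $\mu$, one locates an index $j$ (the first row from which the dominance inequalities $\lambda_1+\dots+\lambda_i\geq k_1+\dots+k_i$ become strict all the way down to row $N+1$; such a $j$ exists precisely because $n>N+1$ forces $\lambda_1+\dots+\lambda_{N+1}>k_1+\dots+k_{N+1}$) and the last row $j'\geq j$ with $\lambda_{j'}=\lambda_j$. One then removes one box from the end of row $j'$ of $\mu$ and places it in row $N+1$, obtaining a new partition $\tilde{\mu}$ with $l(\tilde{\mu})=N+1$. The work is then to check $\tilde{\mu}\in\text{Res}_{\bk}(\lambda)$: that $\tilde{\mu}\subset\lambda$, that $\lambda/\tilde{\mu}$ still has at most one box per column, and that $\tilde{\mu}\geq\bk_{\vert n-1}$. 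The last point uses that the partial sums of $\mu$ past row $j'$ satisfy the dominance inequalities \emph{strictly}, so subtracting one keeps them valid. This is where the decreasing hypothesis on $\bk$ is genuinely consumed.
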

\begin{proof} 
\textbf{1.} Recall that the Hecke algebra $H_{k_1+\dots+k_n}(q)$ has the following Artin--Wedderburn decomposition:
\[H_{k_1+\dots+k_n}(q)\cong \bigoplus_{\lambda\vdash k_1+\dots+k_n} \text{End}(V_{\lambda})\ .\]
We proved in Proposition \ref{prop-PHP} that $H_{\bk,n}(q)\cong P_{\bk,n}H_{k_1+\dots+k_n}(q)P_{\bk,n}$ and in Theorem \ref{thm-rep} that its Artin--Wedderburn decomposition is
\[H_{\bk,n}(q)\cong P_{\bk,n}H_{k_1+\dots+k_n}(q)P_{\bk,n}\cong \bigoplus_{\lambda\in S_{\bk,n}} \text{End}(W_{\bk,\lambda})\ ,\]
where $W_{\bk,\lambda}=P_{\bk,n}(V_{\lambda})$ is non-zero for $\lambda\in S_{\bk,n}$.

Then we have from Proposition \ref{prop-SWk-ideal} that the ideal $I_{\bk,n}^N$ corresponds in $H_{k_1+\dots+k_n}(q)$ to $P_{\bk,n}I_{k_1+\dots+k_n}^NP_{\bk,n}$ where $I_{k_1+\dots+k_n}^N$ is the ideal of $H_{k_1+\dots+k_n}(q)$ appearing in the classical Schur--Weyl duality in Theorem \ref{theo-SW}. From this theorem, we have
\[I_{k_1+\dots+k_n}^N\cong \bigoplus_{\substack{\lambda\vdash k_1+\dots+k_n \\ l(\lambda)>N}} \text{End}(V_{\lambda})\ .\]
From all this, it follows directly that
\[I_{\bk,n}^N\cong \bigoplus_{\substack{\lambda\in S_{\bk,n} \\ l(\lambda)>N}} \text{End}(W_{\bk,\lambda})\ .\]
This proves item 1.

\vskip .2cm
\textbf{2.} Let $\mathcal{D}$ be the Bratteli diagram of the chain $\{H_{\bk,n}(q)\}_{n\geq0}$ described in Theorem \ref{thm-rep}.

First we note that the set $S$ in item 2 do not generate a larger set of partitions. In other words, $\langle S\rangle=S$. Indeed if $\lambda$ is a partition in $\mathcal{D}$ such that there is a path from an element of $S$ to $\lambda$, then in particular $\lambda$ contains (as a subpartition) an element of $ S$ and therefore we have $l(\lambda)>N$.

So at this point, we have, from item 1, that the quotient of $\mathcal{D}$ generated by $S$ contains the correct vertices. It remains to show that the edges of the quotient of $\mathcal{D}$ generated by $S$ indeed express the branching rules for the chain $\{\oH_{\bk,n}^N(q)\}_{n\geq0}$. It amounts to verifying that the following diagram is commutative:
\[\begin{array}{ccl}
H_{\bk,n}(q) & \longrightarrow & \oH_{\bk,n}^N(q)\subset \text{End}\bigl(L_{(k_1)}^N\otimes \dots \otimes L_{(k_n)}^N\bigr)\\[0.5em]
\!\!\!\!\iota\ \downarrow & & \ \ \ \ \downarrow\ \overline{\iota}\\[0.5em]
H_{\bk,n+1}(q) & \longrightarrow & \oH_{\bk,n+1}^N(q)\subset \text{End}\bigl(L_{(k_1)}^N\otimes \dots \otimes L_{(k_n)}^N\otimes L_{(k_{n+1})}^N\bigr)
\end{array}\]
where the horizontal maps are the representation maps of the algebras $H_{\bk,n}(q)$ and $H_{\bk,n+1}(q)$ on the tensor spaces and the vertical maps are the inclusion of algebras. Identifying via Proposition \ref{prop-PHP} the algebra $H_{\bk,n}$ with respectively $P_{\bk,n}H_{k_1+\dots+k_n}(q)P_{\bk,n}$ and similarly for $H_{\bk,n+1}(q)$, we recall that the inclusion map $\iota$ is given by 
$$\iota(P_{\bk,n}xP_{\bk,n})=P_{\bk,n}xP_{\bk,n}\otimes P_{k_{n+1}}\ .$$
Moreover, when acting on $L_{(k_1)}^N\otimes \dots \otimes L_{(k_n)}^N\otimes L_{(k_{n+1})}^N$, the first factor $P_{\bk,n}xP_{\bk,n}$ only acts non-trivially on the $n$ first vector spaces while $P_{k_{n+1}}$ acts as the identity on $L_{(k_{n+1})}^N$. 

On the other hand, the inclusion map $\overline{\iota}$ is given in Proposition \ref{prop-incl}, by $x\mapsto x\otimes\text{Id}_{L_{(k_{n+1})}^N}$. The verification of the commutativity of the diagram is then immediate.

\vskip .2cm
\textbf{3.} If the generating property is true, the fact that $S_{min}$ is minimal is obvious since it contains only vertices of level $N+1$. So it remains to show that the set $S_{min}$ indeed generates the correct subset of partitions in $\mathcal{D}$. For this, we must show that
\[\langle S_{min}\rangle = S\,,\ \ \ \text{where $S=\bigcup_{n\geq 0}\{\ \lambda\in S_{\bk,n}\ |\ l(\lambda)>N\ \}$}\ .\]
First, we know already that the set $S$ do not generate more partitions than those with $l(\lambda)>N$. As $S_{min}\subset S$, this shows the inclusion $\subset$.

Let $n\geq 0$ and let $\lambda\in S_{\bk,n}$ such that $l(\lambda)>N$. We prove that $\lambda\in \langle S_{min}\rangle$ by induction on $n$. If $n\leq N$ there is no partition $\lambda$ satisfying the assumptions. For the induction basis, if $n=N+1$ then the partitions $\lambda$ satisfying the assumptions are exactly the elements of $S_{min}$.

Assume that $n>N+1$. We must show that there is $\mu\in \text{Res}_\bk(\lambda)$ such that $l(\mu)>N$.

First, if $l(\lambda)>N+1$, then for any $\mu\in \text{Res}_\bk(\lambda)$ we have $l(\mu)>N$ since $\lambda/\mu$ must contain at most one box in each column.

So we can assume that $l(\lambda)=N+1$. Recall here that by assumption we have $\bk_{\vert n}^{\text{ord}}=\bk_{\vert n}$. Note that the inequalities in (\ref{k-condition}) expressing that $\lambda\geq\bk_{\vert n}$ cannot be all equalities since, from $l(\lambda)=N+1<n$ we have $\lambda_1+\dots+\lambda_{N+1}=|\lambda|=k_1+\dots+k_n>k_1+\dots+k_{N+1}$. So there is $j\in\{1,\dots,N+1\}$ such that:
\[\begin{array}{rcl}\lambda_1+\dots+\lambda_{j-1} & = & k_1+\dots+k_{j-1}\,,\\
\lambda_j & > & k_j\,,\\
\lambda_j+\lambda_{j+1} & > & k_j+k_{j+1}\,,\\
 & \vdots & \\
 \lambda_{j}+\dots+\lambda_{N+1} & > & k_j+\dots+k_{N+1}\ .\end{array}\]
In words, $j$ is the line from which all the inequalities up to the last line $N+1$ are strict. Moreover, let $j'\geq j$ be such that $\lambda_j=\dots=\lambda_{j'}$ and $\lambda_{j'+1}<\lambda_j$.

We apply Lemma \ref{lem-comb} to find a semistandard tableau in $\SSTab(\lambda,\bk_{\vert n})$ and in turn, by removing the boxes with $n$ and keeping the resulting shape, we have a partition $\mu\in \text{Res}_\bk(\lambda)$. Note that $l(\mu)\in\{N,N+1\}$. From the explicit procedure of Lemma \ref{lem-comb} we see that since $\lambda_{j}=\dots=\lambda_{j'}>k_j\geq k_n$, we have that the $j'$ first lines of $\mu$ are the same as the ones of $\lambda$.

Now, remove one box from $\mu$ at the end of line $j'$ and add it in line $N+1$. It is easy to see that this results in a partition $\tilde{\mu}$. Obviously, $l(\tilde{\mu})=N+1$. We claim that $\tilde{\mu}\in \text{Res}_\bk(\lambda)$. 

To verify the claim, first note that $\tilde{\mu}\subset \lambda$ since the only box we add to $\mu$ was the first box to be removed $\lambda$ in the procedure of Lemma \ref{lem-comb}. We have also immediately that $\lambda/\tilde{\mu}$ contains at most one box in each column. So it remains only to check that $\tilde{\mu}\geq\bk_{\vert n-1}$. Recall from the proof of Lemma \ref{lem-comb} that $\mu\geq\bk_{\vert n-1}$ and furthermore that
\[\mu_1+\dots+\mu_a\in\{\lambda+\dots+\lambda_a,\lambda_1+\dots+\lambda_a+\lambda_{a+1}-k_n\}\ ,\]
depending on the value of $a$. In particular we see that the inequalities are strict if $a>j'$. As the partial sums $\tilde{\mu}_1+\dots\tilde{\mu}_a$ are different from the ones for $\mu$ only for $a>j'$ and they differ only by one, we conclude that $\tilde{\mu}\geq\bk_{\vert n-1}$, which ends the proof that $\lambda\in \langle S_{min}\rangle$.
\end{proof}
\begin{rem}
As recalled before, the structure of $H_{\bk,n}$ does not depend on the ordering of $(k_1,\dots,k_n)$ but the whole chain depends on the ordering of $\bk$. This can be seen in the preceding Theorem, item 3, which would be false without the assumption $k_i\geq k_{i+1}$ for all $i$. This can be seen in the following example. Take $\bk=(1,1,1,3)$ and $N=2$. Then in the Bratteli diagram, there is partition of length 3 at level 4: $\begin{array}{ccc}
\fbox{\phantom{\scriptsize{$2$}}} & \hspace{-0.35cm} \fbox{\phantom{\scriptsize{$2$}}} & \hspace{-0.35cm}\fbox{\phantom{\scriptsize{$2$}}}\\[-0.2em]
\fbox{\phantom{\scriptsize{$2$}}} & \hspace{-0.35cm} \fbox{\phantom{\scriptsize{$2$}}}\\[-0.2em]
\fbox{\phantom{\scriptsize{$2$}}}
\end{array}$, which is not connected to the single partition of length 3 at level 3: $\begin{array}{ccc}
\fbox{\phantom{\scriptsize{$2$}}}\\[-0.2em]
\fbox{\phantom{\scriptsize{$2$}}}\\[-0.2em]
\fbox{\phantom{\scriptsize{$2$}}}
\end{array}$ (indeed, one would have to add two boxes in the same column). So in this example the quotient of the Bratteli diagram is not generated by the partition of length 3 at level 3.
\end{rem}

\subsection{Comparison of the chain of quotients with the chain $\{H_{\bk,n}(q)\}_{n\geq 0}$}

The centralisers form a chain of algebras
\begin{equation}\label{chain-oPHP}
\CC=\oH_{\bk,0}^N(q)\subset \oH_{\bk,1}^N(q)\subset \oH_{\bk,2}^N(q)\subset\dots\dots \subset \oH_{\bk,n}^N(q)\subset \oH^N_{\bk,n+1}(q)\subset \dots\dots\ ,
\end{equation}
which are obtained as quotients of the fused Hecke algebras:
\[\CC=H_{\bk,0}(q)\subset H_{\bk,1}(q)\subset H_{\bk,2}(q)\subset\dots\dots \subset H_{\bk,n}(q)\subset H_{\bk,n+1}(q)\subset \dots\dots\ .\]

From Theorem \ref{theo-quot-rep}, we obtain the following corollary to decide when the centraliser coincides with the fused Hecke algebras. We note that the statement is the same as (\ref{comp-SW}) in the classical Schur--Weyl duality relating the centraliser to the Hecke algebra.
\begin{coro}
The centraliser $\oH^N_{\bk,n}(q)$ coincides with the algebra $H_{\bk,n}(q)$ if and only if $n\leq N$. 
\end{coro}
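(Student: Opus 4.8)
The plan is to read the statement off directly from Theorem \ref{theo-quot-rep}. By Definition \ref{def-oPHP} we have $\oH^N_{\bk,n}(q)=H_{\bk,n}(q)/I_{\bk,n}^N$, and since both algebras are finite dimensional they coincide if and only if $I_{\bk,n}^N=0$. Now item 1 of Theorem \ref{theo-quot-rep} identifies $I_{\bk,n}^N$, through the Artin--Wedderburn decomposition of the semisimple algebra $H_{\bk,n}(q)$ described in Theorem \ref{thm-rep}, with the ideal attached to the set of partitions $\{\lambda\in S_{\bk,n}\mid l(\lambda)>N\}$. So the whole corollary reduces to showing that this set of partitions is empty precisely when $n\leq N$.

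First I would treat the implication $n\leq N\Rightarrow$ equality. The key input is the elementary fact, already recorded just after Theorem \ref{thm-rep} and immediate from the first block of inequalities in (\ref{k-condition}), that any $\lambda\in S_{\bk,n}$, i.e.\ any partition $\lambda\geq\bk^{\text{ord}}_{\vert n}$, satisfies $l(\lambda)\leq n$. Hence, if $n\leq N$, every $\lambda\in S_{\bk,n}$ has $l(\lambda)\leq n\leq N$, the set above is empty, $I_{\bk,n}^N=0$, and $\oH^N_{\bk,n}(q)=H_{\bk,n}(q)$.

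For the converse I would argue by contraposition: assuming $n>N$, it suffices to exhibit a single $\lambda\in S_{\bk,n}$ with $l(\lambda)>N$, which forces $I_{\bk,n}^N\neq 0$ and hence a strict quotient. Since, up to isomorphism, both $H_{\bk,n}(q)$ and its centraliser quotient $\oH^N_{\bk,n}(q)$ depend on $(k_1,\dots,k_n)$ only through its reordering (invariance of Kostka numbers, as in the remark following Theorem \ref{thm-rep}, together with the order-independence of the centraliser coming from quasi-triangularity), one may assume $k_1\geq\dots\geq k_n$, so that $\bk^{\text{ord}}_{\vert n}=(k_1,\dots,k_n)$. Then the partition $\lambda=(k_1,\dots,k_n)$ itself lies in $S_{\bk,n}$ (trivially $\lambda\geq\lambda$) and has $l(\lambda)=n>N$; alternatively one can invoke Proposition \ref{prop-subal-k}, according to which $\{\lambda\in S_{\bk,n}\mid l(\lambda)=n\}$ indexes the irreducible representations of $H_{\bk-1,n}(q)$ and is therefore nonempty.

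I do not expect any genuine obstacle here: once Theorem \ref{theo-quot-rep} is available the argument is only a couple of lines. The one point requiring a moment's care is the reordering reduction in the converse direction, together with the observation that the partition $(k_1,\dots,k_n)$ does index an irreducible representation of $H_{\bk,n}(q)$; everything else is formal manipulation of the semisimple picture already set up in Sections \ref{sec-rep} and \ref{sec-quot-N}.
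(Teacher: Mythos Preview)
Your proof is correct and follows essentially the same route as the paper: reduce via Theorem \ref{theo-quot-rep}(1) to whether $\{\lambda\in S_{\bk,n}\mid l(\lambda)>N\}$ is empty, use $l(\lambda)\le n$ for one direction, and exhibit a partition of length $n$ in $S_{\bk,n}$ for the other.

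The one place where you work harder than necessary is the reordering step you flag as ``requiring a moment's care''. It is not needed: regardless of the ordering of $\bk$, the partition $\bk_{\vert n}^{\text{ord}}$ is by construction a partition, trivially satisfies $\bk_{\vert n}^{\text{ord}}\ge\bk_{\vert n}^{\text{ord}}$ so lies in $S_{\bk,n}$, and has length $n$ (the $k_i$ being nonzero). This is exactly what the paper does, in one line, without invoking invariance of Kostka numbers or quasi-triangularity.
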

\begin{proof}
Note that for every $\bk$ and $n$, there is a partition $\lambda\in S_{\bk,n}$ with $l(\lambda)=n$. One can take $\lambda=\bk_{\vert n}^{\text{ord}}$ (it is of length $n$ since the part of $(k_1,\dots,k_n)$ are non-zero). Then item 1 of Theorem \ref{theo-quot-rep} implies immediately that the ideal $I_{\bk,n}^N$ is non-zero if and only if $N>n$, and thus the corollary follows.
\end{proof}

\subsection{Decomposition of tensor products $L_{(k_1)}^N\otimes\dots\otimes L_{(k_n)}^N$}

We can combine what we have obtained so far to deduce immediately the decomposition of the representation $L_{(k_1)}^N\otimes\dots\otimes L_{(k_n)}^N$ of $U_q(gl_N)$, known as the Pieri rule, a particular case of the Littlewood--Richardson rule. We use Theorem \ref{theo-SWk}, Theorem \ref{thm-rep} item 2 and Theorem \ref{theo-quot-rep} item 1. We obtain the analogue of the full statement of the Schur--Weyl duality (see Theorem \ref{theo-SW}):

As a $U_q(gl_N)\otimes H_{\bk,n}(q)$-module, the space $L_{(k_1)}^N\otimes\dots\otimes L_{(k_n)}^N$ decomposes as follows:
\begin{equation}\label{SWk}
L_{(k_1)}^N\otimes\dots\otimes L_{(k_n)}^N=\bigoplus_{\begin{array}{c}
\\[-1.6em]
\scriptstyle{\lambda\in S_{\bk,n}} \\[-0.4em]
\scriptstyle{l(\lambda)\leq N}
\end{array}} L^N_{\lambda}\otimes W_{\bk,\lambda}\ ,
\end{equation}
where we recall that the set $S_{\bk,n}=\{\lambda\vdash k_1+\dots+k_n\ |\ \lambda\geq \bk_{\vert n}^{\text{ord}}\}$ parametrises the irreducible representations of $H_{\bk,n}(q)$ and $W_{\bk,\lambda}$ is the corresponding irreducible representation constructed in Theorem \ref{thm-rep}. The partition $\bk_{\vert n}^{\text{ord}}$ is obtained from $\bk_{\vert n}=(k_1,\dots,k_n)$ by ordering the parts in decreasing order.

In particular, adding the information on the dimension of $W_{\bk,\lambda}$, we obtain that as a $U_q(gl_N)$-module:
\[L_{(k_1)}^N\otimes\dots\otimes L_{(k_n)}^N=\bigoplus_{\begin{array}{c}
\\[-1.6em]
\scriptstyle{\lambda\in S_{\bk,n}} \\[-0.4em]
\scriptstyle{l(\lambda)\leq N}
\end{array}} \bigl(L^N_{\lambda}\bigr)^{\oplus K_{\lambda,\bk_{\vert n}}}\ ,
\]
where $K_{\lambda,\bk_{\vert n}}$ is the Kostka number counting the number of semistandard Young tableaux of shape $\lambda$ and of weight $(k_1,\dots,k_n)$.

\section{Algebraic description of the centralisers $\oH^N_{\bk,n}(q)$}\label{sec-quot}

The centraliser $\oH^N_{\bk,n}(q)$ is described as the quotient $\oH^N_{\bk,n}(q)=H_{\bk,n}(q)/I_{\bk,n}^N$ and the corresponding ideal $I_{\bk,n}^N$ was described in the preceding section in terms of the representation theory of $H_{\bk,n}(q)$. In this section, we aim at an algebraic description of $I_{\bk,n}^N$.

In this section, we assume that the sequence $\bk=(k_1,k_2,\dots)$ does not contain $0$, and that it is already in decreasing order.

\paragraph{Generalisation of the $q$-antisymmetriser.} We define an element $H_{\bk,n}(q)$ by a simple diagrammatic procedure. For convenience, we give first the definition in the situation $q^2=1$ and then treat the general case (examples are given below). We will give an equivalent more algebraic definition just after the examples.

Let $w\in \mS_n$. We define a fused permutation denoted $|w|_{\bk}$ by the following procedure. Start from the permutation diagram of $w$ and add vertical edges (if necessary) at each ellipse to form the diagram of a fused permutation corresponding to $\bk$. More precisely, for each $a\in\{1,\dots,n\}$, we add $k_a-1$ vertical edges connecting the $a$-th top ellipse to the $a$-th bottom ellipse. Then we set:
\begin{equation}\label{sym1}
\AS_{\bk,n}(1)=\sum_{w\in\mS_n}(-1)^{\ell(w)}|w|_{\bk}\ \in H_{\bk,n}(1)\ .
\end{equation}

Similarly, we define an element $|\si_w|_{\bk}\in H_{\bk,n}(q)$ as follows. We start with the braid diagram of $\si_w$ as defined in Section \ref{sec-fus-br}. We promote all dots into ellipses, and for each $a\in\{1,\dots,n\}$, we add $k_a-1$ vertical edges connecting the $a$-th top ellipse to the $a$-th bottom ellipse. The rule is a follows: At each ellipse, the new strands are attached to the right of the one strand already present; the added strands do not cross each other; the new strands are ``above'' the original ones forming $\si_w$ (above in the natural sense, as shown in the examples below). 
Then we set:
\begin{equation}\label{symq}
\AS_{\bk,n}(q)=\sum_{w\in\mS_n}(-q^{-1})^{\ell(w)}|\si_w|_{\bk}\ \in H_{\bk,n}(q)\ .
\end{equation}

\begin{exa} Let $n=3$ and $\bk=(2,2,2,\dots)$. Here is depicted the procedure to obtain $\AS_{\bk,n}(1)$:
\begin{center}
 \begin{tikzpicture}[scale=0.27]
\fill (1,12) circle (0.2cm);\fill (1,8) circle (0.2cm);
\draw[thick] (1,12) -- (1,8);
\fill (4,12) circle (0.2cm);\fill (4,8) circle (0.2cm);
\draw[thick] (4,12) -- (4,8);
\fill (7,12) circle (0.2cm);\fill (7,8) circle (0.2cm);
\draw[thick] (7,12) -- (7,8);
\node at (9,10) {$-$};
\fill (11,12) circle (0.2cm);\fill (11,8) circle (0.2cm);
\draw[thick] (11,12)..controls +(0,-2) and +(0,+2) .. (14,8);
\fill (14,12) circle (0.2cm);\fill (14,8) circle (0.2cm);
\draw[thick] (14,12)..controls +(0,-2) and +(0,+2) .. (11,8);
\fill (17,12) circle (0.2cm);\fill (17,8) circle (0.2cm);
\draw[thick] (17,12) -- (17,8);
\node at (19,10) {$-$};
\fill (21,12) circle (0.2cm);\fill (21,8) circle (0.2cm);
\draw[thick] (21,12) -- (21,8);
\fill (24,12) circle (0.2cm);\fill (24,8) circle (0.2cm);
\draw[thick] (27,12)..controls +(0,-2) and +(0,+2) .. (24,8);
\fill (27,12) circle (0.2cm);\fill (27,8) circle (0.2cm);
\draw[thick] (24,12)..controls +(0,-2) and +(0,+2) .. (27,8);
\node at (29,10) {$+$};
\fill (31,12) circle (0.2cm);\fill (31,8) circle (0.2cm);
\draw[thick] (31,12)..controls +(0,-2) and +(0,+2) .. (34,8);
\fill (34,12) circle (0.2cm);\fill (34,8) circle (0.2cm);
\draw[thick] (34,12)..controls +(0,-2) and +(0,+2) .. (37,8);
\fill (37,12) circle (0.2cm);\fill (37,8) circle (0.2cm);
\draw[thick] (37,12)..controls +(0,-2) and +(0,+2) .. (31,8);
\node at (39,10) {$+$};
\fill (41,12) circle (0.2cm);\fill (41,8) circle (0.2cm);
\draw[thick] (41,12)..controls +(0,-2) and +(0,+2) .. (47,8);
\fill (44,12) circle (0.2cm);\fill (44,8) circle (0.2cm);
\draw[thick] (44,12)..controls +(0,-2) and +(0,+2) .. (41,8);
\fill (47,12) circle (0.2cm);\fill (47,8) circle (0.2cm);
\draw[thick] (47,12)..controls +(0,-2) and +(0,+2) .. (44,8);
\node at (49,10) {$-$};
\fill (51,12) circle (0.2cm);\fill (51,8) circle (0.2cm);
\draw[thick] (51,12)..controls +(0,-2) and +(0,+2) .. (57,8);
\fill (54,12) circle (0.2cm);\fill (54,8) circle (0.2cm);
\draw[thick] (54,12) -- (54,8);
\fill (57,12) circle (0.2cm);\fill (57,8) circle (0.2cm);
\draw[thick] (57,12)..controls +(0,-2) and +(0,+2) .. (51,8);
 
\draw[line width=1mm,->] (29,7.5) -- (29,4.5); 
 
\node at (-3.5,2) {$\AS_{\bk,n}(1)=$};
\fill (1,4) ellipse (0.6cm and 0.2cm);\fill (1,0) ellipse (0.6cm and 0.2cm);
\draw[thick] (0.8,4) -- (0.8,0);\draw[thick] (1.2,4) -- (1.2,0);
\fill (4,4) ellipse (0.6cm and 0.2cm);\fill (4,0) ellipse (0.6cm and 0.2cm);
\draw[thick] (3.8,4) -- (3.8,0);\draw[thick] (4.2,4) -- (4.2,0);
\fill (7,4) ellipse (0.6cm and 0.2cm);\fill (7,0) ellipse (0.6cm and 0.2cm);
\draw[thick] (6.8,4) -- (6.8,0);\draw[thick] (7.2,4) -- (7.2,0);
\node at (9,2) {$-$};
\fill (11,4) ellipse (0.6cm and 0.2cm);\fill (11,0) ellipse (0.6cm and 0.2cm);
\draw[thick] (10.8,4) -- (10.8,0);\draw[thick] (11.2,4)..controls +(0,-2) and +(0,+2) .. (13.8,0);
\fill (14,4) ellipse (0.6cm and 0.2cm);\fill (14,0) ellipse (0.6cm and 0.2cm);
\draw[thick] (13.8,4)..controls +(0,-2) and +(0,+2) .. (11.2,0);\draw[thick] (14.2,4) -- (14.2,0);
\fill (17,4) ellipse (0.6cm and 0.2cm);\fill (17,0) ellipse (0.6cm and 0.2cm);
\draw[thick] (16.8,4) -- (16.8,0);\draw[thick] (17.2,4) -- (17.2,0);
\node at (19,2) {$-$};
\fill (21,4) ellipse (0.6cm and 0.2cm);\fill (21,0) ellipse (0.6cm and 0.2cm);
\draw[thick] (20.8,4) -- (20.8,0);\draw[thick] (21.2,4) -- (21.2,0);
\fill (24,4) ellipse (0.6cm and 0.2cm);\fill (24,0) ellipse (0.6cm and 0.2cm);
\draw[thick] (23.8,4) -- (23.8,0);\draw[thick] (26.8,4)..controls +(0,-2) and +(0,+2) .. (24.2,0);
\fill (27,4) ellipse (0.6cm and 0.2cm);\fill (27,0) ellipse (0.6cm and 0.2cm);
\draw[thick] (24.2,4)..controls +(0,-2) and +(0,+2) .. (26.8,0);\draw[thick] (27.2,4) -- (27.2,0);
\node at (29,2) {$+$};
\fill (31,4) ellipse (0.6cm and 0.2cm);\fill (31,0) ellipse (0.6cm and 0.2cm);
\draw[thick] (30.8,4) -- (30.8,0);\draw[thick] (31.2,4)..controls +(0,-2) and +(0,+2) .. (33.8,0);
\fill (34,4) ellipse (0.6cm and 0.2cm);\fill (34,0) ellipse (0.6cm and 0.2cm);
\draw[thick] (33.8,4) -- (34.2,0);\draw[thick] (34.2,4)..controls +(0,-2) and +(0,+2) .. (36.8,0);
\fill (37,4) ellipse (0.6cm and 0.2cm);\fill (37,0) ellipse (0.6cm and 0.2cm);
\draw[thick] (36.8,4)..controls +(0,-2) and +(0,+2) .. (31.2,0);\draw[thick] (37.2,4) -- (37.2,0);
\node at (39,2) {$+$};
\fill (41,4) ellipse (0.6cm and 0.2cm);\fill (41,0) ellipse (0.6cm and 0.2cm);
\draw[thick] (40.8,4) -- (40.8,0);\draw[thick] (41.2,4)..controls +(0,-2) and +(0,+2) .. (46.8,0);
\fill (44,4) ellipse (0.6cm and 0.2cm);\fill (44,0) ellipse (0.6cm and 0.2cm);
\draw[thick] (43.8,4)..controls +(0,-2) and +(0,+2) .. (41.2,0);\draw[thick] (44.2,4) -- (43.8,0);
\fill (47,4) ellipse (0.6cm and 0.2cm);\fill (47,0) ellipse (0.6cm and 0.2cm);
\draw[thick] (46.8,4)..controls +(0,-2) and +(0,+2) .. (44.2,0);\draw[thick] (47.2,4) -- (47.2,0);
\node at (49,2) {$-$};
\fill (51,4) ellipse (0.6cm and 0.2cm);\fill (51,0) ellipse (0.6cm and 0.2cm);
\draw[thick] (50.8,4) -- (50.8,0);\draw[thick] (51.2,4)..controls +(0,-2) and +(0,+2) .. (56.8,0);
\fill (54,4) ellipse (0.6cm and 0.2cm);\fill (54,0) ellipse (0.6cm and 0.2cm);
\draw[thick] (53.8,4) -- (53.8,0);\draw[thick] (54.2,4) -- (54.2,0);
\fill (57,4) ellipse (0.6cm and 0.2cm);\fill (57,0) ellipse (0.6cm and 0.2cm);
\draw[thick] (56.8,4)..controls +(0,-2) and +(0,+2) .. (51.2,0);\draw[thick] (57.2,4) -- (57.2,0);
\end{tikzpicture}
\end{center}
Here is depicted the procedure to obtain to obtain $\AS_{\bk,n}(q)$:
\begin{center}
 \begin{tikzpicture}[scale=0.3]
\fill (1,12) circle (0.2cm);\fill (1,8) circle (0.2cm);
\draw[thick] (1,12) -- (1,8);
\fill (4,12) circle (0.2cm);\fill (4,8) circle (0.2cm);
\draw[thick] (4,12) -- (4,8);
\fill (7,12) circle (0.2cm);\fill (7,8) circle (0.2cm);
\draw[thick] (7,12) -- (7,8);
\node at (9,10) {$-q^{-1}$};
\fill (11,12) circle (0.2cm);\fill (11,8) circle (0.2cm);
\draw[thick] (14,12)..controls +(0,-2) and +(0,+2) .. (11,8);\fill[white] (12.5,10) circle (0.4);
\draw[thick] (11,12)..controls +(0,-2) and +(0,+2) .. (14,8);
\fill (14,12) circle (0.2cm);\fill (14,8) circle (0.2cm);
\fill (17,12) circle (0.2cm);\fill (17,8) circle (0.2cm);
\draw[thick] (17,12) -- (17,8);
\node at (19,10) {$-q^{-1}$};
\fill (21,12) circle (0.2cm);\fill (21,8) circle (0.2cm);
\draw[thick] (21,12) -- (21,8);
\fill (24,12) circle (0.2cm);\fill (24,8) circle (0.2cm);
\draw[thick] (27,12)..controls +(0,-2) and +(0,+2) .. (24,8);\fill[white] (25.5,10) circle (0.4);
\fill (27,12) circle (0.2cm);\fill (27,8) circle (0.2cm);
\draw[thick] (24,12)..controls +(0,-2) and +(0,+2) .. (27,8);
\node at (29,10) {$+q^{-2}$};
\draw[thick] (37,12)..controls +(0,-2) and +(0,+2) .. (31,8);\fill[white] (33,9.6) circle (0.4);\fill[white] (35,10.4) circle (0.4);
\fill (31,12) circle (0.2cm);\fill (31,8) circle (0.2cm);
\draw[thick] (31,12)..controls +(0,-2) and +(0,+2) .. (34,8);
\fill (34,12) circle (0.2cm);\fill (34,8) circle (0.2cm);
\draw[thick] (34,12)..controls +(0,-2) and +(0,+2) .. (37,8);
\fill (37,12) circle (0.2cm);\fill (37,8) circle (0.2cm);
\node at (39,10) {$+q^{-2}$};
\fill (41,12) circle (0.2cm);\fill (41,8) circle (0.2cm);
\draw[thick] (47,12)..controls +(0,-2) and +(0,+2) .. (44,8);
\draw[thick] (44,12)..controls +(0,-2) and +(0,+2) .. (41,8);
\fill[white] (43,10.4) circle (0.4);\fill[white] (45,9.6) circle (0.4);
\draw[thick] (41,12)..controls +(0,-2) and +(0,+2) .. (47,8);
\fill (44,12) circle (0.2cm);\fill (44,8) circle (0.2cm);
\fill (47,12) circle (0.2cm);\fill (47,8) circle (0.2cm);
\node at (49,10) {$-q^{-3}$};
\fill (51,12) circle (0.2cm);\fill (51,8) circle (0.2cm);
\fill (54,12) circle (0.2cm);\fill (54,8) circle (0.2cm);
\fill (57,12) circle (0.2cm);\fill (57,8) circle (0.2cm);
\draw[thick] (57,12)..controls +(0,-3) and +(0,+1) .. (51,8);
\fill[white] (54,9.2) circle (0.3);
\draw[thick] (54,12) -- (54,8);
\fill[white] (54,10.8) circle (0.3);\fill[white] (55.7,10) circle (0.3);
\draw[thick] (51,12)..controls +(0,-1) and +(0,+3) .. (57,8);
 
\draw[line width=1mm,->] (29,7.5) -- (29,4.5); 
 
\fill (1,4) ellipse (0.6cm and 0.2cm);\fill (1,0) ellipse (0.6cm and 0.2cm);
\draw[thick] (0.8,4) -- (0.8,0);\draw[thick] (1.2,4) -- (1.2,0);
\fill (4,4) ellipse (0.6cm and 0.2cm);\fill (4,0) ellipse (0.6cm and 0.2cm);
\draw[thick] (3.8,4) -- (3.8,0);\draw[thick] (4.2,4) -- (4.2,0);
\fill (7,4) ellipse (0.6cm and 0.2cm);\fill (7,0) ellipse (0.6cm and 0.2cm);
\draw[thick] (6.8,4) -- (6.8,0);\draw[thick] (7.2,4) -- (7.2,0);
\node at (9,2) {$-q^{-1}$};
\fill (11,4) ellipse (0.6cm and 0.2cm);\fill (11,0) ellipse (0.6cm and 0.2cm);!
\draw[thick] (10.8,4) -- (10.8,0);\draw[thick] (13.8,4)..controls +(0,-2) and +(0,+2) .. (11.2,0);\fill[white] (12.5,2) circle (0.4);
\fill (14,4) ellipse (0.6cm and 0.2cm);\fill (14,0) ellipse (0.6cm and 0.2cm);
\draw[thick] (14.2,4) -- (14.2,0);\draw[thick] (11.2,4)..controls +(0,-2) and +(0,+2) .. (13.8,0);
\fill (17,4) ellipse (0.6cm and 0.2cm);\fill (17,0) ellipse (0.6cm and 0.2cm);
\draw[thick] (16.8,4) -- (16.8,0);\draw[thick] (17.2,4) -- (17.2,0);
\node at (19,2) {$-q^{-1}$};
\fill (21,4) ellipse (0.6cm and 0.2cm);\fill (21,0) ellipse (0.6cm and 0.2cm);
\draw[thick] (20.8,4) -- (20.8,0);\draw[thick] (21.2,4) -- (21.2,0);
\fill (24,4) ellipse (0.6cm and 0.2cm);\fill (24,0) ellipse (0.6cm and 0.2cm);
\draw[thick] (23.8,4) -- (23.8,0);\draw[thick] (26.8,4)..controls +(0,-2) and +(0,+2) .. (24.2,0);\fill[white] (25.5,2) circle (0.4);
\fill (27,4) ellipse (0.6cm and 0.2cm);\fill (27,0) ellipse (0.6cm and 0.2cm);
\draw[thick] (24.2,4)..controls +(0,-2) and +(0,+2) .. (26.8,0);\draw[thick] (27.2,4) -- (27.2,0);
\node at (29,2) {$+q^{-3}$};
\draw[thick] (36.8,4)..controls +(0,-2) and +(0,+2) .. (31.2,0);\draw[thick] (37.2,4) -- (37.2,0);
\fill[white] (33,1.6) circle (0.3);\fill[white] (35,2.4) circle (0.3);;\fill[white] (34,2) circle (0.3);
\fill (31,4) ellipse (0.6cm and 0.2cm);\fill (31,0) ellipse (0.6cm and 0.2cm);
\draw[thick] (30.8,4) -- (30.8,0);\draw[thick] (31.2,4)..controls +(0,-2) and +(0,+2) .. (33.8,0);
\fill (34,4) ellipse (0.6cm and 0.2cm);\fill (34,0) ellipse (0.6cm and 0.2cm);
\draw[thick] (33.8,4) -- (34.2,0);\draw[thick] (34.2,4)..controls +(0,-2) and +(0,+2) .. (36.8,0);
\fill (37,4) ellipse (0.6cm and 0.2cm);\fill (37,0) ellipse (0.6cm and 0.2cm);
\node at (39,2) {$+q^{-1}$};
\draw[thick] (46.8,4)..controls +(0,-2) and +(0,+2) .. (44.2,0);
\draw[thick] (43.8,4)..controls +(0,-2) and +(0,+2) .. (41.2,0);
\fill[white] (43,2.4) circle (0.3);\fill[white] (45,1.6) circle (0.3);
\draw[thick] (41.2,4)..controls +(0,-2) and +(0,+2) .. (46.8,0);
\fill[white] (44,2) circle (0.3);
\fill (41,4) ellipse (0.6cm and 0.2cm);\fill (41,0) ellipse (0.6cm and 0.2cm);
\draw[thick] (40.8,4) -- (40.8,0);
\fill (44,4) ellipse (0.6cm and 0.2cm);\fill (44,0) ellipse (0.6cm and 0.2cm);
\draw[thick] (44.2,4) -- (43.8,0);
\fill (47,4) ellipse (0.6cm and 0.2cm);\fill (47,0) ellipse (0.6cm and 0.2cm);
\draw[thick] (47.2,4) -- (47.2,0);
\node at (49,2) {$-q^{-3}$};
\draw[thick] (56.8,4)..controls +(0,-3) and +(0,+1) .. (51.2,0);
\fill[white] (53.5,1.2) circle (0.3);\fill[white] (54.5,1.6) circle (0.3);
\draw[thick] (53.8,4)..controls +(-0.5,-1) and +(-0.5,1) .. (53.8,0);
\fill[white] (53.5,2.9) circle (0.3);
\fill[white] (54,2.8) circle (0.3);\fill[white] (55.6,2) circle (0.3);
\draw[thick] (51.2,4)..controls +(0,-1) and +(0,+3) .. (56.8,0);
\fill[white] (54.5,2.4) circle (0.3);
\draw[thick] (54.2,4)..controls +(0.5,-1) and +(0.5,1) .. (54.2,0);
\draw[thick] (50.8,4) -- (50.8,0);
\fill (51,4) ellipse (0.6cm and 0.2cm);\fill (51,0) ellipse (0.6cm and 0.2cm);
\fill (54,4) ellipse (0.6cm and 0.2cm);\fill (54,0) ellipse (0.6cm and 0.2cm);
\fill (57,4) ellipse (0.6cm and 0.2cm);\fill (57,0) ellipse (0.6cm and 0.2cm);
\draw[thick] (57.2,4) -- (57.2,0);
\end{tikzpicture}
\end{center}
Note that the added vertical strands are indeed above all others. Initially, we attached them at each ellipse to the right of the existing strands, but in the above picture, we used some idempotent relations to suppress some crossings near the ellipses. This accounts for the modifications in the powers of $q$. Note that, at the end, the coefficient is always equal to $(-1)^{\ell(w)}$ times $q^{-1}$ to the power the sum of the crossings in the diagram (counted with signs).
\end{exa}

\paragraph{Algebraic interpretation.} Consider the following element of the usual Hecke algebra $H_{k_1+\dots+k_n}(q)$:
\begin{equation}\label{Gamma}
\Gamma=\si_{k_1}\dots\si_2\cdot \si_{k_1+k_2}\dots \si_3 \cdot \ldots\ldots\cdot \si_{k_1+\dots+k_{n-1}}\dots\si_n\ , 
\end{equation}
where the dots between $\si_{k_1+\dots+k_{a-1}}$ and $\si_a$ indicate the product of the generators in decreasing order of their indices (note that $\Gamma=1$ only if $\bk=(1,1,1,\dots)$). Here is the diagrammatic representation of $\Gamma$ (in the case $n=3$):
\begin{center}
 \begin{tikzpicture}[scale=0.3]
\node at (-2,0) {$\Gamma=$};
\draw (0.8,5.5) -- (0.8,6) -- (9.2,6) -- (9.2,5.5);\node at (5,7) {$k_1$};
\fill (1,5) circle (0.2);\fill (1,-5) circle (0.2);
\fill (2,5) circle (0.2);\fill (2,-5) circle (0.2);
\fill (3,5) circle (0.2);\fill (3,-5) circle (0.2);
\node at (5,5) {$\dots$};
\fill (4,-5) circle (0.2);
\fill (5,-5) circle (0.2);\node at (7,-5) {$\dots$};
\fill (7,5)  circle (0.2);
\fill (8,5)  circle (0.2);
\fill (9,5) circle (0.2);\fill (9,-5) circle (0.2);
\draw (0.8,-5.5) -- (0.8,-6) -- (9.2,-6) -- (9.2,-5.5);

\draw (9.8,5.5) -- (9.8,6) -- (16.2,6) -- (16.2,5.5);\node at (13,7) {$k_2$};
\fill (10,5) circle (0.2);\fill (10,-5) circle (0.2);
\fill (11,5) circle (0.2);\fill (11,-5) circle (0.2);
\node at (13,5) {$\dots$};\fill (12,-5) circle (0.2);
\node at (14,-5) {$\dots$};
\fill (15,5) circle (0.2);
\fill (16,5) circle (0.2);\fill (16,-5) circle (0.2);
\draw (9.8,-5.5) -- (9.8,-6) -- (16.2,-6) -- (16.2,-5.5);

\draw (16.8,5.5) -- (16.8,6) -- (23.2,6) -- (23.2,5.5);\node at (20,7) {$k_3$};
\fill (17,5) circle (0.2);\fill (17,-5) circle (0.2);
\fill (18,5) circle (0.2);\fill (18,-5) circle (0.2);
\node at (20.5,5) {$\dots$};\node at (20.5,-5) {$\dots$};
\fill (23,5) circle (0.2);\fill (23,-5) circle (0.2);
\draw (16.8,-5.5) -- (16.8,-6) -- (23.2,-6) -- (23.2,-5.5);

\draw[thick] (1,5) -- (1,-5);
\draw[thick] (10,5)..controls +(0,-4) and +(0,+4) .. (2,-5);
\draw[thick] (17,5)..controls +(0,-4) and +(0,+4) .. (3,-5);

\fill[white] (9.2,2.8) circle (0.3);\fill[white] (8.4,2) circle
(0.3);\fill[white] (7.6,1.2) circle (0.3);
\fill[white] (4.4,-1.2) circle (0.3);\fill[white] (3.6,-2) circle
(0.3);\fill[white] (4.7,-2.6) circle (0.3);\fill[white] (3.9,-3.3)
circle (0.3);
\fill[white] (16.1,3.3) circle (0.3);\fill[white] (15.1,2.5) circle (0.3);
\fill[white] (11.5,0.6) circle (0.3);\fill[white] (10,0) circle (0.3);
\fill[white] (9.1,-0.3) circle (0.3);\fill[white] (8.2,-0.7) circle (0.3);

\draw[thick] (2,5)..controls +(0,-4) and +(0,+4) .. (4,-5);
\draw[thick] (3,5)..controls +(0,-4) and +(0,+4) .. (5,-5);
\draw[thick] (7,5)..controls +(0,-4) and +(0,+4) .. (9,-5);
\draw[thick] (8,5)..controls +(0,-4) and +(0,+4) .. (10,-5);
\draw[thick] (9,5)..controls +(0,-4) and +(0,+4) .. (11,-5);
\draw[thick] (11,5)..controls +(0,-4) and +(0,+4) .. (12,-5);
\draw[thick] (15,5)..controls +(0,-4) and +(0,+4) .. (16,-5);
\draw[thick] (16,5)..controls +(0,-4) and +(0,+4) .. (17,-5);
\draw[thick] (18,5)..controls +(0,-4) and +(0,+4) .. (18,-5);
\draw[thick] (23,5)..controls +(0,-4) and +(0,+4) .. (23,-5);

\end{tikzpicture}
\end{center}

Let $w\in \mS_n$. Under the isomorphism between $H_{\bk,n}$ and $P_{\bk,n}H_{k_1+\dots+k_n}(q)P_{\bk,n}$, we claim that the element $|\si_w|_{\bk}$ corresponds to
\[P_{\bk,n}\Gamma \si_w \Gamma^{-1} P_{\bk,n}\ ,\]
where $\si_w$ is seen as an element of $H_{k_1+\dots+k_n}(q)$ by the natural inclusion (it involves only the first $n$ strands).

To check this claim, we consider braid diagrams in $H_{k_1+\dots+k_n}(q)$ and we see the lines of $k_1+\dots+k_n$ dots as $n$ packets of $k_1,\dots,k_n$ dots respectively (as shown in the above picture). Then, for each $a\in\{1,\dots,n\}$, the element $\si_{k_1+\dots+k_{a-1}}\dots\si_a$ does the following: it takes the first strand of the $a$-th packet and move it in position $a$ by passing below the strands to its left. So after application of $\Gamma$ the first strands of the $n$ packets have become the $n$ first strands. On these $n$ first strands, we apply $\si_w$ and then we move back these $n$ strands in their original position using $\Gamma^{-1}$. We see that these $n$ strands always stay below the other ones, and the other strands never cross each other and end up being vertical. So, gluing the packets of dots into ellipses, we obtain exactly the element $|\si_w|_{\bk}$ in its diagrammatical definition.

\subsection{Conjectural description of the ideal $I_{\bk,n}^N$.}

Now we are ready to present a conjectural description of the ideal $I_{\bk,n}^N$ resulting in an algebraic description of the centralisers $\oH^N_{\bk,n}(q)$. Recall that if $n\leq N$, there is nothing to do since $\oH^N_{\bk,n}(q)$ simply coincides with $H_{\bk,n}(q)$.

We make the following two conjectures. They generalise for example the description of the Temperley--Lieb algebra as a quotient of the Hecke algebra, see Example \ref{exa-TL}. Below we see the element $\AS_{\bk,N+1}(q)$ as an element of $H_{\bk,n}(q)$ for any $n\geq N+1$, by the natural inclusion of algebras (namely, in $H_{\bk,n}(q)$, the element $\AS_{\bk,N+1}(q)$ involves only the strands attached to the $N+1$ first ellipses).
\begin{conj}\label{conj1}
Let $n>N$. The algebra $\oH^N_{\bk,n}(q)$ is isomorphic to the quotient of the algebra $H_{\bk,n}(q)$ over the relation:
\[\AS_{\bk,N+1}(q)=0\ .\]
\end{conj}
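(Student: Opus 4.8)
\emph{Proof proposal.} The plan is to pass to the corner-algebra description, reduce to level $n=N+1$, dispose of the easy inclusion, and isolate the non-vanishing statement that is the genuine content of the conjecture. First I would identify, via Proposition~\ref{prop-PHP}, the algebra $H_{\bk,n}(q)$ with $P_{\bk,n}H_{k_1+\dots+k_n}(q)P_{\bk,n}$, so that by the algebraic interpretation of the elements $|\si_w|_{\bk}$ given just before this section, $\AS_{\bk,N+1}(q)$ corresponds to $P_{\bk,n}\,\Gamma A\,\Gamma^{-1}\,P_{\bk,n}$, where $\Gamma$ is the element (\ref{Gamma}) for $n=N+1$ and $A:=\sum_{w\in\mS_{N+1}}(-q^{-1})^{\ell(w)}\si_w$ is a non-zero scalar multiple of the $q$-antisymmetriser $P'_{N+1}$ of (\ref{def-P'}) on the first $N+1$ strands. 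Since $H_{\bk,n}(q)$ is semisimple, the two-sided ideal generated by any element $x$ is $\bigoplus\text{End}(W_{\bk,\lambda})$, the sum over those $\lambda\in S_{\bk,n}$ on which $x$ acts non-trivially; by Theorem~\ref{theo-quot-rep}(1) the conjecture is therefore \emph{equivalent} to the assertion that $\AS_{\bk,N+1}(q)$ acts as $0$ on $W_{\bk,\lambda}$ if and only if $l(\lambda)\le N$. I would then reduce to $n=N+1$ using Theorem~\ref{thm-rep}(3) together with the equality $\langle S_{min}\rangle=S$ established inside Theorem~\ref{theo-quot-rep}(3): for $n>N+1$ the restriction of $W_{\bk,\lambda}$ to $H_{\bk,N+1}(q)$ is a sum of modules $W_{\bk,\mu}$ with $l(\mu)\le l(\lambda)$, and it contains some $\mu$ with $l(\mu)=N+1$ precisely when $l(\lambda)>N$; hence the action of $\AS_{\bk,N+1}(q)\in H_{\bk,N+1}(q)\subset H_{\bk,n}(q)$ on $W_{\bk,\lambda}$ is zero (resp. non-zero) according as $l(\lambda)\le N$ (resp. $l(\lambda)>N$), provided the corresponding statement is proved at level $N+1$.

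The easy inclusion, namely $\AS_{\bk,N+1}(q)\in I_{\bk,N+1}^N$, is then immediate: if $l(\lambda)\le N$, the restriction of $V_\lambda$ to the Hecke subalgebra on the first $N+1$ strands involves no copy of the one-dimensional module $V_{(1^{N+1})}$, so $P'_{N+1}$ — hence $A$, hence $\Gamma A\Gamma^{-1}$, hence $P_{\bk,N+1}\Gamma A\Gamma^{-1}P_{\bk,N+1}$ — acts as $0$ on $V_\lambda$, and a fortiori on $W_{\bk,\lambda}=P_{\bk,N+1}(V_\lambda)$ (this also yields $\AS_{\bk,N+1}(q)\in I_{\bk,n}^N$ for all $n$ via Proposition~\ref{prop-SWk-ideal}).

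The hard part will be the reverse inclusion: showing that $P_{\bk,N+1}\Gamma A\Gamma^{-1}P_{\bk,N+1}$ acts \emph{non-trivially} on $W_{\bk,\lambda}$ for every $\lambda\vdash k_1+\dots+k_{N+1}$ with $l(\lambda)=N+1$ (equivalently $\lambda\ge\bk^{\text{ord}}_{\vert N+1}$, as $\lambda\in S_{\bk,N+1}$). I would attack this by an explicit computation in the seminormal basis: by Theorem~\ref{thm-rep}(1), $W_{\bk,\lambda}$ is spanned by the vectors $w_{\bbT}$, $\bbT\in\SSTab(\lambda,\bk_{\vert N+1})$, and since $l(\lambda)=N+1$ the first column of every such $\bbT$ is forced to read $1,2,\dots,N+1$ from top to bottom; one picks a convenient $\bbT$, follows the action of $\Gamma$, then of $A\propto P'_{N+1}$ (which performs a genuine $(N+1)$-fold $q$-antisymmetrisation, non-degenerate precisely because $\lambda$ has a full first column of height $N+1$), then of the outer $P_{\bk,N+1}$, and exhibits a non-zero coefficient on some $w_{\bbT'}$. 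The obstacle — and the reason only the cases $N=2$ (arbitrary $\bk$), $\bk=(k_1,1,1,\dots)$ and $\bk\in\{1,2\}^{\infty}$ are settled — is to control, uniformly over all admissible shapes $\lambda$, the interference between the $q$-symmetrisers $P_{\bk,N+1}=P_{k_1}\otimes\dots\otimes P_{k_{N+1}}$ and the conjugated $q$-antisymmetriser $\Gamma A\Gamma^{-1}$: a priori the outer symmetrisation can annihilate the antisymmetriser's output, and deciding when it does not seems to require a case analysis rather than one argument. An alternative would be to regard $P_{\bk,N+1}\Gamma A\Gamma^{-1}P_{\bk,N+1}$ inside the subalgebra $\mathcal{A}\cong H_{\bk-1,N+1}(q)$ of Proposition~\ref{prop-subal-k} (which, as a subspace of $H_{\bk,N+1}(q)$, is exactly $I_{\bk,N+1}^N$) and to induct on $\min_a k_a$ through the subalgebra/quotient relations of Section~\ref{sec-quot-k}; but the base cases in which some $k_a=1$ (so $\bk-1$ has a zero entry and $\AS_{\bk-1,N+1}(q)$ is undefined) obstruct a clean induction, which is once more why only partial results are currently available.
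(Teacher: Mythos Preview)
Your proposal is structurally sound and honest about its incompleteness, which is appropriate: this statement is a \emph{conjecture} in the paper too, with no general proof offered. Your reduction to $n=N+1$ via Theorem~\ref{theo-quot-rep}(3) and your proof of the easy inclusion $\AS_{\bk,N+1}(q)\in I_{\bk,n}^N$ are exactly what the paper does (the latter is Proposition~\ref{prop-conj}, and the reduction is the remark immediately following it).

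Where your approach and the paper's diverge is in the attack on the hard direction. You propose a direct seminormal-basis computation: pick a good $\bbT$, trace the action of $\Gamma$, then $A$, then $P_{\bk,N+1}$, and exhibit a non-zero coefficient. The paper instead first observes that, by a specialisation argument in the seminormal realisation, the ideal generated by $\AS_{\bk,N+1}(q)$ for generic $q$ contains the ideal generated by $\AS_{\bk,N+1}(1)$, so it suffices to prove the dimension bound at $q^2=1$; it then works entirely in the combinatorial algebra $H_{\bk,N+1}(1)$ of fused permutations, exhibiting explicit linearly independent families $\{\AS_{\bk,N+1}(1)\cdot x\}$ of the right cardinality by a dominant-term argument on multisets (for $N=2$) or by tracking ``double-edge'' diagrams (for $\bk\in\{1,2\}^{\infty}$). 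This buys the paper a purely combinatorial problem free of $q$-coefficients, at the cost of losing the representation-theoretic picture; your seminormal approach keeps the $\lambda$-by-$\lambda$ structure visible but, as you note, runs into exactly the interference problem between symmetrisers and the conjugated antisymmetriser. Neither route is known to finish the general case.

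One small comment on your inductive alternative through $H_{\bk-1,N+1}(q)$: the obstruction you identify (some $k_a=1$, so $\AS_{\bk-1,N+1}$ is undefined) is real, but note that Proposition~\ref{prop-subal-k} only gives an abstract isomorphism $\mathcal{A}\cong H_{\bk-1,N+1}(q)$ of semisimple algebras; there is no claim that the element $\AS_{\bk,N+1}(q)\in\mathcal{A}$ corresponds under this isomorphism to anything as simple as $\AS_{\bk-1,N+1}(q)$, so even when all $k_a\ge 2$ the induction step would need an additional ingredient.
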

\begin{conj}\label{conj2}
The element $\AS_{\bk,N+1}(q)$ is central in $H_{\bk,N+1}(q)$.
\end{conj}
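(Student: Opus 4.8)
The plan is to prove centrality by showing that $\AS_{\bk,N+1}(q)$ acts by a scalar on every irreducible representation of $H_{\bk,N+1}(q)$; since $q$ is restricted so that this algebra is semisimple (Proposition~\ref{prop-PHP}), this is equivalent to centrality, and the irreducibles are precisely the $W_{\bk,\lambda}$ with $\lambda\vdash k_1+\dots+k_{N+1}$ and $\lambda\geq\bk_{\vert N+1}^{\text{ord}}$ described in Theorem~\ref{thm-rep}. Set $M:=k_1+\dots+k_{N+1}$. Using the isomorphism $H_{\bk,N+1}(q)\cong P_{\bk,N+1}H_M(q)P_{\bk,N+1}$ together with the algebraic interpretation given just before the conjectures ($|\si_w|_{\bk}\leftrightarrow P_{\bk,N+1}\Gamma\si_w\Gamma^{-1}P_{\bk,N+1}$) and formula~(\ref{def-P'}), one first rewrites $\AS_{\bk,N+1}(q)=c\,P_{\bk,N+1}\,E\,P_{\bk,N+1}$, where $E:=\Gamma P'_{N+1}\Gamma^{-1}$ is the $q$-antisymmetriser idempotent acting on the $N+1$ threads attached leftmost on the first $N+1$ ellipses, and $c=\sum_{w\in\mS_{N+1}}q^{-2\ell(w)}$ is a nonzero constant. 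Since $P_{\bk,N+1}$ acts as the identity on $W_{\bk,\lambda}=P_{\bk,N+1}(V_\lambda)$, the action of $\AS_{\bk,N+1}(q)$ on $W_{\bk,\lambda}$ equals $c$ times the action of $P_{\bk,N+1}\circ E$.

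The easy case is $l(\lambda)\leq N$. The element $E$ is a conjugate of the primitive central idempotent $P'_{N+1}$ of $H_{N+1}(q)$ attached to the one-column representation $V_{(1^{N+1})}$, and $P'_{N+1}$ annihilates $V_\lambda$ unless $V_{(1^{N+1})}$ occurs in the restriction of $V_\lambda$ to the parabolic $H_{N+1}(q)\subset H_M(q)$, i.e. unless the diagram of $\lambda$ contains a column of $N+1$ boxes. Hence $E$ annihilates $V_\lambda$ when $l(\lambda)\leq N$, and $\AS_{\bk,N+1}(q)$ acts by the scalar $0$ on $W_{\bk,\lambda}$. In particular $\AS_{\bk,N+1}(q)$ lies in the ideal $I_{\bk,N+1}^N$, which already establishes one half of Conjecture~\ref{conj1}.

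The remaining case $l(\lambda)=N+1$ is the \emph{main obstacle}. Here the Pieri rule gives that $V_{(1^{N+1})}\otimes V_{\rho}$ occurs in $V_\lambda$ restricted to $H_{N+1}(q)\otimes H_{M-N-1}(q)$ only for $\rho=\widehat{\lambda}$, the diagram $\lambda$ with its first column removed, and then with multiplicity one; so the image $E(V_\lambda)$ is, as a module over the complementary Hecke subalgebra, the irreducible $V_{\widehat{\lambda}}$ appearing once. The plan is to combine this multiplicity-one input with the explicit model of $W_{\bk,\lambda}$ from Theorem~\ref{thm-rep}: since $l(\lambda)=N+1$, the first column of $\lambda$ is forced to be filled by $1,2,\dots,N+1$ in every semistandard tableau of weight $\bk_{\vert N+1}$, so removing it yields the bijection $\SSTab(\lambda,\bk_{\vert N+1})\cong\SSTab(\widehat{\lambda},(\bk-1)_{\vert N+1})$ used in Proposition~\ref{prop-subal-k}. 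One should first use this (with a dimension count on Kostka numbers) to see that $\Phi:=c\,P_{\bk,N+1}\,E|_{W_{\bk,\lambda}}$ is a bijection of $W_{\bk,\lambda}$ onto itself; then a computation in the seminormal basis — in the spirit of the proof of Proposition~\ref{prop-rep}, using Lemma~\ref{lem-tableaux} to move between standard tableaux and to reduce the coefficients of $E(w_{\bbT})$ to a common value — should show that $\Phi$ acts uniformly on the spanning vectors $w_{\bbT}$, hence is a scalar. The resulting scalars can be identified either diagrammatically in the base case $\bk=(1,1,\dots)$ (where $\AS_{\bk,N+1}(q)$ is the usual $q$-antisymmetriser on $N+1$ strands, cf. Example~\ref{exa-TL}) or by specialising to the module $L_{(k_1)}^N\otimes\dots\otimes L_{(k_{N+1})}^N$, on which $\AS_{\bk,N+1}(q)$ is the image of an antisymmetriser built out of $R$-matrices. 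The genuine difficulty is exactly here: the threads carrying $E$ are not consecutive, so $E$ is an intricate element of $H_M(q)$ in the seminormal basis, and one must control the interaction of the within-ellipse $q$-symmetrisers $P_{k_a}$ with $E$ on $V_\lambda$ — this is where a direct combinatorial argument is delicate and where one expects to need the full strength of the multiplicity-one phenomenon above.
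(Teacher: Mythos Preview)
This statement is a \emph{conjecture} in the paper, not a theorem: the paper does not prove it in general. What the paper does establish (Proposition~\ref{verif-conj}) are two special cases of Conjecture~\ref{conj2}: (i) when $\bk=(k,1,1,\dots)$, the ideal $I_{\bk,N+1}^N$ is one-dimensional and centrality is automatic; (ii) when $\bk$ consists only of $1$'s and $2$'s and $q^2=1$, centrality is checked by an explicit combinatorial computation with the fused permutations $w^{(2)}_{\bk}$. Neither argument attempts the representation-theoretic route you outline, and neither generalises in an obvious way.

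Your treatment of the case $l(\lambda)\leq N$ is correct and is exactly Proposition~\ref{prop-conj}. The gap is in the case $l(\lambda)=N+1$, and you are candid that this is where the proof is incomplete. Two points make the gap genuine rather than merely technical. First, the claim that $\Phi=c\,P_{\bk,N+1}E|_{W_{\bk,\lambda}}$ is a bijection does not follow from a dimension count: $\Phi$ is an endomorphism of $W_{\bk,\lambda}$, and matching Kostka numbers via the bijection $\SSTab(\lambda,\bk_{\vert N+1})\cong\SSTab(\widehat{\lambda},(\bk-1)_{\vert N+1})$ says nothing about injectivity of this particular composite; the image $E(V_\lambda)$ lives in a subspace governed by the \emph{conjugated} parabolic $\Gamma H_{M-N-1}(q)\Gamma^{-1}$, which does not interact cleanly with $P_{\bk,N+1}$. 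Second, and more seriously, there is no Schur-type mechanism forcing $\Phi$ to be a scalar: $\Phi$ is not an $H_{\bk,N+1}(q)$-intertwiner of $W_{\bk,\lambda}$ (that would presuppose centrality), and the multiplicity-one fact you invoke concerns a module structure for a subalgebra that does not act on $W_{\bk,\lambda}$. The proposed seminormal computation ``in the spirit of Proposition~\ref{prop-rep}'' would have to produce the scalar directly, but that proposition handles a very special idempotent (a product of $q$-symmetrisers on consecutive strands), whereas $E$ is a conjugated antisymmetriser on non-consecutive strands; there is no analogue of the simple $2\times2$ calculation in case~\textbf{(c)} there that would force all coefficients to agree.

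In short: your easy half is the paper's Proposition~\ref{prop-conj}, and your hard half is a plausible strategy but not a proof, which is consistent with the statement remaining a conjecture in the paper.
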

From our description of the representations of the chain of algebras $\oH^N_{\bk,n}(q)$ in Theorem \ref{theo-quot-rep} (in particular, item 3), we have that in order to prove Conjecture \ref{conj1}, we need only to prove that $\AS_{\bk,N+1}(q)$ generates the ideal $I_{\bk,N+1}^N$ of $H_{\bk,N+1}(q)$. In other words, we need only consider the case $n=N+1$.

As a first step towards Conjecture \ref{conj1}, we check that the element $\AS_{\bk,N+1}(q)$ belongs to the correct ideal.
\begin{prop}\label{prop-conj}
If $n>N$, the element $\AS_{\bk,N+1}(q)$ belongs to the ideal $I_{\bk,n}^N$ of $H_{\bk,n}(q)$.
\end{prop}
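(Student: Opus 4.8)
The plan is to assemble the statement from results already established, reducing everything to the classical Schur--Weyl situation. First I would reduce to the case $n=N+1$. Indeed, the chain inclusion $H_{\bk,N+1}(q)\hookrightarrow H_{\bk,n}(q)$ sends $P_{\bk,N+1}xP_{\bk,N+1}$ to $P_{\bk,N+1}xP_{\bk,N+1}\otimes P_{k_{N+2}}\otimes\dots\otimes P_{k_n}$, and on the subspace $L^N_{(k_1)}\otimes\dots\otimes L^N_{(k_n)}=\mathrm{Im}(P_{k_1}\otimes\dots\otimes P_{k_n})$ the factors $P_{k_j}$ with $j\geq N+2$ act as the identity while the first factor acts as the Schur--Weyl representation of $H_{\bk,N+1}(q)$ on $L^N_{(k_1)}\otimes\dots\otimes L^N_{(k_{N+1})}$ (Theorem \ref{theo-SWk}). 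Hence $I_{\bk,N+1}^N$ maps into $I_{\bk,n}^N$, and since $\AS_{\bk,N+1}(q)\in H_{\bk,n}(q)$ is by definition the image of the element with the same name in $H_{\bk,N+1}(q)$, it suffices to show $\AS_{\bk,N+1}(q)\in I_{\bk,N+1}^N$.

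Next I would translate the problem through Proposition \ref{prop-PHP}. Set $M:=k_1+\dots+k_{N+1}$, so $M>N$ since all $k_i\geq 1$. Under the isomorphism $H_{\bk,N+1}(q)\cong P_{\bk,N+1}H_M(q)P_{\bk,N+1}$, the algebraic interpretation of $|\si_w|_{\bk}$ established just before the statement gives, by linearity,
\[
\AS_{\bk,N+1}(q)\ \longleftrightarrow\ P_{\bk,N+1}\,\Gamma\Bigl(\sum_{w\in\mS_{N+1}}(-q^{-1})^{\ell(w)}\si_w\Bigr)\Gamma^{-1}\,P_{\bk,N+1}\ ,
\]
where $\Gamma\in H_M(q)$ is the element \eqref{Gamma} and $\si_w$ is viewed in $H_M(q)$ via the natural inclusion.

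Then I would invoke the classical description of the kernel. By the Proposition preceding Example \ref{exa-TL} applied with $n=M>N$, the element $\sum_{w\in\mS_{N+1}}(-q^{-1})^{\ell(w)}\si_w$ generates the ideal $I_M^N$ of $H_M(q)$ (it is, up to the nonzero scalar $\sum_{w}q^{-2\ell(w)}$, the $q$-antisymmetriser $P'_{N+1}$ of \eqref{def-P'}); in particular it lies in $I_M^N$. Since $I_M^N$ is a two-sided ideal, $\Gamma\bigl(\sum_w(-q^{-1})^{\ell(w)}\si_w\bigr)\Gamma^{-1}\in I_M^N$, hence so does $P_{\bk,N+1}\Gamma(\cdots)\Gamma^{-1}P_{\bk,N+1}$; being of the form $P_{\bk,N+1}(\cdot)P_{\bk,N+1}$ with $P_{\bk,N+1}^2=P_{\bk,N+1}$, it lies in $P_{\bk,N+1}I_M^N P_{\bk,N+1}$ (exactly the argument in the proof of Proposition \ref{prop-SWk-ideal}). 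By that same proposition, $P_{\bk,N+1}I_M^N P_{\bk,N+1}$ corresponds under the isomorphism to $I_{\bk,N+1}^N$. Therefore $\AS_{\bk,N+1}(q)\in I_{\bk,N+1}^N\subset I_{\bk,n}^N$, as required.

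I do not expect a genuine obstacle here: the proposition is a bookkeeping assembly of Proposition \ref{prop-PHP}, the algebraic interpretation of the $|\si_w|_{\bk}$, Proposition \ref{prop-SWk-ideal}, and the classical identification of $I_M^N$. The only point needing slight care is the compatibility used in the first paragraph—that restricting the $H_{\bk,n}(q)$-action to $H_{\bk,N+1}(q)$ yields the action on the truncated tensor product tensored with identities—which follows from the explicit form of the chain inclusion and the fact that each $P_{k_j}$ acts as the identity on $L^N_{(k_j)}$.
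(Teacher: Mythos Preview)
Your proof is correct and takes essentially the same approach as the paper: both use the algebraic interpretation $\AS_{\bk,N+1}(q)\leftrightarrow P_{\bk,n}\Gamma\bigl(\sum_{w}(-q^{-1})^{\ell(w)}\si_w\bigr)\Gamma^{-1}P_{\bk,n}$, observe that the middle factor is the $q$-antisymmetriser on $N+1$ strands lying in $I_{k_1+\dots+k_n}^N$, and conclude via Proposition~\ref{prop-SWk-ideal}. Your preliminary reduction to $n=N+1$ is a harmless extra step; the paper works directly at level $n$, but the argument is the same either way.
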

\begin{proof}
From the algebraic description of the elements $|\si_w|_{\bk}$ given above, we have that under the isomorphism between $H_{\bk,n}$ and $P_{\bk,n}H_{k_1+\dots+k_{n}}(q)P_{\bk,n}$ the element $\AS_{\bk,N+1}(q)$ corresponds to
\[P_{\bk,n}\Gamma \Bigl(\sum_{w\in\mS_{N+1}}(-q^{-1})^{\ell(w)}\si_w\Bigr) \Gamma^{-1} P_{\bk,n}\ ,\]
where $\Gamma$ was given above, see (\ref{Gamma}). The element in the middle is the $q$-antisymmetriser on $N+1$ strands so, as recalled in Section \ref{sec-SW}, it belongs to the kernel of the representation of $H_{k_1+\dots+k_{n}}(q)$ on the tensor power of $L_{(1)}^N$. Thus, the element $\AS_{\bk,N+1}(q)$ belongs to the kernel of the representation of $H_{\bk,n}(q)$ on $L_{(k_1)}^N\otimes \dots\otimes L_{(k_n)}^N$ given in Theorem \ref{theo-SWk} (see Formula \eqref{act-PHP}).
\end{proof}

After this result, for Conjecture \ref{conj1}, it remains only to show that $\AS_{\bk,N+1}(q)$ generates in $H_{\bk,N+1}(q)$ an ideal of the correct dimension. In fact, combining Theorem \ref{theo-quot-rep}, item 3, with Proposition \ref{prop-subal-k}, we must show that the ideal generated by $\AS_{\bk,N+1}(q)$ is at least of dimension $\dim\bigl(H_{\bk-1,N+1}(q)\bigr)$.

At this point, by a deformation argument (see the proof of the special cases below), Conjecture \ref{conj1} can be reduced at least for generic $q$ to the situation $q^2=1$. The situation $q^2=1$ can be attacked by a combinatorial approach.

Both Conjectures are supported by their verifications in some special cases below. Also, explicit (computer-aided) calculations have allowed to check their validity for all $\bk$ and $N$ such that $k_1+\dots+k_{N+1}\leq 7$. 

\subsection{Verification in some cases}

In the statements below, by generic $q$ we mean that the statement is valid in the situation where $q$ is an indeterminate (see Remark \ref{rem-def}). One can also understand that it is valid for all but a finite number of values of $q$.
\begin{prop}\label{verif-conj}$\ $
\begin{enumerate}
\item If $\bk=(k,1,1,1,\dots)$ with $k$ arbitrary, then Conjectures \ref{conj1}-\ref{conj2} are true for any $N$.
\item If $N=2$, then for any $\bk$ Conjecture \ref{conj1} is true for $q^2=1$ and for $q$ generic.
\item If $\bk$ consists only of $1$'s and $2$'s, then Conjecture \ref{conj1} is true for $q^2=1$ and for $q$ generic for any $N$.
\item If $\bk$ consists only of $1$'s and $2$'s then Conjecture \ref{conj2} is true for $q^2=1$ for any $N$.
\end{enumerate}
\end{prop}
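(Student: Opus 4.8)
The plan is to treat all four items through the isomorphism $H_{\bk,N+1}(q)\cong P_{\bk,N+1}H_{m}(q)P_{\bk,N+1}$ with $m:=k_1+\dots+k_{N+1}$ (Proposition~\ref{prop-PHP}), under which, by the algebraic interpretation of $\AS_{\bk,N+1}(q)$ given above, this element equals $\bigl(q^{-N(N+1)}\{N+1\}_q!\bigr)\,P_{\bk,N+1}\,\Gamma P'_{N+1}\Gamma^{-1}\,P_{\bk,N+1}$, i.e. the corner of a conjugate of the $q$-antisymmetriser. Proposition~\ref{prop-conj} already gives $\AS_{\bk,N+1}(q)\in I_{\bk,n}^N$, and Theorem~\ref{theo-quot-rep} (items~1 and~3) reduces Conjecture~\ref{conj1} to showing that the two-sided ideal of $H_{\bk,N+1}(q)$ generated by $\AS_{\bk,N+1}(q)$ is all of $I_{\bk,N+1}^N$; by Proposition~\ref{prop-subal-k} this ideal is the sum of the blocks $\mathrm{End}(W_{\bk,\lambda})$ over $\lambda\in S_{\bk,N+1}$ with $l(\lambda)=N+1$. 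For items~2 and~3 there is moreover a reduction to $q^2=1$: the dimension of the ideal generated by $\AS_{\bk,N+1}(q)$ is the rank of the matrix expressing the $F_u\AS_{\bk,N+1}(q)F_v$ in the standard basis, hence is lower semicontinuous in $q$, whereas $\dim I_{\bk,N+1}^N$ is independent of $q$ (a sum of squares of Kostka numbers); since $\AS_{\bk,N+1}(q)$ specialises to $\AS_{\bk,N+1}(1)$, it suffices to argue inside the fused permutation algebra $H_{\bk,N+1}(1)\cong P_{\bk,N+1}\CC\mS_m P_{\bk,N+1}$.

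For item~1, when $\bk=(k,1,1,\dots)$ the inequalities~\eqref{k-condition} force the hook $(k,1^N)$ to be the \emph{only} $\lambda\in S_{\bk,N+1}$ with $l(\lambda)=N+1$, and its unique semistandard tableau of weight $(k,1,\dots,1)$ gives $\dim W_{\bk,(k,1^N)}=1$. Hence $I_{\bk,N+1}^N$ is a one-dimensional two-sided ideal, so it lies in the centre of $H_{\bk,N+1}(q)$, which yields Conjecture~\ref{conj2} for free, and it is generated by $\AS_{\bk,N+1}(q)$ as soon as $\AS_{\bk,N+1}(q)\neq 0$. The non-vanishing holds for every admissible $q$ since $P_{\bk,N+1}\Gamma P'_{N+1}\Gamma^{-1}P_{\bk,N+1}$ is, up to a nonzero scalar, the $q$-Young symmetriser of the admissible hook $(k,1^N)$ — a $q$-symmetriser on the $k$ strands of the first ellipse followed by a $q$-antisymmetriser on a transversal of the $N+1$ ellipses — which is nonzero in the semisimple regime.

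For items~2 and~3 (with $q^2=1$), recall that the two-sided ideal of $\CC\mS_m$ generated by the antisymmetriser on $N+1$ letters is the kernel $I_m^N$ of the classical Schur--Weyl map (partitions of length $>N$), an ideal cut out by a central idempotent $z$; by Proposition~\ref{prop-SWk-ideal}, $I_{\bk,N+1}^N=P_{\bk,N+1}I_m^N P_{\bk,N+1}$ is then the two-sided ideal of the corner algebra cut out by the central idempotent $zP_{\bk,N+1}$, and such an ideal is generated by one of its elements exactly when that element is nonzero in each of its simple summands. The remaining, and main, task is therefore combinatorial: for each $\mu\in S_{\bk,N+1}$ with $l(\mu)=N+1$, produce $w\in W_{\bk,\mu}=P_{\bk,N+1}(V_\mu)$ with $P_{\bk,N+1}P'\,w\neq 0$ (where $P'$ is the antisymmetriser on one strand from each ellipse), say by a direct seminormal-basis computation with $w=w_{\bbT}$ for a well-chosen semistandard $\bbT$. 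When $N=2$ only three rows occur, and when $\bk$ consists only of $1$'s and $2$'s the relevant $\mu$ are exactly the partitions of $m$ with $N+1$ rows, i.e. $(1^{N+1})+\nu$ for $\nu$ a partition of $\#\{i\le N+1:k_i=2\}$; in both regimes the number of summands and the block sizes are small enough for this non-vanishing check to be completed by a case analysis on the shape of $\mu$, the witnessing tableaux being produced by the sliding construction of Lemma~\ref{lem-comb}. I expect this non-vanishing analysis to be the principal obstacle.

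For item~4, with $\bk$ consisting of $1$'s and $2$'s and $q^2=1$, one must additionally show that $\AS_{\bk,N+1}(1)=\sum_{w\in\mS_{N+1}}(-1)^{\ell(w)}|w|_{\bk}$ is central in $H_{\bk,N+1}(1)$; having identified (via item~3) the ideal it generates, it suffices to check that it acts by a scalar on each simple summand $\mathrm{End}(W_{\bk,\mu})$, equivalently that it commutes with an explicit generating set of $H_{\bk,N+1}(1)$ — the elements $T_i,\Sigma_i$ of the constant-$2$ situation together with the elementary diagrams merging a size-$1$ block into a neighbouring size-$2$ block and back. Using that $\AS_{\bk,N+1}(1)$ is fixed by the anti-automorphism $w\mapsto w^{-1}$ of $\CC\mS_m$ and that all blocks are small, these commutations reduce to a finite, checkable list of identities among fused permutations. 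The restriction to $q^2=1$ here is imposed by the method: centrality, unlike Conjecture~\ref{conj1}, is not a dimension count and so does not transfer through the semicontinuity argument of the first paragraph.
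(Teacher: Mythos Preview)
Your reduction to $q^2=1$ for items~2 and~3 via semicontinuity, and your treatment of item~1, match the paper's. The genuine gap is in the core of items~2--4. You propose to verify, block by block, that $\AS_{\bk,N+1}(1)$ acts nonzero on each $W_{\bk,\mu}$ with $l(\mu)=N+1$, by a ``case analysis on the shape of $\mu$'' and seminormal-basis computations. But this cannot work as stated: for item~2 with $N=2$ and \emph{arbitrary} $\bk=(k_1,k_2,k_3)$, the relevant three-row $\mu$ range over an infinite family as the $k_i$ vary, and the block dimensions $K_{\mu,\bk_{|3}}$ are unbounded; no finite case analysis covers them, and you supply no uniform seminormal argument. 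The same occurs in item~3, where both $N$ and the number $L$ of $2$'s are arbitrary. You correctly flag this as the principal obstacle, but it is precisely the content of the proof.

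The paper bypasses the representation-theoretic non-vanishing entirely and works with fused-permutation diagrams. For item~2 it produces, for each $x\in\cD_{\bk-1,3}$, the element $\AS_{\bk,3}(1)\cdot|x|_{\bk}$ and proves linear independence of this family by a leading-term argument: it puts a total lexicographic order on multiset sequences and shows (in a short lemma) that the dominant basis element of $\AS_{\bk,3}(1)\cdot|x|_{\bk}$ recovers $x$. This gives $\dim\bigl(H_{\bk,3}(1)\AS_{\bk,3}(1)H_{\bk,3}(1)\bigr)\geq\dim H_{\bk-1,3}(1)$ directly, with no reference to individual blocks. For item~3 it uses the ``doubled'' permutations $w^{(2)}_{\bk}$, $w\in\mS_L$, in place of $|x|_{\bk}$, with an even simpler leading-term check. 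For item~4 it then exploits this same explicit spanning set of $I_{\bk,N+1}^N$: since $\AS_{\bk,N+1}(1)$ already lies in $I_{\bk,N+1}^N$, centrality reduces to commutation with the $w^{(2)}_{\bk}$, which follows from the diagrammatic identities $|\pi|_{\bk}\cdot w^{(2)}_{\bk}=\pi w\odot w$ and $w^{(2)}_{\bk}\cdot|\pi|_{\bk}=w\pi\odot w$ together with multiplicativity of the sign --- no $T_i,\Sigma_i$ generating set or anti-automorphism is used.
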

\begin{proof}
\textbf{1.} If $\bk=(k,1,1,1,\dots)$ then the ideal $I_{\bk,N+1}^N$ is of dimension 1 (the dimension of $H_{\bk-1,n}(q)$ for any $n\geq 0$; one can also see easily that there is a single Young semistandard tableau of size $k+N$ filled with $1,\dots,1$($k$ times)$,2,\dots,N+1$: the one of shape a hook with a first line of $k$ boxes). Thus there is nothing to check for the generation of the ideal by $AS_{\bk,N+1}$, and neither for the centrality of $AS_{\bk,N+1}$ since a generator of a one-dimensional ideal in a semisimple algebra must be a (minimal) central idempotent.

\vskip .2cm
\textbf{2.} In the situation of a generic $q$, the element $AS_{\bk,N+1}(q)$ specialises for $q^2=1$ to the element $AS_{\bk,N+1}(1)$. Moreover, in an irreducible representation of $H_{\bk,n}(q)$ we can specialise $q^2=1$ to obtain the corresponding representation of $H_{\bk,n}(1)$. More precisely, recall that the irreducible representations of $H_{\bk,n}(q)$ are obtained as $P_{\bk,n}(V)$ for some irreducible representations $V$ of the Hecke algebra $H_{k_1+\dots+k_n}(q)$. Using the explicit realisation (the seminormal form) of representations of the Hecke algebras given in Section \ref{sec-SW}, we see that we can specialise $q^2=1$. 

So now assume that in some irreducible representation of $H_{\bk,n}(q)$ the element $AS_{\bk,N+1}(q)$ is $0$. By specialisation, this implies that the element $AS_{\bk,N+1}(1)$ is $0$ in the corresponding representation of $AS_{\bk,N+1}(1)$. Therefore, identifying $H_{\bk,n}(q)$ and $H_{\bk,n}(1)$ through their Artin--Wedderburn decomposition, it means that the ideal of $H_{\bk,n}(q)$ generated by $AS_{\bk,N+1}(q)$ contains the ideal of $H_{\bk,n}(1)$ generated by $AS_{\bk,N+1}(1)$. In particular, for generic $q$, we have:
\[\dim\Bigl(H_{\bk,n}(q)AS_{\bk,N+1}(q)H_{\bk,n}(q)\Bigr)\geq \dim \Bigl(H_{\bk,n}(1)AS_{\bk,N+1}(1)H_{\bk,n}(1)\Bigr)\ .\]
Moreover, we explained after Proposition \ref{prop-conj} that in order to verify Conjecture \ref{conj1}, it remains to show that the ideal generated by $\AS_{\bk,N+1}(q)$ in $H_{\bk,N+1}(q)$ is at least of dimension $\dim\bigl(H_{\bk-1,N+1}(q)\bigr)=\dim\bigl(H_{\bk-1,N+1}(1)\bigr)$. So combining all this, we conclude that in order to prove item 2, we need now to check that 
\begin{equation}\label{ineq1}
\dim \Bigl(H_{\bk,N+1}(1)AS_{\bk,N+1}(1)H_{\bk,N+1}(1)\Bigr)\geq \dim\bigl(H_{\bk-1,N+1}(1)\bigr)\ .
\end{equation}
To prove this, we first introduce some combinatorial definitions and notations. Let $x$ be a fused permuation in $\cD_{\bk-1,n}$ (a basis element of $H_{\bk-1,n}(1)$). We define $|x|_{\bk}$ to be the fused permutation obtained by adding a (vertical) edge connecting the $a$-th top ellipse to the $a$-th bottom ellipse for each $a=1,\dots,n$. So $|x|_{\bk}$ is a fused permutation in $\cD_{\bk,n}$ (a basis element of $H_{\bk-1,n}(1)$). Note that the notation is coherent since if $\bk=(2,2,2,...)$ then $x$ is a usual permutation and this definition coincides with the former definition of $|x|_{\bk}$.

Recall that here $N=2$. To prove (\ref{ineq1}), we are going to prove that the following set is linearly independent in $H_{\bk,3}(1)$:
\begin{equation}\label{linind1}
\{\AS_{\bk,3}(1)\cdot|x|_{\bk}\ ,\ \ x\in \cD_{\bk-1,3}\}\ .
\end{equation}
To do so, we use a total order on the set of fused permutations. Recall from Section \ref{sec-def-fus-perm} that a fused permutation is associated (one to one) to a sequence of multisets $(I_1,\dots,I_n)$ of elements of $\{1,\dots,n\}$. For two multisets $I=\{i_1,\dots,i_k\}$ and $J=\{j_1,\dots,j_k\}$, we write the elements in ascending order: $i_1\leq \dots\leq i_k$ and $j_i\leq \dots\leq i_k$ and we consider the lexicographic order, that is:
\[I<J\ \ \text{if}\ \ i_1<j_1\ \text{or}\ (i_1=j_1\ \text{and}\ i_2<j_2)\ \dots \]
Then for two sequences of multisets, we set:
\[(I_1,\dots,I_n)<(J_1,\dots,J_n)\ \ \text{if}\ \ I_1<J_1\ \text{or}\ (I_1=J_1\ \text{and}\ I_2<J_2)\ \dots \]
Thus, we have a total order on $\cD_{_bk,n}$. For usual permutation (when $\bk=(1,1,1,\dots)$), we denote $w_0$ the largest element for this order: $w_0$ is simply the usual longest element of $\mS_n$ associated to the sequence $(\{n\},\{n-1\},\dots,\{1\})$.

Finally, an element of $X\in H_{\bk,n}(1)$ being a linear combination of elements of $\cD_{\bk,n}$, we define the dominant element of $X$:
\[\Dom(X)=\text{largest element of $\cD_{\bk,n}$ appearing in $X$ with non-zero coefficient.}\]
Note that finding the dominant element in a product $x\cdot x'$ of two fused permutation is easy diagrammatically. We follow the edge starting from the top first ellipse and when arriving at a middle ellipse, we always choose the edges going to the right most direction. Then we repeat the procedure starting for the edges starting from the top second ellipse, and so on.

The following lemma implies immediately that the set of elements $\Dom(X)$, when $X$ runs over the set (\ref{linind1}), are different and thereby we obtain the linear independence of the set (\ref{linind1}) and conclude the verification of item 2.

\begin{lem} Let $x,x'\in\cD_{\bk-1,3}$.
\begin{enumerate}
\item[(i)] We have $\Dom\bigl(|w|_{\bk}\cdot |x|_{\bk}\bigr)\leq \Dom\bigl(|w_0|_{\bk}\cdot |x|_{\bk}\bigr)$ for every $w\in\mS_3$.
\item[(ii)] We have $\Dom\bigl(|w_0|_{\bk}\cdot |x|_{\bk}\bigr)<\Dom\bigl(|w_0|_{\bk}\cdot |x'|_{\bk}\bigr)$ if $x < x'$.
\end{enumerate}
\end{lem}
\begin{proof}[Proof of the lemma] All the proof is better read while drawing diagram. Let $x\in\cD_{\bk-1,3}$ and associate to it the following sequence of multisets of elements of $\{1,2,3\}$:
\[x\ \ \rightsquigarrow\ \ (\{i_2,\dots,i_{k_1}\},\{j_2,\dots,j_{k_2}\},\{l_2,\dots,l_{k_3}\})\ .\]
It is straightforward diagrammatically to see that the sequences of multisets associated to the following elements are:
\[\begin{array}{rcl}
|x|_{\bk} & \rightsquigarrow & (E_1,E_2,E_3)=(\{1,i_2,\dots,i_{k_1}\},\{2,j_2,\dots,j_{k_2}\},\{3,l_2,\dots,l_{k_3}\})\\[0.5em]
\Dom\bigl(|w_0|_{\bk}\cdot |x|_{\bk}\bigr) & \rightsquigarrow & (E^m_1,E^m_2,E^m_3)=(\{3,i_2,\dots,i_{k_1}\},\{2,j_2,\dots,j_{k_2}\},\{1,l_2,\dots,l_{k_3}\})
\end{array}\]
Item (ii) follows then immediately. For item (i), let $w\in \mS_3$ and denote $(E'_1,E'_2,E'_3)$ the sequence of multisets associated to $\Dom\bigl(|w|_{\bk}\cdot |x|_{\bk}\bigr)$. Assume that $(E'_1,E'_2,E'_3)>(E_1^m,E_2^m,E_3^m)$. We will obtain a contradiction.

$\bullet$ If $w(1)=1$ then we have $E'_1=E_1<E_1^m$ which is a contradiction.

$\bullet$ If $w(1)=2$ then $E'_1$ is formed by $i_2,\dots,i_{k_1}$ together with a maximal element of $\{2,j_2,\dots,j_{k_2}\}$. As $E'_1\geq E^m_1$ this element must be 3, so that $3\in\{j_2,\dots,j_{k_2}\}$. Say $j_{k_2}=3$. So we have $E'_1=E_1^m$. Next, $E'_2$ is forced to contain $\{2,j_2,\dots,j_{k_2-1}\}$ and its additional element must be a 3 since $E'_2\geq E_2^m$ and $j_{k_2}=3$. So we have $E'_2=E_2^m$ and we are left, by collecting the remaining elements, with $E'_3=\{1,l_2,\dots,l_{k_3}\}=E^m_3$. So we have $(E'_1,E'_2,E'_3)=(E_1^m,E_2^m,E_3^m)$ which is a contradiction.

$\bullet$ If $w(1)=3$. If $w=w_0$ then $(E'_1,E'_2,E'_3)=(E_1^m,E_2^m,E_3^m)$ which is a contradiction. So we must have $w(2)=1$ and $w(3)=2$. We have $E'_1=\{i_2,\dots,i_{k_1},3\}=E^m_1$ where the last 3 comes from $E_3$. As $w(2)=1$ then $E'_2$ must contain a 1 (from $E_1$), and therefore, from $E'_2\geq E^m_2$ we have that $1\in\{j_2,\dots,j_{k_2}\}$, say $j_2=1$. Then we have $E'_2=\{1,2,j_3,\dots,j_{k_2}\}=E_2^m$. By collecting the remaining elements, we have $E'_3=\{j_2,l_2,\dots l_{k_3}\}$ which is equal to $E^m_3$ since $j_2=1$. So we again reach the contradiction $(E'_1,E'_2,E'_3)=(E_1^m,E_2^m,E_3^m)$.
\end{proof}

\vskip .0cm
\textbf{3.} Let $(k_1,\dots,k_{N+1})=(2,\dots,2,1,\dots,1)$ consisting of $L$ 2's for some $L\in\{1,\dots,N+1\}$ (recall that we assumed in this section that $\bk$ is already in decreasing order). So here $H_{\bk-1,N+1}(1)=\CC\mS_{L}$ since a fused pemutation of type $(1,\dots,1,0,\dots,0)$ is a permutation in $\mS_L$. With the same reasoning as in the beginning of the proof of \textbf{2}, we see that we are left to proving
\begin{equation}\label{ineq2}
\dim \Bigl(H_{\bk,N+1}(1)AS_{\bk,N+1}(1)H_{\bk,N+1}(1)\Bigr)\geq \dim\bigl(H_{\bk-1,N+1}(1)\bigr)=L!\ .
\end{equation}
For $w\in \mS_L$ we take its permutation diagram and we double all the edges. So we have 2 lines of $L$ ellipses which are joined by double edges acccording to the permutation $w$. Then we complete the lines of ellipses with $N+1-L$ ellipses and we add a vertical edge for these last ellipses. So at the end, we have a fused permutation in $\cD_{\bk,N+1}$ that we denote $w^{(2)}_{\bk}$. We claim that
the following set is linearly independent in $H_{\bk,N+1}(1)$:
\begin{equation}\label{linind2}
\{\AS_{\bk,N+1}(1)\cdot w^{(2)}_{\bk}\ ,\ \ w\in \mS_L\}\ .
\end{equation}
For $\pi\in\mS_{N+1}$ and $w\in\mS_L$ it is easy to see that $|\pi|_{\bk}\cdot w^{(2)}_{\bk}$ is equal to a single element of $\cD_{\bk,N+1}$ (no sum is involved) and is in fact just $|\pi|_{\bk}$ where the first $L$ ellipses of the bottom line have been permuted by $w$. Thus we see readily that in the sum $\AS_{\bk,N+1}(1)\cdot w^{(2)}_{\bk}$ there is a single element with only double edges on the first $L$ ellipses, this is $w^{(2)}_{\bk}$ (the term obtained from $|\text{Id}|_{\bk}$ in $\AS_{\bk,N+1}(1)$). Thus, $w^{(2)}_{\bk}$ appears in $\AS_{\bk,N+1}(1)\cdot w^{(2)}_{\bk}$ and does not appear in $\AS_{\bk,N+1}(1)\cdot w'^{(2)}_{\bk}$ if $w'\neq w$. This shows that the set \eqref{linind2} is indeed lienarly independent in $H_{\bk,N+1}(1)$, concluding the proof of the inequality in (\ref{ineq2}).  

\vskip .2cm
\textbf{4.} We keep the notation of the preceding item. Let $I'$ be the ideal such that the algebra $H_{\bk,N+1}(1)$ is the direct sum of $I'$ and $I_{\bk,N+1}^N$. We have seen that $\AS_{\bk,N+1}(1)$ belongs to $I_{\bk,N+1}^N$ so we have $x'\AS_{\bk,N+1}(1)=\AS_{\bk,N+1}(1)x'=0$ for all $x'\in I'$. So to show that $\AS_{\bk,N+1}(1)$ is central in $H_{\bk,N+1}(1)$, we must show that $\AS_{\bk,N+1}(1)$ commutes with all elements in $I_{\bk,N+1}^N$, which we have seen to be be spanned by elements in (\ref{linind2}). So finally, we must prove that $\AS_{\bk,N+1}(1)$ commutes with all elements $w^{(2)}_{\bk}$ with $w\in \mS_L$.

We need one final piece of notations. For $w\in\mS_L$ and $\pi\in\mS_{N+1}$, we draw the edges of $w$ and of $\pi$ on the same diagram, and we thus obtain a fused permutation of $\cD_{\bk,N+1}$ that we denote $\pi\odot w$. For example, we have $|\pi|_{\bk}=\pi\odot\text{Id}_L$,where $\text{Id}_mn$ denotes the identity in $\mS_n$. We have also $w^{(2)}_{\bk}=w\odot w$ (here and below we see $w$ both as an element of $\mS_L$ and of $\mS_{N+1}$ by the standard inclusion). With these notations, it is easy to check diagrammatically that we have:
\[|\pi|_{\bk}\cdot w^{(2)}_{\bk}=\pi w\odot w\ \ \ \ \text{and}\ \ \ \ w^{(2)}_{\bk}\cdot |\pi|_{\bk}=w\pi\odot w\ .\]
So we get finally
\[\AS_{\bk,N+1}(1)\cdot w^{(2)}_{\bk}=\sum_{\pi\in \mS_{N+1}}(-1)^{\ell(\pi)}\pi w\odot w=\sum_{\pi\in \mS_{N+1}}(-1)^{\ell(w\pi w^{-1})}w\pi\odot w=w^{(2)}_{\bk}\cdot\AS_{\bk,N+1}(1)\ ,\]
using that the sign $(-1)^{\ell(\pi)}$ is multiplicative. The proof is concluded.
\end{proof}

\appendix

\section{Artin--Wedderburn decompositions and Bratteli diagrams}\label{sec-Brat}

\subsection{Semisimple algebras and algebras of the form $PAP$}\label{app-ss}

\paragraph{Artin--Wedderburn decomposition of a semisimple algebra.} Let $A$ be a finite-dimensional semisimple algebra over $\CC$. Let $S$ be an indexing set for a complete set of pairwise non-isomorphic irreducible representations of $A$. Then Artin--Wedderburn theorem asserts that we have the following isomorphism of algebras:
\begin{equation}\label{AW}
A\cong \bigoplus_{\lambda\in S} \text{End}(V_{\lambda})\ ,
\end{equation}
where $V_{\lambda}$ is a realisation of the irreducible representation corresponding to $\lambda$. The isomorphism is given naturally by sending $a\in A$ to the endomorphism in $\text{End}(V_{\lambda})$ corresponding to the action of $a$ on $V_{\lambda}$.

\paragraph{Algebras of the form $PAP$ and their representations.} Let $A$ be a $\CC$-algebra with an idempotent $P$. Then the subset $PAP=\{PxP\ |\ x\in A\}$ is an algebra with unit $P$. We recall very basic and classical facts on the algebra $PAP$ which can be found for example in \cite[\S 6.2]{Gr}.

Let $\rho\ :\ A\mapsto\text{End}(V)$ be a representation of $A$ and $W:=\rho(P)(V)$ the image of the operator $\rho(P)$. The subspace $W$ is naturally a representation of the algebra $PAP$. Indeed $W$ is obviously invariant under the action of any element the form $\rho(PxP)$, and thus the action of $PAP$ on $W$ is given simply by restriction:
\begin{equation}\label{act-PAP}
\begin{array}{rcl}
PAP & \to & \text{End}\bigl(W\bigr) \\[0.5em]
PxP & \mapsto & {\rho(PxP)}_{|_{W}}
\end{array}\ .
\end{equation}
From now on, we will always remove the map $\rho$ from the notation, and keep the same notation for an element of an algebra and its action in a given representation.

\paragraph{Irreducible representations and semisimplicity.} Let $\{V_{\lambda}\}_{\lambda\in S}$ be a complete set of pairwise non-isomorphic irreducible representations of $A$. Then the set  
$$\{P(V_{\lambda})\ |\ \lambda\in S \text{ and } P(V_{\lambda})\neq 0\}$$ 
 is a complete set of pairwise non-isomorphic irreducible representations of the algebra $PAP$.

Moreover, if the algebra $A$ is a finite-dimensional semisimple algebra then the algebra $PAP$ is also a finite-dimensional semisimple algebra and its Artin--Wedderburn decomposition is
\[PAP\cong \bigoplus_{\substack{\lambda\in S\\ P(V_{\lambda})\neq0}} \text{End}\bigl(P(V_{\lambda})\bigr)\ .\]

\subsection{Minimal central idempotents and ideals}\label{app-idem}

Let $A$ be a finite-dimensional semisimple algebra over $\CC$ and $S$ an indexing set for a complete set of pairwise non-isomorphic irreducible representations of $A$.

\paragraph{Minimal central idempotents.} Let $\lambda\in S$. We define $E_{\lambda}$ as the element of $A$ corresponding under the Artin--Wedderburn decomposition of $A$ to $\text{Id}_{V_{\lambda}}$ in the component corresponding to $\lambda$ and $0$ in all other components. The set $\{E_{\lambda}\}_{\lambda\in S}$ is a complete set of minimal central orthogonal idempotents of $A$, meaning that they are central, they sum to 1, they satisfy $E_{\lambda}E_{\lambda'}=\delta_{\lambda,\lambda'}E_{\lambda}$ and they cannot be written as the sum of two non-zero central idempotents.

In any representation $W$ of $A$, the action of $E_{\lambda}$ projects onto the isotopic component  of $W$ corresponding to $\lambda$. More precisely, if the decomposition of $W$ into irreducible is
\[W\cong \bigoplus_{\lambda'\in S}V_{\lambda'}^{\oplus m_{\lambda'}}\ ,\]
then the action of $E_{\lambda}$ is the projection onto $V_{\lambda}^{\oplus m_{\lambda}}$ corresponding to this decomposition, that is:
\[{E_{\lambda}}_{|_{V_{\lambda}^{\oplus m_{\lambda}}}}=\text{Id}_{V_{\lambda}^{\oplus m_{\lambda}}}\ \ \ \ \text{and}\ \ \ \ E_{\lambda}\Bigl(\bigoplus_{\lambda'\in S\backslash\{\lambda\}}V_{\lambda'}^{\oplus m_{\lambda'}}\Bigr)=0\ .\]

\paragraph{Ideals and quotients.} From the decomposition (\ref{AW}), one sees immediately that ideals (and equivalently quotients) of $A$ are in correspondence with subsets $S'\subset S$ as follows:
\[I_{S'}:=\bigoplus_{\lambda\in S'} \text{End}(V_{\lambda})\ \ \ \ \ \ \text{and}\ \ \ \ \ \ A/I_{S'}\cong\bigoplus_{\lambda\in S\backslash S'} \text{End}(V_{\lambda})\ .\]
One set of generators of the ideal $I_{S'}$ consists of the elements $E_{\lambda}$ with $\lambda\in S'$.

Since we use it in Section \ref{sec-quot}, we recall that the ideal generated by an element $x\in A$ is $I_{S'}$ where $S'$ is the subset of irreducible representations such that $x$ acts as a non-zero element.

\subsection{Bratteli diagram of a chain of algebras}\label{app-bra}

 Let $\{A_n\}_{n\geq0}$ be a family of algebras and assume that, for any $n\geq0$, there is a given injective map from $A_n$ to $A_{n+1}$. We call these maps ``inclusion maps'' and we say that $\{A_n\}_{n\geq0}$, sometimes denoted as follows
 \[A_0\subset A_1\subset A_2\subset\dots\ldots\subset A_n\subset A_{n+1}\subset\dots\ ,\]
forms a chain of algebras. The inclusion maps allow to consider elements of $A_n$ as elements of $A_{n+1}$, and more generally of $A_{n+k}$ for $k\geq 1$, and in turn to consider $A_n$ as a subalgebra of $A_{n+1}$, and more generally of $A_{n+k}$ for $k\geq 1$.

From the inclusion of $A_n$ into $A_{n+1}$, any representation $V$ of $A_{n+1}$ can be seen as a representation of $A_n$ by restriction; we denote this representation of $A_n$ by $\text{Res}_{A_n}(V)$. 

\paragraph{Bratteli diagram of a chain of semisimple algebras.} Let $\{A_n\}_{n\geq0}$ be a chain of algebras and we assume here that the algebras $A_n$ are finite-dimensional semisimple algebras, since this will always be our setting in this paper. 

For an irreducible representation $V_i$ of $A_{n+1}$, the restriction to $A_n$ decomposes by semisimplicity into a direct sum of irreducible representations, namely,
\[\text{Res}_{A_n}(V_i)\cong\bigoplus m_j W_j\ ,\]
where $W_j$ are non-isomorphic irreducible representations of $A_n$ and the numbers $m_j$ are called the multiplicities. The knowledge of such decomposition for any irreducible representation of $A_{n+1}$ for any $n\geq 0$ are called the branching rules of the chain of algebras. Then the Bratteli diagram of the chain of algebras $\{A_n\}_{n\geq0}$ is the following graph:

\begin{itemize}
\item The set of vertices is partitioned into subsets indexed by $n\geq0$. We call $n$ the level. The vertices of level $n$ are indexed by the (isomorphism classes of) irreducible representations of the algebras $A_n$. 
\item The edges express the branching rules of the chain of algebras and they only connect vertices of adjacent levels. Let $V$ be an irreducible representation of $A_n$ and $V'$ an irreducible representation of $A_{n+1}$. Then there are $m$ edges connecting the vertices indexed by $V$ and $V'$ if and only if $V$ appears in the decomposition of $\text{Res}_{A_n}(V')$ with multiplicity $m$. 
\end{itemize} 

\noindent Graphically, we place all the vertices of a given level on an horizontal line, and we put the vertices of level $n+1$ below the vertices of level $n$. We often think of the edges as going down from vertices of level $n$ to vertices of level $n+1$.

Let $v,v'$ be two vertices of the Bratteli diagram. A path from $v$ to $v'$ is a sequence of vertices of the form $v,v_1,\dots,v_{k-1},v'$, for some $k\geq 1$, such that at each step of the sequence, the level increases by 1. In other words, a path from $v$ to $v'$, if it exists, is obtained by starting from $v$  and following edges only in the downward direction to reach $v'$.

The partial order $\leq$ on the set of vertices of a Bratteli diagram is defined by setting that $v\leq v'$ if and only if $v=v'$ or there is a path from $v$ to $v'$.

\paragraph{Dimensions.} We often add the following numerical information to the Bratteli diagram of the chain $\{A_n\}_{n\geq0}$: next to each vertex, we indicate the dimension of the corresponding irreducible representation. 

Obviously, for $n\geq 1$ and any vertex $V$ of level $n$, this dimension can be obtained from the preceding level. Indeed it is the sum of the dimensions of the irreducible representations of level $n-1$ connected to $V$ (counted with multiplicity indicated by the number of edges).

Moreover, from the Artin--Wedderburn decomposition of the algebras $A_n$, we have that the dimension of the algebra $A_n$ is the sum of the squares of the dimensions appearing at level $n$.

\begin{exa}
A standard example of a Bratteli diagram is the Young diagram, corresponding to the poset of partitions partially ordered by inclusion. It is the Bratteli diagram associated to the chain $\{\CC\mathfrak{S}_n\}_{n\geq0}$ of the complex group algebras of the symmetric groups. The first levels are given in Appendix \ref{app1}.
\end{exa}

\subsection{Quotients of Bratteli diagrams and chains of ideals}\label{app-quot-Brat}

We keep our setting of a chain $\{A_n\}_{n\geq 0}$ of finite-dimensional semisimple algebras. Let $\mathcal{D}$ be the Bratteli diagram of the chain $\{A_n\}_{n\geq 0}$ and let $S$ be a subset of vertices of $\mathcal{D}$. 

\paragraph{Quotients of Bratteli diagrams.} We denote by $\langle S\rangle$ the set of vertices $v'$ such that there exists $v\in S$ with $v\leq v'$ (\emph{i.e.} all vertices in $S$ and all vertices connected by a path to them). 
\begin{defi}
We define $\overline{\mathcal{D}}_S$ to be the diagram obtained from $\mathcal{D}$ by removing all vertices $v'\in\langle S\rangle$ and keeping only the edges of $\mathcal{D}$ which connect the remaining vertices.

We call the resulting diagram $\overline{\mathcal{D}}_S$ the quotient of $\mathcal{D}$ generated by $S$.
\end{defi}

Obviously, the quotient $\overline{\mathcal{D}}_S$ depends only on the set of vertices $\langle S\rangle$ generated by $S$. Hence, several choices of $S$ can lead to the same quotient. There is a unique minimal choice $S_{min}$, which is the set of minimal elements (for the partial order $\leq$) in $\langle S\rangle$. We call $S_{min}$ the \emph{minimal generating set} for the quotient $\overline{\mathcal{D}}_S$.

\begin{exa}
A standard example of a quotient of a Bratteli diagram is the following.  Take the Bratteli diagram of the chain $\{\CC\mathfrak{S}_n\}_{n\geq0}$ (see Subsection \ref{app1} below) and make the quotient generated by the vertex labelled by the partition $\begin{array}{c}
\fbox{\phantom{\scriptsize{$2$}}} \\[-0.2em]
\fbox{\phantom{\scriptsize{$2$}}} \\[-0.2em]
\fbox{\phantom{\scriptsize{$2$}}}
\end{array}$. It is easy to see that the remaining vertices are the partitions with no more than two lines. The quotient is equal to the Bratteli diagram of the chain of Temperley--Lieb algebras.
\end{exa}

\paragraph{Representation-theoretic meaning and chains of ideals.} We will explain the name Bratteli diagram for $\overline{\mathcal{D}}_S$, and its representation-theoretic meaning. 

For every $n\geq 0$, let $S_n$ be the set of vertices of level $n$ inside $\langle S\rangle$ (that is, the vertices of level $n$ which have been removed from $\mathcal{D}$). To $S_n$ corresponds an ideal $I_n$ of $A_n$. We have that $\{I_n\}_{n\geq0}$ forms a chain of ideals in $\{A_n\}_{n\geq0}$:
\begin{equation}\label{chain-ideal}
I_0\subset I_1\subset I_2\subset\dots\ldots\subset I_n\subset I_{n+1}\subset\dots\ ,
\end{equation}
which means that $I_n$, seen as a subset of $A_{n+1}$ using the inclusion map, is contained in $I_{n+1}$ for any $n\geq 0$. This chain property follows from the fact that, by definition of $\langle S\rangle$, every edge starting from $S_n$ ends in $S_{n+1}$ (see Remark \ref{rem-chain} below).

Now the quotient $\overline{\mathcal{D}}_S$ has the following meaning:

$\bullet$ The vertices of level $n$ are in bijection with the irreducible representations of $A_n/I_n$.

$\bullet$ The edges give the branching rules for the following restriction procedure: a representation $V'$ of $A_{n+1}/I_{n+1}$ can be seen as a representation of $A_{n+1}$ where $I_{n+1}$ acts as $0$. Thus, from the inclusion $A_n\subset A_{n+1}$, we can form the representation $\text{Res}_{A_n}(V')$ of $A_n$. Now by the inclusion $I_n\subset I_{n+1}$, we have that $I_n$ obviously acts as 0, and therefore $\text{Res}_{A_n}(V')$ can be seen as a representation of $A_n/I_n$.

\begin{rem}\label{rem-chain}
$\bullet$ The family of quotients $\{A_n/I_n\}_{n\geq 0}$ does not necessarily form a chain of algebras, even if the ideals $I_n$ form a chain of ideals. In fact, one can check that the quotients $A_n/I_n$ form a chain of algebras for the natural inclusion maps $x+I_n\mapsto x+I_{n+1}$
if and only if we have, for any $n\geq 0$, $I_n=I_{n+1}\cap A_n$ as subsets of $A_{n+1}$. This is stronger than the chain property for the ideals $I_n$.

$\bullet$ The property $I_n=I_{n+1}\cap A_n$ ensuring that the quotients form a chain of algebras can be seen easily in the Bratteli diagram. We have that $I_n=I_{n+1}\cap A_n$ if and only if:
\[v\in S_n\ \ \ \ \Longleftrightarrow\ \ \ \ \text{$\forall v'$ of level $n+1$, $v\leq v'$ implies $v'\in S_{n+1}$.}\]
We note that the weaker property $I_n\subset I_{n+1}$ is equivalent to the single implication $\Rightarrow$.
\end{rem}

\paragraph{Algebraic description of the quotients $A_n/I_n$.} To make explicit the situation we need in this paper, assume that the set $S_{min}$ contains vertices of a single level, say $N+1$ (the general case can be obtained by partitioning the set $S_{min}$ according to the level, and applying this procedure to each part). 

Denote by $X$ a generator of the ideal $I_{N+1}$ of $A_{N+1}$ (for example the sum of the central idempotents corresponding to $S_{min}$). Then we have that  
for any $n\geq N+1$, the ideal $I_n$ of $A_n$ is generated by the element $X^{\uparrow n}$ which is the element $X$ seen as an element of $A_n$ by the inclusion (indeed, this element $X^{\uparrow n}$ is non-zero precisely in the correct set of irreducible representations of $A_n$, by definition of $S_{min}$ and of a Bratteli diagram).

As a conclusion, we note that for all $n\geq N+1$, the algebra $A_n/I_n$ is the quotient of $A_n$ over the relation $X^{\uparrow n}=0$. We refer to Example \ref{exa-TL} for the well-known example of Temperley--Lieb algebras.

\section{Examples}

\subsection{The chain of Hecke algebras $H_n(q)$}\label{app1}

The Hecke algebra $H_n(q)$ is the particular case of the algebra $H_{\bk,n}(q)$ where $\bk=(1,1,\dots)$ is the infinite sequence of $1$'s. The first levels of the Bratteli diagram for the chain of Hecke algebras $\{H_n(q)\}_{n\geq0}$ is shown below. 

The shaded areas indicate the connections between the Hecke algebras $H_n(q)$ and the centralisers of the representations (corresponding to $\bk=(1,1,1,\dots)$) of $U_q(gl_N)$. Namely, by deleting the vertices included in the shaded area labelled $gl_N$ together with the edges touching them, we obtain the Bratteli diagram of the centralisers of $U_q(gl_N)$. For example, if $N=2$, the quotiented Bratteli diagram is the Bratteli diagram of the Temperley--Lieb algebras.

\begin{center}
 \begin{tikzpicture}[scale=0.3]
%\foreach \t in {1,2,...,5} {\draw[thick] (\t,1) rectangle (\t+1,0);}
%\foreach \t in {1,2,...,2} {\draw[thick] (\t,0) rectangle (\t+1,-1);}
\node at (0.5,4) {$\emptyset$};
\draw ( 0.5,3) -- (0.5, 1);
\diag{0}{0}{1};\node at (-1,-0.5) {$1$};

\draw (-0.5,-1.5) -- (-3,-3.5);\draw (1.5,-1.5) -- (4,-3.5);

\diag{-4}{-4}{2};\node at (-5,-4.5) {$1$}; \diagg{4}{-4}{1}{1};\node at (3,-5) {$1$};

\draw (-3.5,-5.5) -- (-6.5,-8.5);\draw (-2.5,-5.5) -- (0.5,-8.5);\draw (3.5,-6.5) -- (1.5,-8.5);\draw (5.5,-6.5) -- (8.5,-8.5);

\diag{-8}{-9}{3};\node at (-9,-9.5) {$1$};\diagg{0}{-9}{2}{1};\node at (-1,-10) {$2$};\diaggg{8}{-9}{1}{1}{1};\node at (7,-10.5) {$1$};

\draw (-7.5,-10.5) -- (-14,-14.5);\draw (-6.5,-10.5) -- (-6.5,-14.5);\draw (-0.5,-11.5) -- (-4.5,-14.5);\draw (1,-11.5) -- (1,-14.5);\draw (2.5,-11.5) -- (7,-14.5);
\draw (8.5,-12.5) -- (8.5,-14.5);\draw (9.5,-12.5) -- (15.5,-14.5);

\diag{-16}{-15}{4};\node at (-17,-15.5) {$1$};\diagg{-8}{-15}{3}{1};\node at (-9,-16) {$3$};\diagg{0}{-15}{2}{2};\node at (-1,-16) {$2$};\diaggg{8}{-15}{2}{1}{1};\node at (7,-16.5) {$3$};\diagggg{15}{-15}{1}{1}{1}{1};\node at (14,-17) {$1$};

\draw (-15,-16.5)--(-17.5,-20.5);\draw (-13,-16.5)--(-10,-20.5);\draw (-7,-17.5)--(-8.5,-20.5);\draw (-6,-17.5)--(-3,-20.5);\draw (-5,-17.5)--(9,-20.5);
\draw (0,-17.5)--(-1,-20.5);\draw (2,-17.5)--(4,-20.5);
\draw (8,-18.5)--(6,-20.5);\draw (9,-18.5)--(10,-20.5);\draw (10,-18)--(15,-20.5);\draw (15.5,-19.5)--(16,-20.5);\draw (16.5,-19)--(20,-20.5);

\diag{-20}{-21}{5};\node at (-21,-21.5) {$1$};\diagg{-11}{-21}{4}{1};\node at (-12,-22) {$4$};\diagg{-3}{-21}{3}{2};\node at (-4,-22) {$5$};
\diaggg{4}{-21}{2}{2}{1};\node at (3,-22.5) {$5$};\diaggg{9}{-21}{3}{1}{1};\node at (8,-22.5) {$6$};\diagggg{15}{-21}{2}{1}{1}{1};\node at (14,-23) {$4$};\diaggggg{20}{-21}{1}{1}{1}{1}{1};\node at (19,-23.5) {$1$};

\draw[thin, fill=gray,opacity=0.2] (2.5,-18.5)..controls +(0,17) and +(0,17) .. (27,-18.5) .. controls +(0,-17) and +(0,-17) .. (2.5,-18.5);\node at (21.5,-6) {$gl_2$};
\draw[thin, fill=gray,opacity=0.2] (13,-21)..controls +(0,12) and +(0,12) .. (24,-21) .. controls +(0,-10) and +(0,-10) .. (13,-21);\node at (19,-11) {$gl_3$};
\draw[thin, fill=gray,opacity=0.2] (18.5,-23.5)..controls +(0,5) and +(0,5) .. (22.5,-23) .. controls +(0,-5) and +(0,-5) .. (18.5,-23.5);\node at (21,-18.5) {$gl_4$};

\node at (-27,-0.5) {$n=1$};\node at (-27,-4.5) {$n=2$};\node at (-27,-9.5) {$n=3$};\node at (-27,-15.5) {$n=4$};\node at (-27,-21.5) {$n=5$};

\end{tikzpicture}
\end{center}

\subsection{The chain of algebras $H_{\bk,n}(q)$ when $\bk=(2,2,2,\dots)$}\label{app2}

When $\bk=(2,2,2,\dots)$ is the infinite sequence of $2$'s, the Bratteli diagram for the chain of algebras $\{H_{\bk,n}(q)\}_{n\geq0}$ begins as: 

\begin{center}
 \begin{tikzpicture}[scale=0.3]
%\foreach \t in {1,2,...,5} {\draw[thick] (\t,1) rectangle (\t+1,0);}
%\foreach \t in {1,2,...,2} {\draw[thick] (\t,0) rectangle (\t+1,-1);}
\node at (0.5,4) {$\emptyset$};
\draw ( 0.5,3) -- (0.5, 1);
\diag{-0.5}{0}{2};\node at (-1.5,-0.5) {$1$};
\draw (-1,-1.5) -- (-6,-3.5);\draw (0.5,-1.5)--(0.5,-3.5);\draw (2,-1.5) -- (6,-3.5);
\diag{-8}{-4}{4};\node at (-9,-4.5) {$1$};\diagg{-1}{-4}{3}{1};\node at (-2,-5) {$1$};\diagg{5}{-4}{2}{2};\node at (4,-5) {$1$};

\draw (-8.5,-5.5) -- (-22,-8.5);\draw (-7.5,-5.5) -- (-13.5,-8.5);\draw (-6,-5.5) -- (-6,-8.5);    \draw (-1.5,-6) -- (-11,-8.5); \draw (-0.5,-6.5) -- (-5,-8.5);\draw (0.5,-6.5) -- (0.5,-8.5);
\draw (1.5,-6.5) -- (7,-8.5);\draw (2.5,-6) -- (11.5,-8.5);\draw (6,-6.5) -- (-4,-8.5); \draw (6.5,-6.5) -- (12.5,-8.5);\draw (7.5,-6) -- (18,-8.5);

\diag{-25}{-9}{6};\node at (-26,-9.5) {$1$};\diagg{-16}{-9}{5}{1};\node at (-17,-10) {$2$};\diagg{-8}{-9}{4}{2};\node at (-9,-10) {$3$};
\diagg{-1}{-9}{3}{3};\node at (-2,-10) {$1$};\diaggg{5}{-9}{4}{1}{1};\node at (4,-10.5) {$1$}; \diaggg{12}{-9}{3}{2}{1};\node at (11,-10.5) {$2$};\diaggg{18}{-9}{2}{2}{2};\node at (17,-10.5) {$1$};

\draw[thin, fill=gray,opacity=0.2] (3.5,-10.5)..controls +(0,6) and +(0,6) .. (21.5,-10.5) .. controls +(0,-6) and +(0,-6) .. (3.5,-10.5);\node at (19,-6) {$gl(2)$};

\node at (-32,-0.5) {$n=1$};\node at (-32,-4.5) {$n=2$};\node at (-32,-9.5) {$n=3$};
\end{tikzpicture}
\end{center}

Note that there is no arrow from $\mu=\begin{array}{cc}
\fbox{\phantom{\scriptsize{$2$}}} &\hspace{-0.35cm}\fbox{\phantom{\scriptsize{$2$}}}\\[-0.2em]
\fbox{\phantom{\scriptsize{$2$}}} &\hspace{-0.35cm}\fbox{\phantom{\scriptsize{$2$}}}
\end{array}$ to $\lambda=\begin{array}{ccc}
\fbox{\phantom{\scriptsize{$2$}}} &\hspace{-0.35cm}\fbox{\phantom{\scriptsize{$2$}}} & \hspace{-0.35cm}\fbox{\phantom{\scriptsize{$2$}}}\\[-0.2em]
\fbox{\phantom{\scriptsize{$2$}}} &\hspace{-0.35cm}\fbox{\phantom{\scriptsize{$2$}}} & \hspace{-0.35cm}\fbox{\phantom{\scriptsize{$2$}}}
\end{array}$ even if $\mu\subset\lambda$ since $\lambda/\mu$ contains two boxes in the same column.

The shaded area indicates the connections between the fused Hecke algebras $H_{\bk,n}(q)$ and the centraliser of the representations (corresponding to $\bk=(2,2,2,\dots)$) of $U_q(gl_N)$, as in the preceding example.

\subsection{The chain of algebras $H_{\bk,n}(q)$ when $\bk=(3,1,1,1,\dots)$}\label{app3}

When $\bk=(3,1,1,1\dots)$, the Bratteli diagram for the chain of
algebras $\{H_{\bk,n}(q)\}_{n\geq0}$ begins as (the shaded areas have a similar meaning as in the preceding examples):

\begin{center}
 \begin{tikzpicture}[scale=0.3]
%\foreach \t in {1,2,...,5} {\draw[thick] (\t,1) rectangle (\t+1,0);}
%\foreach \t in {1,2,...,2} {\draw[thick] (\t,0) rectangle (\t+1,-1);}
\node at (0.5,4) {$\emptyset$};
\draw ( 0.5,3) -- (0.5, 1);
\diag{-1}{0}{3};\node at (-2,-0.5) {$1$};
\draw (-0.5,-1.5) -- (-3,-3.5);\draw (1.5,-1.5) -- (3.5,-3.5);
\diag{-5}{-4}{4};\node at (-6,-4.5) {$1$};\diagg{2}{-4}{3}{1};\node at
(1,-5) {$1$};

\draw (-5,-5.5) -- (-10.5,-8.5);\draw (-3,-5.5) -- (-3,-8.5);\draw
(1.7,-6.3) -- (-1,-8.5);\draw (3.5,-6.3) -- (3.5,-8.5);\draw (5,-5.5)
-- (9.5,-8.5);

\diag{-13}{-9}{5};\node at (-14,-9.5) {$1$};\diagg{-5}{-9}{4}{1};\node
at (-6,-10) {$2$};\diagg{2}{-9}{3}{2};\node at (1,-10)
{$1$};\diaggg{8}{-9}{3}{1}{1};\node at (7,-10.5) {$1$};

\draw (-13,-10.5) -- (-22,-14.5);\draw (-10.5,-10.5) --
(-13.5,-14.5);\draw (-5.3,-11.3) -- (-12.5,-14.5);\draw (-4,-11.3) --
(-6,-14.5);\draw (-3,-10.5) -- (5,-14.5);
\draw (1.7,-11.3) -- (-4,-14.5);\draw (3,-11.3) -- (0.5,-14.5);\draw
(4.3,-11.3) -- (11.7,-14.5);\draw (7.7,-12.3) -- (7,-14.5);\draw
(9.3,-12.3) -- (13.5,-14.5);
\draw (11,-10.5) -- (19.5,-14.5);

\node at (-26,-15.5) {$1$};\diag{-25}{-15}{6};
\node at (-17,-16) {$3$};\diagg{-16}{-15}{5}{1};

\node at (-9,-16) {$3$};\diagg{-8}{-15}{4}{2};

\node at (-2,-16) {$1$};\diagg{-1}{-15}{3}{3};

\node at (4,-16.5) {$3$};\diaggg{5}{-15}{4}{1}{1};\node at (11,-16.5)
{$2$};\diaggg{12}{-15}{3}{2}{1};\node at (17,-17)
{$1$};\diagggg{18}{-15}{3}{1}{1}{1};

\draw[thin, fill=gray,opacity=0.2] (3.5,-16)..controls +(0,12) and
+(0,12) .. (22.5,-16) .. controls +(0,-12) and +(0,-12) ..
(3.5,-16);\node at (18,-7) {$gl(2)$};
\draw[thin, fill=gray,opacity=0.2] (16,-16)..controls +(0,5) and
+(0,5) .. (21.5,-16) .. controls +(0,-5) and +(0,-5) .. (16,-16);\node
at (19,-11.5) {$gl(3)$};

\node at (-32,-0.5) {$n=1$};\node at (-32,-4.5) {$n=2$};\node at
(-32,-9.5) {$n=3$};\node at (-32,-15.5) {$n=4$};
\end{tikzpicture}
\end{center}


\begin{thebibliography}{99}

\bibitem{ALZ} H.H. Andersen, G. Lehrer and R. Zhang, \emph{Cellularity of certain quantum endomorphism algebras}, Pacific J. Math., 279(1-2) (2015) 11--35.

\bibitem{CPV} N. Crampe, L. Poulain d'Andecy, L. Vinet, \emph{Temperley--Lieb, Brauer and Racah algebras and other centralizers of $su(2)$}, preprint arXiv:1905.06346 (2019).

\bibitem{CR} C. Curtis, I. Reiner, \emph{Methods of representation theory with applications to finite groups and orders. Vol. 1}, Wiley, 1981.

\bibitem{GP} M. Geck, G. Pfeiffer, \emph{Characters of finite Coxeter groups and Iwahori-Hecke algebras}. London Mathematical Society Monographs, 21, Oxford University Press, 2000.

\bibitem{GW} R. Goodman and R. Wallach, \emph{Symmetry, representations, and invariants}, Vol. 255, Springer, 2009.

\bibitem{Gr} J. A. Green, \emph{Polynomial representations of $GL_n$}, Lecture Notes in Mathematics, Vol. 830, Springer, 1980.

\bibitem{Ho} P. Hoefsmit, \emph{Representations of Hecke algebras of finite groups with BN-pairs of classical type}, Ph. D. thesis, University of British Columbia (1974). 

\bibitem{IO} A. Isaev, O. Ogievetsky, \emph{On Baxterized solutions of reflection equation and integrable chain models}, 
Nucl. Phys. B 760[PM] (2007) 167--183.

\bibitem{Ji1} M. Jimbo, \emph{A q-Analogue of $U(\mathfrak{gl}(N+1))$, Hecke algebra, and the Yang--Baxter equation}, Lett. Math. Phys. 11 (1986), 247--252.

\bibitem{Jo} V.F.R. Jones, \emph{Baxterisation}, Int. J. Mod. Phys.B 4(1990) 701, proceedings of “Yang–Baxter equations, conformal invariance and integrability in statistical mechanics and field theory”, Canberra, 1989.

\bibitem{KT} C. Kassel, V. Turaev, \emph{Braid groups}, Graduate Texts in Mathematics, Vol. 247, Springer, 2008.

\bibitem{KS} A. Klimyk, K. Schm\"udgen, \emph{Quantum groups and their representations}, Springer, 2012.

\bibitem{Kn} D. Knuth, \emph{Permutations, matrices, and generalized Young tableaux}, Pacific J. Math. 34(3) (1970) 709--727.

\bibitem{LSA} A. Langlois-Rémillard, Y. Saint-Aubin, \emph{The representation theory of seam algebras}, preprint arXiv:1909.03499 (2019).

\bibitem{LZ} G. Lehrer, R. Zhang, \emph{Strongly multiplicity free modules for Lie algebras and quantum groups}, Journal of Algebra 306(1) (2006) 138--174.

\bibitem{LZ2} G. Lehrer, R. Zhang, \emph{A Temperley–Lieb analogue for the BMW algebra}, in Representation theory of algebraic groups and quantum groups. Birkhäuser Boston, (2010) 155--190.

\bibitem{Mat} C. Matsui, \emph{Multi-state asymmetric simple exclusion
processes,} J. Stat. Phys. 158 (2015) 158--191.

\bibitem{PdA} L. Poulain d'Andecy, \emph{Fusion formulas and fusion procedure for the Yang--Baxter equation}, Algebr. Represent. Theor. (2017), 20: 1379.

\bibitem{PdA2} L. Poulain d'Andecy, \emph{Young tableaux and representations of Hecke algebras of type ADE}, J. Comb. Algebra 1 (2017), no. 4, 371--423.

\bibitem{Ra} A. Ram, \emph{Skew shape representations are irreducible}, Combinatorial and Geometric Representation Theory (Seoul, 2001), Contemporary Mathematics 325 (AMS, Providence, RI, 2003) 161--189.

\bibitem{St} R. Stanley, \emph{Enumerative Combinatorics, vol. 2}, Cambridge Stud. Adv. Math (1999).

\bibitem{Zi} P. Zinn-Justin, \emph{Combinatorial point for fused loop
models,} Comm. Math. Phys. 272 (2007) 661--68.



\end{thebibliography}
\end{document}